\newcommand{\oddheader}{Deception, Delay, and Detection of Strategies}
\markboth{\oddheader}{\currenttitle}
\newcommand{\ifig}[2]{\includegraphics[#2]{#1-eps-converted-to.pdf}}
\newcommand{\fig}[1]{Figure~\ref{#1}}
\newcommand{\vso}{\vspace*{1pt}}
\newcommand{\vst}{\vspace*{2pt}}
\newcommand{\vsr}{\vspace*{3pt}}
\newcommand{\vzsp}{\kern .02em}
\newcommand{\vtsp}{\kern .08em}
\newcommand{\vmsp}{\kern .12em}
\newcommand{\vlsp}{\kern .15em}
\newcommand{\hspc}[0]{\hbox{\hspace*{0.2pt}}}
\newcommand{\hspd}[0]{\hbox{\hspace*{0.4pt}}}
\newcommand{\hsph}[0]{\hbox{\hspace*{0.5pt}}}
\newcommand{\hsps}[0]{\hbox{\hspace*{0.6pt}}}
\newcommand{\hspq}[0]{\hbox{\hspace*{0.75pt}}}
\newcommand{\hspt}[0]{\hbox{\hspace*{1pt}}}
\newtheorem{theorem}{Theorem}
\newtheorem{lemma}[theorem]{Lemma}
\newtheorem{corollary}[theorem]{Corollary}
\newtheorem{construction}[theorem]{Construction}
\newtheorem{definition}[theorem]{Definition}
\newcommand\mydefem[1]{{\underline{\em #1}}}
\newcommand{\setdef}[2]{{\left\lbrace #1\;\left\vert\;#2 \right.\right\rbrace}}
\newcommand{\biginter}{\bigcap}
\newcommand{\bigunion}{\bigcup}
\newcommand{\sdiff}[2]{#1 \setminus #2}
\newcommand{\abs}[1]{\lvert#1\rvert}
\newcommand{\bigabs}[1]{\big\lvert#1\big\rvert}
\newcommand{\matchabs}[1]{\left\lvert#1\right\rvert}
\newcommand{\yless}{\mskip-1mu}
\newcommand{\xadj}{\mskip-1.8mu}
\newcommand{\XxY}{{X}\xadj\times{Y}}
\newcommand{\one}{\bullet}
\newcommand{\OTw}{{{\tt H}\rightarrow{\tt L}}}
\newcommand{\OTh}{{{\tt H}\rightarrow{\tt R}}}
\newcommand{\TwF}{{{\tt L}\rightarrow{\tt P}}}
\newcommand{\ThF}{{{\tt R}\rightarrow{\tt P}}}
\newcommand{\TwTh}{{{\tt L}\rightarrow{\tt R}}}
\newcommand{\ThTw}{{{\tt R}\rightarrow{\tt L}}}
\newcommand{\AB}{{{\tt A}\rightarrow{\tt B}}}
\newcommand{\tH}{{\tt H}}
\newcommand{\tL}{{\tt L}}
\newcommand{\tR}{{\tt R}}
\newcommand{\tP}{{\tt P}}
\newcommand{\tu}{{\tt u}}
\newcommand{\td}{{\tt d}}
\newcommand{\tf}{{\tt f}}
\newcommand{\tm}{{\tt m}}
\newcommand{\nvtsp}{\mskip -1mu}
\newcommand{\ntsp}{\mskip -3mu}
\newcommand{\bonespc}{\vtsp{}b_1}
\newcommand{\inter}{\cap}
\newcommand{\union}{\cup}
\newcommand{\calA}{{\mathcal{A}}}
\newcommand{\calB}{{\mathcal{B}}}
\newcommand{\calC}{{\mathcal{C}}}
\newcommand{\calE}{{\mathcal{E}}}
\newcommand{\calF}{{\mathcal{F}}}
\newcommand{\calK}{{\mathcal{K}}}
\newcommand{\frakA}{{\mathfrak{A}}}
\newcommand{\frakB}{{\mathfrak{B}}}
\newcommand{\frakC}{{\mathfrak{C}}}
\newcommand{\frakD}{{\mathfrak{D}}}
\newcommand{\frakE}{{\mathfrak{E}}}
\newcommand{\CH}{{\mathfrak{C}_H}}
\newcommand{\CHp}{{\mathfrak{C}_{H^*}}}
\newcommand{\CyN}{{\mathcal{C}_N}}
\newcommand{\CyH}{{\mathcal{C}_H}}
\newcommand{\Cp}{{\mathfrak{C}}}
\newcommand{\expansive}{\calE}
\newcommand{\outside}{\calE^{-}}
\newcommand{\exa}{e}
\newcommand{\minexp}{\varrho}
\newcommand{\maction}[2]{$#1\ntsp\rightarrow\ntsp #2$}
\newcommand{\src}{\mathop{\rm src}}
\newcommand{\trg}{\mathop{\rm trg}}
\newcommand{\wrep}{\diamond}
\newcommand{\wrepone}{\Diamond}
\newcommand{\wreptwo}{\Box}
\newcommand{\DG}{\Delta_G}
\newcommand{\DTH}{\Delta_H}
\newcommand{\maxGam}{{\mathfrak{M}}}
\newcommand{\maxmarked}{{\mathcal{M}}}
\newcommand{\DGW}{\Delta_{G/W}}
\newcommand{\DHp}{\Delta_{H^*}}
\newcommand{\DHi}{\Delta_{H_{\scriptstyle i}}}
\newcommand{\DHim}{\Delta_{H_{\scriptstyle {i-1}}}}
\newcommand{\Go}{\overline{G}}
\newcommand{\DGo}{\Delta_{\Go}}
\newcommand{\sigo}{\overline{\sigma}}
\newcommand{\Ao}{\overline{A}}
\newcommand{\tauc}{{\tau_{\circ}}}
\newcommand{\taup}{{\tau_{+}}}
\newcommand{\taum}{{\tau_{-}}}
\newcommand{\taucp}{{\tau^{{\kern .1em}\prime}_{\circ}}}
\newcommand{\taupp}{{\tau^{{\kern .15em}\prime}_{+}}}
\newcommand{\taump}{{\tau^{{\kern .15em}\prime}_{-}}}
\newcommand{\tauz}{{\tau_{\star}}}
\newcommand{\tauzp}{{\tau^{{\kern .1em}\prime}_{\star}}}
\newcommand{\sphere}{\mathbb{S}}
\newcommand{\Sone}{\sphere^{1}}
\newcommand{\Stwo}{\sphere^{2}}
\newcommand{\Ssix}{\sphere^{6}}
\newcommand{\Snt}{\sphere^{\vtsp{n-2}}}
\newcommand{\dl}{\mathop{\rm dl}}
\newcommand{\dowx}{\Psi_R}
\newcommand{\dowy}{\Phi_R}
\newcommand{\clsy}{\phi_R \circ \psi_R}
\newcommand{\clsx}{\psi_R \circ \phi_R}
\newcommand{\dowAx}{\Psi_A}
\newcommand{\dowAy}{\Phi_A}
\newcommand{\Aoney}{A^{(1)}}
\newcommand{\dowAoney}{\Phi_{\mskip-2mu\Aoney}}
\newcommand{\clsAy}{\phi_A \circ \psi_A}
\newcommand{\gammacls}{\overline{\gamma}}
\title{Deception, Delay, and Detection\\[4pt]of Strategies}
\author{%
\parbox[c]{2.2in}{%
\begin{center}
Michael Erdmann\thanks{This report is based upon work supported in
part by the National Science Foundation under award number
IIS-1409003.  Any opinions, findings and conclusions or
recommendations expressed in this report are those of the author and
do not necessarily reflect the views of the Government or the National
Science Foundation.}\\ Carnegie Mellon University\\
June 27, 2019
\end{center}}}
\date{{\small \copyright\ 2019 Michael Erdmann}}
\begin{document}

\pagenumbering{roman}

\maketitle{}
\thispagestyle{empty}

\vspace*{-0.1in}
\begin{abstract}
Homology generators in a relation offer individuals the ability to
delay identification, by guiding the order via which the individuals
reveal their attributes \cite{paths:privacy}.  This perspective
applies as well to the identification of goal-attaining strategies in
systems with errorful control, since the strategy complex of a fully
controllable nondeterministic or stochastic graph is homotopic to a
sphere.  Specifically, such a graph contains for
each state $v$ a maximal strategy $\sigma_v$ that converges to state
$v$ from all other states in the graph and whose identity may be
shrouded in the following sense: One may reveal certain actions of
$\sigma_v$ in a particular order so that the full strategy becomes
known only after at least $n-1$ of these actions have been revealed,
with none of the actions revealed definitively inferable from those
previously revealed.  Here $n$ is the number of states in the graph.
Moreover, the strategy contains at least $(n-1)!$ such {\em
informative action release sequences}, each of length at least $n-1$.

The earlier work described above sketched a proof that {\em every\,}
maximal strategy in a {\em pure nondeterministic\,} or {\em pure
stochastic\,} graph contains {\em at least one\,} informative action
release sequence of length at least $n-1$.  The primary purpose of the
current report is to fill in the details of that sketch.  To build
intuition, the report first discusses several simpler examples.  These
examples suggest an underlying structure for hiding capabilities or
bluffing capabilities, as well as for detecting such deceit.
\end{abstract}

\vspace*{0.2in}
\markright{Contents}

\tableofcontents

\cleardoublepage

\pagenumbering{arabic}

\section{Introductory Examples}
\label{intro}

\subsection{Paths and Constituent Transitions}
\markright{Paths and Constituent Transitions}

\label{lake}

\fig{LakeFig} shows four islands connected by bridges, as might be
found in one of the great oceanic cities of the world.  One of the
bridges allows traffic in two directions, the others are one-way
bridges.  Of interest are the possible paths a bus of tourists or the
motorcade of a prominent dignitary might take from the {\sc Hotel
Island} to the {\sc Palace Island}, via one or two intermediary
islands (the {\sc Left Island} and/or the {\sc Right Island}).

\begin{figure}[h]
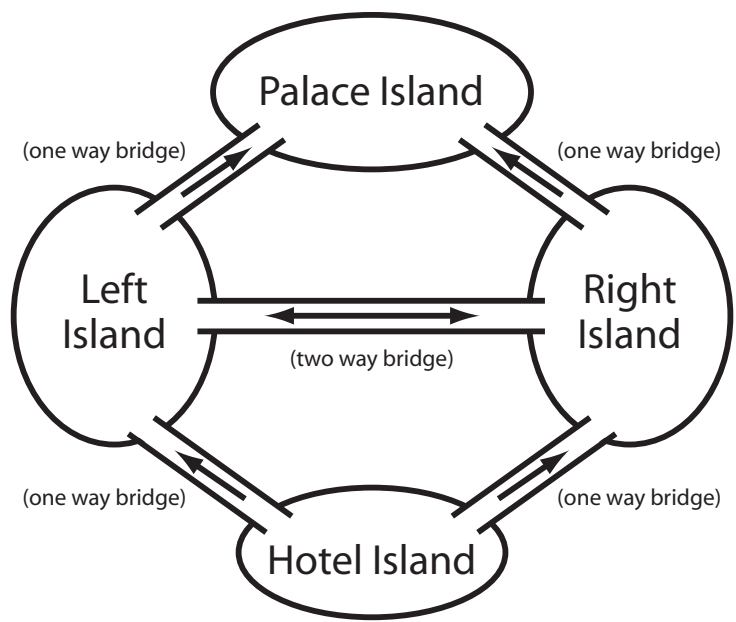

\begin{center}
\ifig{Lake}{scale=0.55}
\end{center}
\vspace*{-0.1in}
\caption[]{Four islands along with some directional bridges connecting
  the islands.}
\label{LakeFig}
\end{figure}

Since the bridge between {\sc Left Island} and {\sc Right Island} is
bidirectional, there are infinitely many such paths, parameterized by
the number of times the bus or motorcade cycles over the two-way
bridge.  For the purposes of this report, we will disallow such
infinite cycling.  One can imagine different restrictions.
In this first example, we impose the restriction that a vehicle may
traverse the two-way bridge at most once in each of its possible
directions (perhaps for legal or monetary reasons).  In a more general
setting, we would disallow traversing any bridge in the same direction
more than once.  Later, in Section~\ref{river}, we will discuss a
different example with a different restriction that prevents infinite
cycling.

We define a {\vtsp\em permissible path\vlsp} to be any path that a
vehicle might take from {\sc Hotel Island} to {\sc Palace Island},
subject to the ``no directional transition twice'' restriction.  The
next page enumerates all permissible paths; there are six.
For clarity, we abbreviate each island name to its first letter and
give paths the names $\pi_1, \ldots, \pi_6$.  Throughout this
subsection, we consider only these six paths, each of which starts at
{\sc Hotel Island} and ends at {\sc Palace Island}.

$$\begin{matrix}
\hbox{$\pi_1$:\ } & \tH & \rightarrow & \tL & \rightarrow & \tP & & & & \\[1pt]
\hbox{$\pi_2$:\ } & \tH & \rightarrow & \tR & \rightarrow & \tP & & & & \\[1pt]
\hbox{$\pi_3$:\ } & \tH & \rightarrow & \tL & \rightarrow & \tR & \rightarrow & \tP & & \\[1pt]
\hbox{$\pi_4$:\ } & \tH & \rightarrow & \tR & \rightarrow & \tL & \rightarrow & \tP & & \\[1pt]
\hbox{$\pi_5$:\ } & \tH & \rightarrow & \tL & \rightarrow & \tR & \rightarrow & \tL & \rightarrow & \tP\\[1pt]
\hbox{$\pi_6$:\ } & \tH & \rightarrow & \tR & \rightarrow & \tL & \rightarrow & \tR & \rightarrow & \tP\\[1pt]
\end{matrix}\label{permissiblepaths}$$

\vspace*{0.1in}

\fig{LakeGraph} describes the islands and bridges of \fig{LakeFig} as
a directed graph, and the six permissible paths as a relation.  The
relation has a row for each permissible path and a column for each
directed edge in the graph, that is, for each directional transition
across a bridge.  Since a permissible path may traverse any bridge
direction at most once, each permissible path defines a {\vtsp\em
set\vlsp} of directed edges, modeling all directional bridge
transitions in the path.  The relation therefore contains a nonblank
entry $\one$ for a given path $\pi_i$ and a given directed edge
$\AB\vtsp$ if and only if path $\pi_i$ includes transition $\AB$.

\begin{figure}[h]
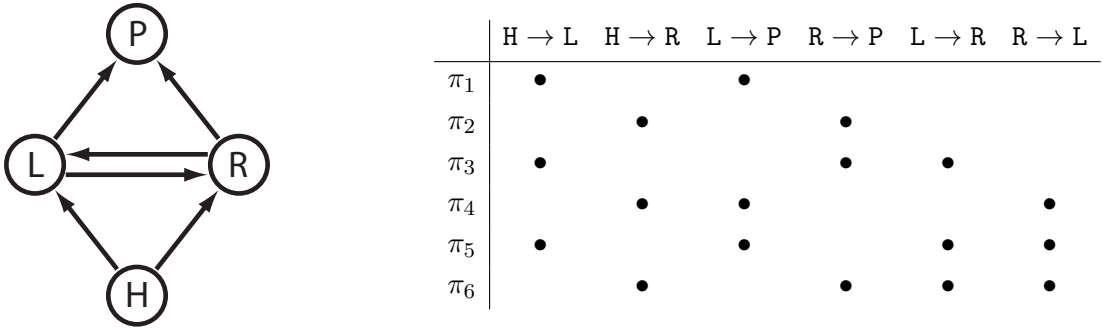

\vspace*{0.1in}
\begin{center}
\begin{minipage}{1.5in}
\ifig{LakeGraph}{scale=0.35}
\end{minipage}
\hspace*{0.65in}
\begin{minipage}{3.2in}{$\begin{array}{c|cccccc}
       & \OTw & \OTh & \TwF & \ThF & \TwTh & \ThTw \\[2pt]\hline
 \pi_1 & \one &      & \one &      &       &       \\[2pt]
 \pi_2 &      & \one &      & \one &       &       \\[2pt]
 \pi_3 & \one &      &      & \one & \one  &       \\[2pt]
 \pi_4 &      & \one & \one &      &       & \one  \\[2pt]
 \pi_5 & \one &      & \one &      & \one  & \one  \\[2pt]
 \pi_6 &      & \one &      & \one & \one  & \one  \\[2pt]
\end{array}$}
\end{minipage}
\end{center}
\vspace*{-0.15in}
\caption[]{{\bf Left Panel:} Directed graph representing the sketch of
  \fig{LakeFig}.  (Island names appear as first letter
  abbreviations.)\quad
  {\bf Right Panel:} A relation describing all paths leading from the
  {\sc Hotel Island} to the {\sc Palace Island}, while traversing any
  bridge direction at most once.}
\label{LakeGraph}
\end{figure}

\subsubsection*{Identifying Paths from Transitions at Execution Time}

Suppose an observer is watching a bus drive from the {\sc Hotel
Island} to the {\sc Palace Island}.  At what point during the trip can
the observer identify uniquely the specific path followed by the bus,
assuming the bus is traversing one of the permissible paths $\pi_1,
\pi_2, \pi_3, \pi_4, \pi_5$, or $\pi_6\vtsp$?

\begin{itemize}

\item Certainly, once the bus arrives at its destination, {\sc Palace
  Island}, the observer can identify the path uniquely, since at that
  point the observer knows that he/she has seen the entire path.

\item The observer cannot identify any path uniquely after observing
  only the first bridge transition.  For instance, after observing
  transition $\OTw$, the possible paths consistent with this
  observation are $\pi_1$, $\pi_3$, and $\pi_5$.  Similarly, after
  observing transition $\OTh$, the possible paths consistent with the
  observation are $\pi_2$, $\pi_4$, and $\pi_6$.

\item Consider paths $\pi_1$ and path $\pi_2$, each of which consists
  of two transitions.  By the previous point, the observer must see
  the entire path in order to identify either of these paths uniquely.

\item The first two transitions of paths $\pi_3$ and $\pi_5$ are the
  same, namely $\OTw$ and $\TwTh$.  Consequently, upon observing these
  transitions, the observer cannot identify a path uniquely; the path
  could be either $\pi_3$ or $\pi_5$.  Path $\pi_3$ consists of three
  transitions.  Thus, if the actual path is $\pi_3$, the observer must
  see the entire path before identifying the path as $\pi_3$.  A similar
  argument holds for path $\pi_4$.

\item Paths $\pi_5$ and $\pi_6$ contain four transitions.  For each of
  these two paths, the observer only needs to see the first three
  transitions in order to identify the path; no other permissible path
  shares those same three transitions with the path being observed.

\end{itemize}

In summary: Paths $\pi_1$, $\pi_2$, $\pi_3$, and $\pi_4$ can only be
identified uniquely after seeing all their transitions, assuming one
observes transitions in consecutive order.  Paths $\pi_5$ and $\pi_6$
can be identified uniquely after seeing the first three of their four
transitions, again assuming one observes transitions in consecutive
order.

\subsubsection*{Identifying Paths from Transitions in Arbitrary Order}

Previously we assumed that the observer was observing consecutive
motions of a bus.  Suppose now that the observer merely learns of
particular transitions made by the bus, {\em without\hspt} any explicit
ordering in time.  For instance, perhaps the observer is listening to
stories told by tourists on the bus after their trip, from which the
observer attempts to reconstruct the path taken.  Or perhaps the
observations are coming from many trips taken over the course of
several days by a bus following a particular fixed bus route each day.
Or perhaps the observer overhears the bus driver commenting on
particular bridges he will encounter on his next trip, from which the
observer is trying to predict the path yet to be taken.

\vspace*{0.1in}

We now ask: \ What {\em set\,} of transitions allows an observer to
identify a path uniquely?

\begin{itemize}

\item Recall that path $\pi_1$ consists of the set of transitions
  $\{\OTw,\, \TwF\}$.  Previously, when observing transitions in
  consecutive order, seeing both these transitions identified path
  $\pi_1$ uniquely.  That is no longer true when transitions may be
  observed nonconsecutively.  The reason is that path $\pi_5$ contains
  these same transitions, plus others.  In fact, it is {\em no longer
  possible} to identify path $\pi_1$ uniquely.  Similarly, it is no
  longer possible to identify path $\pi_2$ uniquely.

\item If the observer learns that a path contains the transitions
  $\OTw$ and $\ThF$, then the observer can infer that the path must
  also contain the transition $\TwTh$ and must in fact be path
  $\pi_3$.  Whereas previously an observer needed to see the entire
  path $\pi_3$ in order to identify it uniquely, now a pair of
  nonconsecutive transitions identifies the path.  In effect,
  continuity of paths allows the observer to infer an unobserved
  transition.  A similar argument holds for path $\pi_4$.  Of course,
  if a story teller wishes to draw out identification of the path,
  he/she might simply talk about the sights seen during the bus ride
  in consecutive order, thus preventing such a leap of inference.

\item If the observer learns that a path contains the transitions
  $\OTw$ and $\ThTw$, then the observer can actually infer two
  unobserved transitions, namely $\TwTh$ and $\TwF$, thereby
  concluding that the path is $\pi_5$.  A similar inference is
  possible for path $\pi_6$.  The observer is in effect taking
  advantage both of path continuity and knowledge of the path's
  destination.  Again, a story teller could draw out identification of
  path $\pi_5$ slightly by reporting transitions in consecutive order.

\end{itemize}

\fig{LakeComplexes} encodes these conclusions geometrically, using
simplicial complexes \cite{paths:munkres, paths:rotman,
paths:privacy}.  As in the relation of \fig{LakeGraph}, we now view
each path as a {\em set\vmsp} of directed edges.  These sets
constitute the generating simplices of the left simplicial complex
shown in \fig{LakeComplexes}.  The vertices in this complex are the
directed edges of the graph of \fig{LakeGraph}.  Path $\pi_1$
generates a one-dimensional simplex (edge) in the complex.  This
simplex is a subset of the three-dimensional simplex (tetrahedron)
generated by path $\pi_5$, modeling the earlier conclusion that one
cannot identify path $\pi_1$ uniquely when observing transitions in
arbitrary order.  Observe that the set $\{\OTw,\, \ThF\}$ is a free
face\footnote{Simplicial complexes in this report are {\em abstract},
i.e., collections of sets and all their subsets.  A simplex is a {\em
free face{\hspace*{0.8pt}}} of an abstract simplicial complex if it is
a proper subset of exactly one maximal simplex in the complex.} in the
complex and is not itself a path.  This geometry models the inference
and identification of path $\pi_3$ discussed previously.  Similarly,
the set $\{\OTw,\, \ThTw\}$ forms a free face in the complex, modeling
the inferences and identification of path $\pi_5$ discussed above.
(The set $\{\OTw,\, \ThTw\}$ is an undrawn ``diagonal'' of the
tetrahedron labeled $\pi_5$.)

\begin{figure}[t]
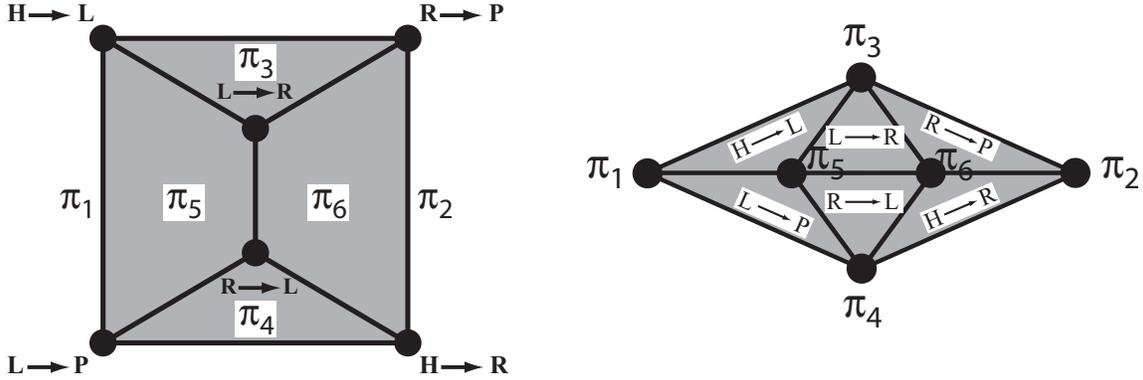

\begin{center}
\ifig{LakePhiLabelsLarger}{scale=0.48}
\hspace*{0.3in}
\ifig{LakePsiLabelsvspace}{scale=0.53}
\end{center}
\vspace*{-0.2in}
\caption[]{Two simplicial complexes derived from the relation of
  \fig{LakeGraph}.  The two complexes are Dowker dual
  \cite{paths:privacy} to each other with respect to that relation. \quad
  {\bf Left Panel:} The underlying vertex set of this complex is the
  collection of directed edges in the graph of \fig{LakeGraph}.
  The paths $\pi_1$, $\pi_2$, $\pi_3$, $\pi_4$, $\pi_5$, $\pi_6\mskip2.5mu$
  generate simplices as indicated by the path labels. (The two
  quadrilaterals are actually solid tetrahedra, flattened for ease of
  viewing in the figure.)\quad
  {\bf Right Panel:} The underlying vertex set of this complex is the
  collection of permissible paths in the graph of
  \fig{LakeGraph}.  Each maximal simplex in the complex is a
  triangle, reflecting the fact that each possible directed edge in
  the graph appears in three permissible paths. Each triangle is
  labeled with that directed edge.}
\label{LakeComplexes}
\end{figure}

The right simplicial complex of \fig{LakeComplexes} contains the same
information as the left complex, but in dual form.  The duality is
with respect to the relation of \fig{LakeGraph}.  (Details of such
``Dowker duality'' are discussed further in \cite{paths:privacy}.)

The vertices in the right complex are the permissible paths of the
graph of \fig{LakeGraph}.  The generating simplices of the complex are
given by the columns of the relation of \fig{LakeGraph}.  In other
words, each generating simplex consists of all the paths that share a
given directed edge.  Thus the right complex tells us how to interpret
observations of transitions as intersections of generating simplices.
For instance, if we know that a path contains the transitions $\OTw$
and $\TwF$, then we can intersect the triangle labeled with $\OTw$ and
the triangle labeled with $\TwF$ to see that the possible paths are
$\pi_1$ and $\pi_5$.  (This geometric intersection is exactly the
intersection of the two columns indexed by $\OTw$ and $\TwF$ in the
relation of \fig{LakeGraph}.)

\vspace*{0.05in}

Considering such intersections, our earlier observations
plus some others are immediate:

\vspace*{-0.05in}

\begin{itemize}
\addtolength{\itemsep}{-4pt}

\item $\pi_1$ and $\pi_2$ are not uniquely identifiable.
\item Observing $\OTw$ and $\ThF$ identifies path $\pi_3$.
\item Observing $\OTh$ and $\TwF$ identifies path $\pi_4$.
\item Observing $\OTw$ and $\ThTw$ identifies path $\pi_5$ (as does observing $\TwF$ and $\TwTh$).
\item Observing $\OTh$ and $\TwTh$ identifies path $\pi_6$ (as does observing $\ThF$ and $\ThTw$).
\end{itemize}

\vspace*{-0.05in}

(These are the smallest sets of identifying observations for each
path; there exist larger sets of observations as well.)

\clearpage

\subsection{Strategies and Underlying Capabilities}
\markright{Strategies and Underlying Capabilities}

\label{river}

\fig{Stream} shows a river with two islands, a fishing area upstream
of the islands, and a marina downstream from the islands.  The two
islands create three passages within the river that boats may
traverse, either upstream or downstream, as they move between the
fishing area and the marina.  The three passages produce currents of
different strengths.  One of these currents is so strong that only
boats with powerful motors are able to traverse the current going
upstream.  Fish in the fishing area like to gather near the start of
that strong current.  Consequently, boats with powerful motors have an
advantage reaching nice fish over boats with weaker motors.  On the
other hand, revealing that one has a powerful motor leads to envy and
other competitions.  As a result, skippers tend to underplay the power
of their motors.

\vst

We will examine the possible strategies for reaching the fishing area
(along with strategies for reaching the marina).  We will further
examine the extent to which someone can reveal portions of a strategy
without revealing the entire strategy.  Conversely, we will examine
the extent to which an observer can infer that a boat has a powerful
motor even when the observer never sees the boat traversing upstream
over the strong current.

\begin{figure}[h]
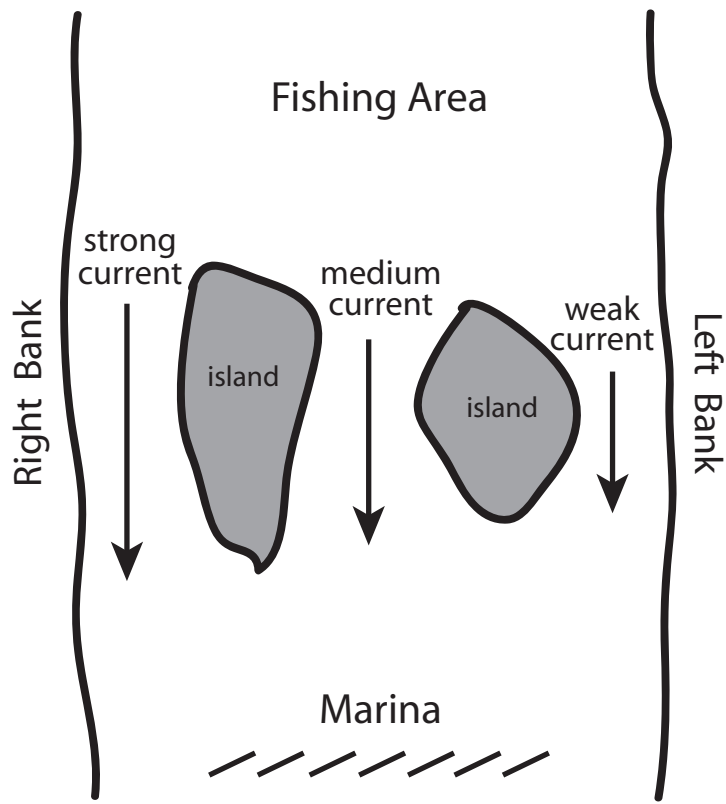

\begin{center}
\ifig{Stream}{scale=0.5}
\end{center}
\vspace*{-0.1in}
\caption[]{A river along with islands that create three passages and
  consequent currents of different strengths, with a fishing area
  upstream and a marina downstream.}
\label{Stream}
\end{figure}

\clearpage

\begin{figure}[t]
\begin{center}
\ifig{StreamGraphArrows}{scale=0.4}
\hspace*{0.675in}
\ifig{StreamGraph}{scale=0.4}
\end{center}
\vspace*{-0.1in}
\caption[]{{\bf Left Panel:} A directed graph that describes the
  possible transitions a boat might make while traversing the river of
  \fig{Stream} between the marina and the fishing area.  (State
  \#0 represents the marina, while state \#7 represents the fishing
  area.)\\[1pt]
  {\bf Right Panel:} The same graph, now with the transitions that
  move upstream or downstream through the passages given explicit
  names.}
\label{StreamGraphs}
\end{figure}

\fig{StreamGraphs} describes the river setting of \fig{Stream} using a
directed graph, much as we did in the earlier example of
Section~\ref{lake}.  We will be interested primarily in the
transitions upstream and downstream through the passages beside the
islands, so we give those directed edges explicit names: $\tu_1$,
$\td_1$, $\tu_2$, $\td_2$, $\tu_3$, $\td_3$, as shown in the right
panel of the figure.

\vst

In the example of Section~\ref{lake}, we focused on paths.  In the
current example, we adopt a slightly different perspective.  We are
interested in a generalization of what is frequently called a {\em
control law}, namely a mapping from states to commanded motions.  The
generalization is that of a {\em strategy}, to be reviewed in
Section~\ref{strategies}.  A strategy is a mapping from states to {\em
sets\hsps} of possible motions, in this case sets of directed edges.
The semantics are as follows: When a boat is at a particular location,
a strategy specifies a set of directed edges leading from that
location to some neighboring locations.  The boat must move along some
one of those directions, with the particular direction determined
possibly by circumstance rather than chosen by the skipper.  (The
strategy specifies a set since sometimes the precise direction is not
so important as is a general direction. For instance, a boat with a
powerful motor that is currently at the marina might be instructed to
move toward any of the three passages in the river.  A corresponding
strategy would therefore include the set of transitions
$\{0\rightarrow 1,\, 0\rightarrow 2,\, 0\rightarrow 3\}$.)  If the set
specified for a particular location is empty, then the boat must stop
if it is at that location.

\vst

In the example of Section~\ref{lake}, we prevented infinite cycling by
disallowing any path that traversed any directed edge more than once.
With strategies, it is more natural to disallow any strategy whose
motion sets might cause the system to revisit a state.

\vst

There is a well-developed theory for strategies in graphs with
directed edges \cite{paths:bjorner-welker, paths:hultman,
paths:jonsson} as well as in graphs with nondeterministic and/or
stochastic transitions \cite{paths:strategies, paths:plans}.  One can
model the collection of all strategies as a simplicial complex,
similar to the constructions of Section~\ref{lake}.  In a directed
graph, a strategy is a set of directed edges that produces no cycle(s)
in the graph.  A graph's strategies constitute the simplices of a
simplicial complex whose underlying vertex set consists of the graph's
directed edges.  For a strongly connected directed graph, this
simplicial complex has the homotopy type of a sphere, namely $\Snt$,
with $n$ the number of states in the graph \cite{paths:hultman}.  For
the graph of \fig{StreamGraphs}, the simplicial complex is therefore
homotopic to $\Ssix$.

Since a sphere has homology, our prior work on privacy
\cite{paths:privacy} offers some lower bounds on how long a skipper
may delay identification of a strategy or a boat's final destination,
relative to all possible strategies in the graph of
\fig{StreamGraphs}.  However, rather than explore the entire space of
strategies, we will focus in this example on some simpler scenarios.

\subsubsection*{Strategies for Attaining the Fishing Area}

For the moment, let us consider only all maximal strategies that
ultimately attain the fishing area from anywhere in the graph.  (By a
{\em maximal strategy\hspace*{0.86pt}} we mean here a cycle-free set
of directed edges in the graph of \fig{StreamGraphs} that is maximal
among all such sets.)

\vspace*{-0.05in}

\begin{itemize}

\item Here is one such strategy, consisting of all possible upstream
  motions in the graph of \fig{StreamGraphs}:

\vspace*{-0.13in}

$$\sigma_{123} \;=\; 
  \{0\rightarrow 1,\; 0\rightarrow 2,\; 0\rightarrow 3,\; 
    1\rightarrow 4,\; 2\rightarrow 5,\; 3\rightarrow 6,\; 
    4\rightarrow 7,\; 5\rightarrow 7,\; 6\rightarrow 7\}.$$

(This strategy contains the three upstream transitions $\tu_1$,
    $\tu_2$, and $\tu_3$, with $\tu_i = i \rightarrow i+3$.)

\item The strategy $\sigma_{123}$ only makes sense for a boat with a
powerful enough motor to traverse the strong current of \fig{Stream}.  A
boat without such a powerful motor might instead use the following
strategy:

\vspace*{-0.12in}

$$\sigma_{23} \;=\; 
  \{1\rightarrow 0,\; 0\rightarrow 2,\; 0\rightarrow 3,\; 
    4\rightarrow 1,\; 2\rightarrow 5,\; 3\rightarrow 6,\; 
    4\rightarrow 7,\; 5\rightarrow 7,\; 6\rightarrow 7\}.$$

(This strategy contains downstream transition $\td_1$ and 
    upstream transitions $\tu_2$ and $\tu_3$.)

Strategy $\sigma_{23}$ is very similar to strategy $\sigma_{123}$, but
in place of the upstream transitions $0\rightarrow 1$ and
$1\rightarrow 4$, the strategy contains the downstream transitions
$1\rightarrow 0$ and $4\rightarrow 1$.  As a result, if necessary, the
boat will first return to the marina via the leftmost passage of
\fig{Stream}, then move up to the fishing area via either of the other
two passages.

\end{itemize}

\paragraph{Permissible Strategies:}\ Strategy $\sigma_{23}$ specifies
two transitions at state \#4, namely $4\rightarrow 7$ and
$4\rightarrow 1$.  A boat moving under strategy $\sigma_{23}$ may
therefore reach the fishing area from state \#4 either by moving
directly to the fishing area or by moving first downstream to the
marina then upstream via one of the other passages.  Intuitively, this
bifurcation arises because there are two arcs between any two points
on a circle.  Some maximal strategy must contain both.

\vspace*{0.025in}

\label{permissiblestrat}
While generally useful, motion multiplicity may merely add bookkeeping
clutter, so we restrict it: To start, we define a {\em permissible
strategy\hsph} to be a maximal strategy that (i) attains the fishing
area from anywhere in the graph and (ii) specifies a unique motion at
each state in the set $\{1, 2, 3\}$.  We also stipulate that whenever
a strategy specifies a passage transition and some other motion at a
boat's current location, then the boat will move through the passage.

\vspace*{0.05in}

We may now model permissible strategies via the relation of
\fig{StreamFishingStrategyRelation}.  The relation contains a row for
each permissible strategy and a column for each passage transition.
An entry in the relation is nonblank if and only if the given strategy
contains the given transition.  Every maximal strategy in the graph of
\fig{StreamGraphs} must contain exactly one transition from each of
the three sets $\{\tu_i, \td_i\}$, $i = 1, 2, 3$.  Furthermore, among
the \hspc{\em permissible}$\mskip2mu$ strategies, each strategy is
uniquely characterized by the three passage transitions it contains.
(Of course, it is impossible for a permissible strategy to contain all
three downstream transitions $\td_1$, $\td_2$, $\td_3$, since then the
strategy would not be guaranteed to attain the fishing area from the
marina.)
\fig{StreamFishingStrategyRelation} further depicts a simplicial
complex generated by the strategies of the relation.  The underlying
vertex set of this complex is $\{\tu_1, \td_1, \tu_2, \td_2, \tu_3,
\td_3\}$, comprising the six passage transitions in the river.

\begin{figure}[t]
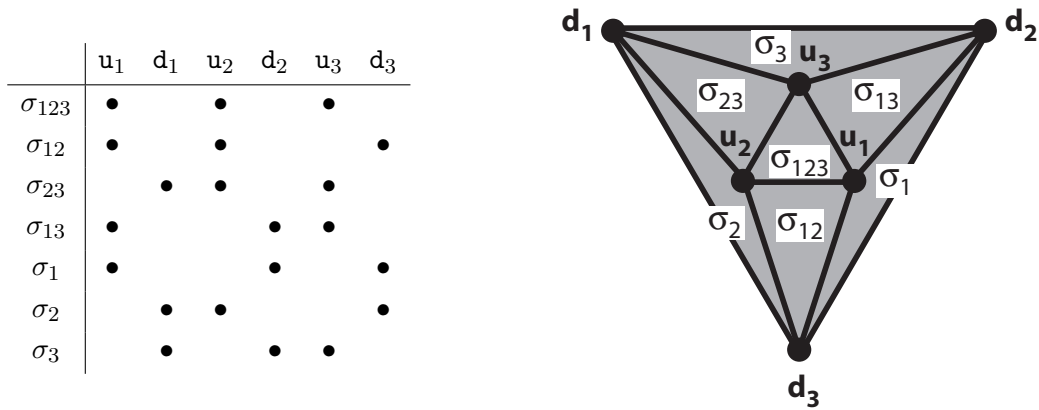

\begin{center}
\begin{minipage}{2.1in}{$\begin{array}{c|cccccc}
              & \tu_1& \td_1& \tu_2& \td_2& \tu_3& \td_3  \\[2pt]\hline
 \sigma_{123} & \one &      & \one &      & \one &      \\[2pt]
 \sigma_{12}  & \one &      & \one &      &      & \one \\[2pt]
 \sigma_{23}  &      & \one & \one &      & \one &      \\[2pt]
 \sigma_{13}  & \one &      &      & \one & \one &      \\[2pt]
 \sigma_1     & \one &      &      & \one &      & \one \\[2pt]
 \sigma_2     &      & \one & \one &      &      & \one \\[2pt]
 \sigma_3     &      & \one &      & \one & \one &      \\[2pt]
\end{array}$}
\end{minipage}
\hspace*{0.7in}
\begin{minipage}{2.4in}
\ifig{StreamFishingStrategiesLabels}{scale=0.425}
\end{minipage}
\end{center}
\vspace*{-0.2in}
\caption[]{{\bf Left Panel:} Relation describing permissible
  strategies for attaining state \#7 in \fig{StreamGraphs}.  (State
  \#7 is the fishing area in \fig{Stream}.  See
  page~\pageref{permissiblestrat} for the meaning of {\em
  permissible}.)  For each permissible strategy, the relation shows
  only the subset of upstream and downstream transitions $\{\tu_1,
  \td_1, \tu_2, \td_2, \tu_3, \td_3\}$ contained in the strategy.
  These transitions describe motions through the three passages of the
  river.  They fully determine the strategy, given that it is
  permissible.\quad
  {\bf Right Panel:} A simplicial complex derived from the relation,
  with underlying vertex set being the upstream and downstream
  transitions $\{\tu_1, \td_1, \tu_2, \td_2, \tu_3, \td_3\}$. \,Each
  maximal simplex is labeled with its strategy name, as specified by
  its row in the relation.}
\label{StreamFishingStrategyRelation}
\end{figure}

Each of the free faces in the complex of
\fig{StreamFishingStrategyRelation} consists of a pair of downstream
transitions, e.g., $\{\td_1, \td_2\}$, suggesting inference of an
upstream transition, e.g., $\tu_3$.  Indeed, if an observer learns
that a permissible strategy specifies downstream motion through two
passages, then the observer can infer that the strategy must specify
an upstream motion through the remaining passage (since the fishing
area is given as destination).  Consequently, observing two downstream
transitions in a permissible strategy identifies the strategy
uniquely.
There are no other free faces in the complex.  Consequently, observing
any other proper subset of a permissible strategy's passage
transitions does not identify that strategy uniquely.

\vspace*{-0.125in}

\paragraph{Comment:}\ On a given fishing expedition, an observer may
only see a boat move through a single passage, but over the course of
several days the observer may see the boat take different routes.  Or
perhaps a crewmember speaks of the transitions specified by a
strategy.  Assuming the skipper's strategy is constant, the observer
may be able to eventually infer the overall permissible strategy, much
like an observer could infer a bus route in Section~\ref{lake}, after
observing different bridge crossings on different days or by listening
to tourist stories.

\paragraph{Inferring Motor Strength:}\  How might an observer of a
boat's transitions infer that the boat has a strong motor?  Directly
observing the upstream transition $u_1$ is one way, of course.
Additionally, if the observer learns that a strategy specifies
downstream transitions $\td_2$ and $\td_3$ {\em and\,} if the observer
knows that these are part of a strategy to reach the fishing area,
then the observer can infer that the strategy must contain the
upstream transition $\tu_1$, implying that the boat has a powerful
motor.  \ (We assume that each boat only follows strategies it can execute.)

Four of the permissible strategies in
\fig{StreamFishingStrategyRelation}, namely $\sigma_{123}$,
$\sigma_{12}$, $\sigma_{13}$, and $\sigma_1$, presuppose a powerful
motor.  If a skipper is following one of the strategies
$\sigma_{123}$, $\sigma_{12}$, or $\sigma_{13}$, then the skipper can
carefully reveal up to two different passage transitions while still
hiding the motor's power.  In contrast, for strategy $\sigma_1$, the
skipper can reveal at most one passage transition; revealing a second
transition necessarily exposes or implies the motor's power.

\vspace*{-0.1in}

\subsubsection*{Strategies for Attaining the Fishing Area and Strategies
  for Attaining the Marina}

Let us augment our collection of {\em permissible strategies}, in
order to model boat excursions that are {\em
\hspace*{-0.05pt}either\hspace*{1.05pt}} outbound to the fishing area
\hspace*{0.1pt}{\em or\hspace*{0.8pt}} returning to the marina.  We
now permit any maximal strategy for attaining the fishing area that
specifies a unique motion at each state in the set $\{1, 2, 3\}$, {\em
plus}\hspace*{0.5pt} any maximal strategy for attaining the marina
that specifies a unique motion at each state in the set $\{4, 5, 6\}$.
\ (We retain the stipulation regarding passage transitions.)

Focusing on the subcollection of strategies for attaining the marina,
we may again construct a simplicial complex whose underlying vertex
set is $\{\tu_1, \td_1, \tu_2, \td_2, \tu_3, \td_3\}$, much as in
\fig{StreamFishingStrategyRelation}, now with the upstream and
downstream transitions interchanged.  We thus have two simplicial
complexes, one for the fishing-attaining strategies, the other for the
marina-attaining strategies.  We wish to combine these complexes.  In
order to not confuse simplices, we conify each complex with a vertex
identifying the complex.  We then glue the resulting two complexes
together at common boundary locations.  The final simplicial complex
thus obtained is homotopic to $\Stwo$.

In order to visualize this construction more easily, let us simplify
the problem, by removing one of the islands, as in \fig{StreamNarrow}.
Now there are only two passages, one with a strong current requiring a
powerful motor for the upstream direction, the other with a mild
current, traversable by all boats in both directions.  (The new
graph's state and transition names are consistent with those of
\fig{StreamGraphs}. \ State \#0 is the marina and state \#7 is the
fishing area.)

There are now three permissible strategies that attain the fishing
area from anywhere in the graph.  We name them $\sigma_{12}$,
$\sigma_1$, and $\sigma_2$.  Similarly, there are three permissible
strategies that attain the marina from anywhere in the graph.  We name
them $\tau_{12}$, $\tau_1$, and $\tau_2$.  \fig{NarrowRelation}
describes these six strategies via a relation.  Strategy names index
the rows of the relation.  The upstream and downstream transitions,
$\tu_1$, $\td_1$, $\tu_2$, and $\td_2$, plus two additional
attributes, $\tf$ and $\tm$, index the columns of the relation.
Previously, when we were considering only permissible strategies for
attaining the fishing area, the component upstream and downstream
transitions of that strategy fully determined the strategy (given that
the strategy was permissible).  Now, knowing a strategy's upstream and
downstream transitions {\em may $\mskip-0.5mu$not}$\mskip1.5mu$ fully
determine the strategy.  However, each permissible strategy in the set
$\{\sigma_{12}, \sigma_1, \sigma_2, \tau_{12}, \tau_1, \tau_2\}$ is
fully determined by its upstream and downstream transitions {\em
and\vmsp} by its destination.  The attributes $\tf$ and $\tm$ model
this destination.  Attribute $\tf$ means a strategy's destination is
the fishing area and attribute $\tm$ means the strategy's destination
is the marina.  For each permissible strategy, the relation lists the
strategy's upstream and downstream transitions along with its
destination.

\clearpage

\begin{figure}[t]
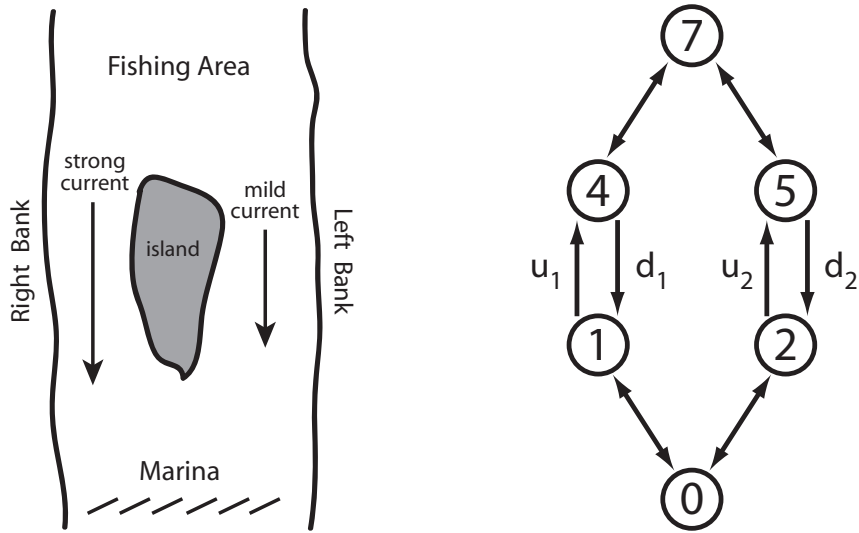

\begin{center}
\ifig{StreamNarrow}{scale=0.33}
\hspace*{0.8in}
\ifig{StreamNarrowGraph}{scale=0.45}
\end{center}
\vspace*{-0.2in}
\caption[]{{\bf Left Panel:} Simplified version of the river from
  \fig{Stream}, in which there now are only two passages and
  consequent currents.\quad
  {\bf Right Panel:} Corresponding simplified graph, again
  highlighting the upstream and downstream transitions through the
  passages.}
\label{StreamNarrow}
\end{figure}

\begin{figure}[h]
\begin{center}
\hspace*{0.7in}\begin{minipage}{3in}{$\begin{array}{c|cccccc}
             & \tu_1& \td_1& \tu_2& \td_2& \tf  & \tm  \\[2pt]\hline
 \sigma_{12} & \one &      & \one &      & \one &      \\[2pt]
 \sigma_1    & \one &      &      & \one & \one &      \\[2pt]
 \sigma_2    &      & \one & \one &      & \one &      \\[2pt]
 \tau_1      &      & \one & \one &      &      & \one \\[2pt]
 \tau_2      & \one &      &      & \one &      & \one \\[2pt]
 \tau_{12}   &      & \one &      & \one &      & \one \\[2pt]
    \end{array}$}
\end{minipage}
\end{center}
\vspace*{-0.2in}
\caption[]{Relation describing permissible strategies $\sigma_{12}$,
 $\sigma_1$, $\sigma_2$ for attaining state \#7 (fishing area) and
 permissible strategies $\tau_{12}$, $\tau_1$, $\tau_2$ for attaining
 state \#0 (marina) in \fig{StreamNarrow}.  The relation uses the
 upstream and downstream transitions for each strategy as attributes.
 Without knowing a strategy's destination, these transitions are not
 necessarily enough to uniquely determine the strategy.  Consequently,
 the relation includes two additional attributes, to indicate the
 destination, as either the fishing area (attribute \tf) or the marina
 (attribute \tm).}
\label{NarrowRelation}
\end{figure}

\vspace*{-0.1in}

\paragraph{Comment:}\ One may readily observe a boat traversing a
passage, but what does it mean to observe attribute $\tf$ or
$\tm\vtsp$?  \ One possibility is that a skipper announces a boat's
destination.  \ Another possibility is that the ``observation'' of
$\tf$ or $\tm$ is actually an inference made from other observations.
For instance, if an observer learns that the skipper has made
preparations for both transitions $4\rightarrow 7$ and $5\rightarrow
7$, then the observer may conclude that the boat's destination is the
fishing area.  Or perhaps someone observes a boat departing the
marina, for instance by making the transition $0\rightarrow 2$.  Then
the observer knows that the boat's destination cannot be the marina,
and so has ``observed $\tf$'' (assuming the strategy being observed is
permissible).

\clearpage

\begin{figure}[h]
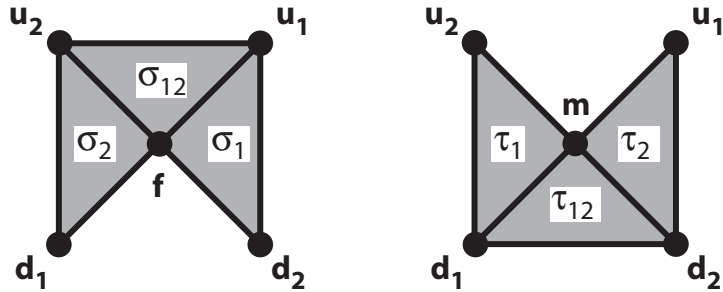

\begin{center}
\ifig{StreamNarrowFishingMarinaStrategiesHemispheresLabels}{scale=0.45}
\end{center}
\vspace*{-0.2in}
\caption[]{Two simplicial complexes derived from the relation of
  \fig{NarrowRelation}, with underlying vertex set being the
  attributes $\{\tu_1, \td_1, \tu_2, \td_2, \tf, \tm\}$.  The left
  complex shows the permissible strategies $\sigma_{12}$, $\sigma_1$,
  and $\sigma_2$ for attaining the fishing area.  The right complex
  shows the permissible strategies $\tau_{12}$, $\tau_1$, and $\tau_2$
  for attaining the marina.  See \fig{StreamNarrowComplex} as well.}
\label{StreamNarrowComplexHemispheres}
\end{figure}

We will now construct a simplicial complex to represent the relation
of \fig{NarrowRelation}, much as we constructed the complex in
\fig{StreamFishingStrategyRelation}.  Let us proceed in steps.  First,
we construct a simplicial complex representing the permissible
strategies for attaining the fishing area and a separate simplicial
complex representing the permissible strategies for attaining the
marina.  In both cases, we let the underlying vertex set be $\{\tu_1,
\td_1, \tu_2, \td_2, \tf, \tm\}$.  See
\fig{StreamNarrowComplexHemispheres}.  Observe that attribute $\tf$ is
a cone apex for the complex generated by the fishing-attaining
strategies, while attribute $\tm$ is a cone apex for the complex
generated by the marina-attaining strategies.\footnote{A {\em cone
apex\vlsp} for a finite simplicial complex is a vertex contained in
every maximal simplex of the complex.}  Finally, we glue these two
simplicial complexes together along shared simplices, obtaining the
complex of \fig{StreamNarrowComplex}.  The homotopy type of this
complex is $\Sone$, suggesting some ability to delay identification of
strategies \cite{paths:privacy}.  Observe in particular that every
upstream or downstream transition appears in three permissible
strategies.

\begin{figure}[h]
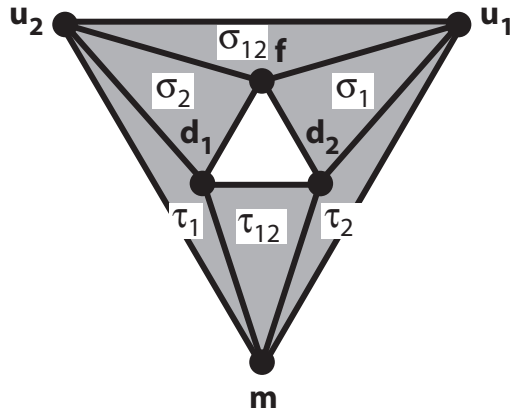

\vspace*{0.05in}
\begin{center}
\ifig{StreamNarrowFishingMarinaStrategiesLabels}{scale=0.45}
\end{center}
\vspace*{-0.2in}
\caption[]{Simplicial complex derived from the relation of
  \fig{NarrowRelation}, with underlying vertex set being the
  attributes $\{\tu_1, \td_1, \tu_2, \td_2, \tf, \tm\}$.  (This
  complex is obtained from the two complexes shown in
  \fig{StreamNarrowComplexHemispheres} by gluing those
  complexes together along common vertices and edges.)  Each maximal
  simplex is labeled with its strategy name, as specified in the
  relation.}
\label{StreamNarrowComplex}
\end{figure}

\clearpage

Considering the free and nonfree faces of the simplicial complex in
\fig{StreamNarrowComplex}, or directly from the relation of
\fig{NarrowRelation}, we may make the following inferences:

\begin{itemize}

\item Observing upstream transitions $\tu_1$ and $\tu_2$ in a boat's
  strategy implies attribute $\tf$.  In other words, one may infer
  that the boat's destination is the fishing area and that the
  encompassing permissible strategy is $\sigma_{12}$.  This inference
  reflects the physical reality that a boat cannot have the marina as
  destination if its strategy always entails moving upstream, via the
  two passages on both sides of the island.

\item Similarly, observation of $\td_1$ and $\td_2$ implies $\tm$
  and identifies strategy $\tau_{12}$.

\item Observing an upstream transition in one passage and a downstream
  transition in the other passage (without knowing which occurred
  first) leaves the boat's destination ambiguous.  This ambiguity
  reflects the physical reality that the boat could have rounded
  either the upstream end or the downstream end of the island.

\item Knowing the boat's destination and observing a passage
  transition {\em away\,} from that destination implies the other
  passage transition and thus the overall permissible strategy.  For
  instance, knowing that the boat is heading to the fishing area
  ($\tf$) and observing the boat move downstream along the strong
  current ($\td_1$) means the boat's strategy must also specify a
  motion upstream over the mild current ($\tu_2$).  The geometry of
  the river forces this implication and thus identifies the strategy
  $\sigma_2$ \,(assuming strategies are permissible).

\item Knowing the boat's destination and observing a passage
  transition {\em toward\,} that destination leaves open the
  directionality of the transition through the other passage.  For
  instance, knowing that the boat is heading to the marina ($\tm$) and
  observing the boat move downstream along the mild current ($\td_2$)
  does not nail down whether the strategy specifies an upstream or a
  downstream transition through the passage with the strong current.
  There remains an ambiguity as to whether the encompassing
  permissible strategy is $\tau_{12}$ or $\tau_2$.

\end{itemize}

\subsubsection*{Deception and Detection}

Skippers of boats with motors capable of moving upstream over the
strong current tend to hide their strength for when it is really
needed, such as a competition to snag nice fish at the head of the
strong current.  They may hide their strength either by never
exercising it or by moving upstream over the strong current only under
cover of fog or darkness.

Let us suppose that this deception is so pervasive that, for all
intents and purposes, the upstream transition $\tu_1$ is {\vtsp\em
never\,} observable.  Two questions emerge:

\vspace*{-0.03in}

\begin{enumerate}
\item How does the unobservability of $\tu_1$ change the relation of
  \fig{NarrowRelation} and the complex of \fig{StreamNarrowComplex}?

\vspace*{-0.03in}

\item How can one distinguish between a boat that is truly incapable
  of moving upstream over the strong current and a boat whose skipper
  is merely hiding that capability?
\end{enumerate}

\vspace*{-0.03in}

An initial answer to question \#2 is that one cannot distinguish the
two types of boats {\em if\,} the powerful boat {\em always\,} acts
like the weaker boat.  However, if the powerful boat does sometimes
exercise its capabilities, then other observations {\em may\,} imply
the boat's power.

\clearpage

\begin{figure}[h]
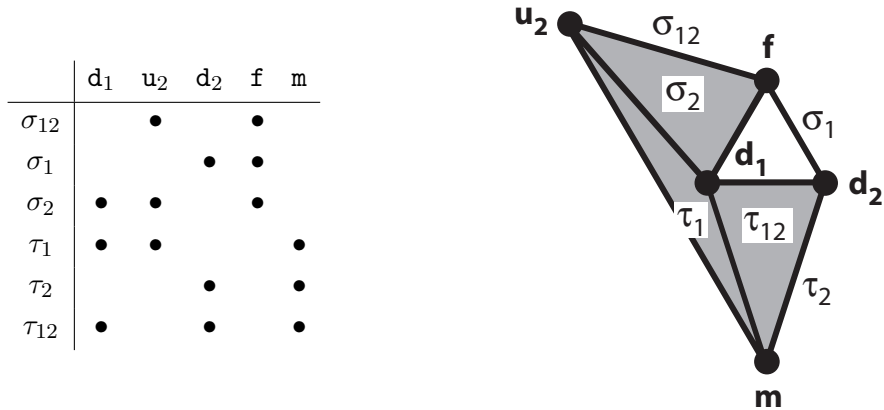

\begin{center}
\begin{minipage}{2.25in}{$\begin{array}{c|ccccc}
             & \td_1& \tu_2& \td_2& \tf  & \tm  \\[2pt]\hline
 \sigma_{12} &      & \one &      & \one &      \\[2pt]
 \sigma_1    &      &      & \one & \one &      \\[2pt]
 \sigma_2    & \one & \one &      & \one &      \\[2pt]
 \tau_1      & \one & \one &      &      & \one \\[2pt]
 \tau_2      &      &      & \one &      & \one \\[2pt]
 \tau_{12}   & \one &      & \one &      & \one \\[2pt]
    \end{array}$}
\end{minipage}
\hspace*{0.3in}
\begin{minipage}{2.25in}
\ifig{StreamNarrowFishingMarinaHiddenStrategiesLabels}{scale=0.45}
\end{minipage}
\end{center}
\vspace*{-0.1in}
\caption[]{Here is a version of the relation from \fig{NarrowRelation}
  and the simplicial complex from \fig{StreamNarrowComplex} in
  which the attribute $\tu_1$ exists but is not observable.  Each
  strategy in the relation generates a simplex in the complex, as
  indicated by the labels.  Some strategies now appear as edges rather
  than triangles, due to the unobservable transition $\tu_1$ in those
  strategies.  Although this transition is unobservable, one can infer
  its existence indirectly if one observes attributes $\td_2$ and
  $\tf$.  This is because the edge $\{\td_2, \tf\}$ is not a
  subsimplex of some larger strategy.}
\label{StreamNarrowHiddenRelationComplex}
\end{figure}

\vspace*{0.1in}

\fig{StreamNarrowHiddenRelationComplex} answers question \#1.  The
relation one obtains when $\tu_1$ is unobservable is the same as the
original relation of \fig{NarrowRelation}, except that the column
indexed by the upstream transition $\tu_1$ disappears.  The resulting
simplicial complex is now the deletion $\dl(\Gamma, \tu_1)$, with
$\Gamma$ the complex of \fig{StreamNarrowComplex}.  (Formally,
$\dl(\Gamma,\tu_1) = \setdef{\gamma\in\Gamma}{\tu_1\not\in\gamma}$.
In other words, the resulting complex contains all simplices of the
original complex except those that included $\tu_1$.)

\vso

In the new complex, some strategies that appeared as triangles
originally now appear as edges.  These are the strategies
$\sigma_{12}$, $\sigma_1$, and $\tau_2$, namely all strategies that
include the now unobservable upstream transition $\tu_1$.

\vso

The observable portions of two of those strategies, namely
$\sigma_{12}$ and $\tau_2$, are subsets of other strategies.  For
instance, the observable portion of $\sigma_{12}$ is a subset of
strategy $\sigma_2$.  This means: If one observes the upstream
transition $\tu_2$ and if one knows that the boat's destination is the
fishing area (attribute $\tf$), then there remains an ambiguity
regarding the strategy's specified transition over the strong current;
it could be either upstream (as in $\sigma_{12}$) or downstream (as in
$\sigma_2$).  Consequently, if a skipper with a powerful motor always
follows strategy $\sigma_{12}$, but traverses the strong current only
during fog and the mild current during clear weather, then no one will
know of the boat's power.  (This assumes that the traversal times are
unmeasured or constant, e.g., the skipper avoids traversing the mild
current excessively quickly with the strong motor.)

\vso

In contrast, strategy $\sigma_1$ generates a maximal simplex even in
the new complex and is thus uniquely identifiable from its observable
attributes.  This means: If one observes the downstream transition
$\td_2$ but knows that the overall destination of the boat is the
fishing area (attribute $\tf$), then one can conclude that the
strategy must be $\sigma_1$ and that the boat will traverse the strong
current upstream.  In other words, even though $\tu_1$ is
unobservable, one can infer its existence and conclude that the boat
has a powerful motor.

\clearpage

\begin{figure}[h]
\begin{center}
\begin{minipage}{2.25in}{$\begin{array}{c|ccccc}
             & \td_1& \tu_2& \td_2& \tf  & \tm  \\[2pt]\hline
 \sigma_2    & \one & \one &      & \one &      \\[2pt]
 \tau_1      & \one & \one &      &      & \one \\[2pt]
 \tau_{12}   & \one &      & \one &      & \one \\[2pt]
    \end{array}$}
\end{minipage}
\hspace*{0.3in}
\begin{minipage}{2.25in}
\ifig{StreamNarrowFishingMarinaLimitedCapabilityLabels}{scale=0.45}
\end{minipage}
\end{center}
\vspace*{-0.1in}
\caption[]{{\bf Left Panel:} A relation describing the permissible
  strategies for a boat that is incapable of moving upstream over the
  strong current of \fig{StreamNarrow}.  This incapacity amounts to
  removing the directed edge $\tu_1$ from the graph of$\mskip2mu$
  \fig{StreamNarrow} $\mskip2.25mu${\em as well as}$\mskip3.75mu$
  removing all strategies from the relation of \fig{NarrowRelation}
  that contain transition $\tu_1$.\quad
  {\bf Right Panel:} The resulting simplicial complex, with maximal
  simplices labeled by strategy names.}
\label{StreamNarrowLimitedRelationComplex}
\end{figure}

It is instructive to construct the strategy relation and attendant
simplicial complex for a boat that truly is incapable of traversing
the strong current upstream.  These appear in
\fig{StreamNarrowLimitedRelationComplex}.  Not only does the
transition $\tu_1$ now disappear, but so do all strategies that relied
on that transition.  (Again, we assume that the boat only follows
strategies which it is capable of executing.)

The space of strategies is much smaller now.  The resulting simplicial
complex looks similar to that in which $\tu_1$ was merely
unobservable, but there is a key difference: The simplex $\{\tf,
\td_2\}$ is gone.  (In fact, all three strategies that once contained
$\tu_1$ are now gone, but only one of those, namely $\sigma_1$, formed
a maximal simplex previously in the complex of
\fig{StreamNarrowHiddenRelationComplex}.)  The new complex tells us
that it is {\em inconsistent\,} for a boat with a weak motor to
announce its destination as the fishing area (attribute $\tf$) but to
traverse the mild current downstream (attribute $\td_2$).

\vst

Report \cite{paths:privacy} described how inconsistent observations
sometimes suggest or identify unmodeled properties.  Here, observing
the inconsistency $\{\tf, \td_2\}$ might suggest that the actual
strategies for the boat being observed are not those of
\fig{StreamNarrowLimitedRelationComplex} but those of
\fig{StreamNarrowHiddenRelationComplex}.

\vspace*{0.1in}

We may therefore interpret the difference between the two relations
and complexes of Figures~\ref{StreamNarrowHiddenRelationComplex} and
\ref{StreamNarrowLimitedRelationComplex} as follows:

\vspace*{-0.025in}

\begin{itemize}
\item The relations and complexes tell an {\em actor\,} what behavior
  to {\em avoid}, in order to {\em be successful\,} at deception.
\item The relations and complexes tell an {\em observer\,} how to
  \hspq{\em look for\,} inconsistencies in behavior, in order to {\em
  detect\,} deception.
\end{itemize}

\clearpage

\subsubsection*{Inferences, Free Faces, Geometry}

The simplicial complex of \fig{StreamNarrowLimitedRelationComplex} is
a cone with apex $\td_1$.  This geometry arises because {\em every\,}
permissible strategy in the relation of that figure contains the
downstream transition $\td_1$.  Said differently, if a boat is
incapable of moving upstream through the passage with the strong
current, then there is no choice but to include the downstream
transition in every permissible strategy for that boat.  (Recall that
a permissible strategy consists of a {\em maximal\,} set of motions
that a boat {\em might\,} make to attain its destination without
cycling.)  One can thus infer the transition $\td_1$ ``for free'',
i.e., without observing or learning of the motion directly, assuming
one knows that the boat has a weak motor.

\vspace*{0.075in}

Let us therefore remove vertex $\td_1$ from the complex of
\fig{StreamNarrowLimitedRelationComplex} to obtain the following:

\begin{figure}[h]
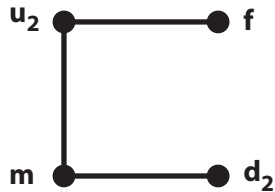

\begin{center}
\vspace*{0.1in}
\ifig{StreamNarrowFishingMarinaLimitedCapabilityUncone}{scale=0.4}
\end{center}
\vspace*{-0.2in}
\caption[]{In the complex of \fig{StreamNarrowLimitedRelationComplex},
  the downstream transition $\td_1$ is a cone apex.  Removing $\td_1$
  produces the simplicial complex shown here.  The free vertices of
  this complex highlight the implications $\tf\!\implies\!\tu_2$ and
  $\td_2\!\implies\!\tm$ that occur when transition $\tu_1$ does not
  exist, as can also be seen from the relation of
  \fig{StreamNarrowLimitedRelationComplex}.}
\label{StreamNarrowLimitedComplexUncone}
\end{figure}

\vspace*{0.1in}

This simplified complex highlights some inferences that are possible
when observing boats known to have weak motors (as usual, we also
assume that the strategies under consideration are permissible
strategies and that each boat only follows strategies it can execute):

\vspace*{-0.025in}

\begin{itemize}

\item $\tf\!\implies\!\tu_2$: \ If one knows that a boat is heading to
  the fishing area, then one can infer that the boat must move
  upstream when traversing the mild current.

\item $\td_2\!\implies\!\tm$: \ If one observes a boat moving
  downstream over the mild current, then the boat must be heading to
  the marina.

\end{itemize}

\vspace*{0.05in}

These conclusions reflect the geometry of the fishing area and marina
relative to the passages.  Said differently, the geometry of the
simplicial complex models the geometry of the river in such a way that
one can draw conclusions about boat motions from the free faces in the
complex.  We saw this property as well when modeling paths in the
example of Section~\ref{lake}.

\clearpage

\subsection{Hidden State}
\markright{Hidden State}

The examples and discussion of this introduction have assumed that the
underlying state is known to the observer.  For example, the analysis
of Section~\ref{river} assumed a state space defined by the river
geometry, along with strategies whose motions were not allowed to
revisit states.

\vso

In reality, a skipper might have additional {\em hidden state}.  The
restriction on revisiting states would then apply to the composite of
observable and hidden states but not necessarily to the observable
states alone.  For instance, a skipper might be willing to revisit
some parts of the river once or twice, relative to some hidden
internal counter (perhaps fuel consumption).  A skipper could then
selectively hide some motions and reveal other motions, in order to
bluff the capability of a strong motor.  (An observer unaware of the
skipper's hidden state might infer $\tu_1$ from observations $\{\tf,
\td_2\}$ even though the boat only has a weak motor and has made
surreptitious cyclic motions not involving $\tu_1$ in order to give
the appearance of a strong motor.)

\vso

One possible approach for dealing with such hidden state is to
construct many possible models of that state, then hope to observe
inconsistencies in behavior to rule out or imply some of these models.
We leave such higher-order deception and detection for future work.

\clearpage
\section{Review of Prior Work and Notation}
\markright{Review}

This section briefly reviews key concepts and notation regarding
strategy complexes and relations.  Detailed discussions of strategies
and strategy complexes appear in \cite{paths:strategies, paths:plans}.
The connection of strategies to relations appears in
\cite{paths:privacy}.  Background material on topology may be found in
\cite{paths:munkres, paths:rotman}, on posets in \cite{paths:wachs},
and on privacy in \cite{paths:sweeneykanon, paths:DinurNissim,
paths:netflix, paths:dworkcacm}.

\vspace*{0.1in}

\noindent {\bf Assumption:}\quad All graphs, relations, and simplicial
complexes in this report are finite.

\subsection{Graphs and Strategies}
\markright{Graphs and Strategies}
\label{strategies}

This subsection reviews material on strategies, taken fairly directly
from \cite{paths:plans}, with some descriptions verbatim.

\vspace*{-0.15in}

\paragraph{Nondeterministic and Stochastic Graphs:}\ We are interested
in finite graphs, viewed as state spaces with errorful transitions.
We model any such graph $G$ as a pair $(V, \frakA)$, consisting of a
finite set of {\em states\hspd} $\mskip1muV\mskip-2mu$ and a finite
collection of {\em actions\,} $\frakA$.  Each action $a\in\frakA$
consists of a {\em source\,} state $v$ and a nonempty set
$T\mskip-2mu$ of {\em targets}, with $v \in V$ and $\emptyset \neq T
\subseteq V$.

If $\vmsp{}T$ consists of a single state $t$, we say that the action
is {\em deterministic}.  We may write a deterministic action $a$ as
\vtsp\maction{v}{t}, just like a directed edge in a directed graph.
If $\vmsp{}T$ contains more than one state, there are two
possibilities: The action is either {\em nondeterministic\,} or {\em
stochastic}.  We discuss each of these possibilities next.  We may
also regard a deterministic action as a special instance of a
nondeterministic action and/or as a special instance of a stochastic
action.

We write a nondeterministic action $a\,$ as \maction{v}{T}.  The
semantics of such an action are as follows: Action $a$ may be executed
whenever the system is at state $v$.  When action $a$ is executed, the
system moves from state $v$ to one of the target states in $T$.  \ If
$\,\abs{T} > 1$, then the precise target attained is not predictable in
advance, but is known after execution completes.  Different execution
instances of action $a$ could attain different target states within
$T$.  An adversary might be choosing the target attained.

We write a stochastic action $a\,$ as \maction{v}{\vtsp{p}\,T}, with
$p\vtsp$ a strictly positive probability distribution $p : T
\rightarrow (0, 1]$, such that $\sum_{t\in{T}}p(t) = 1$.
The semantics of a stochastic action are very similar to those of a
nondeterministic action, except that the target state attained at
execution time is now determined stochastically, according to the
probability distribution $p$, rather than nondeterministically.
Different execution instances of action $a$ are assumed to be
independent of each other.

We say that a graph $(V, \frakA)$ is a {\em pure nondeterministic}
graph if all the actions in $\frakA$ are either deterministic or
nondeterministic.  We say that a graph $(V, \frakA)$ is a {\em pure
stochastic}$\mskip1.5mu$ graph if all the actions in $\frakA$ are
either deterministic or stochastic.  This report is primarily
interested in such pure graphs.  However, see \cite{paths:plans} for a
discussion of graphs with a mix of deterministic, nondeterministic,
and stochastic actions.  See also Section~\ref{mixed}.

Suppose action $a$ has source $v$ and targets $T$.  We refer to each
possible transition $\vtsp{}v \rightarrow t\vtsp$, with $t\in T$, as
an {\em action edge (of action $\vtsp{}a$)}.

\vspace*{0.1in}

{\bf Comment:} \ We permit multiple actions to be distinct yet have
the same source $v$ and the same target set $T$ (and the same
probability distribution $p$ if the actions are stochastic).  Such
duplication flexibility is useful, for instance when forming quotient
graphs (see page~\pageref{quotient}).

\paragraph{Strategies and Strategy Complexes:}\ We next define {\em
strategies} and {\em strategy complexes} via a series of intermediate
concepts.  Intuitively, a strategy is a generalization of a control
law, now viewed as a mapping from states to sets of actions.

\label{actionsemantics}Let $G=(V, \frakA)$ be a graph as on
page~\pageref{strategies}, and suppose $\calA\subseteq\frakA$.  \ We
view $\calA$ as a generalized control law as follows: Suppose the
system is currently at state $v$.  The set $\calA$ may contain zero,
one, or several actions with source $v$.  The system {\em stops\vtsp}
moving precisely when $\calA$ contains no action with source $v$.
Otherwise, the system {\em must\vtsp} execute some action $a\in\calA$
with source $v$.  If there are several such actions, any one of the
actions might execute, determined nondeterministically.  (Worst-case,
an adversary might make the choice.  In Section~\ref{river}, perhaps
sometimes a boat's skipper could.)  Upon execution of action $a$, the
system finds itself at one of the targets $t$ of action $a$.  The
process then repeats, with $t$ the system's new current state.

We are interested in only those control laws that eventually stop at
some state or states.  Intuitively, for pure nondeterministic graphs,
this means that executing any of the actions contained in $\calA$ will
never cause the system to cycle (i.e., revisit a previously
encountered state).  \ For pure stochastic graphs, it means that no
subset of the actions contained in $\calA$ forms a recurrent Markov
chain.  We model these requirements with the following definitions,
again taken fairly directly from \cite{paths:plans}:

\vspace*{0.1in}

Let $G=(V, \frakA)$ be a graph as on page~\pageref{strategies}.

\vspace*{-0.025in}

\begin{itemize}

\item With $a\in\frakA$, let $\src(a)$ denote the source of action
  $a$.  When $\calA\subseteq\frakA$, define $\calA$'s {\em source set\,}
  $\src(\calA)$ as the set of all the individual
  actions' sources: \ $\src(\calA)=\setdef{\mskip1mu\src(a)\!}{a\in\calA}$.

\item With $a\in\frakA$, let $\trg(a)$ denote the set of targets of
  action $a$.

\item \label{movesoff}Let $W \subseteq V$ and $a \in \frakA$.  We say
  that {\em action $a$ moves off\, $\hspc{W}\!$ (in $G$)} \vmsp if
  \vmsp$\src(a) \in W$ and one (or both) of the following is true: (i)
  action $a$ is nondeterministic with all of its targets in $V
  \setminus W$, or (ii) action $a$ is stochastic with at least one of
  its targets in $V \setminus W$.  (The two requirements are identical
  when $a$ is deterministic.)

\item Let $\calA \subseteq \frakA$.  We say $\calA$ {\em contains a
  circuit\,} if, for some nonempty subset $\calB$ of $\calA$, no
  action of $\calB$ moves off $\src(\calB)$.  We say $\calA$ {\em
  converges\,} or {\em is convergent\,} if $\calA$ does not contain a
  circuit.  \hspace*{0.05in} Comment: \ If \vmsp$\calA$ is convergent
  and $V\!\neq\emptyset$, then necessarily $\src(\calA) \neq V$.

\item \label{strategy}If \vmsp$V\! \neq \emptyset$, then the {\em
  strategy complex\, $\DG$ of\, $G$} is the simplicial complex whose
  underlying vertex set is $\frakA$ and whose simplices are all the
  convergent subsets $\calA$ of $\frakA$.  Every simplex of $\DG$ is
  called a {\em strategy}.  The empty simplex $\emptyset$ is one such
  strategy, modeling no motion.  It appears in $\DG$ whenever
  $V\mskip-2.5mu\neq\emptyset$.  \ If \vmsp$V\mskip-2.5mu=\emptyset$,
  then one would let $\DG$ be the void complex, containing no
  simplices.  \ (This report will always require $V\! \neq
  \emptyset$.)

\item If $\sigma$ is a strategy in $\DG$, then
  $\sdiff{V}{\src(\sigma)}$ is called the {\em goal\,} or {\em goal
  set\,} of $\sigma$.  The goal set consists of all states in the
  graph at which $\sigma$ does not specify a motion.  We may say that
  \vmsp$\sigma$ {\em converges to\,} $\sdiff{V}{\src(\sigma)}$.  \ If
  the goal set is a singleton $\{v\}$, we may refer directly to state
  $v$ as $\mskip1mu\sigma$'s {\em goal}.

\item Suppose $V\!\neq\emptyset$.  We say that $G$ is {\em fully
  controllable\hspd} if every nonempty subset of $V\mskip-2mu$ is the
  goal set of some strategy in $\DG$.  Observe that $G$ is fully
  controllable if and only if every singleton state is the goal set of
  at least one {\em maximal\,} strategy (simplex) in $\DG$.

\item Topologically, $G$ is fully controllable if and only if $\DG$ is
  homotopic to the sphere $\Snt$, with $n=\abs{V}$.  \ See
  \cite{paths:strategies, paths:plans}.

\end{itemize}

We will soon model strategy complexes via relations.  The maximal
simplices of a strategy complex will index the rows of the relation
and the graph's actions will index the columns.  Of interest will be
how to reveal the constituent actions of a maximal strategy in such a
way as to delay identification of the strategy for as long as
possible.

\vspace*{0.2in}

We end this subsection with two definitions that will be useful later
in the report:

\vspace*{-0.05in}

\paragraph{Subgraphs:} \
Suppose $G=(V,\frakA)$ is a graph as on page~\pageref{strategies}.
\ By a {\vmsp\em subgraph\vtsp} $\mskip1muH\mskip-1.5mu$ {\em of}
$\mskip4muG\mskip2mu$ we mean a graph $H=(W, \frakB)\vtsp$ in its
own right such that $W \subseteq V$ and $\frakB \subseteq \frakA$.

\paragraph{Quotient graphs:} \
\label{quotient}Suppose $G=(V,\frakA)$ is a graph as on
page~\pageref{strategies} and suppose $\emptyset \neq W \subseteq V$.
\\[2pt]
We define the {\em quotient graph\,} $G/W =(V^\prime,\frakA^\prime)$
as follows:

\vspace*{-0.05in}

\begin{itemize}
\item The state space is $\,V^\prime \;=\; (\sdiff{V}{W})\;\union\;\{\wrep\}$.

      Here $\wrep$ is a new state. \ It represents the set of states
      $W\!$ all identified to one state.\footnote{In this report,
      \hspace*{-0.75pt}{\em identify}\hspace*{0.325pt} typically means
      {\em determine identity \hspace*{-0.5pt}of}, but sometimes, as
      here, it means \hspc{\em treat as same\hspq}.}

\item The actions $\frakA^\prime$ are in one-to-one correspondence
  with the actions $\frakA$, but source and target states of actions
  in $\frakA^\prime$ are relabeled to match the new state space.
  Specifically, any source or target in $\sdiff{V}{W}$ remains
  unchanged, while any source or target in $W\mskip-0.5mu$ becomes $\wrep$.
  
  The relabeling of targets may identify some or all of the targets of
  an action $a\in\frakA$.  For a stochastic action of the form
  \maction{v}{\vtsp{p}\,T}, with $T \inter W \mskip-2mu \neq
  \emptyset$, one therefore sums the transition probabilities of the
  targets in $T \inter W$ in order to determine the transition
  probability to state $\wrep$ of the relabeled action
  $a^\prime\in\frakA^\prime$.

\vso

  {\bf Comment:}\ ``one-to-one correspondence'' means that distinct
  actions of $\frakA$ remain distinct in $\frakA^\prime$ even if their
  sources become the same and their target sets become the same (and
  even if their probability distributions become the same, in the
  stochastic case).

\end{itemize}

\vst

\noindent The following facts are easy to establish:

\begin{enumerate}

\label{quotientfacts}
\item A convergent set of actions in $G$ may contain a circuit once
one views the actions in $G/W$.  \ (In particular, individual actions
may become nonconvergent.) \ However, the set of actions remains
convergent if its source set does not overlap $W$.

\item Any convergent set of actions in $G/W\mskip-1.5mu$ will remain
convergent if one views the actions back in their original form in
$G$.

\item If $G$ is fully controllable, then so is $G/W$.

\end{enumerate}

\paragraph{Generalization:} Suppose $W_1, \ldots, W_k$ are
nonempty pairwise disjoint subsets of $V$, with $k \geq 1$.
\ Let $\wrep_1, \ldots, \wrep_k$ be new and distinct states.  The
definition of quotient graph given above generalizes to this setting:
For each $i=1, \ldots, k$, we identify all states of $G$ that lie in
$W_i$ to the single state $\wrep_i$.  \ We denote the resulting
quotient graph by $G/\{W_1, \ldots, W_k\}$.

\clearpage

\subsection{Relations and Dowker Complexes}
\markright{Relations and Dowker Complexes}
\label{relations}

This subsection reviews material on relations, taken fairly directly
from \cite{paths:privacy}, with some descriptions verbatim.

\vst

Let $X$ and $Y$ be nonempty finite discrete spaces.  A {\em relation}
$R$ on $\XxY$ is a set of ordered pairs constituting a subset of the
cross product $\XxY$.  We frequently view $R$ as a matrix of blank and
nonblank entries, with $X$ indexing rows and $Y\yless$ indexing
columns.  We often refer to elements of $\hspt{}X\mskip-0.5mu$ as
\hspc{\em individuals\vlsp} and to elements of $\hspt{}Y\mskip-2.3mu$
as {\em attributes}.

For each $x\in{X}$, we let $Y_x$ be the set of attributes that
individual $x$ has.  Formally, $Y_x = \setdef{\mskip2muy\in
Y}{(x,y)\in R}$.  We may view $Y_x$ as the row of $R\mskip0.75mu$
indexed by $x$ (or more precisely, as all the attributes with nonblank
entries in the row indexed by $x$).
Similarly, for each $y\in{Y}$, we let $X_y$ be the set of individuals
who have attribute $y$, that is, $X_y = \setdef{\mskip1mux\in
X}{(x,y)\in R}$.  We may view $X_y$ as the column of $R\mskip0.5mu$
indexed by $y$ (or more precisely, as all the individuals with
nonblank entries in the column indexed by $y$).

\vspace*{0.05in}

Given a relation $R$, we define two simplicial complexes, $\dowy$ and
$\dowx$, as follows:

\vst

$\dowy$ is called the {\em Dowker attribute complex}.  It has
  underlying vertex set $Y\!$ and is generated by the rows of $R$.
\ $\dowx$ is called the {\em Dowker association complex}.  It has
  underlying vertex set $X$ and is generated by the columns of $R$.
\ Thus:
$$\dowy=\bigunion_{x\in X}\hbox{$<$$Y_x$$>$} \qquad\hbox{and}\qquad
  \dowx=\bigunion_{y\in Y}\hbox{$<$$X_y$$>$}.$$
(The symbol \hbox{$<$$\sigma$$>$} means the {\em simplicial complex
generated by} $\sigma$, that is, the collection of all subsets of
$\sigma$, including the empty simplex $\mskip2mu\emptyset$ and
$\sigma$ itself.)

\vspace*{0.1in}

We define two {\em interpretation maps}, $\vtsp\phi_R$ and $\psi_R$, as
follows:

\vspace*{-0.15in}

\begin{eqnarray*}
\phi_R(\sigma) &=& \biginter_{x\in\sigma}Y_x, \qquad \hbox{for any $\sigma\subseteq{X}$,}\\[2pt]
\psi_R(\gamma) &=& \biginter_{y\in\gamma}X_y, \qquad \hbox{for any $\gamma\subseteq{Y}$.}\\
\end{eqnarray*}

\vspace*{-0.1in}

Thus $\phi_R(\sigma)$ consists of all attributes shared by at least
all the individuals in $\sigma$, while $\psi_R(\gamma)$ consists of
all individuals who each have at least all the attributes in $\gamma$.

\vso

Observe that $\phi_R(\emptyset) = Y$ and $\psi_R(\emptyset) = X$.

\vso

One may regard $\phi_R$ both as an interpretation map as well as a
test for membership in the Dowker complex $\dowx$.  Specifically, for
all $\sigma\subseteq{X}$, $\hspt\sigma\in\dowx\hspt$ if and only if
$\hspt\phi_R(\sigma)\neq\emptyset$.  Moreover, if
$\hspt\emptyset\neq\sigma\in\dowx$, then
$\emptyset\neq\phi_R(\sigma)\in\dowy$.

\vso

Similarly, for all $\gamma\subseteq{Y}$, $\hspt\gamma\in\dowy\hspt$ if
and only if $\hspt\psi_R(\gamma)\neq\emptyset$.  And
$\emptyset\neq\psi_R(\gamma)\in\dowx$ whenever
$\emptyset\neq\gamma\in\dowy$.

\vspace*{0.1in}

We say that individual $x \in X$ is \hspace*{0.075pt}{\em identifiable
via $\mskip0.5muR\mskip1.5mu$} whenever $\psi_R(Y_x) = \{x\}$.  In
other words, an individual $x$ is identifiable when no other
individual's attributes include all of $x$'s attributes.

\clearpage

The following facts are useful to remember \cite{paths:privacy}:

\vspace*{0.05in}

\begin{minipage}{5.7in}
\begin{enumerate}
\label{closurefacts}
\item Each of $\phi_R$ and $\psi_R$ is inclusion-reversing.
\item For all $\gamma\subseteq{Y}$, \
  $\gamma\subseteq(\clsy)(\gamma)$.  \ Similarly for $\clsx$.
\item If $\gamma$ is a maximal simplex of $\dowy$, then
  $(\clsy)(\gamma)=\gamma$.  \ Similarly for $\clsx$.
\item Each of the compositions $\clsy$ and $\clsx$ is idempotent.
\item $\phi_R \circ \psi_R \circ \phi_R = \phi_R$.  \ Similarly with
  the roles of $\phi_R$ and $\psi_R$ interchanged.
\end{enumerate}
\end{minipage}

\vspace*{0.15in}

The maps $\phi_R$ and $\psi_R$ define homotopy equivalences between
the two Dowker complexes \cite{paths:privacy}.  In particular, the
compositions $\clsy$ and $\clsx$ are homotopy equivalent to the
identity maps on their respective simplicial complexes.  These
equivalences allow one to construct a poset whose elements may be
viewed as pairs of sets $(\sigma, \gamma)$ satisfying $\emptyset \neq
\sigma = \psi_R(\gamma) \in \dowx$ and $\emptyset \neq \gamma =
\phi_R(\sigma) \in \dowy$.  This poset has an encompassing lattice
structure and is amenable to topological analysis: When the poset has
high-dimensional homology, one can be assured that it contains long
chains.  We will not need the details of that poset construction in
this report.  Instead, we jump directly to one additional definition
that we will need:

\paragraph{Informative Attribute Release Sequences:}\label{iarsdef}
\ An {\em informative attribute release sequence (for relation $R$)},
abbreviated as {\em iars}, is a nonempty set of attributes in
$\hspace{0.7pt}{Y}$ released in a particular sequential order

\vspace*{-0.15in}

$$y_1, y_2, \ldots, y_k, \quad\hbox{with $k \geq 1$},$$
satisfying

\vspace*{-0.1in}

$$y_i \not\in (\clsy)(\{y_1, \ldots, y_{i-1}\}), \quad
  \hbox{for all $1 \leq i \leq k$}.$$

\vspace*{0.05in}

In order to understand this last condition, recall from
\cite{paths:privacy} that $\sdiff{(\clsy)(\gamma)}{\gamma}$, with
$\gamma\in\dowy$, is the set of all attributes inferable from
$\gamma$.  For instance, one may have directly observed attributes
$\gamma$ for some unknown individual known to be modeled by relation
$R$.  Then the set of attributes $\sdiff{(\clsy)(\gamma)}{\gamma}$ is
inferable without direct observation.  Thus the condition above
requires that no attribute $y_i$ in the sequence be inferable from the
attributes $y_1, \ldots, y_{i-1}$ released before $y_i$.  In
particular, $y_1$ must not be inferable ``for free'', i.e., without
observation.  (The cone apex $\td_1$ of the complex in
\fig{StreamNarrowLimitedRelationComplex} on
page~\pageref{StreamNarrowLimitedRelationComplex} was inferable for
free and thus would never appear in an informative attribute release
sequence for the relation in that figure.)

\vst

{\bf Comment:}\  An informative attribute release sequence $y_1, y_2,
\ldots, y_k$ might not form a simplex in $\dowy$, but any proper prefix
of the sequence will.  It may at first seem counterintuitive to have
a nonsimplex be informative, but the inconsistency one obtains with
the last attribute released may provide information in some relation
containing $R$, as discussed in \cite{paths:privacy}.

\vst

Of interest in some privacy settings is how long one can delay
identifying an individual while revealing information: Given an
individual $x$, how large can one make $k$ in defining an informative
attribute release sequence $y_1, y_2, \ldots, y_k$ for which
$\psi_R(\{y_1, \ldots, y_k\}) = \{x\}$?  Topology offers lower bounds
\cite{paths:privacy}.  In this report, we will consider that question
with strategies in place of individuals and actions in place of
attributes.  We will argue from first principles.
\clearpage

\paragraph{Relations from Complexes:}\ \label{actionrelation}Suppose
$\Gamma$ is a nonvoid simplicial complex with underlying vertex set
$Y\!\neq\emptyset$.  We can define a relation $R$ on
$\vtsp\maxGam\times{Y}$, with $\maxGam$ the maximal simplices of
$\Gamma$:
$$R = \setdef{(\gamma, y)}{y \in \gamma \in \maxGam}.$$
One readily sees that $\dowy = \Gamma$.
\ \ (See footnote
\footnote{Void vs.~Empty: The {\em void complex}\, is the simplicial
  complex $\Gamma=\emptyset$, containing no simplices.  In contrast,
  the {\em empty complex}\, is the simplicial complex
  $\Gamma=\{\emptyset\}$, containing a single simplex, namely the {\em
  empty simplex}\, $\emptyset$.  One may construct $R\mskip1mu$ from
  the empty complex, assuming $\mskip1muY\mskip-4mu\neq\emptyset$.  In
  that case, $\maxGam=\{\emptyset\}$, $R=\emptyset$, and $\dowy =
  \{\emptyset\} = \Gamma$.}
for a side comment.)

\vspace*{0.1in}

\noindent Observe that every maximal simplex of $\Gamma$, i.e., every
$\gamma \in \maxGam$, is identifiable via relation $R$.

\vspace*{0.1in}

\paragraph{Action Relations:}\ Suppose $G=(V, \frakA)$ is a graph as
on page~\pageref{strategies}, now with both $V\mskip-3.65mu \neq
\emptyset$ and $\mskip1mu\frakA \neq \emptyset$.  \ We may substitute
$\DG$ for $\Gamma$ in the previous construction to obtain a relation
on $\maxGam\times\frakA$, with $\maxGam$ the maximal simplices of
$\DG$.  We refer to such a relation as an {\em action relation}, or
more specifically, as {\em graph $G$'s $\mskip-1mu$action relation}.
In an action relation, maximal strategies\footnote{The term {\em
maximal strategy (in $\mskip2muG\mskip-1mu$ or
$\mskip1mu\DG$)\hspd} is synonymous with {\em maximal simplex (in
$\DG$)}.}  play the role of individuals while actions play the
role of attributes.  The strategy relations of Section~\ref{river}
were of this form, though there we were only considering subcomplexes
of the full strategy complex.

\vspace*{0.1in}

In this context, informative {\em attribute\hspq} release sequences
become informative {\em action\hspq} release sequences.
\ We may thus ask the question:

\vspace*{0.05in}

\begin{center}
\begin{minipage}{4in}
\raggedright
{\bf How many actions can one reveal informatively before one has
identified a maximal strategy?}
\end{minipage}
\end{center}

\paragraph{Caution:}\ The order via which actions are revealed in an
informative action release sequence need not correspond to the order
in which actions might be executed at runtime.

\vspace*{0.1in}

\paragraph{Terminology:}\ Let $G$ be a graph as on
page~\pageref{strategies}.  \ We will make statements of the form
``maximal strategy $\sigma$ in $G\mskip1.5mu$ contains informative
action release sequence $a_1, a_2, \ldots, a_k$''.  This statement
means that the following three conditions hold:

\vspace*{-0.06in}

\begin{center}
\begin{minipage}{5.65in}
\begin{itemize}
\addtolength{\itemsep}{-4pt}
\item[(a)] $\sigma$ is a maximal simplex in $\DG$.
\item[(b)] $\{a_1, a_2, \ldots, a_k\} \subseteq \sigma$.
\item[(c)] $a_1, a_2, \ldots, a_k\mskip2mu$ is an informative
  attribute release sequence for $G$'s action relation.
\end{itemize}
\end{minipage}
\end{center}

In particular, the order of the actions $\mskip1mua_1, a_2, \ldots,
a_k\mskip1mu$ is significant.  If we view the same set of actions in a
different order we obtain a different sequence.  Consequently, a
statement of the form ``$\sigma$ contains $k!$ different informative
action release sequences'' has meaning even when $\abs{\sigma}=k$.  \
(Caution: A permutation of an informative attribute release sequence
need not itself be an informative attribute release sequence.  See
page~\pageref{permsnotiars}.)

\clearpage
\subsection{Minimal Nonfaces}
\markright{Minimal Nonfaces}
\label{nonfaces}

Suppose $\Gamma$ is a simplicial complex with underlying vertex set
$Y$.  A {\em minimal nonface} of (or in) $\Gamma\hspd$ is a subset of
$\vmsp{}Y\yless$ that is not itself a simplex in $\Gamma$ but all of
whose proper subsets are simplices in $\Gamma$.

\vst

If we arrange the vertices of a minimal nonface in any order, we
obtain an informative attribute release sequence.  That fact is the
content of our first lemma:

\begin{lemma}[Minimal Nonfaces as Informative Attribute Release Sequences]\label{minnonfaceiars}
\ Let $R$ be a relation on $\XxY$, with both $\hspt{}X\!$ and
$\hspt{}Y\!$ nonempty.
Suppose $\kappa$ is a minimal nonface of $\vlsp\dowy$.  Then any
ordering of the attributes in $\kappa$ is an informative attribute
release sequence for $R$.
\end{lemma}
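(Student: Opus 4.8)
The plan is to fix an arbitrary ordering $y_1, y_2, \ldots, y_k$ of the attributes comprising $\kappa$ (so $k = \abs{\kappa}$) and to verify the defining condition of an informative attribute release sequence directly, namely that $y_i \notin (\clsy)(\{y_1, \ldots, y_{i-1}\})$ for every $1 \leq i \leq k$. I would argue by contradiction: assuming $y_i \in (\clsy)(\gamma_{i-1})$ for some index $i$, where I abbreviate $\gamma_{i-1} = \{y_1, \ldots, y_{i-1}\}$, I aim to conclude that $\kappa$ itself is a simplex of $\dowy$, contradicting the hypothesis that $\kappa$ is a nonface.

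The first step is to translate the closure membership into a statement about individuals. Since $(\clsy)(\gamma_{i-1}) = \phi_R(\psi_R(\gamma_{i-1})) = \biginter_{x \in \psi_R(\gamma_{i-1})} Y_x$, the assumption $y_i \in (\clsy)(\gamma_{i-1})$ says precisely that every individual in $\psi_R(\gamma_{i-1})$ has attribute $y_i$; that is, $\psi_R(\gamma_{i-1}) \subseteq X_{y_i}$.

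The second step exploits the minimal-nonface hypothesis. Set $\kappa' = \sdiff{\kappa}{\{y_i\}}$. Because $\kappa'$ is a proper subset of the minimal nonface $\kappa$, it is a simplex of $\dowy$, so $\psi_R(\kappa') \neq \emptyset$. Now $\gamma_{i-1} \subseteq \kappa'$, and since $\psi_R$ is inclusion-reversing I obtain $\psi_R(\kappa') \subseteq \psi_R(\gamma_{i-1}) \subseteq X_{y_i}$. Writing $\psi_R(\kappa) = \psi_R(\kappa') \cap X_{y_i}$ and applying this containment gives $\psi_R(\kappa) = \psi_R(\kappa') \neq \emptyset$. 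By the membership test for $\dowy$, this makes $\kappa$ a simplex, which is the desired contradiction.

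I expect no genuine obstacle here; the point that requires care is that this single contradiction argument must cover every index $i$ uniformly, including the boundary cases. When $i = 1$ the prefix $\gamma_0$ is empty, so $\psi_R(\gamma_0) = X$ and one is asserting that not every individual carries $y_1$; when $k = 1$ the set $\kappa'$ is the empty simplex, for which $\psi_R(\emptyset) = X \neq \emptyset$ follows from the nonemptiness of $X$. The argument above handles these cases without modification, since it never needs $\gamma_{i-1}$ or $\kappa'$ to be nonempty—it needs only that $\kappa'$ is a simplex, which the minimal-nonface condition guarantees. Finally, the conclusion is manifestly independent of the chosen ordering, because for whichever attribute occupies position $i$ the relevant set $\kappa'$ and the containment $\gamma_{i-1} \subseteq \kappa'$ are equally available; this is what yields the claim for \emph{any} ordering.
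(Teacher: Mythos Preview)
Your proof is correct and follows essentially the same approach as the paper's: both argue by contradiction, using that a proper subset of a minimal nonface is a simplex and then showing the closure absorbs the remaining attribute, forcing $\kappa$ itself into $\dowy$. The only cosmetic difference is that the paper reduces (implicitly, via re-ordering) to the case where the failing index is the last one, $i=k$, whereas you handle an arbitrary index $i$ directly by passing to $\kappa' = \kappa \setminus \{y_i\}$ and unwinding the closure through $\psi_R$; your version is slightly more explicit but not a different route.
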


\begin{proof}
Suppose the lemma fails for some minimal nonface $\kappa$ of $\dowy$.
Necessarily, $\kappa\neq\emptyset$.  Let $k = \abs{\kappa}$.  Then,
for some ordering $y_1, y_2, \ldots, y_k$ of the attributes in
$\kappa$, $y_k$ must be implied by $y_1, \ldots, y_{k-1}$ (if $k=1$,
this means $y_1$ is inferable ``for free'').  Since $\kappa$ is a
minimal nonface of $\dowy$, $\{y_1, \ldots, y_{k-1}\}$ is a simplex in
$\dowy$.  Thus $\{y_1, \ldots, y_{k-1}\}\subseteq(\clsy)(\{y_1,
\ldots, y_{k-1}\})\in\dowy$.  By supposition, $y_k\in(\clsy)(\{y_1,
\ldots, y_{k-1}\})$.  Consequently, $\kappa \subseteq (\clsy)(\{y_1,
\ldots, y_{k-1}\}) \in \dowy$, contradicting the assumption that
$\kappa$ is a nonface of $\dowy$.
\end{proof}

\vspace*{0.05in}

Suppose all individuals in a relation $R$ are identifiable.  Then all
rows of $R$ are distinct and each row forms a maximal simplex in the
attribute complex $\dowy$.  Suppose an observer has observed
attributes $\eta$ for some unknown individual $x$ known to be modeled
by relation $R$.  Even if $\eta$ is a proper subset of $Y_x$, it is
possible that $\eta$ identifies $x$, meaning $\psi_R(\eta)=\{x\}$.  In
that case, $Y_x$ is the only maximal simplex containing $\eta$.
Conversely, if the observed attributes $\eta$ do not identify
individual $x$, then $\eta$ must be contained in some maximal simplex
besides $Y_x$.  Thus there exists an attribute $y$ that is not one of
$x$'s attributes but that is consistent with all the observed
attributes $\eta$ of $x$, meaning $\eta\union\{y\} \in \dowy$.  The
following lemma characterizes this situation more generally for a
simplicial complex, in terms of minimal nonfaces.

\begin{lemma}[Minimal Nonfaces between a Maximal Simplex and a Separate Vertex]\label{minnonfacesmaxsimplex}
Suppose $\Gamma$ is a simplicial complex with underlying vertex set
$\vtsp{}Y$.
\,Let $\gamma$ be a maximal simplex of $\,\hspc\Gamma$ and let
\hspq$y\in Y\!$ such that $y\not\in\gamma$. \ Define

\vspace*{-0.125in}

$$\calK \;=\; \setdef{\kappa \subseteq \gamma \union
  \{y\}}{\hbox{$\kappa$ is a minimal nonface of $\vlsp\Gamma$}}.$$

\vspace*{0.02in}

Suppose $\eta\subseteq\gamma$.
Let $\eta^\prime = \eta\union\{y\}$.

\vspace*{2.5pt}

Then $\vtsp\eta^\prime \in \Gamma$ \ if and only if \ 
$\,\sdiff{\kappa}{\eta^\prime}\neq\emptyset\vtsp$ for every
$\kappa\in\calK$.

\end{lemma}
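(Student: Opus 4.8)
The plan is to prove both implications directly from two elementary properties of simplicial complexes: that $\Gamma$ is closed under passing to subsets, and that every nonface of $\Gamma$ contains a minimal nonface. The entire argument hinges on one observation: because $\eta \subseteq \gamma$, we have $\eta' = \eta \cup \{y\} \subseteq \gamma \cup \{y\}$, so \emph{any} minimal nonface contained in $\eta'$ is automatically a member of $\calK$. The maximality of $\gamma$ and the hypothesis $y \not\in \gamma$ turn out to play no logical role beyond guaranteeing this containment and that $\eta'$ genuinely extends $\eta$; the lemma is really a statement about nonfaces sitting inside the fixed ambient set $\gamma \cup \{y\}$.

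For the forward direction, I would assume $\eta' \in \Gamma$ and argue by contradiction. If some $\kappa \in \calK$ satisfied $\kappa \setminus \eta' = \emptyset$, then $\kappa \subseteq \eta'$; since $\Gamma$ is downward closed and $\eta' \in \Gamma$, this would force $\kappa \in \Gamma$, contradicting the fact that $\kappa$, being a minimal nonface, is not a simplex of $\Gamma$. Hence $\kappa \setminus \eta' \neq \emptyset$ for every $\kappa \in \calK$.

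For the converse I would prove the contrapositive. Assume $\eta' \not\in \Gamma$, so $\eta'$ is a nonface. Every nonface contains at least one minimal nonface, so I pick a minimal nonface $\kappa_0 \subseteq \eta'$. By the key observation above, $\kappa_0 \subseteq \eta' \subseteq \gamma \cup \{y\}$, so $\kappa_0 \in \calK$; and $\kappa_0 \subseteq \eta'$ gives $\kappa_0 \setminus \eta' = \emptyset$. Thus the condition ``$\kappa \setminus \eta' \neq \emptyset$ for all $\kappa \in \calK$'' fails, which is exactly what the contrapositive requires.

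The only step that warrants any care is the claim that every nonface contains a minimal nonface, since that is where the definition of $\calK$ is actually exercised. I would justify it by choosing, among the finitely many nonfaces contained in $\eta'$ (there is at least one, namely $\eta'$ itself), a nonface $\kappa_0$ of minimal cardinality: minimality of cardinality forces every proper subset of $\kappa_0$ to be a simplex, so $\kappa_0$ is a genuine minimal nonface. I do not expect a substantive obstacle here; the finiteness assumption on $\Gamma$ makes this routine, and the rest of the proof is pure bookkeeping with subset containment.
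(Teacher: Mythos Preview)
Your proof is correct and follows essentially the same approach as the paper: both directions are handled exactly as you describe, using downward closure for the forward direction and the existence of a minimal nonface inside $\eta'$ (hence inside $\gamma\cup\{y\}$, so in $\calK$) for the converse. Your extra remark that maximality of $\gamma$ and $y\notin\gamma$ are not needed for the biconditional itself is also accurate; the paper uses those hypotheses only in the surrounding comments to ensure $\calK\neq\emptyset$.
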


Comments: \ (i) $\calK\mskip-2mu\neq\emptyset$, since $\gamma$ is a
maximal simplex in $\Gamma$ and $y\not\in\gamma$. \
(ii) If $\{y\}\not\in\Gamma$, then $\calK$ consists solely of $\{y\}$
and $\sdiff{\{y\}}{\eta^\prime}=\emptyset$, no matter what $\eta$ is.
Indeed, no $\eta^\prime$ can be in $\Gamma$.
(iii) If $\{y\}\in\Gamma$, then $\gamma$ cannot be the empty simplex.
Every $\kappa\in\calK$ now contains at least two vertices, namely $y$
and some element of $\gamma$.  Therefore the lemma's assertion for
$\eta=\emptyset$ is clear.
(iv) More generally, the lemma says: Even though vertex $y$ cannot
enlarge simplex $\gamma$, it may be able to enlarge a face $\eta$ of
$\gamma$.  Such enlargement is possible precisely when the enlarged
set contains no minimal nonfaces of the type described by $\calK$.

\begin{proof}
Suppose $\eta^\prime\in\Gamma$.  If for some $\kappa\in\calK$,
$\sdiff{\kappa}{\eta^\prime}=\emptyset$, then $\eta^\prime$ would
contain $\kappa$ as a minimal nonface, a contradiction.  Now suppose
$\eta^\prime\not\in\Gamma$.  Then $\eta^\prime$ must contain some
minimal nonface, necessarily a set $\kappa$ in $\calK$ since
$\eta^\prime\subseteq\gamma\union\{y\}$.  Thus
$\,\sdiff{\kappa}{\eta^\prime}=\emptyset$.
\end{proof}

\paragraph{Minimal Nonfaces in a Strategy Complex:}\ 
Specializing to minimal nonfaces of a strategy complex yields
additional results, as discussed below.

\begin{lemma}[Minimal Nonfaces in Strategy Complexes]\label{minnonfacestrat}
\ Let $G=(V,\frakA)$ be a graph as on page~\pageref{strategies}, with
$\mskip1muV\! \neq \emptyset$.
Suppose $\kappa$ is a minimal nonface of $\vlsp\DG$.  Then the actions
in $\kappa$ all have distinct sources and no action in $\kappa$ moves
off $\,\src(\kappa)$ in $\mskip1.5muG$.
\end{lemma}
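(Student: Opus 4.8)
The plan is to work directly from the definition of convergence, exploiting that $\kappa$ being a minimal nonface of $\DG$ means precisely that $\kappa$ is not convergent (so it contains a circuit) while every proper subset of $\kappa$ is convergent.

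First I would establish the second assertion, that no action of $\kappa$ moves off $\src(\kappa)$. Since $\kappa$ contains a circuit, there is a nonempty $\calB \subseteq \kappa$ such that no action of $\calB$ moves off $\src(\calB)$. But that very property exhibits $\calB$ itself as a circuit, so $\calB$ is not convergent. If $\calB$ were a \emph{proper} subset of $\kappa$, this would contradict the minimality of $\kappa$ as a nonface, since all proper subsets of $\kappa$ are simplices and hence convergent. Therefore $\calB = \kappa$, which immediately gives that no action of $\kappa$ moves off $\src(\kappa)$.

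Next I would prove the distinct-sources assertion by contradiction. Suppose two distinct actions $a, a' \in \kappa$ shared a common source $v$. Set $\kappa' = \sdiff{\kappa}{\{a'\}}$, a proper subset of $\kappa$ and hence convergent. The key observation is that deleting $a'$ does not shrink the source set: because $a \in \kappa'$ also has source $v$, we still have $\src(\kappa') = \src(\kappa)$. Now every action of $\kappa'$ is an action of $\kappa$, and by the first part no action of $\kappa$ moves off $\src(\kappa) = \src(\kappa')$; hence no action of $\kappa'$ moves off $\src(\kappa')$. This makes $\kappa'$ itself a circuit, so $\kappa'$ is not convergent, contradicting that the proper subset $\kappa'$ must be a simplex. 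This contradiction forces the sources of the actions in $\kappa$ to be distinct.

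I do not anticipate a serious obstacle, as the whole argument reduces to two clean bookkeeping facts; the points to get right are that the circuit witness $\calB$ must coincide with all of $\kappa$ (so that minimality actually bites) and that removing one of two equal-source actions leaves the source set, and therefore the ``moves off'' condition, unchanged. I would also keep track of the implicit nonemptiness: once $\abs{\kappa} \geq 2$, which holds whenever two distinct same-source actions exist, the set $\kappa'$ is nonempty and so qualifies as a legitimate circuit witness.
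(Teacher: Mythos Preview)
Your proof is correct and follows essentially the same approach as the paper: both first show the circuit witnessed inside $\kappa$ must be all of $\kappa$ by minimality, then derive the distinct-sources claim from the observation that removing one of two same-source actions leaves $\src(\kappa)$ unchanged. The only cosmetic difference is that the paper phrases the contradiction as ``some action of $\kappa' = \sdiff{\kappa}{\{a'\}}$ moves off $\src(\kappa') = \src(\kappa)$'' (from convergence of $\kappa'$), whereas you phrase it contrapositively as ``$\kappa'$ contains a circuit''; the logic is identical.
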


\begin{proof}
Let $k = \abs{\kappa}$.  \ Since $V\! \neq \emptyset$, $\,k>0$.

Write $\kappa = \{a_1, \ldots, a_k\}$.  For each $i=1,\ldots,k$,
define $\kappa_i = \sdiff{\kappa}{\{a_i\}}$, that is, remove one
action from $\kappa$.  Then $\kappa_i\in\DG$, for $i=1,\ldots,k$.
Thus, for every $\emptyset\neq\tau\subseteq\kappa_i$, some action in
$\tau$ must move off $\src(\tau)$.  On the other hand,
$\kappa\not\in\DG$, so for some $\emptyset\neq\xi\subseteq\kappa$, no
action in $\xi$ moves off $\src(\xi)$.  Consequently $\xi=\kappa$,
establishing the second assertion of the lemma.

The first assertion is trivial if $k=1$, so assume $k>1$ and suppose
$\src(a_1)=\src(a_2)$.  Then
$\src(\kappa)=\src(\kappa_1)=\src(\kappa_2)$.  Some action in
$\kappa_1$ moves off $\src(\kappa_1)=\src(\kappa)$.  That contradicts
the previous paragraph, thereby establishing the first assertion of
the lemma.
\end{proof}

\paragraph{Interpreting Minimal Nonfaces in Strategy Complexes:}\ 
Let us examine the meaning of minimal nonfaces for the two types of
pure graphs discussed in this report.  Assume the notation of
Lemma~\ref{minnonfacestrat} and its proof.\label{minnonfacesemantics}

\vspace*{-0.05in}

\begin{itemize}

\item Suppose $G$ is a pure nondeterministic graph.  Inductively,
  Lemma~\ref{minnonfacestrat} produces a cycle of actions $a_1$,
  \ldots, $a_k$, such that $\src(a_{i+1}) \in \trg(a_i)$, for $i=1,
  \ldots, k$ (here indices wrap around, so that $a_{k+1}$ again means
  $a_1$).  Moreover, for each action $a_i$, {\em exactly
  one}\hspace*{1pt} of the action's targets lies in $\src(\kappa)$;
  any additional targets lie outside $\src(\kappa)$.  (Otherwise, one
  could create a shorter cycle and thus a proper subset of $\kappa$
  would be a nonface of $\DG$.)

\item Suppose $G$ is a pure stochastic graph.  In the definition of
  ``moves off'' from page~\pageref{movesoff}, the quantification over
  targets is different for stochastic actions than for
  nondeterministic actions.  Consequently, Lemma~\ref{minnonfacestrat}
  now implies that {\em all\,\hspc} targets of every action in
  $\kappa$ must lie within $\src(\kappa)$.  One may therefore create a
  subgraph $H$ of $G$ defined by $H=(\src(\kappa), \kappa)$.  One sees
  that $\kappa$ is also a minimal nonface in $\DTH$, that $\DTH$ is
  the boundary complex\footnote{The {\em boundary complex on the set
  $Z$} (with $Z$ finite) is the simplicial complex whose underlying
  vertex set is $Z$ and whose simplices are all the proper subsets of
  $Z$.} on the set $\kappa$, and that $H$ is a fully controllable pure
  stochastic graph.  In fact, $H$ defines an irreducible Markov chain
  \cite{paths:feller, paths:ktII, paths:plans}.

\end{itemize}

\clearpage
\subsection{Sample Graphs, Relations, and Informative Action Release Sequences}

This subsection provides examples of graphs, action relations, and
strategy complexes, along with discussion of the extent to which
strategy or goal identification may be delayed. \ (Actions here are
deterministic or nondeterministic.  Stochastic actions appear in
Sections \ref{stochastic} and \ref{mixed}.)

\subsubsection{A Directed Cycle Graph}
\markright{A Directed Cycle Graph}
\label{directedcycle}

\begin{figure}[h]
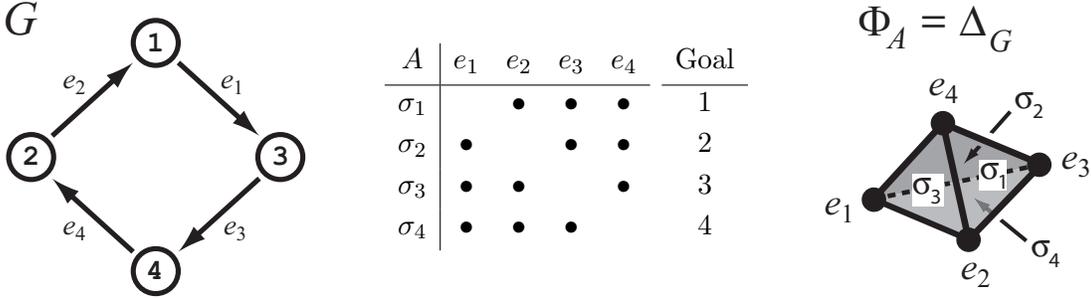

\begin{center}
\begin{minipage}{1.5in}
\ifig{CycleGraph4}{height=1.6in}
\end{minipage}
\hspace*{0.4in}
\begin{minipage}{1.4in}{$\begin{array}{c|cccc}
    A       & e_1  & e_2  & e_3  & e_4  \\[2pt]\hline
\sigma_1    &      & \one & \one & \one \\[2pt]
\sigma_2    & \one &      & \one & \one \\[2pt]
\sigma_3    & \one & \one &      & \one \\[2pt]
\sigma_4    & \one & \one & \one &      \\[2pt]
\end{array}$}
\end{minipage}
\begin{minipage}{0.5in}{$\begin{array}{c}
\hbox{Goal} \\[2pt]\hline
1 \\[2pt]
2 \\[2pt]
3 \\[2pt]
4 \\[2pt]
\end{array}$}
\end{minipage}
\hspace*{0.25in}
\begin{minipage}{1.25in}
\ifig{CycleDG}{scale=0.5}
\end{minipage}
\end{center}
\vspace*{-0.15in}
\caption[]{{\bf Left Panel:}\ A deterministic graph with four
  states and four actions that form a directed cycle.
\ \ 
  {\bf Middle Panel:}\ The graph's action relation $\mskip-2muA$,
  along with each maximal strategy's goal.
\ \ 
{\bf Right Panel:}\ The Dowker attribute complex $\dowAy$ (which is
  necessarily the same as the strategy complex $\DG$).  It is a hollow
  tetrahedron.  Vertices are actions and triangles are maximal
  strategies, as indicated by the labels.}
\label{CycleGraph4}
\end{figure}

As a first example, consider the directed graph $G$ in the left panel
of \fig{CycleGraph4}.  The graph contains four states and four
directed edges.  (The directed edges represent deterministic actions.)
These edges form a directed cycle.  Any proper subset of the four
directed edges does not form a cycle.  Consequently, any set of three
directed edges forms a strategy, in fact a maximal strategy, that
converges to one of the states in the graph, from any other state in
the graph.  For instance, the strategy $\sigma_4$, consisting of the
set $\{e_1, e_2, e_3\}$ of directed edges, converges to state \#4, for
any initial starting state of the system.

\vspace*{0.05in}

{\bf Comment:}\ Any subset of a maximal strategy is also a strategy,
since it too will be acyclic.  For instance, the set of directed edges
$\{e_1, e_4\}$ is a strategy that stops at either state \#2 or state
\#3.  (The precise stopping point depends on the starting point during
a particular execution of the strategy.)  The set $\{e_1, e_4\}$ is a
strategy but it it is {\em not\,} a {\em maximal\,} strategy.

\vspace*{0.05in}

The middle panel of \fig{CycleGraph4} shows graph $G$'s action
relation, describing each maximal strategy by its constituent actions.
For each state $v$, there is a maximal strategy $\sigma_v$ converging
to that state from anywhere else in the graph.  Therefore, $G$ is
fully controllable.  The strategy complex $\DG$ is in fact generated
by four such maximal strategies, each consisting of three directed
edges.  Consequently, the strategy complex is a hollow tetrahedron, as
shown in the right panel of the figure.  In particular, the strategy
complex contains a single minimal nonface, namely the set $\{e_1, e_2,
e_3, e_4\}$, consisting of all four directed edges (actions) in the
graph.

\vspace*{0.0425in}

There are no free faces in the strategy complex, so it is impossible to
infer any actions of a strategy from any actions revealed ---
``attribute privacy is preserved'' \cite{paths:privacy}.  Thus
it is impossible to identify a maximal strategy uniquely if one knows
only a proper subset of its actions.
Each maximal strategy consists of three actions and has no free faces.
Consequently, each maximal strategy contains $3!$ different
informative action release sequences that identify the strategy, and
each such sequence has length 3.  For instance, the six sequences for
strategy $\sigma_4$ are:

\vspace*{-0.2in}

\begin{eqnarray*}
e_1, e_2, e_3 & \quad e_2, e_3, e_1 & \quad e_3, e_1, e_2 \\
e_3, e_2, e_1 & \quad e_2, e_1, e_3 & \quad e_1, e_3, e_2.
\end{eqnarray*}

\vspace*{0.025in}

\paragraph{Ability to Delay Strategy Identification:}\ \label{stratdefer}
Let $G=(V,\frakA)$ be a fully controllable graph with $n=\abs{V} > 1$.
The following property holds \cite{paths:privacy}: For every state
$v\in{V}$, there is some maximal strategy $\sigma_v\in\DG$ such that
$\sigma_v$ has goal $v$ and contains at least $(n-1)!$ different
informative action release sequences of length at least $n-1$ each.
\ For each such sequence $a_1, \ldots, a_\ell$, this means the
following: An observer cannot infer (via $G$'s action relation) that
$\sigma_v$ contains action $a_i$ merely from knowing that $\sigma_v$
contains the set $\{a_1, \ldots, a_{i-1}\}$ of actions appearing
earlier in the sequence.  In particular, an observer cannot identify
$\sigma_v$ uniquely before seeing all actions in the sequence $a_1,
\ldots, a_\ell$.  \ Moreover, $\ell \geq n-1$.

\vspace*{0.05in}

{\bf Comment:}\quad In the example of \fig{CycleGraph4}, the six
informative action release sequences of length 3 within each maximal
strategy were permutations of the strategy's three constituent
actions.  In general, the $\mskip0.75mu(n-1)!\mskip1mu$ different
sequences need not be permutations of each other.

\vspace*{0.05in}

\subsubsection{A Graph with a Subspace Cycle}
\markright{A Graph with a Subspace Cycle}

\begin{figure}[h]
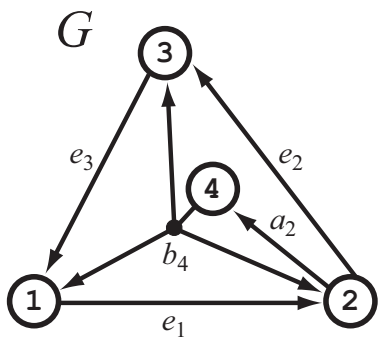

\begin{center}
\ifig{Ex202b_Graph}{height=1.8in}
\end{center}
\vspace*{-0.15in}
\caption[]{A pure nondeterministic graph with four states, $1,2,3,4$,
  \ four deterministic actions, $e_1$, $e_2$, $e_3$, $a_2$, \ and one
  nondeterministic action, $b_4$.}
\label{Ex202b_Graph}
\end{figure}

As a second example, let us consider a graph with a directed cycle
merely on a proper subset of the state space, as shown in
\fig{Ex202b_Graph}.  The graph again consists of four states.  The set
of deterministic actions $\{e_1, e_2, e_3\}$ forms a directed cycle on
the set of states $\{1, 2, 3\}$.  In addition, there is a
deterministic action $a_2$ that moves off this cycle space,
specifically from state \#2 to state \#4.  Finally, there is a
nondeterministic action $b_4$ that moves from state \#4 back to the
set of target states $\{1, 2, 3\}$.  (To say that the action is
nondeterministic means that the precise target state attained cannot
be predicted in advance, not even stochastically.)

\vst

The actions $e_1$, $e_2$, and $e_3$ form a directed cycle.  Any two of
these actions form a convergent strategy.  The remaining two actions,
$a_2$ and $b_4$, taken together, could cause the system to cycle
between states \#2 and \#4.  Any one of these actions is convergent by
itself.  Therefore, the two sets of actions $\{e_1, e_2, e_3\}$ and
$\{a_2, b_4\}$ each form a minimal nonface in the strategy complex
$\DG$.  In fact, these are the only minimal nonfaces in the strategy
complex.  They are independent of each other.  Consequently, $G$'s
strategy complex is the simplicial join of the boundary of the
triangle $\{e_1, e_2, e_3\}$ and the boundary of the edge $\{a_2,
b_4\}$.  In other words, $\DG$ is a {\em suspension}
\cite{paths:munkres, paths:wachs} of a triangle boundary.
\fig{Ex202b} depicts this complex along with $G$'s action relation.
Observe that the complex is homotopic to $\Stwo$, consistent with $G$
being fully controllable.

\begin{figure}[t]
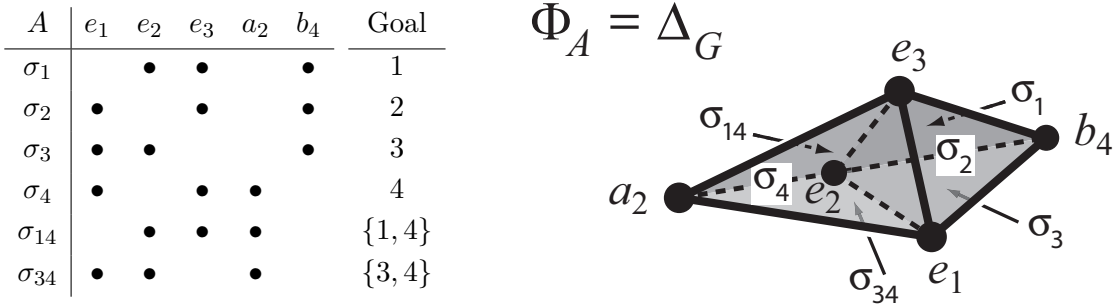

\vspace*{0.2in}
\begin{center}
\begin{minipage}{1.75in}{$\begin{array}{c|ccccc}
    A       & e_1  & e_2  & e_3  & a_2  & b_4  \\[2pt]\hline
\sigma_1    &      & \one & \one &      & \one \\[2pt]
\sigma_2    & \one &      & \one &      & \one \\[2pt]
\sigma_3    & \one & \one &      &      & \one \\[2pt]
\sigma_4    & \one &      & \one & \one &      \\[2pt]
\sigma_{14} &      & \one & \one & \one &      \\[2pt]
\sigma_{34} & \one & \one &      & \one &      \\[2pt]
\end{array}$}
\end{minipage}
\begin{minipage}{0.5in}{$\begin{array}{c}
\hbox{Goal} \\[2pt]\hline
1 \\[2pt]
2 \\[2pt]
3 \\[2pt]
4 \\[2pt]
\{1,4\} \\[2pt]
\{3,4\} \\[2pt]
\end{array}$}
\end{minipage}
\hspace*{0.35in}
\begin{minipage}{3in}
\ifig{Ex202b_A_DG_labels}{scale=0.625}
\end{minipage}
\end{center}
\vspace*{-0.15in}
\caption[]{{\bf Left Panel:}\ The action relation $A$ for the graph $G$
  of \fig{Ex202b_Graph}, along with each maximal strategy's goal (or
  goal set).
\ \ 
  {\bf Right Panel:}\ The Dowker attribute complex $\dowAy$ (which is
  necessarily the same as the strategy complex $\DG$).  The complex
  consists of two party hats glued together, forming an $\Stwo$ hole.
  Vertices are actions and triangles are maximal strategies, as
  indicated by the labels.}
\label{Ex202b}
\end{figure}

\vso

As in the example of Section~\ref{directedcycle}, $\DG$ contains no
free faces.  So, again, it is impossible to identify a maximal
strategy uniquely from a proper subset of its constituent actions.

\vso

One salient difference between this example and the previous one is
that some maximal strategies now have goal sets with more than one
state in them.  For instance, strategy $\sigma_{34}$, consisting of
actions $\{e_1, e_2, a_2\}$ has goal set $\{3, 4\}$.  This multi-state
goal arises because the actions $e_1$ and $e_2$, taken together,
converge to state \#3 {\em assuming\hspc} the system state lies within
the subset of states $\{1, 2, 3\}$.  However, if the starting state
happens to be state \#4, then the system will simply remain at that
state.  Consequently, the strategy $\{e_1, e_2\}$ has goal set $\{3,
4\}$.  That strategy is not itself maximal.  One can augment it either
with action $b_4$, in which case the resulting maximal strategy would
be $\sigma_3$, converging to state \#3.  Or one can augment $\{e_1,
e_2\}$ with action $a_2$ to produce $\sigma_{34}$.  Adding action
$a_2$ introduces some nondeterminism at state \#2, but nothing that
changes the overall goal set; it remains $\{3, 4\}$.

\paragraph{Informative Action Release Sequences:} \
As in the example of Section~\ref{directedcycle}, the longest
informative action release sequences within each maximal strategy in
$\DG$ are simply permutations of the strategy's constituent actions.
Whenever a maximal strategy has no free faces, one can release its
actions in any order without definitively identifying the strategy
before all actions have been released.

In order to understand the more general picture, suppose we simply
interchange the roles of strategies and actions in this example.  The
``individuals'' are now $e_1$, $e_2$, $e_3$, $a_2$, $b_4$ and the
``attributes'' are $\sigma_1$, $\sigma_2$, $\sigma_3$, $\sigma_4$,
$\sigma_{14}$, $\sigma_{34}$.  We are thus interested in the Dowker
association complex $\dowAx$ of the original action relation.  That
complex appears in \fig{Ex202b_dowAx}.
\,(Comment: \label{permsnotiars}We are not asserting that this complex
is the strategy complex of a fully controllable graph, merely using
the complex to illustrate a point. One can however construct fully
controllable graphs with strategy complexes that make the same
underlying point: permutations of informative attribute release
sequences need not themselves be informative attribute release
sequences.)

\begin{figure}[h]
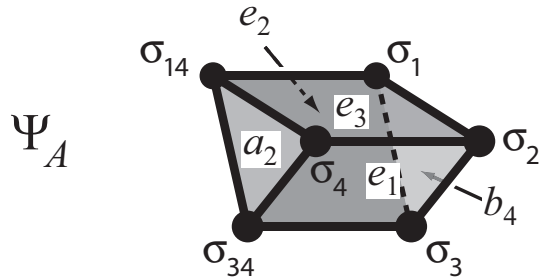

\begin{center}
\ifig{Ex202b_dowAx}{scale=0.65}
\end{center}
\vspace*{-0.2in}
\caption[]{The Dowker association complex $\dowAx$ for the action
  relation of \fig{Ex202b}, drawn as a hollow cylinder with a
  triangular cross-section and two triangular endcaps.  The
  quadrilaterals drawn in the figure are actually solid tetrahedra,
  flattened in the figure for ease of viewing.}
\label{Ex202b_dowAx}
\end{figure}

Each maximal simplex of $\dowAx$ continues to offer (at least) $3!$
different informative release sequences of length 3 each, with
elements in each sequence drawn from the simplex's vertices.  Two of
the maximal simplices (namely, the ``endcaps'' in the figure) are
solid triangles with no free faces, so their sequences are again
simply permutations of each other.  Three of the maximal simplices are
solid tetrahedra.  The undrawn ``diagonals'' of these tetrahedra are
free faces, so releasing their endpoints would completely identify the
tetrahedron.  For instance, releasing vertices $\sigma_1$ and
$\sigma_4$ identifies the tetrahedron labeled $e_3$.  Consequently,
one cannot simply choose arbitrary sequences of length 3 and expect
them to be informative.  Nonetheless, each tetrahedron does contain 16
informative release sequences of length 3.  Here are the sequences for
the tetrahedron labeled $e_3$:

$$\begin{array}{llll}
\sigma_1, \sigma_2, \sigma_4    & \quad \sigma_2, \sigma_1, \sigma_4    & \quad  \sigma_1, \sigma_2, \sigma_{14} & \quad \sigma_2, \sigma_1, \sigma_{14} \\
\sigma_4, \sigma_{14}, \sigma_1 & \quad \sigma_{14}, \sigma_4, \sigma_1 & \quad  \sigma_4, \sigma_{14}, \sigma_2 & \quad \sigma_{14}, \sigma_4, \sigma_2 \\
\sigma_1, \sigma_{14}, \sigma_4 & \quad \sigma_{14}, \sigma_1, \sigma_4 & \quad  \sigma_1, \sigma_{14}, \sigma_2 & \quad \sigma_{14}, \sigma_1, \sigma_2 \\
\sigma_2, \sigma_4, \sigma_{14} & \quad \sigma_4, \sigma_2, \sigma_{14} & \quad  \sigma_2, \sigma_4, \sigma_1    & \quad \sigma_4, \sigma_2, \sigma_1.    
\end{array}$$

\vspace*{0.3in}

\paragraph{Incorporating Additional Constraints:}\ 
Suppose, in some context, the system only executes strategies that
converge to singleton goals.  From an inference perspective, the
action relation $A$ and strategy complex $\DG$ of \fig{Ex202b} would
be misleading.  To understand the possible inferences, one should
consider a relation $\Aoney$ that models all the maximal strategies
with singleton goal sets, and only those, as shown in
\fig{Ex202btruncated}.

\begin{figure}[h]
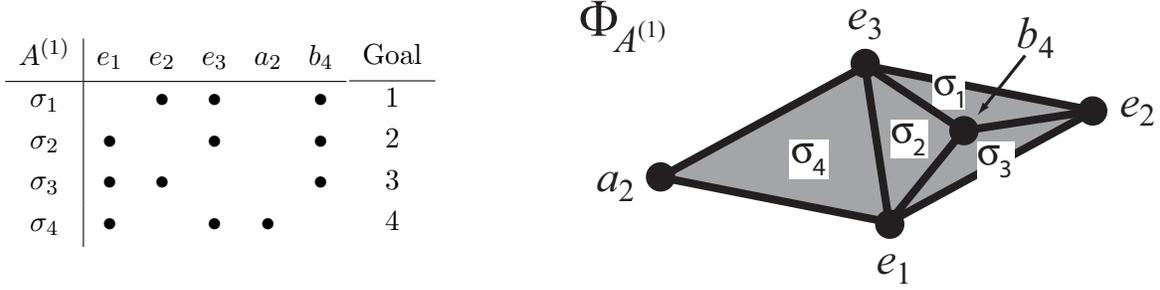

\begin{center}
\begin{minipage}{1.75in}{$\begin{array}{c|ccccc}
\Aoney      & e_1  & e_2  & e_3  & a_2  & b_4  \\[2pt]\hline
\sigma_1    &      & \one & \one &      & \one \\[2pt]
\sigma_2    & \one &      & \one &      & \one \\[2pt]
\sigma_3    & \one & \one &      &      & \one \\[2pt]
\sigma_4    & \one &      & \one & \one &      \\[2pt]
\end{array}$}
\end{minipage}
\begin{minipage}{0.75in}{$\begin{array}{c}
\hbox{Goal} \\[2pt]\hline
1 \\[2pt]
2 \\[2pt]
3 \\[2pt]
4 \\[2pt]
\end{array}$}
\end{minipage}
\hspace*{0.35in}
\begin{minipage}{2.95in}
\ifig{Ex202b_truncated}{scale=0.6}
\end{minipage}
\end{center}
\vspace*{-0.25in}
\caption[]{{\bf Left Panel:}\ Modified relation from \fig{Ex202b},
  containing only those maximal strategies that each have a singleton
  state as goal set.
\ 
  {\bf Right Panel:}\ The corresponding Dowker attribute complex
  $\dowAoney$, along with labels for actions and maximal strategies.}
\label{Ex202btruncated}
\end{figure}

With this added information, it is no longer true that one can find
$6$ different informative action release sequences of length $3$
within every maximal strategy.  For instance, action $a_2$ identifies
strategy $\sigma_4$.  Consequently, as soon as one releases that
action, the other two actions in $\sigma_4$, if not previously
released, would be implied.  As a result, there are only two
informative action release sequences of length 3 for identifying
strategy $\sigma_4$, namely $e_1, e_3, a_2\mskip1mu$ and $e_3, e_1,
a_2$.

Similarly, action $e_2$ implies action $b_4$, again limiting the
ordering of any sequences containing both those actions.  Strategy
$\sigma_1$ now contains only three, rather than six, informative
action release sequences of length 3, namely:

\vspace*{-0.1in}

$$e_3, b_4, e_2 \qquad b_4, e_3, e_2 \qquad b_4, e_2, e_3.$$

\vspace*{0.1in}

\paragraph{Ability to Delay Goal Identification:}\
Suppose $G=(V,\frakA)$ is a fully controllable graph with $n=\abs{V}>1$.
The following property holds \cite{paths:privacy}: 
For every state $v\in{V}$, there is some maximal strategy
$\tau_v\in\DG$ such that $\tau_v$ has goal $v$ and contains an
informative action release sequence whose sequential release leaves
the goal ambiguous at least until all actions in the sequence have
been revealed.  \ Moreover, the sequence reduces the goal ambiguity by
at most one state with each action revealed, so the sequence has
length at least $n-1$.

One sees this property in the complex of \fig{Ex202btruncated} since:
(i) every maximal simplex contains a vertex shared by three strategies
with different goals and (ii) the vertex lies within one of the
simplex's edges that is shared by two strategies with different goals.

\subsubsection{An Augmented Cycle Graph}
\markright{An Augmented Cycle Graph}

Let us augment the graph of \fig{CycleGraph4} with two nondeterministic
actions, as shown in \fig{Ex200_Graph}.  The actions are $a_1 =
1\ntsp\rightarrow\ntsp\{2, 3\}$ and $a_2 = 2\ntsp\rightarrow\ntsp\{3,
4\}$.

\begin{figure}[h]
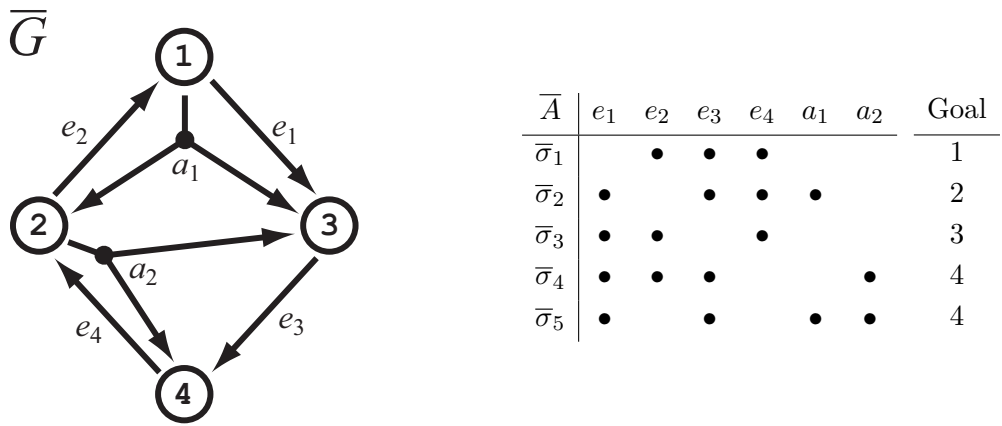

\begin{center}
\begin{minipage}{2.25in}
\ifig{Ex200_Graph}{height=2.2in}
\end{minipage}
\hspace*{0.35in}
\begin{minipage}{2in}{$\begin{array}{c|cccccc}
\Ao        & e_1  & e_2  & e_3  & e_4  & a_1  & a_2  \\[2pt]\hline
\sigo_1    &      & \one & \one & \one &      &      \\[2pt]
\sigo_2    & \one &      & \one & \one & \one &      \\[2pt]
\sigo_3    & \one & \one &      & \one &      &      \\[2pt]
\sigo_4    & \one & \one & \one &      &      & \one \\[2pt]
\sigo_5    & \one &      & \one &      & \one & \one \\[2pt]
\end{array}$}
\end{minipage}
\begin{minipage}{0.5in}{$\begin{array}{c}
\hbox{Goal} \\[2pt]\hline
1 \\[2pt]
2 \\[2pt]
3 \\[2pt]
4 \\[2pt]
4 \\[2pt]
\end{array}$}
\end{minipage}
\end{center}
\vspace*{-0.1in}
\caption[]{{\bf Left Panel:}\ A pure nondeterministic graph with four
  states, $\!1, 2, 3, 4$, four deterministic actions, $e_1$, $e_2$,
  $e_3$, $e_4$, and two nondeterministic actions, $a_1$, $a_2$.  The
  graph here is the graph from \fig{CycleGraph4} augmented with
  two nondeterministic actions.
\\[2pt]
  {\bf Right Panel:}\ The graph's action relation $\Ao$, along with
  each maximal strategy's goal.}
\label{Ex200_Graph}
\end{figure}

The new graph $\Go$ has a strategy complex $\DGo$ described by the
action relation $\Ao$ shown in \fig{Ex200_Graph}.  The complex is a
partially puffed up version of the hollow tetrahedron from
\fig{CycleGraph4}, now consisting of three solid tetrahedra and two
solid triangles glued together to enclose an $\Stwo$ hole.
\fig{Ex200_1skel} shows the 1-skeleton of this complex.

The maximal strategies from the original strategy complex $\DG$ are
still present in $\DGo$.  Two of these strategies now lie within
larger maximal simplices.  For instance, strategy $\sigma_2$ for
attaining goal state \#2 in the original graph $G$ consisted of the
actions $\{e_1, e_3, e_4\}$, whereas now the corresponding maximal
simplex $\sigo_2$ consists of the actions $\{e_1, e_3, e_4, a_1\}$.
The original strategy $\sigma_2$ always executed action $e_1$ when the
system was at state \#1, thus transitioning to state \#3.  In the new
graph $\Go$, the new $\sigo_2$ might execute either action $e_1$ or
action $a_1$ at state \#1, selected nondeterministically (possibly by
an adversary).  Action $a_1$ will transition either to state \#2 (the
goal) or to state \#3.  (If an adversary controls the outcome of
action $a_1$, then the adversary might choose to make action $a_1$
mimic action $e_1$, in which case the old $\sigma_2$ and the new
$\sigo_2$ would behave equivalently.)

Actions $e_2$ and $e_4$ both appear in both the original strategies
$\sigma_1$ and $\sigma_3$ of $\DG$.  In the new graph $\Go$, the set
$\{e_2, a_1\}$ contains a circuit, as does the set $\{e_4, a_2\}$.
Consequently, one cannot augment the strategies $\sigma_1$ and
$\sigma_3$ with either of the actions $a_1$ or $a_2$.  These
strategies remain unchanged as one passes from $G$ to $\Go$, that is,
$\sigo_1 = \sigma_1$ and $\sigo_3 = \sigma_3$.

There are five maximal strategies in the new graph, whereas there were
four previously.  New strategy $\sigo_5$ has the same goal state,
namely state \#4, as does strategy $\sigo_4$, but arrives there with
different actions, trading off action $e_2$ for action $a_1$.  The
minimal nonface $\{e_2,\vtsp a_1\}$ of $\vtsp\DGo\vtsp$ hints at this
possible tradeoff.

\vso

The ability to delay strategy identification mentioned on
page~\pageref{stratdefer}, as well as our analysis of the original
graph $G$, ensures that each of $\sigo_1, \sigo_2, \sigo_3, \sigo_4$
contains (at least) 6 informative action release sequences of length
(at least) 3 each.  What can we say about strategy $\sigo_5$?

Releasing both of the two nondeterministic actions $a_1$ and $a_2$
identifies $\sigo_5$.  Releasing either one of these actions implies
both deterministic actions in $\sigo_5$.  Consequently, one obtains
the longest possible informative action release sequences within
$\sigo_5$ by first revealing the two deterministic actions (in either
order) and then the two nondeterministic actions (in either order).
Here are the four possible longest informative action release
sequences within $\sigo_5$:

\vspace*{-0.1in}

$$e_1, e_3, a_1, a_2 \qquad e_1, e_3, a_2, a_1 
      \qquad e_3, e_1, a_1, a_2 \qquad e_3, e_1, a_2, a_1.$$

(The previous reasoning can be generalized and formalized using
lattice representations of links, as discussed in
\cite{paths:privacy}, but we will not develop or use that machinery in
this report.)

\clearpage

\begin{figure}[h]
\vspace*{0.1in}
\begin{center}
\ifig{Ex200_A_DG_1skel}{height=2in}
\end{center}
\vspace*{-0.25in}
\caption[]{The 1-skeleton of the strategy complex $\DGo$, with $\Go$
   the graph of \fig{Ex200_Graph}.  The complex consists of three
   solid tetrahedra and two solid triangles, enclosing a hollow
   tetrahedron.  The hollow tetrahedron has vertices $\{e_1, e_2, e_3,
   e_4\}$, just as in \fig{CycleGraph4}.}
\label{Ex200_1skel}
\end{figure}

\paragraph{Informative Action Release Sequences for Maximal Strategies in Pure Graphs:}
Suppose $G$ is a fully controllable graph containing $n$ states, with
$n > 1$.  Suppose further that $G$ is either pure nondeterministic or
pure stochastic.  The remainder of this report will provide proofs
that {\em every\,} maximal strategy in $G$ contains at least one
informative action release sequence of length at least $n-1$.  The
proofs will be different for the two types of graphs.  This property
need not hold for graphs containing a mix of nondeterministic and
stochastic actions.  See \cite{paths:privacy} and Section~\ref{mixed}
for counterexamples.

\clearpage
\section{Basic Tools}
\markright{Basic Tools}
\label{tools}

Graphs and strategies were defined in Section~\ref{strategies}.
\ We wish to prove the following theorem:

\begin{theorem}[Informative Action Release Sequences for Maximal Strategies]\label{longiars}
$\phantom{0}$\\[1pt]
Let $G=(V,\frakA)$ be a fully controllable graph with $n = \abs{V} >
1$.  Suppose that $G$ is either pure nondeterministic or pure
stochastic.
\ Then every maximal strategy in $\DG$ contains an informative action
release sequence of length at least \hspace*{1pt}$n-1$.
\end{theorem}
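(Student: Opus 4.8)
The plan is to translate the statement about informative action release sequences into a statement about rerouting strategies, and then to build the required sequence one action at a time. First I would record two elementary reductions. Because the interpretation maps are inclusion reversing and $\clsy(P)$ is the intersection of all maximal strategies containing $P$ (and $\sigma$ is one of them), for any $P\subseteq\sigma$ we have $\clsy(P)\subseteq\sigma$; hence an action $a\in\sigma$ is \emph{non-inferable} from an already released set $P\subseteq\sigma$ precisely when there is some maximal strategy $\mu\in\DG$ with $P\subseteq\mu$ and $a\notin\mu$. I will call such a $\mu$ a \emph{witness} for the release of $a$ after $P$. Second, every prefix of an informative action release sequence is again one, so it suffices to exhibit an ordering $a_1,\ldots,a_{n-1}$ of $n-1$ actions of $\sigma$ for which each $a_{i+1}$ admits a witness $\mu_i\supseteq\{a_1,\ldots,a_i\}$ omitting $a_{i+1}$. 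In particular the theorem entails $\abs{\sigma}\ge n-1$, which I expect to fall out of the construction rather than to require a separate argument.

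The construction of the witnesses is where full controllability and the structure of pure graphs enter. Since $\sigma$ is convergent, it induces a well founded potential $f$ on $V$ (worst case number of steps to the goal set under $\sigma$), and I would release the actions of $\sigma$ in order of decreasing $f(\src(\cdot))$, i.e.\ from the states farthest from the goal inward, breaking ties arbitrarily and using the multiplicity of $\sigma$ at a single source to supply several releases when the goal set is not a singleton (as for the strategy with goal $\{3,4\}$ in the earlier example, whose witness for the last release reconverges through a different state). To release $a_{i+1}$, sourced at a state $s$, while keeping $\{a_1,\ldots,a_i\}$ intact, I would build the witness $\mu_i$ by retaining the already released actions and reconverging the remainder of the state space \emph{differently} at and beyond $s$, so that $s$ is driven to a goal without using $a_{i+1}$. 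One cannot simply invoke Lemma~\ref{minnonfaceiars} on a single minimal nonface, since a maximal strategy may require different witnesses at different steps; rather, full controllability is what guarantees that a reconvergence exists. The obstruction to rerouting is governed by Lemma~\ref{minnonfacestrat}, which says a minimal nonface is a directed cycle in the pure nondeterministic case and an irreducible recurrent class in the pure stochastic case, and the ordering by $f$ is designed so that the only structure one must break lies strictly ``farther out'' than the frozen set $\{a_1,\ldots,a_i\}$.

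To keep the bookkeeping honest and make the recursion explicit, I would organize the argument by induction on $n=\abs{V}$, the base case $n=2$ being immediate because the maximal strategies for the two possible singleton goals have disjoint sources, so $\clsy(\emptyset)=\emptyset$ and any single action is a length-$1$ sequence. For the inductive step I would collapse the source of the first released action together with an appropriate target, passing to a quotient graph $G/W$ on $n-1$ states; this quotient is again fully controllable and of the same pure type, convergent sets transfer in both directions between $G$ and the quotient, and the remaining actions of $\sigma$ form a maximal strategy there. The inductive hypothesis then supplies an informative action release sequence of length $n-2$ in the quotient, which I lift (prepending the first action) to one of length $n-1$ in $G$, turning each quotient witness into a witness in $G$ via the transfer of convergence.

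The main obstacle, and the place I would spend most of the effort, is guaranteeing that a witness exists at \emph{every} step and that it lifts correctly. Concretely there are two dangers: the rerouting needed to omit $a_{i+1}$ might be forced to alter an already released action --- this is what the potential ordering is meant to prevent, and it must be verified that freezing the ``outer'' actions never blocks reconvergence of the ``inner'' states --- and when lifting a quotient witness back to $G$ one must extend it to a maximal strategy \emph{without} reintroducing the omitted action $a_{i+1}$. Finally, because minimal nonfaces take different shapes in the two pure settings, cycles versus irreducible Markov chains per Lemma~\ref{minnonfacestrat} and the accompanying discussion, the rerouting step (and the handling of nondeterministic multi-target actions in forming the quotient) must be carried out separately for pure nondeterministic and pure stochastic graphs, consistent with the paper's remark that the two proofs differ.
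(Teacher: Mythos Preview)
Your reduction to witnesses is correct, and the inductive shape (quotient, recurse, lift) is close to what the paper does in the stochastic case. But two concrete claims in your inductive step fail. First, ``the remaining actions of $\sigma$ form a maximal strategy'' in $G/W$ is not true in general: after quotienting, the image of $\sigma$ (with self-loops removed) may admit an extension by an action $b'$ with $b\notin\sigma$, because the minimal nonface in $\DG$ that blocked $\sigma\cup\{b\}$ may have collapsed. The paper's stochastic proof (Construction~\ref{expandminnonfaces}) confronts exactly this: it extends $\xi_i'$ to a maximal $\tau_i'$ in the quotient and then \emph{tests} whether $\tau_i\subseteq\sigma$; when it is not, the construction harvests a new minimal nonface from the discrepancy and enlarges the collapsed set $W_i$ before recursing. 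Your proposal has no mechanism for this case. Second, ``convergent sets transfer in both directions'' is false (Fact~1 on page~\pageref{quotientfacts}): convergence only lifts \emph{from} the quotient, not \emph{to} it. Relatedly, the lifting you need is Lemma~\ref{liftquotientiars}/Corollary~\ref{combinequotientgraphiars}, and these require the collapsed set $W$ to carry a \emph{fully controllable} subgraph of $G$; ``source together with an appropriate target'' does not ensure that.

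The potential-function ordering is also too coarse to do the work you assign it. In the nondeterministic case the paper does not order by distance to goal at all; it first extracts a hierarchical cyclic subgraph $H$ (Lemma~\ref{hierarch}) and then performs an ``acyclic dissection'' (Construction~\ref{acyclicdissection}) that splits $\sigma\cap\frakB$ into pieces $\tauc,\taup,\taum$ with carefully controlled forward projections (Lemma~\ref{disjointforward}). The actions of $\tauc\cup\taup$ are released first because they are cycle-breaking (Lemma~\ref{nocycleiars}); then for each missing core action $c\in\taum$ one finds a substitute in $\sigma$ lying in the disjoint forward-projection region $J_c$ (Lemma~\ref{overlap}, Corollary~\ref{iarsfwd}). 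The disjointness of the $J_c$ is what guarantees the substitutes are distinct and uninferable --- this is the content your ``freezing the outer actions never blocks reconvergence of the inner states'' would have to replace, and a simple distance function does not deliver it.
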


Section~\ref{nondet} will prove Theorem~\ref{longiars} for the pure
nondeterministic case, while Section~\ref{stochastic} will prove
Theorem~\ref{longiars} for the pure stochastic case.

This section builds some tools that are useful for both settings.
Throughout this section the graph $G$ may in fact contain a mix of
deterministic, nondeterministic, and stochastic actions.

\paragraph{Terminology and Notation:}

\begin{itemize}
\item We frequently abbreviate {\em informative action release
  sequence} as {\em iars}, for both the singular and plural cases.

\item If $G$ is a graph, the phrase {\em $a_1, \ldots, a_k$ is an iars
  for $G$} means that $a_1, \ldots, a_k$ is an informative action
  release sequence for $G$'s action relation as defined on
  page~\pageref{actionrelation}.

\item Suppose $G=(V,\frakA)$ is a graph and
  $G^\prime=(V^\prime,\frakA^\prime)$ is some quotient graph of $G$.
  Recall from page~\pageref{quotient} that the sets of actions
  $\frakA$ and $\frakA^\prime$ are in one-to-one correspondence.
  Corresponding actions $a\in\frakA$ and $a^\prime\in\frakA^\prime$
  differ only in that the source and/or target states of action
  $a^\prime$ may have changed from those of action $a$ in order to
  reflect the quotient graph's state identifications.  One may
  therefore view any action of $G^\prime$ in the form $a^\prime$, with
  $a$ being the unique action of $G$ corresponding to $a^\prime$.  \
  We will use this prime notation from now on.

\end{itemize}

\vspace*{0.05in}

Our first tool allows us to combine two strategies when one of the
strategies comes from a subgraph and the other comes from a quotient
graph formed by collapsing that subgraph to a single state.

\begin{lemma}[Combining Quotient and Subgraph Strategies]\label{unquotienting}
\quad Let $G=(V,\frakA)$ be a graph and let $H=(W,\frakB)$ be a subgraph of $G$.
\ (As always, both $\hspq{}V\mskip-4mu$ and $\hsph{}W\mskip-4mu$ are assumed to be nonempty.)      

\vst

If $\hspd\sigma^\prime \in \DGW$ and $\hspc\gamma\in\DTH$,
      then $\hspc\sigma\union\gamma\in\DG$.

\vst

(Prime notation indicates corresponding actions in a graph and a
quotient, as discussed.)
\end{lemma}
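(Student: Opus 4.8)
The plan is to work directly from the definition of convergence and show that $\sigma\union\gamma$ contains no circuit in $G$, so that it is a simplex of $\DG$. Suppose, for contradiction, that it does contain a circuit: choose a nonempty $\calB\subseteq\sigma\union\gamma$ such that no action of $\calB$ moves off $Z:=\src(\calB)$ in $G$. Split $\calB$ as $\calB_\sigma=\calB\inter\sigma$ and $\calB_\gamma=\sdiff{\calB}{\sigma}\subseteq\gamma$. Since $\sigma$ comes from the quotient $G/W$ and $\gamma$ from the subgraph $H$, the overall strategy is to push the contradiction into whichever of those two graphs the circuit really lives in: a circuit among $\sigma^\prime$-actions in $G/W$ would contradict $\sigma^\prime\in\DGW$, while a circuit among $\gamma$-actions in $H$ would contradict $\gamma\in\DTH$.

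First I would establish two preservation facts. (i) The quotient map $\pi$ that collapses $W$ to $\wrep$ and fixes $\sdiff{V}{W}$ transports circuits forward: if no action of a set moves off a source set $Z$ in $G$, then no corresponding primed action moves off $\pi(Z)$ in $G/W$. This is a case check against the definition of ``moves off'' on page~\pageref{movesoff}, treating nondeterministic and stochastic actions separately and using that $\pi$ carries any target lying in $Z$ to a target lying in $\pi(Z)$. (ii) For a set of $\frakB$-actions, all of whose sources and targets lie in $W$, ``moves off'' computed in $H$ coincides with ``moves off'' computed in $G$; hence $\gamma\in\DTH$ is equivalent to $\gamma$ carrying no circuit when viewed inside $G$.

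With these in hand I would argue by a dichotomy on whether $\wrep\in\pi(\src(\calB_\sigma))$, i.e.\ on whether some action of $\calB_\sigma$ is already sourced in $W$. When it is, I would show that $\{a^\prime : a\in\calB_\sigma\}\subseteq\sigma^\prime$ is a circuit on $\pi(\src(\calB_\sigma))$ in $G/W$, contradicting $\sigma^\prime\in\DGW$. Verifying this amounts to checking that every target which kept a $\calB_\sigma$-action from moving off $Z$ still lands in the projected source set: a stabilizing target outside $W$ is itself the source of some $\sigma$-action and so survives, while a stabilizing target inside $W$ maps to $\wrep$, which is in $\pi(\src(\calB_\sigma))$ precisely by the case assumption.

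The step I expect to be the obstacle is the complementary branch, where no action of $\calB_\sigma$ is sourced in $W$ and yet the circuit still meets $W$; here the projected source set omits $\wrep$ and the $\sigma^\prime$-projection can spuriously move off, so I must route the contradiction through $H$ instead. The key observation is that in this branch \emph{every} action of $\calB$ sourced in $W$ is necessarily a $\gamma$-action, which yields the exact bookkeeping identity $\src(\calB_\gamma)=Z\inter W$ (with the fully-interior circuit $\calB_\sigma=\emptyset$ as the degenerate instance, and the case $\calB_\gamma=\emptyset$, where $Z\inter W=\emptyset$ and the projection in fact succeeds, handled trivially). Convergence of $\gamma$ in $H$ then supplies some $b\in\calB_\gamma$ moving off $Z\inter W$; because all targets of $b$ lie in $W$, moving off $Z\inter W$ is the same as moving off $Z$, contradicting the defining property of $\calB$. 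Getting the source sets to match so that this upgrade from $Z\inter W$ to $Z$ is valid, and recognizing that it is forced exactly when the $\sigma^\prime$-projection breaks, is the crux and the only place where the hypothesis $\gamma\in\DTH$ is consumed.
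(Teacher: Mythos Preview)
Your proof is correct and uses the same dichotomy as the paper, splitting on whether $\src(\calB_\sigma)\cap W$ is empty; in the nonempty case both you and the paper project to $G/W$ and contradict $\sigma'\in\DGW$. The only difference is that in the empty case the paper invokes Lemma~7.3(b)(i) of \cite{paths:plans}, whereas you give the argument directly via the identity $\src(\calB_\gamma)=Z\cap W$ and convergence of $\gamma$ in $H$.
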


Comment:\ Since all sources and targets of actions in $\gamma$ lie
within $W$, no such action is convergent in the quotient graph
$G/W$. Therefore $\sigma\inter\gamma=\emptyset$.

\begin{proof}
Suppose $\emptyset\neq\tau\subseteq\sigma\union\gamma$.  We need to
show that some action of $\tau$ moves off $\src(\tau)$ in $G$.

\vso

If $\mskip1mu\src(\sdiff{\tau\!}{\!\gamma})\inter{W}=\emptyset$, then
this assertion follows from Lemma~7.3(b)(i) in \cite{paths:plans}.

\vso

Otherwise, let $\kappa = \sdiff{\tau\!}{\!\gamma}$.  Then
$\emptyset\neq\kappa^\prime\subseteq\sigma^\prime\in\DGW$, so some
action $b^\prime$ in $\kappa^\prime$ moves off $\src(\kappa^\prime)$
in $G/W$.  If $b^\prime$ is nondeterministic, then all its targets lie
in the set
$\sdiff{\big((\sdiff{V}{W})\union\{\wrep\}\big)}{\hspc\src(\kappa^\prime)}$.
If $b^\prime$ is stochastic, then at least one of its targets lies in
that set.  (As usual, $\mskip0.5mu\wrep$ represents
$\mskip.75mu{W}\mskip-1.5mu$ identified to a single state in
$\mskip1muG/W$.)  \ Since $\mskip0.1mu\kappa\mskip0.4mu$ contains an
action with source in $W\mskip-1.5mu$, $\mskip1mu\kappa^\prime$
contains an action with source $\wrep$.  \ Thus
$\sdiff{\big((\sdiff{V}{W})\union \{\wrep\}\big)}{\hspc\src(\kappa^\prime)}
\;=\;
\sdiff{V}{\big(W\union\mskip0.5mu\src(\kappa^\prime)\big)}
\;=\;
\sdiff{V}{\big(W\union\mskip1mu\src(\kappa)\big)}
\;\subseteq\;
\sdiff{V}{\src(\tau)}
$.
That means action $b$ lies in $\tau$ and moves off $\src(\tau)$ in $G$.
\end{proof}

The next lemma ensures that unquotienting a simplicial informative
action release sequence (iars) again produces an informative action
release sequence, when the quotienting is over a proper subspace that
is fully controllable.

\begin{lemma}[Lifting Quotient {\footnotesize IARS}]\label{liftquotientiars}
\quad Let $G=(V,\frakA)$ be a graph
      and let $H=(W,\frakB)$ be a fully controllable subgraph of $G$,
      with \hsph$\emptyset\neq W \subsetneq V$.

\vst

Suppose $a^\prime_1, \ldots, a^\prime_k$ is an iars for $G/W$, with
$\{a^\prime_1, \ldots, a^\prime_k\} \in \DGW$ and $\vmsp{}k \geq 1$.

\vst

Then $a_1, \ldots, a_k$ is an iars for $G$.
\end{lemma}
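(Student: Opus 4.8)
The plan is to reduce the claim, via the definition of an informative action release sequence, to a statement about maximal strategies, and then to transport each maximal strategy we need from $G/W$ back to $G$ using Lemma~\ref{unquotienting}. Writing $R$ for $G$'s action relation, the sequence $a_1,\ldots,a_k$ is an iars for $G$ precisely when $a_i\notin(\clsy)(\{a_1,\ldots,a_{i-1}\})$ for every $i$. Since the rows of an action relation are the maximal simplices of $\DG$, the closure $(\clsy)(\eta)$ is the intersection of all maximal strategies containing $\eta$, so the condition for index $i$ is equivalent to the existence of a maximal strategy $\hat\sigma\in\DG$ with $\{a_1,\ldots,a_{i-1}\}\subseteq\hat\sigma$ and $a_i\notin\hat\sigma$. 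The hypothesis that $a'_1,\ldots,a'_k$ is an iars for $G/W$ gives, for each $i$, a maximal strategy $\mu'_i\in\DGW$ with $\{a'_1,\ldots,a'_{i-1}\}\subseteq\mu'_i$ and $a'_i\notin\mu'_i$. Thus the lemma follows once I produce, for each fixed $i$, such a maximal strategy of $\DG$ out of $\mu'_i$.

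I would first record the key observation that each $a_i$ is \emph{external} to $W$: because $\{a'_1,\ldots,a'_k\}\in\DGW$, every $a'_i$ is convergent in $G/W$, so $a_i$ cannot have both its source and all of its targets inside $W$ (otherwise $a'_i$ would be a self-loop at $\wrep$, hence nonconvergent). In particular $a_i\notin\frakB$, so $a_i$ lies in no strategy of $\DTH$. I then choose a maximal strategy $\gamma\in\DTH$ (one exists because $H$ is fully controllable and $W\neq\emptyset$) and set $\sigma_0=\mu_i\cup\gamma$, where $\mu_i$ is the unquotiented form of $\mu'_i$. By Lemma~\ref{unquotienting}, $\sigma_0\in\DG$; it contains $\{a_1,\ldots,a_{i-1}\}$, and it omits $a_i$ since $a_i\notin\mu_i$ (as $a'_i\notin\mu'_i$) and $a_i\notin\gamma$. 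The remaining task, for each $i$, is to extend $\sigma_0$ to a \emph{maximal} strategy of $\DG$ that still omits $a_i$; any such extension then contains $\{a_1,\ldots,a_{i-1}\}$ and witnesses $a_i\notin(\clsy)(\{a_1,\ldots,a_{i-1}\})$.

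The engine for the extension is the maximality of $\mu'_i$: since $a'_i\notin\mu'_i$ and $\mu'_i$ is maximal, the strictly larger set $\mu'_i\cup\{a'_i\}$ is nonconvergent, so it contains a circuit $C'$ in $G/W$, necessarily using $a'_i$. I would split on whether $C'$ meets the collapsed state $\wrep$. If $\wrep\notin\src(C')$, then every action of $C'$ has its source in $\sdiff{V}{W}$, and the targets that witness ``no action of $C'$ moves off $\src(C')$'' also lie in $\sdiff{V}{W}$; neither the sources nor these witnessing targets are altered by the quotient, so $C'$ lifts verbatim to a circuit $C\subseteq\mu_i\cup\{a_i\}$ in $G$ (the check is identical for nondeterministic and for stochastic actions). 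Hence $\sigma_0\cup\{a_i\}$ is already nonconvergent, no strategy containing $\sigma_0$ can contain $a_i$, and so extending $\sigma_0$ to any maximal strategy of $\DG$ yields the strategy we want.

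The main obstacle is the remaining case, in which every circuit of $\mu'_i\cup\{a'_i\}$ runs through $\wrep$. Here quotient fact~1 on page~\pageref{quotientfacts} warns that such a circuit need not lift: the cycle enters $W$ and, under the convergent $H$-strategy $\gamma$, may be driven to a goal inside $W$ and stopped, so $\sigma_0\cup\{a_i\}$ can in fact be convergent and $a_i$ might be addable. To handle this I would exploit the full controllability of $H$ together with the freedom in choosing $\gamma$ (and, if needed, in choosing which maximal $\mu'_i\in\DGW$ witnesses the iars condition for $a'_i$): the aim is to select the internal $H$-strategy so that the $W$-states at which $C'$ exits are routed toward a single goal that absorbs the would-be cycle, producing a maximal strategy of $\DG$ whose only external actions are those of $\mu_i$ --- so that $a_i$, being external and absent from $\mu_i$, cannot appear. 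Proving that such a choice always exists is the crux, and I expect it to need the finer structural facts about strategies under quotients from \cite{paths:plans} (the source already invoked in the proof of Lemma~\ref{unquotienting}). With the hard case settled, the per-index statement holds for every $i$, and therefore $a_1,\ldots,a_k$ is an iars for $G$.
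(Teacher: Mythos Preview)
Your reduction to ``for each $i$, find a maximal strategy of $\DG$ containing $\{a_1,\ldots,a_{i-1}\}$ but not $a_i$'' is exactly the paper's starting point, and your Case~I (no circuit through $\wrep$) is correct and matches the paper's Case~I.  The gap is entirely in Case~II.

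Your stated aim there --- choose $\gamma$ so as to obtain a maximal strategy of $\DG$ whose \emph{external} actions are precisely those of $\mu_i$ --- is the wrong target, and there is no reason it can be achieved: nothing prevents a maximal extension of $\mu_i\cup\gamma$ from acquiring further external actions.  What you actually need (and what suffices, by your own Case~I logic) is a choice of $\gamma$ for which $\mu_i\cup\gamma\cup\{a_i\}$ is \emph{nonconvergent}; then every maximal extension of $\mu_i\cup\gamma$ omits $a_i$.  The paper supplies the missing idea in two strokes.  First, pass from an arbitrary circuit $C'\subseteq\mu'_i\cup\{a'_i\}$ to a \emph{minimal nonface} $\eta'$ of $\DGW$ contained in it; by Lemma~\ref{minnonfacestrat} the actions of $\eta'$ have distinct sources, so at most one action of $\eta'$ has source $\wrep$.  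Second, if that one action has source $w\in W$ when viewed in $G$, use full controllability of $H$ to pick $\gamma\in\DTH$ converging to $w$.  Then $\kappa=\eta\cup\gamma$ has $\src(\kappa)=(\src(\eta')\setminus\{\wrep\})\cup W$, and one checks directly that no action of $\kappa$ moves off $\src(\kappa)$ in $G$: actions of $\gamma$ have all sources and targets in $W$, and each action of $\eta$ fails to move off for the same reason its primed version fails to move off $\src(\eta')$ in $G/W$.  Hence $\kappa$ is a circuit in $G$ contained in $\mu_i\cup\gamma\cup\{a_i\}$, and you are done.  The intuition to ``route $W$-states toward a goal that absorbs the cycle'' points the wrong way: you should route them \emph{into} $w$ so as to \emph{complete} the cycle through $a_i$, not dissipate it.
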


\begin{proof}
Let $A$ be $G$'s action relation.  We need to show that

\vspace*{-0.1in}

$$a_i \not\in (\clsAy)(\{a_1, \ldots, a_{i-1}\}), \quad\hbox{for $i=1, \ldots, k$.}$$

Suppose this assertion is false for some $i$.  Then $a_i$ is contained in every
maximal simplex of $\DG$ that contains $\{a_1, \ldots, a_{i-1}\}$.

\vspace*{0.1in}

Let $\frakA^\prime$ be the actions of the quotient graph $G/W$.  Since
$a^\prime_1, \ldots, a^\prime_k$ is an iars for
$\mskip.5muG/W\mskip-.5mu$ and $\{a^\prime_1, \ldots, a^\prime_k\} \in
\DGW$, there exist actions $\tau^\prime$, with
$\emptyset\neq\tau^\prime\subseteq\sdiff{\frakA^\prime}{\{a^\prime_1,
\ldots, a^\prime_i\}}$, such that

\vspace*{-0.1in}

$$\{a^\prime_1, \ldots, a^\prime_{i-1}\} \union \tau^\prime \in \DGW
  \qquad\hbox{but}\qquad
  \{a^\prime_1, \ldots, a^\prime_i\} \union \tau^\prime \not\in \DGW.$$

Let $\eta^\prime$ be a minimal nonface of $\DGW$ contained in
$\{a^\prime_1, \ldots, a^\prime_i\} \union \tau^\prime$.
(Since $\{a^\prime_i\}\in\DGW$, $\eta^\prime$ contains at least two
actions, one of them being $a^\prime_i$.)
\ By Lemma~\ref{minnonfacestrat} on page~\pageref{minnonfacestrat}, no
action of $\eta^\prime$ moves off $\src(\eta^\prime)$ in $G/W$.

\vspace*{0.05in}

We consider two cases below, deriving a contradiction for each.

By Lemma~\ref{minnonfacestrat}, there are no further cases.

\vspace*{0.05in}

State $\wrep$ represents $W\mskip-1.5mu$ identified to a single state
in the quotient graph $G/W$, as per page~\pageref{quotient}.

\begin{itemize}

\item[I:] \underline{No action in $\eta^\prime$ has source $\wrep$:}\ 

Then $\src(\eta) = \src(\eta^\prime)$, so no action of $\eta$ moves
  off $\src(\eta)$ in $G$.

On the other hand, $\{a_1, \ldots, a_{i-1}\} \union \tau \in \DG$, by
  Fact 2 on page~\pageref{quotientfacts}.\\  So $\{a_1, \ldots, a_i\}
  \union \tau \in \DG$, by the falsity assumption above.  Since
  $\eta\subseteq\{a_1, \ldots, a_i\} \union \tau$, that means
  $\emptyset\neq\eta\in\DG$ and some action of $\eta$ must move off
  $\src(\eta)$ in $G$, a contradiction.

\item[II:] \underline{Exactly one action in $\eta^\prime$ has source
  $\wrep$:}\ 

Suppose the source of the corresponding action in $G$ is $w$.  Then
  $w\in W$.  Let $\kappa = \eta \union \gamma$, with $\gamma\in\DTH$ a
  strategy that attains $w$ from anywhere in $W\mskip-2mu$ using
  actions of $H$.  Then $\src(\kappa) =
  \left(\sdiff{\src(\eta^\prime)}{\{\wrep\}}\right) \union W$.  Since
  no action of $\eta^\prime$ moves off $\src(\eta^\prime)$ in
  $\mskip0.5muG/W\mskip-1.5mu$ and since $\gamma$ has all its sources
  and targets in $W$, no action of $\kappa$ moves off $\src(\kappa)$
  in $G$.

Since $\{a^\prime_1, \ldots, a^\prime_{i-1}\} \union \tau^\prime \in
\DGW$ and $\gamma\in\DTH$, Lemma~\ref{unquotienting} on
page~\pageref{unquotienting} implies that $\{a_1, \ldots, a_{i-1}\}
\union \tau \union \gamma \in \DG$.  By the falsity assumption,
$\{a_1, \ldots, a_i\} \union \tau \union \gamma \in \DG$.  Now
$\kappa\subseteq\{a_1, \ldots, a_i\} \union \tau \union \gamma$, so
$\emptyset\neq\kappa\in\DG$, and some action of $\kappa$ must move off
$\src(\kappa)$ in $G$, again a contradiction.
\end{itemize}

\vspace*{-0.35in}

\end{proof}

\clearpage
\paragraph{Combining Informative Action Release Sequences:}\ 
The next lemma shows how one may combine an iars in a graph with an
iars from a subgraph.  The subsequent corollary leverages this result
with those discussed earlier, showing how one may combine an iars from
a quotient graph with an iars from a fully controllable subgraph.
That combinability forms a stepping stone in several proofs during the
rest of the report.

\vspace*{0.1in}

\begin{lemma}[Combining Graph and Subgraph Informative Action Release Sequences]\label{combinesubgraphiars}
\quad Let $G=(V,\frakA)$ be a graph and let $H=(W,\frakB)$ be a
subgraph of $G$ (with both $\mskip1muV\mskip-4.5mu$ and
$\mskip1.2muW\mskip-4mu$ nonempty).

\vspace*{0.05in}

\noindent Suppose $a_1, \ldots, a_k$, with $k \geq 1$, is an iars for
$G$, such that:

\vspace*{0.12in}

\hspace*{1in}\begin{minipage}{4in}
\begin{itemize}
   \addtolength{\itemsep}{-3pt}

   \item[(i)] $a_i \in \sdiff{\frakA}{\frakB}$, for $i=1, \ldots, k$, and

   \item[(ii)] $\{a_1, \ldots, a_k\} \union \tau \in \DG$, for every $\tau\in\DTH$.

\end{itemize}
\end{minipage}

\vspace*{0.15in}

\noindent Suppose $b_1, \ldots, b_\ell$ is an iars for $H$, with
$\{b_1, \ldots, b_\ell\} \in \DTH$ and $\vlsp\ell \geq 1$.

\vspace*{0.05in}

\noindent Then $\mskip1mua_1, \ldots, a_k, \bonespc, \ldots, b_\ell$
is an iars for $G$,
with $\{a_1, \ldots, a_k, \bonespc, \ldots, b_\ell\}\in\DG$.
\end{lemma}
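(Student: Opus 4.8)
The plan is to separate the two assertions. The membership claim $\{a_1,\ldots,a_k,b_1,\ldots,b_\ell\}\in\DG$ I would get for free by applying hypothesis (ii) with $\tau=\{b_1,\ldots,b_\ell\}$, which lies in $\DTH$ by assumption. The real content is the iars claim, and for it I would use the standard reading of the closure operator $\clsAy=\phi_A\circ\psi_A$ on $G$'s action relation: for a set of actions $S$ and an action $c$, one has $c\notin(\clsAy)(S)$ exactly when some maximal strategy of $\DG$ contains $S$ but omits $c$ (both sides fail harmlessly when $S\notin\DG$). So I must verify, for the concatenated sequence $a_1,\ldots,a_k,b_1,\ldots,b_\ell$, that at each position the current action escapes the closure of its predecessors.

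The positions inside the $a$-block are immediate: there the required condition is literally the hypothesis that $a_1,\ldots,a_k$ is an iars for $G$. Hence all the work sits in the $b$-block, where I would show $b_j\notin(\clsAy)(S_j)$ with $S_j=\{a_1,\ldots,a_k,b_1,\ldots,b_{j-1}\}$, for each $1\le j\le\ell$. Here I would invoke that $b_1,\ldots,b_\ell$ is an iars for $H$: since $b_j\notin(\phi_B\circ\psi_B)(\{b_1,\ldots,b_{j-1}\})$ in $H$'s action relation, there is a maximal strategy $\gamma$ of $H$, i.e.\ a maximal simplex of $\DTH$, with $\{b_1,\ldots,b_{j-1}\}\subseteq\gamma$ and $b_j\notin\gamma$. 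By hypothesis (ii), $\{a_1,\ldots,a_k\}\cup\gamma\in\DG$, and since the complex is finite I would extend this to a maximal strategy $M\in\DG$. Then $S_j\subseteq\{a_1,\ldots,a_k\}\cup\gamma\subseteq M$, so $M$ is the desired witness provided $b_j\notin M$.

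The step I expect to be the main obstacle is exactly showing the extension cannot smuggle $b_j$ back in, i.e.\ $b_j\notin M$. The clean route is to observe that $\DTH$ is precisely the full subcomplex of $\DG$ on the vertex set $\frakB$: for any $\calC\subseteq\frakB$ and any subset $\calC'\subseteq\calC$, every action of $\calC'$ has all its sources and targets in $W$, so the predicate ``$a$ moves off $\src(\calC')$'' returns the same value in $H$ as in $G$; hence convergence of a $\frakB$-action set is the same in $H$ and in $G$. Because $\gamma$ is maximal in $\DTH$ and $b_j\in\frakB\setminus\gamma$, the set $\gamma\cup\{b_j\}$ is a nonface of $\DTH$ and therefore a nonface of $\DG$. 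Thus $b_j\in M$ would force $\gamma\cup\{b_j\}\subseteq M\in\DG$, a contradiction, so $b_j\notin M$, completing the $b$-block.

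Finally, I would record that hypothesis (i), $a_i\in\frakA\setminus\frakB$, is what makes the concatenated list a sequence of \emph{distinct} actions (the $a$'s avoid $\frakB$ while each $b_j\in\frakB$); this matters because if some $b_j$ coincided with an earlier $a_i$, then $b_j$ would lie in $\clsAy$ of its predecessors by the extensivity of the closure, instantly breaking the iars condition. With distinctness secured by (i) and the escape condition verified block by block, the concatenation is an iars for $G$.
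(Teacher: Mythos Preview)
Your proposal is correct and follows essentially the same approach as the paper. Both arguments pick a maximal simplex $\gamma$ (the paper writes $\tau$) of $\DTH$ containing $\{b_1,\ldots,b_{j-1}\}$ but not $b_j$, lift via hypothesis (ii) to a simplex of $\DG$, and then use the full-subcomplex fact $\DTH=\DG|_{\frakB}$ to see that $\gamma\cup\{b_j\}\notin\DG$; the paper phrases this last step as ``$\tau\cup\{b_{i+1}\}=\frakB\cap(\{a_1,\ldots,a_k,b_{i+1}\}\cup\tau)\in\DTH$, contradicting maximality,'' which is the contrapositive of your direct argument that $b_j\notin M$.
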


\noindent Comment:\ The lemma also holds when $k=0$, meaning every
iars for $H$ is also an iars for $G$.

\begin{proof}
Suppose the iars part of the assertion is false. \ Let $A$ be $G$'s
action relation.

\vso

Then, for some $i\in\{0, 1, \ldots, \ell - 1\}$,
\,$b_{i+1}\in(\clsAy)(\{a_1, \ldots, a_k, \bonespc, \ldots, b_i\})$.

(When $i=0$, this notation means $b_1\in(\clsAy)(\{a_1, \ldots,
a_k\})$.)

Consequently, every maximal simplex of $\DG$ containing $\{a_1,
\ldots, a_k, \bonespc, \ldots, b_i\}$ also contains $b_{i+1}$.

\vst

Since $b_1, \ldots, b_\ell$ is an iars for $H$'s
action relation, there exists a maximal simplex $\tau\in\DTH$ such
that $\{b_1, \ldots, b_i\}\subseteq\tau$ but $\{b_1, \ldots,
b_{i+1}\}\not\subseteq\tau$.

\vst

By assumption, $\{a_1, \ldots, a_k\} \union \tau \in \DG$.
Consequently, $\{a_1, \ldots, a_k, b_{i+1}\} \union \tau \in \DG$.
Thus $\tau\union\{b_{i+1}\} = \frakB\inter\big(\{a_1, \ldots, a_k,
b_{i+1}\} \union \tau\big) \in \DTH$, contradicting the maximality
of $\mskip1mu\tau$ in $\DTH$.
\end{proof}

\vspace*{0.1in}

\begin{corollary}[Lifting and Combining Informative Action Release Sequences]\label{combinequotientgraphiars}
\ Let $G=(V,\frakA)$ be a graph and let $H=(W,\frakB)$ be a fully
controllable subgraph of \vtsp$G$ with \hsph$\emptyset\neq W \subsetneq V$.

\vst

Suppose $a^\prime_1, \ldots, a^\prime_k$ is an iars for $G/W$, with
$\{a^\prime_1, \ldots, a^\prime_k\} \in \DGW$ and $\vlsp{}k \geq 1$.

\vst

Suppose further that $\vtsp{}b_1, \ldots, b_\ell$ is an iars for $H$, with
$\{b_1, \ldots, b_\ell\} \in \DTH$ and $\vlsp\ell \geq 1$.

\vst

Then $\mskip1mua_1, \ldots, a_k, \bonespc, \ldots, b_\ell$ is an iars
for $G$,
with $\{a_1, \ldots, a_k, \bonespc, \ldots, b_\ell\}\in\DG$.
\end{corollary}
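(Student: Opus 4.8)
The plan is to obtain this corollary as a straight composition of the two preceding lemmas. Lemma~\ref{liftquotientiars} will lift the quotient iars $a^\prime_1,\ldots,a^\prime_k$ up to an iars $a_1,\ldots,a_k$ for $G$, and Lemma~\ref{combinesubgraphiars} will then splice the subgraph iars $b_1,\ldots,b_\ell$ onto its tail. The only substantive work is to check that the lifted sequence satisfies the two hypotheses (i) and (ii) that Lemma~\ref{combinesubgraphiars} requires; everything else is a direct citation.

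First I would apply Lemma~\ref{liftquotientiars}, whose hypotheses match the corollary's verbatim ($H$ fully controllable, $\emptyset\neq W\subsetneq V$, and $a^\prime_1,\ldots,a^\prime_k$ an iars for $G/W$ with $\{a^\prime_1,\ldots,a^\prime_k\}\in\DGW$ and $k\geq 1$). Its conclusion gives that $a_1,\ldots,a_k$ is an iars for $G$, which is precisely the first premise Lemma~\ref{combinesubgraphiars} asks for. To verify hypothesis~(i), that $a_i\in\sdiff{\frakA}{\frakB}$ for each $i$, I would use the observation recorded in the comment following Lemma~\ref{unquotienting}: every action of $\frakB$ has all its sources and targets inside $W$, so in $G/W$ it collapses to a self-loop at the identified state $\wrep$ and fails to move off $\{\wrep\}$, hence is nonconvergent in $G/W$. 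Since $\{a^\prime_1,\ldots,a^\prime_k\}\in\DGW$ and a strategy complex is closed under subsets, each singleton $\{a^\prime_i\}$ lies in $\DGW$, so each $a^\prime_i$ is convergent in $G/W$; therefore no $a_i$ can be an action of $\frakB$, which establishes~(i).

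For hypothesis~(ii), that $\{a_1,\ldots,a_k\}\union\tau\in\DG$ for every $\tau\in\DTH$, I would apply Lemma~\ref{unquotienting} with $\sigma^\prime=\{a^\prime_1,\ldots,a^\prime_k\}\in\DGW$ and $\gamma=\tau\in\DTH$. With (i) and (ii) in place, and using the given iars $b_1,\ldots,b_\ell$ for $H$ satisfying $\{b_1,\ldots,b_\ell\}\in\DTH$ and $\ell\geq 1$, Lemma~\ref{combinesubgraphiars} delivers the desired conclusion: $a_1,\ldots,a_k,b_1,\ldots,b_\ell$ is an iars for $G$ with $\{a_1,\ldots,a_k,b_1,\ldots,b_\ell\}\in\DG$. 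Because each step merely invokes an already-established lemma, I expect no genuine obstacle; the one spot warranting care is the convergence check in~(i), namely confirming that a $\frakB$-action really does fail to move off its own collapsed source and so cannot appear in any simplex of $\DGW$, which is what guarantees that the lifted actions avoid $\frakB$.
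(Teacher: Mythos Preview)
Your proposal is correct and follows essentially the same approach as the paper: apply Lemma~\ref{liftquotientiars} to lift the quotient iars, verify the two hypotheses of Lemma~\ref{combinesubgraphiars} using Lemma~\ref{unquotienting} and the self-loop observation for actions of $\frakB$, then invoke Lemma~\ref{combinesubgraphiars}. The only difference is that you spell out the convergence reasoning behind hypothesis~(i) in slightly more detail than the paper's one-line remark that ``actions of $H$ become self-loops in $G/W$.''
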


\begin{proof}
By Lemma~\ref{liftquotientiars}, $a_1, \ldots, a_k$ is an iars for
$G$.  By Lemma~\ref{unquotienting}, $\{a_1, \ldots, a_k\} \union
\tau\in\DG$, for every $\tau\in\DTH$.  Since actions of $H$ become
self-loops in $G/W$, $a_i\not\in\frakB$, for $i=1, \ldots, k$.  The
desired result therefore follows from Lemma~\ref{combinesubgraphiars}.
\end{proof}

\noindent{Comment:} \ The corollary also holds if one of $k$ or $\ell$ is $0$.

\clearpage
\section{The Nondeterministic Setting}
\markright{The Nondeterministic Setting}
\label{nondet}

The aim of this section is to prove Theorem~\ref{longiars} from
page~\pageref{longiars} for the case in which the graph $G$ is pure
nondeterministic.  Throughout, this section assumes that all graphs
are pure nondeterministic, meaning each action is either deterministic
or nondeterministic (but not stochastic).
\ First, we need some additional definitions and results.

\subsection{Hierarchical Cyclic Graphs}
\markright{Hierarchical Cyclic Graphs}

We start with a recursive definition:

\begin{definition}[Hierarchical Cyclic Graph]
\ A pure nondeterministic graph $\mskip3muG=(V,\frakA)$ is a
\mydefem{hierarchical cyclic graph}\vmsp{} if\vtsp{} one of conditions
(i) or (ii) holds:
\begin{enumerate}

\item[(i)] $\abs{V}=1$ and $\mskip4.5mu\frakA=\emptyset$.

\item[(ii)] There exist $\,V_1, \ldots, V_k$, $\,\frakA_1, \ldots,
            \frakA_k$, $\,a_1, \ldots, a_k$, with $k > 1$, such that:

  \vspace*{-0.05in}

   \begin{enumerate}
   \addtolength{\itemsep}{2pt}

    \item $V_1, \ldots, V_k$ are nonempty pairwise disjoint subsets of
    $\mskip4muV\!$ and $\,V\!=\bigcup_{i=1}^kV_i$.

    \item $\frakA_i$ consists of all actions in $\,\frakA$ whose sources
    and targets lie in $V_i$, for $i=1, \ldots, k$.

    \item $(V_i, \,\frakA_i)$ is a hierarchical cyclic graph, for $i=1, \ldots, k$.

    \item $\frakA \,=\, \{a_1, \ldots, a_k\} \,\union\mskip4.5mu \bigcup_{i=1}^k\frakA_i$.

    \item For $\vmsp{}i=1, \ldots, k$, $\,\vtsp\src(a_i) \in V_i\,$ and $\mskip4mu\trg(a_i) \subseteq V_{i+1}$
          \\[1pt] (here indices wrap around, so $V_{k+1}$ again means $V_1$).
   \end{enumerate}

\end{enumerate}

The decomposition above need not be unique.  We implicitly assume a
specific decomposition when stating that a graph is hierarchical
cyclic.  We refer to it as the \vtsp\mydefem{tree decomposition} of
$\,G$.

\vspace*{0.05in}

A graph of type (i) is a \vtsp\mydefem{leaf} and a graph of type (ii)
is a \vtsp\mydefem{node}.

\vspace*{0.05in}

When $\,G$ is a node, we refer to the subgraphs $\,(V_1, \frakA_1),
\ldots, (V_k, \frakA_k)$ in {\kern .09em}$G$'s tree decomposition as
the {\kern .06em}\mydefem{children of $G$}.  Each subgraph
$\mskip1mu(V_i, \frakA_i)$ is itself either a leaf or a node, with
{\kern .07em}\mydefem{parent} $(V, \frakA)$.  When $(V_i, \frakA_i)$
is a node, we may then speak of its children, and so forth.
Transitively, we may therefore speak of all the \vzsp\mydefem{nodes
and leaves within $G$} \hspc (that includes $(V, \frakA)$).  Finally,
we may speak of the \vmsp\mydefem{root} of the tree decomposition of
$\mskip1.75muG$, meaning the node or leaf $\,(V, \frakA)$, i.e., $G$
itself.

\vspace*{0.05in}

For a graph of type (ii), the actions $a_1, \ldots, a_k$ are the
\mydefem{(top-level) cycle actions of $G$}.  Similarly, if $N\!$ is
any node within $G$, the \mydefem{cycle actions of $N$} are the
top-level cycle actions of $N\!$ when $N$ is viewed as a hierarchical
cyclic graph in its own right.

\end{definition}

\paragraph{Comments and Observations:}

\begin{itemize}
\item Given a hierarchical cyclic graph $G$ of type (ii) as above, we can
  form the quotient graph $G/\{V_1, \ldots, V_k\}$ (see again
  page~\pageref{quotient}).   This quotient graph has state space
  $\{\wrep_1, \ldots, \wrep_k\}$, where $\wrep_i$ represents all of
  $V_i$ identified to a single state, for $i=1, \ldots, k$.

  All actions in each $\frakA_i$ become nonconvergent in $G/\{V_1,
  \ldots, V_k\}$ (actions in $\frakA_i$ become self-loops on state
  $\wrep_i$), so we may ignore them.  In contrast, each action $a_i$
  turns into a {\em deterministic transition} \hsph$a^\prime_i$ from
  state $\wrep_i$ to state $\wrep_{i+1}$.

  We may therefore view the quotient graph $G/\{V_1, \ldots, V_k\}$ as
  the cycle graph\label{cyclequotient}

  \vspace*{-0.1in}

  $$\wrep_1 \;\xrightarrow{\phantom{1}a^\prime_1\phantom{1}}\; \wrep_2
       \;\xrightarrow{\phantom{1}a^\prime_2\phantom{1}}\;
          \cdots
       \;\xrightarrow{\phantom{!}a^\prime_{k-1\phantom{.}}}\; \wrep_k
         \;\xrightarrow{\phantom{1}a^\prime_k\phantom{1}}\; \wrep_1$$
  (the first and last states in the diagram above are the same state,
   namely $\wrep_1$).

\item \label{ignoreloops}More generally, suppose $G$ is a hierarchical
  cyclic graph and $(W, \frakB)$ is some node that appears within the
  tree decomposition of $G$.  We can form the quotient graph
  $\mskip1.25muG/W$.  The quotienting identifies all of
  $\mskip0.5muW\mskip-2mu$ to a single state $\wrep$.  The actions
  $\frakB$ become self-loops on state $\wrep$.  Technically,
  $\mskip0.5muG/W\mskip-1.5mu$ includes these self-loops, but there is
  no harm ignoring them, thereby allowing us to view $G/W\mskip-2mu$
  as a hierarchical cyclic graph.  If $(W, \frakB)$ is $G$ itself,
  then we may view $\mskip0.5muG/W\mskip-1.75mu$ as the leaf
  $(\{\wrep\}, \emptyset)$.  Otherwise, the tree decomposition of
  $\mskip1muG/W\mskip-1.5mu$ is largely unchanged from that of $G$,
  except that one node, along with the subtree rooted at that node,
  has now become a leaf, and any actions of $G$ with source or target
  states in $W\mskip-1mu$ have had those states relabeled as
  $\wrep$.  The only actions that become nonconvergent (by creating
  self-loops) are those in $\frakB$, which we now ignore and discard.

\item Conversely, suppose $(\{s\}, \emptyset)$ is a leaf that
  appears in the tree decomposition of a hierarchical cyclic graph
  $G=(V, \frakA)$.  Suppose $H=(W, \frakB)$ is another hierarchical
  cyclic graph, with states and actions distinct from those of $G$.

  We can replace the leaf $(\{s\}, \emptyset)$ with node $H$, to form
  a new hierarchical cyclic graph $\overline{G} = (\overline{V},
  \,\overline{\frakA}\union\frakB)$.

  Here $\,\overline{V} \,=\, (\sdiff{V}{\{s\}})\,\union\,{W}$.
  \ In forming $\overline{\frakA}$ from $\frakA$, we have some choices:

  Suppose $a=v\rightarrow{T}$ is an action in $\frakA$.  We create a
  corresponding action $\overline{a}\in\overline{\frakA}$ as follows:

  \vspace*{-0.05in}

  \begin{itemize}

   \item If $v=s$, \,we let $\overline{v}$ be {\em any\,} state
   in $\mskip0.5muW\mskip-2mu$ and define $\overline{a}=\overline{v}\rightarrow{T}$.

   \item If $s\in T$, \,we let $S$ be {\em any\,} nonempty subset of
   $\mskip1muW\mskip-1.5mu$ and then define
   $\overline{a}=v\rightarrow\overline{T}$, with $\,\overline{T} \,=\,
   (\sdiff{T}{\{s\}})\,\union\,{S}$.

   \item In all other cases, $\,\overline{a} = a$.

  \end{itemize}

\item A \label{insertcycle}special case of the previous construction
  is to replace a single state $s$ in a hierarchical cyclic graph with
  a deterministic cycle on some new set of states, while adjusting all
  other actions of the encompassing graph accordingly.  Actions of the
  encompassing graph that used to start at $s$ now start at an
  arbitrary state of the cycle.  Actions that used to have a
  transition to $s$ now might transition to one or more states
  comprising the cycle.

\item Every hierarchical cyclic graph is fully controllable and each
  of its actions is convergent.

\item Conversely, the lemma below shows that every fully controllable
  pure nondeterministic graph contains a hierarchical cyclic subgraph
  with the same state space.  \ (There may be more than one such
  subgraph.)

\end{itemize}

\vspace*{0.05in}

\begin{lemma}[Hierarchical Cyclic Subgraphs]\label{hierarch}
Let $G=(V,\frakA)$, with $V\mskip-4mu\neq\emptyset$, be a fully
controllable pure nondeterministic graph.  Then $G$ contains a
hierarchical cyclic subgraph $H=(V, \frakB)$.
\end{lemma}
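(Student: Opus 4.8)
The plan is to argue by strong induction on $n=\abs{V}$. The base case $n=1$ is immediate: $(V,\emptyset)$ is a leaf and is trivially a hierarchical cyclic subgraph of $G$. For $n>1$ I would first make two harmless reductions. Any self-loop action (one with $\src(a)\in\trg(a)$) is nonconvergent by itself and so lies in no strategy; discarding all such actions changes neither $\DG$ nor full controllability, so assume there are none. Next, full controllability forces every state to have an outgoing action (a sink state could never reach any other goal), so $\src(\frakA)=V$ and $\frakA$ is therefore nonconvergent. Hence $\frakA\notin\DG$, so $\DG$ is not a full simplex and has a minimal nonface $\kappa=\{a_1,\ldots,a_k\}$. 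By the nondeterministic reading of Lemma~\ref{minnonfacestrat}, the actions of $\kappa$ have distinct sources $v_1,\ldots,v_k$ and form a directed cycle, with $\src(a_{i+1})\in\trg(a_i)$ (indices mod $k$) and with exactly one target of each $a_i$, namely $v_{i+1}$, lying in $C=\src(\kappa)$; all other targets lie outside $C$. Since self-loops were removed, $k\ge 2$.

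The recursion I would use is to peel off a fully controllable piece. Suppose there is a nonempty proper subset $W\subsetneq V$ with $\abs{W}\ge 2$ whose induced subgraph is fully controllable. Then the quotient $G/W$ is fully controllable (quotients of fully controllable graphs are again fully controllable) and has fewer states, so by induction it carries a hierarchical cyclic subgraph $H'$ in which the collapsed state $\wrep$, being a single state, is a leaf of the tree decomposition. The induced subgraph on $W$ is fully controllable with fewer than $n$ states, so by induction it carries a hierarchical cyclic subgraph $H_W$. I would then reattach $H_W$ at the leaf $\wrep$ of $H'$ using the replace-a-leaf-by-a-node construction to obtain a hierarchical cyclic subgraph on $V$. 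In the complementary case, where no proper subset of size $\ge 2$ supports a fully controllable induced subgraph, every block of the eventual decomposition must be a singleton; here I would argue that the minimal nonface already spans $V$ and, by the one-target-in-$C$ property above, consists of deterministic edges, so that $\kappa$ is a deterministic Hamiltonian cycle and yields the all-leaf decomposition directly.

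Two points are delicate, and I expect the second to be the main obstacle. First, the reattachment must produce a genuine subgraph of $G$: the $G$-actions joining $W$ to the rest retain their actual sources and targets, so one must arrange that every incoming action targets a single top-level block of $H_W$ (so that it becomes a legitimate cross action) and that no spurious cross actions survive. I would control this by choosing $W$ minimally and choosing the decomposition of $H_W$ compatibly, using the one-target-in-$C$ structure of minimal nonfaces to keep the interface between quotient and fiber clean. Second, and harder, is justifying the dichotomy itself, namely that a fully controllable pure nondeterministic graph either contains a proper fully controllable induced subgraph of size $\ge 2$ or else admits a spanning deterministic cycle. The difficulty is that an arbitrary minimal nonface may describe a cycle buried deep inside the eventual hierarchy rather than sitting at its top level, as with the inner $3$-cycle $\{e_1,e_2,e_3\}$ in the subspace-cycle example, whose source set $\{1,2,3\}$ leaves state $4$ unassigned. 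Resolving this, by selecting the cycle and partition that genuinely occupy the top of the hierarchy and certifying that their fibers are fully controllable via the quotient facts, is where the real work lies.
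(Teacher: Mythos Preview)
Your induction framework is sound, and your first ``delicate point'' is actually not an obstacle: the replace-a-leaf construction only requires that the incoming cycle action have its targets somewhere in $W$, not in a single top-level block of $H_W$ (condition (e) in the definition says $\trg(a_i)\subseteq V_{i+1}$, and after replacement $V_{i+1}=W$).  Unrelabeling the quotient actions therefore recovers a genuine hierarchical cyclic subgraph of $G$.  The real gap is exactly your second point, and your proposal does not close it.  A minimal nonface $\kappa$ in a pure nondeterministic graph does \emph{not} yield a fully controllable induced subgraph on $C=\src(\kappa)$: the actions of $\kappa$ may have targets outside $C$, so $(C,\kappa)$ need not even be a graph, and nothing forces the induced subgraph on $C$ (using whatever other actions happen to live there) to be fully controllable.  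Consequently your dichotomy ``either some proper $W$ carries a fully controllable induced subgraph, or $\kappa$ is a deterministic Hamiltonian cycle'' is unproved; the complementary case does not imply $\src(\kappa)=V$.

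The paper's argument bypasses all of this with a single observation you are missing: in a fully controllable pure nondeterministic graph, \emph{every} state $v$ is the target of some nonlooping \emph{deterministic} action.  (Take a strategy $\sigma$ with goal $\{v\}$; some action of $\sigma$ moves off $V\setminus\{v\}$, which for a nondeterministic action means all its targets lie in $\{v\}$.)  Backchaining such actions produces a deterministic cycle $\calC$ on some $W\subseteq V$ with $\abs{W}\ge 2$.  This cycle already \emph{is} a hierarchical cyclic graph, so the dichotomy dissolves: either $W=V$ and $\calC$ is the desired $H$, or $W\subsetneq V$ and you recurse once on $G/W$, then reinsert $\calC$ at the leaf $\wrep$, undoing relabelings.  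There is no second recursion on $W$ because the piece on $W$ is the simplest possible hierarchical cyclic graph already.
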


\begin{proof}
By strong induction on $\abs{V}$.  The base case $\abs{V}=1$ is clear,
so suppose $\abs{V} > 1$.  \ 
For every state $v$ in $V\mskip-2mu$ one can find a nonlooping {\em
deterministic\vtsp} action with target $v$ (since $G$ is fully
controllable and pure nondeterministic).  Backchaining such actions
produces a deterministic cycle $\calC$ on some subspace $W\!$ of $V$
(possibly all of $V$), containing at least two states.

Consider $G/W = (V^\prime, \frakA^\prime)$.  Here $\,V^\prime \,=\,
(\sdiff{V}{W})\,\union\,\{\wrep\}$, with $\wrep$ representing $W$.
\ $G/W$ is fully controllable (by Fact 3 on
page~\pageref{quotientfacts}) and pure nondeterministic, with $0 <
\abs{V^\prime} < \abs{V}$, so the induction hypothesis applies.  We
therefore obtain a hierarchical cyclic subgraph $H^\prime=(V^\prime,
\frakB^\prime)$ of $G/W$.

We may now replace leaf $(\{\wrep\}, \emptyset)$ in $H^\prime$ with
cycle $\calC$ on state space $W$.  When adjusting the encompassing
actions $\frakB^\prime$, we choose sources and targets so as to undo
any relabeling of states that occurred in forming $G/W$.  These
adjustments produce a hierarchical cyclic subgraph $H=(V, \frakB)$ of
$G$.
\end{proof}

\subsection{Core Cycle Actions, Leaf Covers, Disruptive Sets of Actions}
\markright{Core Cycle Actions, Leaf Covers, Disruptive Sets of Actions}

Suppose $H=(W,\frakB)$ is a hierarchical cyclic graph with $\abs{W} >
1$.  Each state $\mskip1.5mut$ of $\hspd{}W\!$ appears as a leaf
$(\{t\}, \emptyset)$ in the tree decomposition of $\mskip1.5muH$ and
has some parent node $N=(U, \frakE)$.  Some action $c_t\in\frakE$,
necessarily a cycle action of $N$, must be deterministic with {\em
target} $\mskip1.5mut$.  We refer to $c_t$ as {\em $t$'s core cycle
action.}  This action is determined uniquely by $t$ and the tree
decomposition of $H$.  ($H$ may contain multiple deterministic actions
with target $t$, but one and only one of those actions will be a cycle
action in the parent node of $(\{t\}, \emptyset)$.)

With that construct in mind, we now make a series of definitions and observations.

\begin{definition}[Core Cycle Actions]
Let $H=(W,\frakB)$ be a hierarchical cyclic graph.  The set
$\CH$ of \,\mydefem{core cycle actions of $H$} is

\begin{equation*}
\CH \;=\; \left\{\,
  \begin{aligned}
    \setdef{\,c_t}{t\in W}, & & \hbox{if $\,\abs{W} > 1$} & & \hbox{(with $c_t\mskip-1mu$  as defined above);}\\[1pt]
    \emptyset, & & \hbox{otherwise}. & & \\
  \end{aligned}
\right.
\end{equation*}
\end{definition}

\vspace*{0.05in}

\begin{definition}[Leaf Covers]
Let $N$ be a node in a  hierarchical cyclic graph $H$.  We say that $N$
\mydefem{covers only leaves in $H$} whenever each of $N$'s children is a
leaf in $H$'s tree decomposition.
\end{definition}

\vspace*{0.05in}

\begin{definition}[Disruptive Sets of Actions]
Let $H=(W,\frakB)$ be a hierarchical cyclic graph and suppose $\frakD
\subseteq \frakB$.  We say that $\frakD$ is \mydefem{disruptive (in $H$)}
whenever the following condition is satisfied:

\vspace*{-0.05in}

\hspace*{0.8in}\begin{minipage}{4.7in}
\raggedright
For every node that covers only leaves in $H$,\\
at least two of the node's cycle actions are missing from $\frakD$.
\end{minipage}
\end{definition}

\paragraph{Observations:}
\begin{itemize}
\addtolength{\itemsep}{-2pt}
\item When a hierarchical cyclic graph $H=(W,\frakB)$ contains at least
  two states, $\abs{\CH} = \abs{W}$.
\item A node covers only leaves in $H$ if and only if all the node's
  cycle actions lie in $\CH$.
\item The empty set of actions is always disruptive, even when $H$ is a leaf.
\end{itemize}

\subsection{Cycle-Breaking Strategies}
\markright{Cycle-Breaking Strategies}

Sets of actions that do not contain any node's full set of cycle
actions are convergent and may be arranged informatively, as the
following definition and lemmas make precise.

\begin{definition}[Cycle-Breaking]
\ Suppose $H=(W,\frakB)$ is a hierarchical cyclic graph.  A set of
actions $\tau\subseteq\frakB$ is \hsps\mydefem{cycle-breaking (in
$H$)} \hsph if, for each node $N$ in the tree decomposition of $H$,
$\hspc\tau\mskip-1mu$ does not contain all of $N$'s cycle actions.
\end{definition}

\vspace*{0.1in}

\begin{lemma}[Cycle-Breaking is Convergent]\label{nocyclestrats}
Suppose $\tau$ is a cycle-breaking set of actions in a hierarchical
cyclic graph $H$.  \ Then $\tau\in\DTH$.
\end{lemma}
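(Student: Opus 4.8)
The plan is to argue by strong induction on $\abs{W}$, the number of states of the hierarchical cyclic graph $H=(W,\frakB)$. In the base case $H$ is a leaf, so $\frakB=\emptyset$ and the only cycle-breaking $\tau$ is $\emptyset$, which is trivially convergent. For the inductive step I take $H$ to be a node with children $(V_1,\frakA_1),\ldots,(V_k,\frakA_k)$ ($k>1$) and top-level cycle actions $a_1,\ldots,a_k$, where $\src(a_i)\in V_i$ and $\trg(a_i)\subseteq V_{i+1}$ (indices mod $k$). Recalling that $\tau\in\DTH$ just means $\tau$ is convergent, I would assume for contradiction that $\tau$ contains a circuit: a nonempty $\calB\subseteq\tau$ with $S:=\src(\calB)$ such that no action of $\calB$ moves off $S$ in $H$.

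First I would set up the recursion on the children. For each $i$, the restriction $\tau_i := \tau\cap\frakA_i$ is cycle-breaking in the child graph $(V_i,\frakA_i)$: every node interior to $(V_i,\frakA_i)$ is also a node of $H$ and its cycle actions lie inside $\frakA_i$, so $\tau$ omits one of them exactly when $\tau_i$ does. Since $\abs{V_i}<\abs{W}$, the induction hypothesis gives $\tau_i\in\Delta_{(V_i,\frakA_i)}$, i.e.\ $\tau_i$ is convergent; hence so is every subset of $\tau_i$. I would then let $I=\setdef{i}{S\inter V_i\neq\emptyset}$ be the set of children touched by the circuit.

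The crux — and the step I expect to be the main obstacle — is showing that $a_i\in\calB$ for every $i\in I$. The subtlety is that within a touched child the circuit's local source set $S_i:=S\inter V_i$ may be strictly larger than $\src(\calB\inter\frakA_i)$, precisely because the incoming cycle action can contribute a source; getting the bookkeeping right is what forces the cycle action to appear. Concretely, suppose $i\in I$ but $a_i\notin\calB$. Then $S_i=\src(\calB\inter\frakA_i)$ and, as $S_i\neq\emptyset$, the set $\calB\inter\frakA_i\subseteq\tau_i$ is nonempty and convergent, so some action $b$ in it moves off $\src(\calB\inter\frakA_i)=S_i$ within $V_i$; since $b$'s targets lie in $V_i$ and $V_i\inter S=S_i$, the action $b$ in fact moves off $S$ in $H$ — contradicting the choice of $\calB$. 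Therefore $a_i\in\calB$.

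It then remains to close the cycle. For $i\in I$ the action $a_i\in\calB$ does not move off $S$, so at least one of its targets lies in $S$; but $\trg(a_i)\subseteq V_{i+1}$, whence $S\inter V_{i+1}\neq\emptyset$, i.e.\ $i+1\in I$. Thus $I$ is nonempty and closed under the successor map, which is a single $k$-cycle, forcing $I=\{1,\ldots,k\}$ and hence $\{a_1,\ldots,a_k\}\subseteq\calB\subseteq\tau$. This contradicts the cycle-breaking hypothesis applied to the root node $H$, which requires $\tau$ to omit at least one top-level cycle action. The contradiction shows $\tau$ contains no circuit, completing the induction and establishing $\tau\in\DTH$.
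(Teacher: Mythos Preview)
Your proof is correct and takes a genuinely different route from the paper's.

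The paper argues constructively: by structural induction, each restriction $\tau\cap\frakB_i$ lies in $\Delta_{(W_i,\frakB_i)}$, and the top-level cycle actions of $\tau$ form a convergent set in the directed cycle quotient $H/\{W_1,\ldots,W_k\}$; then repeated application of Lemma~\ref{unquotienting} (Combining Quotient and Subgraph Strategies) glues these pieces into a strategy in $\DTH$. Your argument instead assumes a circuit $\calB\subseteq\tau$ and shows it forces $\{a_1,\ldots,a_k\}\subseteq\calB$, contradicting cycle-breaking at the root. The key step---that each touched child must contribute its outgoing cycle action to $\calB$, since otherwise the inductively convergent restriction $\calB\cap\frakA_i$ would supply an action moving off $S$---is a clean piece of direct combinatorics that avoids the quotient machinery entirely. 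Your approach is more self-contained; the paper's approach has the advantage of reusing Lemma~\ref{unquotienting}, which it needs elsewhere anyway, making the proof of this lemma a one-line corollary of already-established infrastructure.
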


\vspace*{-0.1in}

\begin{proof}
By structural induction on the tree decomposition of $H$.  The lemma
holds if $\mskip0.5muH\mskip-0.5mu$ is a leaf, since only
$\tau=\emptyset\,$ is possible.  \ Otherwise, suppose the children of
$\mskip1muH\mskip-1mu$ are $\,(W_1, \frakB_1), \ldots, (W_k,
\frakB_k)$.  Inductively, the lemma holds for the set of actions $\tau
\inter \frakB_i$ in the hierarchical cyclic graph $(W_i, \frakB_i)$,
for $i=1, \ldots, k$.  \ Let $\sigma$ consist of the top-level cycle
actions of $\mskip1.2muH\mskip-0.2mu$ that are in $\tau$.  \ Since
${H/\{W_1, \ldots, W_k\}}$ is a directed cycle graph (see top of
page~\pageref{cyclequotient}) and since $\tau$ is cycle-breaking,
$\sigma^\prime\in\Delta_{H/\{W_1, \ldots, W_k\}}$.  \ Thus, by
repeated application of Lemma~\ref{unquotienting} on
page~\pageref{unquotienting}, $\tau\in\DTH$.
\end{proof}

\vspace*{-0.1in}

\paragraph{Caution:} \hspace*{-0.05in}Not all strategies in a
hierarchical cyclic graph need be cycle-breaking (see
page~\pageref{Ex241_H1}).

\vspace*{0.1in}

\begin{lemma}[Cycle-Breaking is Informative]\label{nocycleiars}
Suppose $\tau$ is a nonempty cycle-breaking set of actions in a
hierarchical cyclic graph $H$.  \ Then some ordering of all the
actions in $\mskip1mu\tau$ is an informative action release sequence
for $H$.
\end{lemma}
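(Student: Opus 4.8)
The plan is to prove the statement by strong induction on $n=\abs{W}$, leaning on Lemma~\ref{nocyclestrats} to guarantee that the pieces I assemble are genuine simplices, and on Corollary~\ref{combinequotientgraphiars} to stitch together informative sequences coming from a quotient graph and from a subgraph. Because $\tau$ is cycle-breaking, Lemma~\ref{nocyclestrats} already gives $\tau\in\DTH$, so the only task is to order $\tau$ informatively. The base case $n=1$ is vacuous: a one-state hierarchical cyclic graph is a leaf with no actions, so no nonempty cycle-breaking $\tau$ exists. For $n>1$, $H$ is a node with children $(W_1,\frakB_1),\ldots,(W_k,\frakB_k)$, $k>1$, and top-level cycle actions $a_1,\ldots,a_k$; I would split on whether some child is itself a node.

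In the first case, where every child is a leaf, each $\abs{W_i}=1$ and $\frakB_i=\emptyset$, so $\frakB=\{a_1,\ldots,a_k\}$ and $H$ is simply the directed cycle graph on the $k\geq 2$ states, with each $a_i$ a deterministic edge; its only minimal nonface is the full edge set. Then $\tau\subsetneq\{a_1,\ldots,a_k\}$, and I would check directly that \emph{any} ordering is informative: for each proper prefix $P$ and each subsequent action $a$, the $(k{-}1)$-element maximal strategy $\{a_1,\ldots,a_k\}\setminus\{a\}$ contains $P$ but omits $a$, witnessing that $a$ is not inferable from $P$ (the case $P=\emptyset$ shows $a$ is not inferable ``for free'').

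In the second case, where some child $(W_j,\frakB_j)$ is a node (hence $\abs{W_j}>1$), I would peel it. Set $\tau_j=\tau\cap\frakB_j$ and $\hat\tau=\tau\setminus\frakB_j$. Viewing $H/W_j$ as a hierarchical cyclic graph (discarding the self-loops that $\frakB_j$ becomes, per page~\pageref{ignoreloops}), I would verify that $\tau_j$ is cycle-breaking in $(W_j,\frakB_j)$ and that $\hat\tau'$ is cycle-breaking in $H/W_j$: every node whose cycle actions must be tested already occurs, carrying the same cycle actions, inside $H$, where $\tau$ omits at least one of each node's cycle actions. Both $(W_j,\frakB_j)$, on $\abs{W_j}<n$ states, and $H/W_j$, on $n-\abs{W_j}+1<n$ states, are strictly smaller fully controllable hierarchical cyclic graphs, so the induction hypothesis supplies an informative ordering of $\tau_j$ for $(W_j,\frakB_j)$ and of $\hat\tau'$ for $H/W_j$; by Lemma~\ref{nocyclestrats} their underlying sets are simplices. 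Corollary~\ref{combinequotientgraphiars}, applied with $G=H$ and subgraph $(W_j,\frakB_j)$ (legitimate since $\emptyset\neq W_j\subsetneq W$), then concatenates them into an iars for $H$ whose underlying set is $\hat\tau\cup\tau_j=\tau$; its stated degenerate cases cover $\hat\tau=\emptyset$ and $\tau_j=\emptyset$.

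The main obstacle, and the reason for the case split, is that collapsing a single-state (leaf) child does not shrink $n$, so the induction can make progress only by peeling a genuinely multi-state child; when none exists, the graph has degenerated to a bare directed cycle that must be dispatched by hand. I expect the only other point requiring care to be the verification that ``cycle-breaking'' is inherited both under restriction to a child and under passage to the quotient, which rests on the observation that the nodes of a child's subtree and of $H/W_j$ are exactly nodes of $H$ carrying their cycle actions unchanged. Granting that, Corollary~\ref{combinequotientgraphiars} does the real work of ordering the combined sequence correctly.
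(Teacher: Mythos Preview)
Your proof is correct. It differs from the paper's in organization rather than in its essential tools. The paper runs a bottom-up structural induction through the tree decomposition, associating an iars (allowing the empty sequence as a convenience) to every node by collapsing \emph{all} children simultaneously to obtain a bare directed cycle, invoking Lemma~\ref{minnonfaceiars} on that cycle, and then citing ``repeated application'' of Corollary~\ref{combinequotientgraphiars} to splice in the children's sequences one after another. You instead do strong induction on $\abs{W}$, peeling off a \emph{single} multi-state child and recursing on both the child and the quotient $H/W_j$. Your case split isolates precisely when the quotient step makes progress (only when some child is itself a node) and handles the all-leaves base case --- where $H$ is already a bare directed cycle --- by a direct maximal-strategy witness rather than via Lemma~\ref{minnonfaceiars}. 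Both arguments rest on the same two ingredients, Lemma~\ref{nocyclestrats} and Corollary~\ref{combinequotientgraphiars}; yours trades the paper's empty-sequence convention and its somewhat implicit ``repeated application'' step for an explicit two-piece recursion, at the cost of a case analysis the paper avoids.
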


\begin{proof}

The proof will associate to each leaf and node of $H$ an informative
action release sequence, with the sequence associated to the root of
$H$ comprising all of $\tau$.

For the purposes of this proof, it will be convenient to consider the
empty sequence of actions as an informative action release sequence.
Since $\tau$ is nonempty, the final sequence produced below will be
nonempty, satisfying the standard requirement of
page~\pageref{iarsdef} that informative attribute release sequences be
nonempty.

\vst

Base Case: \ Associate to each leaf of $H$ the empty sequence.

\vst

Inductive Step: \ Consider a node $N$ of $H$ and assume each child $C$
of $N$ has an associated informative action release sequence
consisting of all the actions of $\tau$ that appear in the graph $C$.
View $N$ as a hierarchical cyclic subgraph in its own right, and form
the quotient graph $N^\prime$ obtained by identifying each child to a
singleton state.  The cycle actions of $N$ create a deterministic
directed cycle in $N^\prime$.  This cycle forms a minimal nonface in
$\Delta_{N^\prime}$.  By Lemma~\ref{minnonfaceiars} on
page~\pageref{minnonfaceiars}, any sequential ordering of the directed
edges comprising this cycle forms an iars for $N^\prime$, any proper
subset of which is convergent.  Let $\{a_1, \ldots, a_\ell\}$ be the
set of $N$'s cycle actions in $\tau$, this being $\emptyset$ with
$\ell=0$ when none of $N$'s cycle actions lie in $\tau$.  Since $\tau$
is cycle-breaking, the reasoning just given implies $\{a^\prime_1,
\ldots, a^\prime_\ell\}\in\Delta_{N^\prime}$ and $a^\prime_1, \ldots,
a^\prime_\ell$ is an iars for $N^\prime$.

\vst

Since the children of $N$ are fully controllable subgraphs of $N$,
repeated application of Corollary~\ref{combinequotientgraphiars} on
page~\pageref{combinequotientgraphiars} shows that $a_1, \ldots,
a_\ell, \hsph{}c_1, \ldots, c_m$ is an iars for $N$, with $c_1,
\ldots, c_m$ being some concatenation of all the informative action
release sequences associated to $N$'s children.  Associate $a_1,
\ldots, a_\ell, \hsph{}c_1, \ldots, c_m$ to $N$.  Observe that this
iars consists of all actions of $\tau$ that appear in the graph $N$.
\ Associated to $H$ itself therefore is an iars consisting of all of
$\tau$.
\end{proof}

\clearpage
\subsection{Markings}
\markright{Markings}
\label{markings}

Let $H=(W,\frakB)$ be a hierarchical cyclic graph.  We will view each
node of $H$ as being either {\em marked\,} or {\em unmarked}.  \ Each
node is unmarked initially. \ Later, we will define an algorithm that
{\em marks\vmsp} nodes according to some criteria.  \ Once marked, a
node remains marked.

\vso

In order to consider marking a node $N$, we first require that each
child of $N$ be either a leaf or an already marked node.  The
collection of marked nodes at any instant therefore defines a set
$\maxmarked$ of {\em maximal marked nodes}, consisting of those nodes
that are marked but have no marked parent.
\ After some nodes have been marked, $\maxmarked = \{(W_1, \frakB_1),
\ldots, (W_\ell, \frakB_\ell)\}$, for some $\ell \geq 1$, with the
sets $W_1, \ldots, W_\ell$ nonempty and pairwise disjoint.  We may
therefore form the quotient graph $H/\{W_1, \ldots, W_\ell\}$, which
we abbreviate as $H/\maxmarked$.  We view $H/\maxmarked$ as a
hierarchical cyclic graph, much as on page~\pageref{ignoreloops}, by
discarding any actions that have become self-loops.

\vso

The process will be iterative, adding an additional node to the
collection of marked nodes with each step.  We abbreviate the notation
by writing $\maxmarked^{(j)}$ to mean the maximal marked nodes at the
$j^{\scriptstyle{th}}$ step, with $j \geq 1$, and by writing $H^{(j)}$
to mean $H/\maxmarked^{(j)}$.  We also define
$\maxmarked^{(0)}=\emptyset$ and $H^{(0)}=H$.

\vspace*{-0.1in}

\paragraph{Observation:}\ Any node covering only leaves in $H^{(j)}$
corresponds to a node in $H$ that is not yet marked but that could be
marked at the $(j+1)^{\scriptstyle{st}}$ step, and vice-versa.

\subsection{Forward Projections}
\markright{Forward Projections}

Strategies in a pure nondeterministic graph define partial orders.
One may view those partial orders as forward projections of possible
system states.

\begin{definition}[A Strategy's Partial Order]
Let $G=(V,\frakA)$ be a pure nondeterministic graph.  If
$\mskip1mu\sigma\in\DG$, then \vtsp\mydefem{$\sigma$ induces a partial
order $\,\geq_\sigma\vtsp$ on $V\!$} as follows:

\vso

For each $w,v \in V\!$, $\,w \geq_\sigma v\,$ if and only if either
$w=v$ or there exist actions $a_1, \ldots, a_k \in \sigma$, with $k
\geq 1$, such that:

\vspace*{-0.1in}

\hspace*{1.5in}\begin{minipage}{3.5in}
\begin{itemize}
\addtolength{\itemsep}{-3pt}
\item[(i)] $\src(a_1) = w$,
\item[(ii)] $\src(a_{i+1}) \in \trg(a_i)$, \ for $i=1,\ldots,k-1$, \ and
\item[(iii)] $v \in \trg(a_k)$.
\end{itemize}
\end{minipage}
\end{definition}
Thus, $w \geq_\sigma \mskip-0.75muv$ if and only if $w$ is $v$ or the
system might move from $w$ to $v$ while executing strategy $\sigma$
\,(in the diagram below, $a_i\in\sigma$, $v_i=\src(a_i)$, and
$v_{i+1}\in\trg(a_i)$, for $i=1,\ldots,k$):

\vspace*{-0.1in}

$$w=v_1\;\xrightarrow{\phantom{1}a_1\phantom{1}}\;
    v_2\;\xrightarrow{\phantom{1}a_2\phantom{1}}\;
    \cdots\;
    v_k\;\xrightarrow{\phantom{1}a_k\phantom{1}}\; v_{k+1}=v.$$
The partial order $\geq_\sigma$ is well-defined since $\sigma$ cannot
create any cycles.

\begin{definition}[Forward Projection]
Suppose $G=(V,\frakA)$ is a pure nondeterministic graph.
Let $\sigma\in\DG$ and $\,\emptyset\neq{W}\subseteq{V}$.
\quad The \,\mydefem{forward projection of $W\!$ under $\sigma$} is the set
$$\calF_\sigma(W) \;=\; \setdef{v \in V}{w \geq_\sigma v, \ \hbox{for
    some $w\in{W}$}}.$$
\end{definition}

In other words, $\calF_\sigma(W)$ consists of all states that the
system might pass through or stop at, assuming the system starts at
some state in $W\!$ and moves according to strategy $\sigma$.  (In
some papers, {\em forward projection} refers only to the states the
system might stop at.  Here, {\em forward projection\hspt} includes
all states through which the system might move, including starting
states.)

\begin{lemma}[Disjoint Forward Projections --- Core Cycle Actions]\label{disjointforwardcore}
$\phantom{0}$

\vst

Let $H=(W,\frakB)$ be a hierarchical cyclic graph and suppose
$\tau\in\DTH$.

\vst

Define $\vtsp\taup =\, \CH \inter \tau$ \ and \ $\taum =\,
\sdiff{\CH}{\tau}$.

\vspace*{0.05in}

For each $c\in\taum$, let $J_c = \calF_\taup(\{t\})$, with $t$ being
the unique target of action $c$. \ Then:

\vspace*{0.05in}

(a) The sets in the family
    $\mskip1mu\{J_c\}_{c\mskip1mu\in\mskip1mu\taum}\!$ are pairwise
    disjoint.

\vsr

(b) Suppose further that $\mskip2mu\tau$ is disruptive.  Let
$c\in\taum$.  Write $c = w \rightarrow t$.  Then \,$w\not\in J_c$.
\end{lemma}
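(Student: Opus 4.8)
The plan is to reduce both parts to the combinatorics of a single auxiliary digraph built from $\taup$. Since every core cycle action $c_t\in\CH$ has the singleton target $\{t\}$, it is deterministic; hence $\taup\subseteq\CH$ defines a digraph $G_{\taup}$ on $W$ whose edges are the transitions $\src(c_s)\rightarrow s$ for $c_s\in\taup$. Two facts drive everything. First, each state $s$ is the target of exactly one core cycle action, namely $c_s$, so $G_{\taup}$ has in-degree at most $1$ at every vertex, with in-degree $0$ precisely at those states $t$ whose core cycle action $c_t$ lies in $\taum$. Second, $\taup\subseteq\tau\in\DTH$ is convergent, so $G_{\taup}$ is acyclic, since a directed cycle among deterministic actions would be a circuit contained in $\tau$. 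An acyclic digraph with in-degree at most $1$ is a disjoint union of out-arborescences, whose roots (the in-degree-$0$ vertices) are exactly the targets of the actions in $\taum$. (If $\abs{W}=1$ then $\CH=\emptyset$, $\taum=\emptyset$, and both assertions are vacuous; so assume $\abs{W}>1$.)

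For part (a), I would observe that for $c\in\taum$ with target $t$, the state $t$ is the root of its arborescence, so $J_c=\calF_{\taup}(\{t\})$ is contained in (indeed equals) that whole component. Distinct actions $c\neq c'$ in $\taum$ have distinct targets $t\neq t'$, because core cycle actions are indexed bijectively by their targets, and two distinct roots lie in two distinct components. Hence $J_c$ and $J_{c'}$ sit in disjoint components and are therefore disjoint.

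For part (b), suppose toward a contradiction that $w\in J_c$ for $c=w\rightarrow t\in\taum$. Because $G_{\taup}$ has in-degree at most $1$, the path from $t$ to $w$ is unique and simple; adjoining the edge $c\colon w\rightarrow t$ produces a simple directed cycle $C$ built entirely from core cycle actions, in which $c$ is the only action not in $\tau$, all the rest lying in $\taup\subseteq\tau$. The crux is a structural claim: any simple cycle of core cycle actions is exactly the full set of cycle actions of some node that covers only leaves. I would prove this by taking $N^\ast$ to be the minimal node of $H$ whose state set contains all vertices of $C$; since $C$ cannot be confined to a single child of $N^\ast$, each time $C$ passes between children of $N^\ast$ it must use a cycle action of $N^\ast$, and a simple cycle traverses these children in their cyclic order, using each of $N^\ast$'s cycle actions exactly once. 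As every such crossing action is core, each target child is a leaf, so all children of $N^\ast$ are leaves and $C$ coincides with $N^\ast$'s complete set of cycle actions. Then disruptiveness of $\tau$ forces at least two of $N^\ast$'s cycle actions to lie outside $\tau$, contradicting that $c$ is the only action of $C$ outside $\tau$, and so $w\not\in J_c$. Establishing this structural claim — matching a cycle of core cycle actions to a single leaf-covering node — is the step I expect to require the most care, since it is precisely where the hierarchical tree decomposition and the definition of disruptiveness must be made to interlock.
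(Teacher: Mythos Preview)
Your argument is correct. Part~(a) is essentially the paper's proof in digraph language: where you invoke in-degree at most~$1$ and a forest of out-arborescences, the paper says that backchaining from any $v$ along actions of $\taup$ produces a unique backward path, which must pass through both $t$ and $s$ whenever $v\in J_c\cap J_d$, forcing $t$ and $s$ to be comparable and hence equal (since neither has an incoming $\taup$-edge). These are the same observation.

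Part~(b) is where you diverge. The paper dispatches the structural claim by a depth argument: for any core cycle action $u\rightarrow s$, the leaf depth of $u$ in the tree decomposition is at least that of $s$ (the parent node of leaf $s$ contains $u$, so $u$ sits at that depth or deeper). Going around a cycle of such actions forces all leaf depths to be equal; consecutive equal-depth core actions then share a parent, so all vertices of the cycle are sibling leaves under a single parent node, which therefore covers only leaves and has exactly those actions as its cycle actions. Your minimal-containing-node argument reaches the same endpoint but with more bookkeeping: you must check that any inter-child crossing within $N^\ast$ is a cycle action of $N^\ast$ (true, since the parent of the target leaf must contain both endpoints and hence cannot sit strictly inside one child), that the simple cycle therefore visits the children of $N^\ast$ in cyclic order using each cycle action exactly once, and finally that each such action being core forces every child to be a leaf. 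Both routes are sound; the paper's depth monotonicity is simply a shorter path to the same node.
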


In words: We split the core cycle actions $\CH$ of $H$ into two sets,
those that lie in the strategy $\tau$ and those that do not.  The
first set is itself a strategy, so we can consider forward projections
under that strategy.  For each core cycle action that is not in
$\tau$, we consider the forward projection of that action's target
state.  The lemma asserts that the resulting forward projections are
pairwise disjoint.  Moreover, if $\mskip2mu\tau$ is disruptive, then
each such forward projection does not loop back far enough to include
the source state of its generating core cycle action.

These properties will help us later to construct minimal nonfaces from
which we can then extract an informative action release sequence that
is sufficiently long to establish Theorem~\ref{longiars}.

\begin{proof}
(a) Let $\geq$ be the partial order induced by $\taup$ on $W$.
  Suppose $v\in J_c \inter J_d$, with $c,d \in \taum$.  Write $c = w
  \rightarrow t$ and $d = u \rightarrow s$.  Then $t \geq v$ and $s
  \geq v$.  Since $\taup \subseteq \CH$, backchaining from $v$
  produces a unique backwards path of action edges in $\taup$, with
  each edge actually being a deterministic action.  (The path could be
  degenerate, consisting of no edges, just the state $v$.)  That
  backwards path eventually encounters both $t$ and $s$, establishing
  that $t$ and $s$ are comparable.  For example, \
 $s \rightarrow \cdots \rightarrow t \rightarrow \cdots \rightarrow v$
 \ would establish $s \geq t$.  Since core cycle actions $w
  \rightarrow t$ and $u \rightarrow s$ are missing from $\taup$, this
  is only possible if $s=t$, meaning $c=d$.

\vspace*{0.1in}

(b) Suppose $w\in J_c$, with $c\in\taum$ and $c = w \rightarrow t$.
Arguing as in (a), we now obtain a cycle:

\vspace*{-0.1in}

 $$w \rightarrow t \rightarrow w_1 \rightarrow \cdots
 \rightarrow w_k = w, \quad \hbox{with $k \geq 1$}.$$

All but one of the actions comprising this cycle lie in $\taup$.

\vst

The exception is $w \rightarrow t$, which lies in $\taum$.

\vst

All the actions comprising the cycle lie in $\CH$. Consider any action
$u \rightarrow s$ of $\CH$.  The depth\footnote{Here, the depth of a
node or leaf in a tree is defined recursively as follows: \\
\hspace*{0.25in} The depth of the tree's root is 0.  \ The depth of a
child is one more than the depth of its parent.} of the leaf
$(\{u\}, \emptyset)$ in the tree decomposition of $H$ must be greater
than or equal to the depth of the leaf $(\{s\}, \emptyset)$.
Consequently, all the states in the cycle appear in $H$ as leaves at
the same depth and with the same parent node.  The cycle must
therefore consist of that parent node's cycle actions and the parent
node cannot contain any other children.  So, the parent node covers
only leaves.  Since $\tau$ is disruptive, at least two of the node's
cycle actions lie in $\taum$, not just one, establishing a
contradiction.
\end{proof}

\clearpage
\subsection{Quotienting until Disruption}
\markright{Quotienting until Disruption}
\label{quottodisrupt}

The proof path now is to iteratively mark and quotient by nodes that
prevent a strategy from being disruptive.  Concurrently, one assembles
several sets of actions that satisfy a property similar to the
disjointness of forward projections described in
Lemma~\ref{disjointforwardcore}.

\begin{construction}[Acyclic Dissection]\label{acyclicdissection}
Let $H=(W,\frakB)$ be a hierarchical cyclic graph.  Suppose
$\tau\subseteq\frakB$.  An \vtsp\mydefem{acyclic dissection
$(\tauc,\vtsp \taup,\vtsp \taum,\vtsp \xi)\vtsp$ of $\vtsp\tau\vtsp$
in $H$} is defined iteratively as follows:
\end{construction}

\begin{enumerate}

\addtolength{\itemsep}{1pt}

\item Initialize $\tau^{(0)}=\tau$ \,and $\,\kappa^{(0)} = \emptyset$.
      Assume all nodes in the tree decomposition of $H$ are unmarked
      and initialize $H^{(0)}=H$, as per Section~\ref{markings}.

      Set {\sc Done} to {\tt true} if $\mskip1mu\tau$ is disruptive in
      $H$ and to {\tt false} otherwise.

\item While not {\sc Done}, run the following loop, starting from $j=0$:
      
  \vspace*{-0.1in}

  \begin{enumerate}

  \addtolength{\itemsep}{2pt}

  \item At this stage, $\tau^{(j)}$ consists of actions in $H^{(j)}$
        and is not disruptive in $H^{(j)}$.  \ Let $N$ be some
        unmarked node in $H$ such that the corresponding quotient node
        $N^\prime$ in $H^{(j)}$ covers only leaves and at most one of
        the cycle actions in $N^\prime$ is absent from $\tau^{(j)}$.

  \item Suppose $N^\prime$ has $k$ cycle actions $\{c^\prime_1,
        \ldots, c^\prime_k\}$.  Discard one of these, so that the rest
        all lie in $\tau^{(j)}$.  Without loss of generality, assume
        one may discard $c^\prime_k$.  Now let

        \vspace*{-0.1in}

        $$\kappa^{(j+1)} \;=\; \kappa^{(j)} \union \{c_1, \ldots, c_{k-1}\}.$$

        Inductively: $\,\kappa^{(j+1)}$ consists of (unquotiented)
        actions in $H$.  \ In fact, $\kappa^{(j+1)}\subseteq\tau$.

  \item Mark node $N$, then let $H^{(j+1)}$ be the quotient graph
        formed from the resulting maximal marked nodes, as per
        page~\pageref{markings}, again viewed as a hierarchical cyclic
        graph.

  \item Suppose $H^{(j+1)} = (W^\prime, \,\frakD^\prime)$.  \,Let
        $\tau^{(j+1)} = \setdef{a'\in\frakD^\prime}{a\in\tau}$.  So
        $\tau^{(j+1)}$ is nearly the same as $\tau^\prime$, except
        that $\tau^{(j+1)}$ ignores any action of
        $\mskip1.75mu\tau\mskip-0.1mu$ whose source and targets all
        lie within any one maximal marked node of $H$.  (Prime
        notation indicates the correspondence between an action in
        $\mskip1muH$ and its relabeled form in a quotient graph.)

  \item If $\tau^{(j+1)}$ is disruptive in $H^{(j+1)}$, set {\sc Done}
        to {\tt true}.  The loop ends.  Otherwise, the loop continues,
        with $j+1$ in place of $j$.

  \end{enumerate}

\item If $\tau$ was already disruptive in $H$, let $H^*=H$,
      $\tau^*=\tau$, and $\tauc=\emptyset$.  Otherwise, let
      $H^*=H^{(j+1)}$, $\,\tau^*=\tau^{(j+1)}$, and
      $\,\tauc=\kappa^{(j+1)}$, with $j+1$ as above when the loop
      ends.  In either case, $\tau^*$ is disruptive in $H^*$.
      \ Finally, let $\Cp = \setdef{c\in\frakB}{c'\in\CHp}$.  In
      other words, $\Cp$ is the set of actions in $H$ that become
      core cycle actions in the quotient graph $H^*$.

\item Define $\xi$ as follows ($H$ contains a marked node if and only if
      the loop of step 2 was run):

      Start with $\xi=\emptyset$.  Then, for each {\em unmarked\,}
      node $N$ in $H$, let $\CyN$ be $N$'s cycle actions.  If
      $\CyN\inter\tau$ is a proper subset of $\CyN$, add all of
      $\CyN\inter\tau$ to $\xi$.  Otherwise, select an action $c$ in
      $\sdiff{\CyN}{\Cp}$.  Add the actions $\sdiff{\CyN}{\{c\}}$ to
      $\xi$.  \ (Why does $c$ exist?  If not, let $N^\prime$ be the
      node in $H^*$ corresponding to $N$.  It is well-defined since
      $N$ is unmarked.  Then $N^\prime$ would cover only leaves in
      $H^*$ and thus $\tau^*$ would not be disruptive in $H^*$, a
      contradiction.)  \

\item Step 3 defined $\tauc$.  Now define
            $\,\taup=\,\Cp\inter\xi$, \,and
            $\,\taum=\,\sdiff{\Cp}{\xi}$.

\end{enumerate}

\begin{lemma}\label{acyclicprop}
Construction~\ref{acyclicdissection} produces an acyclic dissection
$(\tauc, \taup, \taum, \vtsp\xi)$ of $\,\tau$ such that:

\vspace*{0.15in}

\hspace*{0.75in}\begin{minipage}{5in}
\begin{itemize}
\addtolength{\itemsep}{-3pt}
\item[(i)] $\taup\subseteq\xi\vtsp$ and $\vtsp\taum\subseteq\frakB$,
\item[(ii)] $\tauc\union\xi\subseteq\tau\vtsp$ and $\vtsp\tauc\inter\xi=\emptyset$,
\item[(iii)] $\tauc \union \xi\hspc$ is cycle-breaking in $H$, and
\item[(iv)] $\taum\inter\tau=\emptyset$.
\end{itemize}
\end{minipage}
\end{lemma}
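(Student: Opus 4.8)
The plan is to prove all four properties by bookkeeping, for each node $N$ in the tree decomposition of $H$, exactly which of $N$'s cycle actions are collected into $\tauc$ and which into $\xi$. The organizing fact I would establish first is that every action of $\frakB$ is a top-level cycle action of \emph{exactly one} node of $H$: leaves carry no actions, and the recursive decomposition $\frakA=\{a_1,\dots,a_k\}\union\bigcup_i\frakA_i$ assigns each action either to the current node's cycle actions or, recursively, to a unique child. So $\frakB$ is partitioned into cycle-action blocks indexed by nodes. I would then note that $\tauc$ only ever collects cycle actions of a node at the moment that node is marked (step 2(b) draws $c_1,\dots,c_{k-1}$ from the cycle actions of $N'$, which are the relabelled cycle actions of the node $N$ marked in step 2(c)), whereas step 4 collects only cycle actions of \emph{unmarked} nodes. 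Hence $\tauc$ and $\xi$ draw from disjoint blocks.

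Properties (i) and (ii) are then the easy part. For (i), $\taup=\Cp\inter\xi\subseteq\xi$ and $\taum=\sdiff{\Cp}{\xi}\subseteq\Cp\subseteq\frakB$ are immediate from the definitions and from $\Cp=\setdef{c\in\frakB}{c'\in\CHp}$. For (ii), $\xi\subseteq\tau$ is direct: in step 4 each contribution is either $\CyN\inter\tau$ or, in the ``otherwise'' branch where $\CyN\subseteq\tau$, a subset $\sdiff{\CyN}{\{\hat c\}}$ of $\CyN$, and both lie in $\tau$. The inclusion $\tauc\subseteq\tau$ I would prove by induction on the loop, as flagged in step 2(b): the $c_1,\dots,c_{k-1}$ are cycle actions of $N'$ lying in $\tau^{(j)}$, and since $\tau^{(j)}$ consists precisely of the relabelled forms of actions of $\tau$, their unquotiented forms lie in $\tau$; combined with $\kappa^{(j)}\subseteq\tau$ this yields $\kappa^{(j+1)}\subseteq\tau$. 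Disjointness $\tauc\inter\xi=\emptyset$ then follows from the block structure, since marked and unmarked nodes are disjoint.

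For (iii) I would verify cycle-breaking node by node. If $N$ is marked, the marking step discarded one cycle action $c_k$ and placed only $c_1,\dots,c_{k-1}$ into $\tauc$; since $c_k$ is a cycle action of the marked node $N$, it cannot reappear in $\xi$ (built only from unmarked nodes) nor elsewhere in $\tauc$, so $\tauc\union\xi$ omits a cycle action of $N$. If $N$ is unmarked, step 4 either keeps only $\CyN\inter\tau\subsetneq\CyN$ (omitting some cycle action not in $\tau$, which cannot lie in $\tauc\subseteq\tau$) or explicitly drops one action $\hat c\in\CyN$ (which, being a cycle action of an unmarked node, is absent from $\tauc$). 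Either way $\tauc\union\xi$ omits a cycle action of $N$, so it is cycle-breaking.

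The hard part will be (iv), $\taum\inter\tau=\emptyset$, which is exactly the purpose of the choice in step 4's ``otherwise'' branch. Since $\taum=\sdiff{\Cp}{\xi}$, it suffices to show $\Cp\inter\tau\subseteq\xi$. I would first argue that any $c\in\Cp$ is a cycle action of some unmarked node $N$: by definition $c'$ is a core cycle action of $H^*$, hence a cycle action of some node of $H^*$, and the nodes of $H^*$ are precisely the surviving unmarked nodes of $H$ (the maximal marked nodes having collapsed to leaves). Establishing this correspondence carefully — that quotienting by the maximal marked nodes preserves the cycle actions of every surviving node — is the one place I expect to have to work. Granting it, I examine step 4's treatment of $N$ when $c\in\tau$: in the first branch $c\in\CyN\inter\tau\subseteq\xi$; in the ``otherwise'' branch the discarded $\hat c$ is chosen from $\sdiff{\CyN}{\Cp}$, so $\hat c\neq c$ (as $c\in\Cp$) and $c\in\sdiff{\CyN}{\{\hat c\}}\subseteq\xi$. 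In both cases $c\in\xi$, so $c\notin\taum$; thus $\Cp\inter\tau\subseteq\xi$ and $\taum\inter\tau=\emptyset$.
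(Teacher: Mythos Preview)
Your proposal is correct and follows essentially the same approach as the paper. The paper is terser---it dismisses (i), (ii), (iii) as ``clear from the construction'' and gives only the (iv) argument---but your explicit block-by-node bookkeeping and your (iv) argument (pivoting on the fact that the discarded action $\hat c$ in step~4's ``otherwise'' branch is chosen outside $\Cp$) match the paper's reasoning exactly.
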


\vso

\begin{proof}
The loop in step 2 of Construction~\ref{acyclicdissection} runs at most
a finite number of times, since the graph $H$ is finite.  As a result,
an acyclic dissection $(\tauc, \taup, \taum, \xi)$ of $\tau$ is
well-defined by step 5.

\vst

Assertions (i), (ii), and (iii) are clear from the construction.

\vst

To establish assertion (iv), suppose $a\in\taum\inter\tau$.  Let prime
notation denote quotienting from $H$ to $\mskip1muH^*$, with
$\mskip1muH^*$ as defined in step 3 of the construction, and assume
the rest of the notation from the construction.

Then $a\in\Cp$, $a\in\tau$, and $a\not\in\xi$.

So $a^\prime\in\CHp$, implying $a\in\CyN$, with $\CyN$ the cycle
actions of some unmarked node $N$ in $H$.

\vst

If $\CyN\inter\tau$ is proper subset of $\CyN$, then $a\in\xi$, by
step 4 of the construction, producing a contradiction.

So $\CyN\inter\tau=\CyN$.  Let $c$ be the action removed in step 4 of
the construction.  So $c\not\in\Cp$.  Since $a\not\in\xi$, $a\in\CyN$,
and $\sdiff{\CyN\!}{\!\{c\}}\subseteq\xi$, it must be that $a=c$, but
that contradicts $a\in\Cp$.
\end{proof}

And here is a generalization of Lemma~\ref{disjointforwardcore}:

\begin{lemma}[Disjoint Forward Projections]\label{disjointforward}
\ Suppose $H=(W,\frakB)$ is a hierarchical cyclic graph and
$\tau\subseteq\frakB$.  \ Construct $\vtsp(\tauc, \taup, \taum, \xi)$
from $\mskip3mu\tau\mskip-0.5mu$ as per Construction~\ref{acyclicdissection}.  \
Let $\eta = \tauc \union \taup$.

\vspace*{0.05in}

Given $\vmsp{}c\in\taum$, \vmsp{}write $\vmsp{}c = w \rightarrow T$ and define

\vspace*{-0.1in}

$$J_c = \calF_\eta(T).$$

\hspace*{0.74in}(The definition is sensible since $\eta$ is
cycle-breaking in $H\!$ and so $\eta\in\DTH$.)

\vspace*{0.1in}

Then:

\hspace*{0.6in}\begin{minipage}{5in}
\begin{itemize}
\addtolength{\itemsep}{-3pt}
\item[(a)] The sets in the family
  $\mskip1mu\{J_c\}_{c\mskip1mu\in\mskip1mu\taum}\!$ are pairwise
  disjoint.
\item[(b)] For each $c\in\taum$, \,$\src(c)\not\in J_c$.
\end{itemize}
\end{minipage}
\end{lemma}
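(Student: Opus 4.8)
The plan is to reduce everything to Lemma~\ref{disjointforwardcore} applied inside the disrupted quotient graph $H^*$ produced by Construction~\ref{acyclicdissection}, and then to transport the resulting disjointness back to $H$ along the quotient map. Throughout, let $q\colon W \to W^*$ be the map that collapses each maximal marked node of $H$ to its representative state in $H^* = (W^*, \frakD^*)$, and let $\tau^* = \tau^{(j+1)}$ be the image of $\tau$, which the construction guarantees is disruptive in $H^*$. I first record that $\eta = \tauc \union \taup$ is cycle-breaking in $H$: since $\taup \subseteq \xi$ by Lemma~\ref{acyclicprop}(i), we have $\eta \subseteq \tauc \union \xi$, which is cycle-breaking by Lemma~\ref{acyclicprop}(iii), and subsets of cycle-breaking sets are cycle-breaking; hence $\eta \in \DTH$ by Lemma~\ref{nocyclestrats} and the forward projections $J_c = \calF_\eta(T)$ make sense.

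Next I would establish the correspondence between the $H$-level and $H^*$-level data. Using step~4 of Construction~\ref{acyclicdissection}, I check that for every $c \in \Cp$ one has $c \in \xi \iff c \in \tau$: the only action a node can contribute to $\tau$ but withhold from $\xi$ is the discarded action $c_0 \in \sdiff{\CyN}{\Cp}$, which does not lie in $\Cp$. Since $\Cp$-actions survive as genuine (core cycle) actions in $H^*$, we also have $c \in \tau \iff c' \in \tau^*$. Combining these with $\taup = \Cp \inter \xi$ and $\taum = \sdiff{\Cp}{\xi}$ shows that the primed versions of $\taup$ and $\taum$ are exactly $(\tau^*)_+ := \CHp \inter \tau^*$ and $(\tau^*)_- := \sdiff{\CHp}{\tau^*}$, and that $c \mapsto c'$ is a bijection $\taum \to (\tau^*)_-$.

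I would then apply Lemma~\ref{disjointforwardcore} inside $H^*$ to the strategy $\hat\tau = (\tau^*)_+$. This is legitimate because $\hat\tau$ is both convergent and disruptive: it is cycle-breaking in $H^*$ (a leaf-covering node has at least two cycle actions missing from $\tau^* \supseteq \hat\tau$ by disruptiveness, while a node that is not leaf-covering has a cycle action targeting a non-leaf child, which is not a core cycle action and hence absent from $\CHp \supseteq \hat\tau$), so $\hat\tau \in \DHp$ by Lemma~\ref{nocyclestrats}; and $\hat\tau \subseteq \tau^*$ inherits disruptiveness. Since $\CHp \inter \hat\tau = (\tau^*)_+$ and $\sdiff{\CHp}{\hat\tau} = (\tau^*)_-$, Lemma~\ref{disjointforwardcore} yields that the sets $J^*_{c'} = \calF_{(\tau^*)_+}(\{t'\})$, indexed by $c' \in (\tau^*)_-$ with $t' = \trg(c')$ a single state (as $c'$ is a core cycle action), are pairwise disjoint, and that $\src(c') \notin J^*_{c'}$.

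The crucial step, and the one I expect to be the main obstacle, is transporting forward projections along $q$, namely showing $v \in J_c \implies q(v) \in J^*_{c'}$. Given a witnessing $\eta$-trajectory $t = v_1 \to \cdots \to v_m = v$ in $H$ with $t \in T$, I project it by $q$. Each edge comes from an action of $\eta = \tauc \union \taup$: an edge from $\tauc$ has both endpoints inside a single maximal marked node (its action is a cycle action of a marked node, moving only among that node's children), so it collapses to a stationary step in $H^*$; an edge from $\taup$ projects onto the action edge of the corresponding $\CHp$-action, which lies in $(\tau^*)_+$. Deleting the stationary steps leaves a $(\tau^*)_+$-trajectory in $H^*$ beginning at $q(t) = t'$ (here I use that $c'$ is deterministic, so $q(T) = \{t'\}$), whence $q(v) \in \calF_{(\tau^*)_+}(\{t'\}) = J^*_{c'}$. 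Both conclusions follow: for (a), if $v \in J_c \inter J_d$ with $c \neq d$ in $\taum$, then $q(v) \in J^*_{c'} \inter J^*_{d'}$ with $c' \neq d'$, contradicting disjointness in $H^*$; for (b), if $\src(c) \in J_c$, then $q(\src(c)) = \src(c') \in J^*_{c'}$, contradicting $\src(c') \notin J^*_{c'}$. The delicate bookkeeping lies in verifying that $\tauc$-actions never straddle two maximal marked nodes and that each $\taup$-action quotients to a deterministic core cycle action whose target set in $H$ collapses to the single state $t'$.
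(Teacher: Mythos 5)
Your proposal is correct and follows essentially the same route as the paper's proof: reduce to Lemma~\ref{disjointforwardcore} in the disrupted quotient $H^*$, identify the primed versions of $\taup$ and $\taum$ with the core-cycle split there, and transport forward projections by projecting $\eta$-paths, with $\tauc$-edges collapsing to self-loops and $\taup$-edges surviving as core cycle action edges. The only (cosmetic) difference is that you certify the disruptive convergent strategy $\CHp\inter\tau^*$ where the paper uses $\xi^\prime$; since the core lemma's projections depend only on the $\CHp$-part, and you check $\CHp\inter\tau^*=\CHp\inter\xi^\prime$, the two choices yield the same sets $J^*_{c^\prime}$.
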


\begin{proof}
Let prime notation denote quotienting from $H$ to $H^*$, where
sensible, with $H^*$ as in step 3 of the construction.  Write
$H^*=(W^\prime, \frakD^\prime)$, viewed with a tree decomposition
derived from that of $H$.

\vsr

Since $\xi$ arises only from cycle actions of unmarked nodes, each
action in $\xi^\prime$ is a well-defined convergent action in
$\frakD^\prime$.  By construction, $\xi^\prime$ is cycle-breaking in
$H^*$, so $\xi^\prime\in\DHp$.  Since $\xi^\prime\subseteq\tau^*$,
$\xi^\prime$ is disruptive in $H^*$.  Consequently,
Lemma~\ref{disjointforwardcore} applies to the graph $H^*$ and the
disruptive strategy $\xi^\prime$.

\vsr

In $H$, we have $\taup={\Cp}\inter{\xi}$ and $\taum=\sdiff{\Cp}{\xi}$.
Therefore, each action $c\in\taup$ corresponds to an action $c^\prime
\in \taupp = {\CHp}\inter{\xi^\prime}$ in $H^*$, and each action
$c\in\taum$ corresponds to an action $c^\prime \in \taump =
\sdiff{\CHp}{\xi^\prime}$.  \ (Recall the comment about ``one-to-one
correspondence'' on page~\pageref{quotient}.)

\vsr

Let $\geq$ be the partial order induced by $\eta$ on $W$ and let
$\geq^*$ be the partial order induced by $\taupp$ on $W^\prime$.
Suppose $v \geq w$, with $v,w \in W$.  Then $v^\prime \geq^*
w^\prime$, with $v^\prime, w^\prime \in W^\prime$ being the state
relabelings of $v$ and $w$, respectively.  (Why?  If there is a path
of action edges from $v$ to $w$ with the actions drawn from $\eta$,
then there is a path of action edges from $v^\prime$ to $w^\prime$
with the actions drawn from $\eta^\prime$.  Some of the action edges
between states in $W\mskip-2.3mu$ may become self-loops when sources
and targets are relabeled as states in $W^\prime$.  Indeed,
$v^\prime=w^\prime$ is possible even if $v\neq w$.  Any such
self-loops could only come from actions in $\taucp$.  Conversely, all
actions in $\taucp$ are self-loops.  One discards those actions in
forming $H^*$, leaving only $\taupp$ from $\eta^\prime$.  Thus there
is a path of action edges from $v^\prime$ to $w^\prime$ with the
actions drawn from $\taupp$.)

\vsr

It follows that $v\in J_c$ implies $v^\prime\in J_{c^\prime}$, with
$J_{c^\prime}$ defined for $H^*$ and $\xi^\prime$ as in
Lemma~\ref{disjointforwardcore}, now using $c^\prime$ in place of $c$,
$\vtsp\taupp$ in place of $\taup$, and $\taump$ in place of $\taum$.
\ (To see this, write $c = w \rightarrow T$.  The set of targets $T$
becomes a single state $t^\prime\in W^\prime$, since
$c^\prime\in\CHp$.  Write $c^\prime = w^\prime \rightarrow t^\prime$.
If $v\in J_c$, then $t \geq v$ for some $t\in T$, so $t^\prime \geq^*
v^\prime$, and thus $v^\prime\in J_{c^\prime}$.)

\vsr

Consequently, Lemma~\ref{disjointforwardcore} establishes the claims of
the current lemma.
\end{proof}

\vspace*{0.1in}
\subsection{Alternate Development: Quotienting until Disruption}
\markright{Alternate Development: Quotienting until Disruption}

This subsection restates Construction~\ref{acyclicdissection}
recursively without mentioning markings, then provides induction
proofs of the corresponding lemmas.  The key steps are the same as
before.  The rest of Section~\ref{nondet} will prove
Theorem~\ref{longiars} for pure nondeterministic graphs using the
earlier iterative construction, side-stepping any issue of strategy
maximality in quotient graphs.  \ Section~\ref{stochastic} will engage
that issue when proving Theorem~\ref{longiars} for pure stochastic
graphs.

\vsr

\begin{construction}[Alternate Construction: Acyclic Dissection]\label{altacyclicdissection}
\ Let $H=(W,\frakB)$ be a hierarchical cyclic graph. \ Suppose
$\tau\subseteq\frakB$.  An \vtsp\mydefem{acyclic dissection
$(\tauc,\vtsp \taup,\vtsp \taum,\vtsp \xi)\vtsp$ of $\vtsp\tau\vtsp$
in $H$} is defined recursively as follows:
\end{construction}

\vspace*{-0.1in}

\begin{itemize}

\item[I.]  Suppose $\tau$ is disruptive in $H$:

\vspace*{-0.1in}

\begin{enumerate}

  \item Define $\xi$ as follows, starting from $\xi=\emptyset$:

        For each node $N$ in $H$, let $\CyN$ be $N$'s cycle actions.
        If $\CyN\inter\tau$ is a proper subset of $\CyN$, add all of
        $\CyN\inter\tau$ to $\xi$.  Otherwise, there is at least one
        action $c$ in $\sdiff{\CyN}{\CH}$.  (If not, then $N$ would
        cover only leaves in $H$ and thus $\tau$ would not be
        disruptive.)  \ Pick one such action $c$ and add the remaining
        actions $\sdiff{\CyN}{\{c\}}$ to $\xi$.

  \item Let $\,\tauc=\emptyset$,
            $\,\taup=\,\CH\inter\xi$, \,and
            $\,\taum=\,\sdiff{\CH}{\xi}$.

\end{enumerate}

\item[II.] Suppose $\tau$ is not disruptive in $H$:

\vspace*{-0.1in}

\begin{enumerate}

  \item Let $N=(U, \frakE)$ be a node in $H$ that covers only leaves
        and at most one of whose cycle actions is absent from $\tau$.
        The actions $\frakE$ are necessarily $N$'s cycle actions.
        Discard one of those actions, so the rest all lie in $\tau$.
        Denote that resulting set by $\calC$.

  \item Let $H^*=(W^\prime,\frakD^\prime)$ be the hierarchical cyclic
        graph formed from the quotient graph $H/U$ by discarding
        self-loops.  Let $\,(\tauzp,\, \taupp,\, \taump,\,
        \xi^\prime)\,$ be a recursively constructed acyclic dissection
        of $\tau^\prime\inter\frakD^\prime$ in $H^*$.  (As usual,
        prime notation describes the correspondence between actions of
        $\mskip2.5muH$ and actions of $\mskip2.5muH/U$.)

  \item Now define the sets of actions $\,\taup$, $\,\taum$, and
        $\,\xi$ by unquotienting, that is, by direct correspondence
        from the sets of actions $\,\taupp$, $\,\taump$, and
        $\,\xi^\prime$, respectively.
        \quad Finally, let $\tauc = \tauz \union \calC$, with $\tauz$
        formed from $\tauzp$ by unquotienting.

\end{enumerate}

\end{itemize}

\paragraph{Proof of Lemma~\ref{acyclicprop}, assuming alternate acyclic
  dissection given by Construction~\ref{altacyclicdissection}:}

\begin{proof}
The construction terminates because $H$ is finite and each recursive
invocation of the construction replaces a node with a leaf.

The proof of the specific assertions is by induction, with Case I of
the construction defining the base case and Case II defining the
inductive step:

\vspace*{-0.05in}

\begin{itemize}

\item[I:] In the base case, assertions (i), (ii), and (iii) are
          immediate from the construction.  Assertion (iv) follows as
          it did in the earlier proof of Lemma~\ref{acyclicprop}, but
          now working directly with $H$ rather than needing to form a
          quotient.

\item[II:] Inductively, we assume assertions (i)--(iv) hold for an
           acyclic dissection $\,(\tauzp,\, \taupp,\, \taump,\,
           \xi^\prime)\,$ of
           $\mskip1mu\tau^\prime\inter\frakD^\prime\mskip1mu$ in
           $H^*$, using the notation from the construction.  Then:

  \vspace*{-0.05in}

   \begin{itemize}
   \addtolength{\itemsep}{2pt}

   \item[(i)] $\taupp\subseteq\xi^\prime$, so $\taup\subseteq\xi$
              \ and \ 
              $\taump\subseteq\frakD^\prime$, so
              $\taum\subseteq\frakD\subseteq\frakB$.

   \item[(ii)] $\tauzp\union\xi^\prime\subseteq\tau^\prime\inter\frakD^\prime$,
              so
              $\tauc\union\xi=\tauz\union\calC\union\xi\subseteq\tau$,
              since $\calC\subseteq\tau$.

              $\tauzp\inter\xi^\prime=\emptyset$, so
              $\tauz\inter\xi=\emptyset$.  \ Since all actions in
              $\frakE$ become self-loops and are discarded when
              forming $H^*$ from $H$, $\,\calC\inter\xi=\emptyset$,
              and so $\tauc\inter\xi=\emptyset$.

   \item[(iii)] $\tauzp \union \xi^\prime\hspc$ is cycle-breaking in $H^*$,
              so $\tauc\union\xi=\tauz\union\calC\union\xi$ is cycle-breaking
              in $H$, since $\calC$ consists of a proper subset of one
              node's cycle actions and since $\tauz\union\xi$ does not
              include any of that node's actions.

   \item[(iv)] $\taump\inter\tau^\prime\inter\frakD^\prime=\emptyset$,
                so $\taum\inter\tau\inter\frakD=\emptyset$.  Since
                $\taum\subseteq\frakD$, it follows that
                $\taum\inter\tau=\emptyset$.
   \end{itemize}

\end{itemize}

\vspace*{-0.35in}

\end{proof}

\paragraph{Proof of Lemma~\ref{disjointforward}, assuming alternate acyclic
  dissection given by Construction~\ref{altacyclicdissection}:}

\begin{proof}
Again by induction:

\vspace*{-0.1in}

\begin{itemize}

\item[I:] The base case follows from Lemma~\ref{disjointforwardcore},
          with $\xi$ in place of $\tau$, since $\xi$ is a disruptive
          strategy by Lemmas~\ref{acyclicprop} and \ref{nocyclestrats}.

\item[II:] Inductively, the argument is much the same as in the
           earlier proof of this lemma.  One assumes the assertions
           hold for the hierarchical cyclic quotient graph $H^*$.  In
           moving back to $H$, one state of $H^*$ turns back into a
           cycle of states, with all but one of the cycle actions
           added to $\tauz$ to form $\tauc$.  The other sets of
           actions in the dissection do not change as one moves back
           from $H^*$ to $H$, except for relabelings of sources and
           targets. Consequently, execution paths of
           $\tauc\union\taup$ (within $H$) imply execution paths of
           $\tauzp\union\taupp$ (within $H^*$), thereby establishing
           the lemma's assertions for $H$.
\end{itemize}

\vspace*{-0.34in}

\end{proof}

\clearpage
\subsection{Acyclic Dissection Sizes}
\markright{Acyclic Dissection Sizes}

This subsection measures the size of the set
$\tauc\union\taup\union\taum$ in an acyclic dissection.\\
When reading the lemma below, recall that
Construction~\ref{acyclicdissection} marks nodes in $H$.

\begin{lemma}[Subgraph Sizes]\label{subcyclesize}
\ Suppose $H=(W,\frakB)$ is a hierarchical cyclic graph and
$\tau\subseteq\frakB$.  \ Let \hspq$H^*=(W^\prime,\frakD^\prime)$ and
$\mskip2mu\tauc$ be derived from $\mskip2mu\tau$ as per step 3 in
Construction~\ref{acyclicdissection} on
page~\pageref{acyclicdissection}.

\vso

For each $u\in W^\prime$, define $\,(W_u, \frakB_u)$ as follows: \hspd
If $\mskip2muu\in W$, let $\,(W_u, \frakB_u)$ be the leaf $\,(\{u\},
\emptyset)$ of $H$.  If $\vtsp{}u\not\in W$, let $\vtsp(W_u,
\frakB_u)$ be the maximal marked node of $\vtsp{}H$ for which $u$
represents $W_u$.

\vsr

Then \hspd$\abs{\tauc \inter \frakB_u} = \abs{W_u} - 1$,\, for each
$u\in W^\prime$.
\end{lemma}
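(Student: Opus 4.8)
The plan is to reduce the identity to a simple count on the tree decomposition of each maximal marked node, after first pinning down precisely which actions enter $\tauc$ and where inside $H$ they live. I would first dispose of the easy case $u\in W$: here $(W_u,\frakB_u)$ is the leaf $(\{u\},\emptyset)$, so $\frakB_u=\emptyset$ and hence $\tauc\inter\frakB_u=\emptyset$, while $\abs{W_u}-1=0$. So the identity holds trivially, and it remains to treat $u\notin W$, where $M=(W_u,\frakB_u)$ is a maximal marked node.

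The crux is to understand the marking process of Construction~\ref{acyclicdissection}. I would establish the structural claim that \emph{every} node of $H$ in the subtree rooted at $M$ is marked by the time $M$ itself is marked. The argument runs top-down from $M$: to mark $M$ at some step $j$, the corresponding quotient node in $H^{(j)}$ must cover only leaves, so each child of $M$ in $H$ is either an original leaf or a node that has already been collapsed to a single state, i.e.\ an already-marked maximal marked node; descending this reasoning through the subtree shows all of $M$'s descendant nodes are marked. Conversely, since $M$ is \emph{maximal} marked, no proper ancestor of $M$ is marked at all.

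Next I would identify $\tauc\inter\frakB_u$. When a node $N$ is marked, step 2(b) of Construction~\ref{acyclicdissection} deposits $k_N-1$ of $N$'s (top-level) cycle actions into $\kappa$, where $k_N$ is the number of children of $N$: this equals the number of cycle actions of the quotient node at marking time, since collapsing each child to a single state preserves the number of children and carries $N$'s top-level cycle actions to the quotient's cycle actions under the one-to-one action correspondence. A cycle action of $N$ has both its source and its target inside $N$'s state space, hence lies in $\frakB_N\subseteq\frakB_u$ whenever $N$ lies in $M$'s subtree. Conversely, a cycle action of any marked node $N$ \emph{not} in $M$'s subtree joins two distinct children of $N$ and so has at most one endpoint in $W_u$, putting it outside $\frakB_u$; and no ancestor of $M$ contributes, since none is marked. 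As cycle actions of distinct nodes are distinct and $\kappa$ only grows, this yields $\abs{\tauc\inter\frakB_u}=\sum_N(k_N-1)$, the sum ranging over the nodes $N$ of $M$'s subtree.

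Finally I would run the count. Let the tree decomposition of $M$ have $p$ nodes and $\ell$ leaves; since leaves correspond bijectively to states, $\ell=\abs{W_u}$. Summing child-counts over all nodes enumerates each non-root vertex exactly once, so $\sum_N k_N=(p+\ell)-1$, and therefore
$$\sum_N(k_N-1)=(p+\ell-1)-p=\ell-1=\abs{W_u}-1,$$
as required. The main obstacle is the bookkeeping in the second and third steps: verifying that the subtree of a maximal marked node is marked in its entirety, and that the actions $\tauc$ places inside $\frakB_u$ are exactly $k_N-1$ per descendant node with nothing leaking in from ancestor or sibling branches. Once that is secured, the closing identity is a one-line tree count.
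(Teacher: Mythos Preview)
Your argument is correct and takes a genuinely different route from the paper. The paper proves the identity by induction on the iteration count $j$ of the loop in Construction~\ref{acyclicdissection}, maintaining the invariant $\abs{\kappa^{(j)}\inter\frakB_u}=\abs{W_u}-1$ for every state $u$ of the intermediate quotient $H^{(j)}$; the inductive step splits into two cases according to whether $u$ survives unchanged into $H^{(j+1)}$ or is the newly collapsed state $\wrep$, and in the latter case sums the invariant over the children of the freshly marked node. You instead work directly at the terminal configuration: identify $\tauc\inter\frakB_u$ as exactly the $k_N-1$ cycle actions deposited by each node $N$ in $M$'s subtree, then close with the tree handshake $\sum_N k_N=(p+\ell)-1$. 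Your route is more structural and avoids step-by-step tracking of the loop; the paper's route is more mechanical but mirrors the construction, which may help a reader who is simultaneously verifying Construction~\ref{acyclicdissection}.

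One small tightening: your clause that a marked node $N$ outside $M$'s subtree contributes a cycle action with ``at most one endpoint in $W_u$'' is slightly roundabout. Since you already handle ancestors of $M$ separately (none is marked), any remaining marked $N$ outside $M$'s subtree is incomparable to $M$ in the tree, so $W_N\inter W_u=\emptyset$ outright and its cycle actions have \emph{no} endpoint in $W_u$. Stating it this way removes the need to reason about how $W_u$ sits among $N$'s children.
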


\begin{proof}
The proof is by induction on the iteration count $j\hspc$ in the loop
of Construction~\ref{acyclicdissection}, now using $H^{(j)}$ in place
of $H^*$, $\kappa^{(j)}$ in place of $\tauc$, and with the collection
of marked nodes dependent on $j$.  The base case, $j=0$, corresponds
to all $u$ being in $W$, for which the lemma's assertion is clear.
Inductively, suppose the lemma's assertion is true for $H^{(j)}$.

In forming $H^{(j+1)}$ from $H^{(j)}$, one marks an unmarked node
$N=(V, \frakA)$ of $H$ whose corresponding node $N^\prime$ in
$H^{(j)}$ covers only leaves.  So $N$ is now a maximal marked node.
\ Let $W^{(j)}$ be the states of $H^{(j)}$, $W^{(j+1)}$ the states of
$H^{(j+1)}$, and $V^\prime$ the states of $N^\prime$.  Then
$W^{(j+1)}=\big(\sdiff{W^{(j)}}{V^\prime}\big)\;\union\;\{\wrep\}$,
with $\wrep$ representing the states $V^\prime$ identified to a
singleton.

\vst

Let $\{c_1, \ldots, c_k\}$ and $\{(V_1, \frakA_1), \ldots, (V_k,
\frakA_k)\}$ be the cycle actions and children of $N$ in $H$, respectively.
\ Without loss of generality, $\,\kappa^{(j+1)} = \kappa^{(j)} \union
\{c_1, \ldots, c_{k-1}\}$.  One has $k=\abs{V^\prime}>1$.

\vspace*{0.1in}

Case I: \ Suppose $u\in W^{(j+1)} \inter W^{(j)}$.  \ Then the
definition of $(W_u, \frakB_u)$ is the same via $H^{(j+1)}$ as via
$H^{(j)}$.  So $\abs{\kappa^{(j)} \inter \frakB_u} = \abs{W_u} - 1$.
Since $(W_u, \frakB_u)$ is either a leaf or a marked node of $H$ at
the $j^{\scriptstyle{th}}$ iteration of the loop in
Construction~\ref{acyclicdissection}, $\frakB_u$ contains none of node
$N$'s cycle actions.  Thus $\kappa^{(j+1)} \inter \frakB_u =
{\kappa^{(j)} \inter \frakB_u}$ and so $\abs{\kappa^{(j+1)} \inter
\frakB_u} = \abs{W_u} - 1$, inductively.

\vspace*{0.1in}

Case II: \ Suppose $u=\wrep$.  \ Then the definition of $(W_\wrep,
\frakB_\wrep)$ via $H^{(j+1)}$ is $N$, so $W_\wrep = V$ and
$\frakB_\wrep = \frakA$.  The states $V^\prime$ of $N^\prime$ in
$H^{(j)}$ are in one-to-one correspondence with the children $\{(V_i,
\frakA_i)\}$ of $N$.  Inductively, $\abs{\kappa^{(j)} \inter \frakA_i}
= \abs{V_i} - 1$.  Again, $\kappa^{(j+1)} \inter \frakA_i =
\kappa^{(j)} \inter \frakA_i$.  \ Moreover,

\vspace*{-0.05in}

$$W_\wrep  \;=\; V_1 \union \cdots \union V_k
  \qquad\hbox{and}\qquad
  B_\wrep  \;=\;  \{c_1, \ldots, c_k\} \union \frakA_1 \union \cdots \union \frakA_k.$$

\vspace*{0.05in}

\noindent By reasoning about markings, one further knows that
$\kappa^{(j)}\inter\{c_1, \ldots, c_k\}=\emptyset$.  \ Therefore

\vspace*{-0.1in}

$$\kappa^{(j+1)} \inter B_\wrep \;\,=\,\;
   \{c_1, \ldots, c_{k-1}\}
    \;\,\union\,\; 
    \bigunion_{i=1}^k\big(\kappa^{(j)} \inter \frakA_i\big)$$

\noindent and

\vspace*{-0.3in}

\begin{eqnarray*}
\abs{\kappa^{(j+1)} \inter B_\wrep}
  & = &
  (k-1) \;+\; \sum_{i=1}^k \abs{\kappa^{(j)} \inter \frakA_i} \\[2pt]
  & = &
  (k-1) \;+\; \sum_{i=1}^k \big(\abs{V_i} - 1\big) \\[2pt]
  & = &
  - 1 \;+\; \sum_{i=1}^k \abs{V_i} \\[5pt]
  & = &
  \abs{W_\wrep} - 1.
\end{eqnarray*}

\vspace*{-0.25in}

\end{proof}

\begin{corollary}[Dissection Sizes]\label{dissize}
\ Suppose $H=(W,\frakB)$ is a hierarchical cyclic graph and
$\tau\subseteq\frakB$.  \ Let $\tauc$, $\taup$, $\taum$, and $H^*$ be
derived from $\tau$ as per Construction~\ref{acyclicdissection} on
page~\pageref{acyclicdissection}.

\vso

Let $n=\abs{W}$ and $m=\abs{\tauc\union\taup\union\taum}$.

\vso

If \hspd$H^*\mskip-1.5mu$ is a leaf, then \hspd$m=n-1$.  \ 
If \hspd$H^*\mskip-1.5mu$ is a node, then \hspd$m=n$.
\end{corollary}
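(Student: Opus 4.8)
The plan is to split $m = \abs{\tauc \union \taup \union \taum}$ into the contribution of $\tauc$ and the contribution of $\taup \union \taum$, count each separately, and then recombine. The first observation is purely set-theoretic: by step~6 of Construction~\ref{acyclicdissection}, $\taup = \Cp \inter \xi$ and $\taum = \sdiff{\Cp}{\xi}$, so $\taup$ and $\taum$ partition $\Cp$; hence $\taup \union \taum = \Cp$ and $\abs{\taup \union \taum} = \abs{\Cp}$. Next I would verify that $\tauc$ is disjoint from $\Cp$. The actions of $\tauc$ are cycle actions of \emph{marked} nodes, namely the $k-1$ cycle actions kept each time a node is marked in step~2(b), whereas $\Cp = \setdef{c\in\frakB}{c'\in\CHp}$ consists of actions whose quotient images are core cycle actions of $H^*$, i.e.\ cycle actions of \emph{unmarked} nodes of $H$. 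Since every action is a cycle action of at most one node, and that node is either marked or unmarked, $\tauc \inter \Cp = \emptyset$, giving $m = \abs{\tauc} + \abs{\Cp}$.

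The second step counts $\abs{\tauc}$ by summing Lemma~\ref{subcyclesize} over all states of $H^*$. The sets $\frakB_u$, for $u\in W'$, are pairwise disjoint (actions of distinct leaves or maximal marked nodes), and their union contains $\tauc$, because each action of $\tauc$ is a cycle action of some marked node and therefore lies in the action set of the maximal marked node containing it. Thus $\abs{\tauc} = \sum_{u\in W'}\abs{\tauc \inter \frakB_u} = \sum_{u\in W'}\bigl(\abs{W_u}-1\bigr)$ by Lemma~\ref{subcyclesize}. Since the state sets $\{W_u\}_{u\in W'}$ partition $W$ (each state of $H$ is collapsed into exactly one maximal marked node or survives as its own leaf), one has $\sum_{u}\abs{W_u}=n$, and there are $\abs{W'}$ summands, so $\abs{\tauc} = n - \abs{W'}$.

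The third step evaluates $\abs{\Cp}$. The one-to-one correspondence between actions of $H$ and actions of the quotient $H^*$ restricts to a bijection $\Cp \leftrightarrow \CHp$, so $\abs{\Cp}=\abs{\CHp}$, and here the two cases split. If $H^*$ is a leaf, then $\abs{W'}=1$ and $\CHp=\emptyset$, so $\abs{\Cp}=0$ and $m = (n-1)+0 = n-1$. If $H^*$ is a node, then $\abs{W'}>1$, and the observation that $\abs{\CHp}=\abs{W'}$ for a hierarchical cyclic graph with at least two states yields $\abs{\Cp}=\abs{W'}$, whence $m = (n-\abs{W'})+\abs{W'} = n$. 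The main obstacle is the bookkeeping behind the two disjointness claims, namely that $\tauc$ lies entirely inside $\bigunion_{u\in W'}\frakB_u$ and that $\tauc \inter \Cp = \emptyset$: these are precisely the facts that let the summation over $W'$ capture all of $\tauc$ and let the counts of $\tauc$ and $\Cp$ add with no double counting.
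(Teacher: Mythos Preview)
Your argument is correct and follows essentially the same route as the paper: split $m$ into $\abs{\tauc}$ and $\abs{\taup\cup\taum}=\abs{\Cp}$, use Lemma~\ref{subcyclesize} summed over $W'$ to get $\abs{\tauc}=n-\abs{W'}$, and use $\abs{\Cp}=\abs{\CHp}$ together with the leaf/node dichotomy to finish. One trivial slip: the definitions $\taup=\Cp\inter\xi$ and $\taum=\sdiff{\Cp}{\xi}$ occur in step~5 of Construction~\ref{acyclicdissection}, not step~6.
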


\begin{proof}
Suppose $H^*$ is a leaf.  Then $H^*=(\{u\}, \emptyset)$, for some $u$,
and $\taup=\taum=\emptyset$.  Using the notation of
Lemma~\ref{subcyclesize}, $(W_u, \frakB_u)$ must be all of $H$.
\ The lemma then implies that $m=\abs{\tauc} = \abs{\tauc\inter\frakB}
= \abs{\tauc\inter\frakB_u} = \abs{W_u}-1 = \abs{W}-1=n-1$, as
claimed.

\vspace*{0.1in}

Suppose $H^*$ is a node.  Let $W^\prime$ be the states of $H^*$.  For each
$u\in{W^\prime}$, let $(W_u, \frakB_u)$ be defined as in
Lemma~\ref{subcyclesize}.  Since $\tauc$ is formed from cycle actions
in marked nodes of $H$, $\,\tauc =
\bigunion_{u\in{W^\prime}}(\tauc\inter\frakB_u)$.  We also know that
$W=\bigunion_{u\in{W^\prime}}W_u$. \ Thus, by the lemma,

\vspace*{-0.2in}

\begin{eqnarray*}
\abs{\tauc} & = & \sum_{u\in{W^\prime}}\abs{\tauc\inter\frakB_u} \\[2pt]
            & = & \sum_{u\in{W^\prime}}(\abs{W_u} - 1) \\[2pt]
            & = & \abs{W} - \abs{W^\prime} \\[2pt]
            & = & n - \abs{W^\prime}.
\end{eqnarray*}

Let $\frakC$ and $\CHp$ be as in Construction~\ref{acyclicdissection}.
Then $\taup\union\taum = \frakC$, so
$\abs{\taup\union\taum}=\abs{\frakC}=\abs{\CHp} = \abs{W^\prime}$.
Consequently, $m=\abs{\tauc}+\abs{\taup\union\taum}=n$, as claimed.
\end{proof}

\vspace*{0.1in}
\subsection{Informative Action Release Sequences for Maximal Strategies}
\markright{Informative Action Release Sequences for Maximal Strategies}

This subsection assembles the previous results to prove
Theorem~\ref{longiars} for pure nondeterministic graphs.

\vst

\begin{lemma}[Minimal Nonfaces Overlapping Forward Projections]\label{overlap}
\ Let $G=(V,\frakA)$ be a fully controllable pure nondeterministic graph
with $V\mskip-4mu\neq\emptyset$ and suppose $H=(V, \frakB)$ is a
hierarchical cyclic subgraph of $G$. \ (Recall that $H\!$ exists, by
Lemma~\ref{hierarch} on page~\pageref{hierarch}.)

\vst

Let $\sigma$ be a maximal strategy in $\DG$, \hsph define
$\tau=\sigma\inter\frakB$, \hsph let $(\tauc, \taup, \taum, \xi)$ be
an acyclic dissection of $\mskip3mu\tau\mskip-0.25mu$ obtained from
Construction~\ref{acyclicdissection}, \hsph
and set $\,\eta = \tauc \union \taup$.

\vst

Suppose $c\in\taum$.  \ Write $c = w \rightarrow T$ and define

\vspace*{-0.2in}

\begin{eqnarray*}
\calK_c  &=& \setdef{\kappa \subseteq \sigma \union
    \{c\}}{\hbox{$\kappa$ is a minimal nonface of $\vmsp\DG$}},\\[5pt]
J_c      &=& \calF_\eta(T),\\[5pt]
\sigma_c &=& \setdef{a\in\sigma}{\src(a)\in{}J_c\;\hbox{and}\;\trg(a)\not\subseteq{}J_c}.
\end{eqnarray*}

Then $\,\calK_c\neq\emptyset\,$ and $\,\kappa\inter\sigma_c\neq\emptyset\,$
for every $\kappa\in\calK_c$.
\end{lemma}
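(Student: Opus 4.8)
The plan is to obtain $\calK_c\neq\emptyset$ from maximality of $\sigma$, to exploit the cyclic structure that Lemma~\ref{minnonfacestrat} forces on each minimal nonface of a pure nondeterministic strategy complex, and then to locate the required intersection by a boundary-crossing argument along the forward projection $J_c$. First I would verify that $c\notin\sigma$: by Lemma~\ref{acyclicprop} we have $\taum\subseteq\frakB$ and $\taum\inter\tau=\emptyset$, and since $\tau=\sigma\inter\frakB$ and $c\in\taum$, this gives $c\in\frakB$ with $c\notin\sigma$. Because $\sigma$ is a maximal simplex of $\DG$ and $c\notin\sigma$, the set $\sigma\union\{c\}$ cannot be a simplex, so it contains a minimal nonface; as every subset of $\sigma$ is a simplex, any such minimal nonface must contain $c$. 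Hence $\calK_c\neq\emptyset$ and every $\kappa\in\calK_c$ contains $c$. (This is exactly Comment~(i) following Lemma~\ref{minnonfacesmaxsimplex}, with $\gamma=\sigma$ and $y=c$.)

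Next I would fix $\kappa\in\calK_c$ and read off its combinatorics. Since $G$ is pure nondeterministic, Lemma~\ref{minnonfacestrat} together with its interpretation on page~\pageref{minnonfacesemantics} presents $\kappa$ as a cycle of actions with distinct sources: writing $\kappa=\{a_1,\ldots,a_k\}$ with $c=a_1$, one has $\src(a_{i+1})\in\trg(a_i)$ for all $i$ (indices wrapping, so $a_{k+1}=a_1$), and exactly one target of each $a_i$ lies in $\src(\kappa)$. Setting $s_i=\src(a_i)$, the cyclically visited states are $s_1=w\to s_2\to\cdots\to s_k\to s_1$, with $s_2\in\trg(c)=T$. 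Note $k\geq 2$, since $\{c\}$ is convergent in $H$ (every action of a hierarchical cyclic graph is convergent) and therefore is not itself a nonface.

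Finally I would run the boundary-crossing argument. The forward projection contains its seed, so $T\subseteq J_c=\calF_\eta(T)$, giving $s_2\in J_c$; on the other hand Lemma~\ref{disjointforward}(b) gives $\src(c)=w=s_1\notin J_c$. Thus the cyclic walk $s_2,s_3,\ldots,s_k,s_1$ begins inside $J_c$ and ends outside it, so there is a largest $j\in\{2,\ldots,k\}$ with $s_2,\ldots,s_j\in J_c$; then $s_{j+1}\notin J_c$ (with $s_{k+1}=s_1$). The action $a_j$ therefore satisfies $\src(a_j)=s_j\in J_c$ and $s_{j+1}\in\trg(a_j)\setminus J_c$, so $\trg(a_j)\not\subseteq J_c$. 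Since $j\geq 2$ we have $a_j\neq c$, whence $a_j\in\sigma$ (as $\kappa\subseteq\sigma\union\{c\}$), and so $a_j\in\sigma_c\inter\kappa$, proving $\sigma_c\inter\kappa\neq\emptyset$.

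I expect the substantive content to sit in the last step, and specifically in the appeal to Lemma~\ref{disjointforward}(b): the fact $w\notin J_c$ is precisely where the disruptiveness of the dissection (and the supporting quotient analysis) is consumed, and without it the cyclic walk need not cross the boundary of $J_c$ at all. The remaining delicate point is ensuring that the crossing action $a_j$ is distinct from $c$, so that it genuinely lies in $\sigma$; this is guaranteed because $s_2\in J_c$ forces the first departure from $J_c$ to occur strictly after traversing $c$, i.e.\ at an index $j\geq 2$.
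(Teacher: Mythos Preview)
Your proof is correct and follows essentially the same route as the paper: both show $c\notin\sigma$ via Lemma~\ref{acyclicprop}(iv), appeal to Lemma~\ref{disjointforward}(b) for $w\notin J_c$, and then use the minimal-nonface cycle structure from Lemma~\ref{minnonfacestrat} to find an action of $\kappa\cap\sigma$ that exits $J_c$. The only difference is stylistic --- the paper argues by contradiction (if $\kappa\cap\sigma_c=\emptyset$ then $\gamma=\{a\in\kappa:\src(a)\in J_c\}$ would be a nonempty proper sub-nonface of $\kappa$), whereas you trace the cycle explicitly and exhibit the boundary-crossing action $a_j$ directly.
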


\paragraph{Comment:}\  The lemma tells us that every action
$c\in\taum$ is part of a minimal nonface of $\DG$ whose remaining
actions lie in $\sigma$, and that at least one of those actions has
its source, but not all its targets, in the forward projection of
$c$'s targets.  Here the forward projection is based on those actions
of $\mskip2mu\sigma$ that lie in the acyclic dissection sets
$\mskip1mu\tauc$ and $\taup$.  Intuitively, it is useful to think of
$\mskip0.5muH\mskip-1.5mu$ as a single node defining a directed cycle,
with the projection of $\mskip1.5mu\sigma\mskip0.5mu$ onto that cycle
being disruptive.  Disruption means that the cycle splits into at
least two pairwise disjoint directed arcs, as follows: The cycle edges
present in $\sigma$ constitute $\taup$, the cycle edges missing from
$\sigma$ constitute $\taum$, and $\tauc$ is empty in this simple
scenario.  There is one directed arc for each action $c\in\taum$,
starting at $c$'s target.  Each arc is formed from contiguous action
edges of $\taup$.  Each arc has a forward projection flow defined on
it by the directionality of those action edges.  A directed arc ends
when it encounters the source of another action in $\taum$.  An arc may
be degenerate, consisting of a single state.  The lemma says that, for
each missing cycle edge $c$, there is some action $a\in\sigma$ whose
source lies in an arc that starts at $c$'s target, such that at least
one of $a$'s targets lies outside this arc and such that $a$ and $c$
appear together in a minimal nonface of $\DG$.  \ As we will see
shortly, the ``$a$ or $c$?'' choice is therefore informative.

\begin{proof}
By Lemma~\ref{acyclicprop}(iv) on page~\pageref{acyclicprop},
$c\not\in\sigma$.  So, since $\sigma$ is maximal in $\DG$,
$\,\calK_c\neq\emptyset$.

Let $\kappa\in\calK_c$ be given.
Define $\gamma=\setdef{a\in\kappa}{\src(a)\in{}J_c}$.
Since $\kappa$ is a minimal nonface of $\DG$, no action of $\kappa$
moves off $\src(\kappa)$, by Lemma~\ref{minnonfacestrat} on
page~\pageref{minnonfacestrat}.  Since $c\in\kappa$ and $\{c\}\in\DG$,
$\,\gamma\neq\emptyset$.  \ By Lemma~\ref{disjointforward}(b) on
page~\pageref{disjointforward}, $\src(c)\not\in{}J_c$, so
$c\in\sdiff{\kappa}{\gamma}$.  \ Thus
$\emptyset\neq\gamma\subsetneq\kappa$ and $\gamma\subseteq\sigma$.

Now suppose the lemma's second assertion is false for this $\kappa$.
Then every action in $\gamma$ has all its targets in $J_c$.  Pick some
$a\in\gamma$.  Since $a\in\kappa$, there exists $b\in\kappa$ such that
$\src(b)\in\trg(a)\subseteq{}J_c$.  We see therefore that $b\in\gamma$
and that no action of $\gamma$ moves off $\src(\gamma)$.
Consequently, $\gamma\not\in\DG$, which contradicts $\kappa$ being a
minimal nonface of $\DG$.
\end{proof}

Imagine revealing actions of some secret maximal strategy
$\mskip0.25mu\sigma\mskip-3mu\in\mskip-3mu\DG\mskip0.5mu$ to an
observer who knows $G\mskip0.85mu$ but initially merely that $\sigma$
is maximal in $\DG$.  Suppose $c\mskip1mu$ is an action in $\taum$, as
previously defined.  So $c\not\in\sigma$ and
$\sigma\union\{c\}\not\in\DG$.  Let $\sigma_c$ be as before.  The next
corollary says that so long as one has not explicitly revealed any
actions of $\sigma_c$, the observer cannot exclude the possibility
that one is revealing actions of some maximal strategy other than
$\sigma$, some strategy that does include action $c$.  Moreover, there
exists some unrevealed and unimplied action in $\sigma_c$ that one may
yet release informatively.  (The explicitly revealed actions may imply
some actions in $\sigma_c$, but so long as none of the explicitly
revealed actions themselves lie in $\sigma_c$, these assertions hold.)

\vspace*{0.1in}

\begin{corollary}[Informative Actions in Forward Projections]\label{iarsfwd}
Let the hypotheses and notation be as in Lemma~\ref{overlap}.
\ In particular, $\sigma$ is maximal in $\DG$ and $c\in\taum$.

\vso

Suppose $\gamma\subseteq\sigma$ such that $\gamma\inter\sigma_c=\emptyset$.

\vso

Let $A$ be $G$'s action relation and define $\gammacls = (\clsAy)(\gamma)$.
\ Then:

\vspace*{0.1in}

\hspace*{0.6in}\begin{minipage}{5in}
\begin{itemize}
\addtolength{\itemsep}{-3pt}
\item[(i)] $\gammacls\union\{c\}\in\DG$.
\item[(ii)] $\calK_c\neq\emptyset\,$ and
  $\,\sdiff{\big(\kappa\inter\sigma_c\big)}{\gammacls}\;\neq\;\emptyset\,$
  for every $\kappa\in\calK_c$.
\end{itemize}
\end{minipage}
\end{corollary}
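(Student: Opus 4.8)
The plan is to produce a \emph{single} maximal strategy $\hat\sigma \in \DG$ that simultaneously contains $\gamma$, contains the missing action $c$, and contains all of $\sigma$ except its escaping actions $\sigma_c$. Once such an $\hat\sigma$ is in hand, both parts drop out by elementary set manipulation against the minimal nonfaces collected in $\calK_c$. The engine is the auxiliary set $\rho = (\sdiff{\sigma}{\sigma_c}) \union \{c\}$, which I will show is convergent; note already that $\gamma \subseteq \sdiff{\sigma}{\sigma_c} \subseteq \rho$, using $\gamma \subseteq \sigma$ and $\gamma \inter \sigma_c = \emptyset$.

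The first and main step is to prove $\rho \in \DG$. Suppose instead $\rho$ contains a circuit $\calB$. Since $\sdiff{\sigma}{\sigma_c}$ is a subset of the strategy $\sigma$, it is convergent, so $\calB$ cannot avoid $c$; hence a minimal nonface $\kappa_0 \subseteq \calB$ must contain $c$ (otherwise $\kappa_0 \subseteq \sigma$, contradicting $\sigma \in \DG$). Because $G$ is pure nondeterministic, $\kappa_0$ is a directed cycle of actions, as recorded in the discussion after Lemma~\ref{minnonfacestrat}; view $c = w \rightarrow T$ as one of its edges. The source of the action following $c$ in the cycle lies in $T \subseteq J_c$, while $\src(c) = w \notin J_c$ by Lemma~\ref{disjointforward}(b). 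Travelling once around the cycle, the action sources therefore pass from inside $J_c$ to outside $J_c$, so some cycle action $a \neq c$ has $\src(a) \in J_c$ yet a target outside $J_c$. That makes $a \in \sigma_c$, contradicting $a \in \sdiff{\sigma}{\sigma_c}$. Thus $\rho$ is convergent, and I extend it to a maximal strategy $\hat\sigma \supseteq \rho$ in $\DG$.

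With $\hat\sigma$ fixed, I observe that $\hat\sigma$ is a maximal strategy containing $\gamma$, so $\gammacls = (\clsAy)(\gamma) \subseteq \hat\sigma$ (the closure of $\gamma$ lies in every maximal strategy through $\gamma$). Part (i) is then immediate: $\gammacls \union \{c\} \subseteq \hat\sigma \in \DG$, whence $\gammacls \union \{c\} \in \DG$. For part (ii), both $\calK_c \neq \emptyset$ and $\kappa \inter \sigma_c \neq \emptyset$ for every $\kappa \in \calK_c$ come straight from Lemma~\ref{overlap}. Fixing such a $\kappa$, note $c \in \kappa$ (else $\kappa \subseteq \sigma$), and split $\kappa = \{c\} \union (\kappa \inter \sigma_c) \union \big(\kappa \inter (\sdiff{\sigma}{\sigma_c})\big)$. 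If $\kappa \inter \sigma_c$ were contained in $\gammacls \subseteq \hat\sigma$, then all three pieces would lie in $\hat\sigma$, forcing $\kappa \subseteq \hat\sigma \in \DG$ and contradicting that $\kappa$ is a nonface; hence $\sdiff{\big(\kappa \inter \sigma_c\big)}{\gammacls} \neq \emptyset$.

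I expect the convergence of $\rho$ to be the crux. The delicate point is that deleting \emph{exactly} the escaping set $\sigma_c$ while inserting $c$ destroys every circuit, and this hinges on the non-looping guarantee $\src(c) \notin J_c$ from Lemma~\ref{disjointforward}(b) together with the bookkeeping $T \subseteq J_c$ built into the forward projection. The remaining steps are purely formal, relying only on $\gammacls \subseteq \hat\sigma$ and the fact that a simplex cannot contain a nonface; in particular I never need the stronger (and likely false) statement that $\gammacls \inter \sigma_c = \emptyset$, since one well-chosen witness $\hat\sigma$ suffices for both assertions.
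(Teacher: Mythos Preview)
Your proof is correct, and it takes a route genuinely different from the paper's. The paper argues (i) by first showing $\gamma\cup\{c\}\in\DG$ via Lemma~\ref{overlap} and then invoking the closure identity $\psi_A(\gammacls\cup\{c\})=\psi_A(\gamma\cup\{c\})$ from \cite{paths:privacy}; for (ii) it reduces by monotonicity to the extremal case $\gamma=\sdiff{\sigma}{\sigma_c}$ and appeals to Lemma~\ref{minnonfacesmaxsimplex}. You instead build a single concrete witness: the maximal strategy $\hat\sigma\supseteq(\sdiff{\sigma}{\sigma_c})\cup\{c\}$ simultaneously contains $\gammacls$ and $c$, which dispatches (i) directly and makes (ii) a one-line nonface-in-simplex contradiction. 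Your approach is more self-contained, avoiding both the external closure identity and Lemma~\ref{minnonfacesmaxsimplex}; the paper's approach is more modular, reusing abstract machinery already in place. One small redundancy: your convergence argument for $\rho$ is effectively a second proof of Lemma~\ref{overlap} specialized to the pure nondeterministic cycle structure --- you could simply cite Lemma~\ref{overlap} (any minimal nonface $\kappa_0\subseteq\rho\subseteq\sigma\cup\{c\}$ lies in $\calK_c$ and hence meets $\sigma_c$, contradicting $\kappa_0\subseteq(\sdiff{\sigma}{\sigma_c})\cup\{c\}$) and skip the cycle traversal entirely.
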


\begin{proof}
We may prove (i) by establishing that
$\psi_A\big(\gammacls\union\{c\}\big)\neq\emptyset$.  By reasoning
similar to that on page 122 in \cite{paths:privacy},
$\psi_A\big(\gammacls\union\{c\}\big)=\psi_A\big(\gamma\union\{c\}\big)$,
so it is enough to show that $\gamma\union\{c\}\in\DG$.  Suppose this
is false.  Then there exists a minimal nonface $\kappa$ of $\DG$ such
that $\kappa\subseteq\gamma\union\{c\}\subseteq\sigma\union\{c\}$, so
$\kappa\in\calK_c$.  By Lemma~\ref{overlap},
$\kappa\inter\sigma_c\neq\emptyset$.  That establishes a contradiction
to $\gamma\inter\sigma_c=\emptyset$ and $c\not\in\sigma_c$.

\clearpage

Turning to (ii), $\calK_c\neq\emptyset$ by Lemma~\ref{overlap}.
\ Suppose now that $\gamma=\sdiff{\sigma}{\sigma_c}$.  Establishing
the second part of (ii) for this particular $\gamma$ will establish it
for all hypothesized $\gamma$, by monotonicity of closure operators.
By (i), $\gammacls\union\{c\}\in\DG$.  Since $\sigma$ is maximal in
$\DG$, $\,\gammacls\subseteq\sigma$.  \ Since $c\not\in\sigma$ and by
Lemma~\ref{minnonfacesmaxsimplex} on page~\pageref{minnonfacesmaxsimplex},
$\,\sdiff{\kappa}{(\gammacls\union\{c\})}\neq\emptyset\mskip1.5mu$ for
every $\kappa\in\calK_c$.
\ Since $\sdiff{\sigma}{\sigma_c} \,=\, \gamma \,\subseteq\, \gammacls
\,\subseteq\, \sigma\,$ and $\,\kappa \,\subseteq\,
\sigma\union\{c\}$, $\;\sdiff{\kappa}{(\gammacls\union\{c\})} =
\sdiff{\big(\kappa\inter\sigma_c\big)}{\gammacls}$, \,completing the
proof.
\end{proof}

\noindent The following theorem has as corollary
Theorem~\ref{longiars} of page~\pageref{longiars} for pure
nondeterministic graphs:

\begin{theorem}[Informative Action Release Sequences :
    Pure Nondeterministic Graphs]$\phantom{0}$\label{longiarsnondet}

Let $G=(V,\frakA)$ be a fully controllable pure nondeterministic graph
with $n = \abs{V} > 1$ and suppose $H=(V, \frakB)$ is a hierarchical
cyclic subgraph of $\vmsp{}G$.

Suppose $\sigma$ is a maximal strategy in $\DG$.  Set
$\tau=\sigma\inter\frakB$, then define $H^*$ by step 3 of
Construction~\ref{acyclicdissection} on
page~\pageref{acyclicdissection}.

\vspace*{0.1in}
\hspace*{0.25in}\begin{minipage}{5in}
\begin{itemize}
\addtolength{\itemsep}{-3pt}
\item[I.] If $\vmsp{}H^*\mskip-0.5mu$ is a leaf, then $\vlsp\sigma$
contains an informative action release sequence for $\vlsp{}G$ of
length at least $\vtsp{}n-1$.
\item[II.] If $\vmsp{}H^*\mskip-0.5mu$ is a node, then $\vlsp\sigma$
contains an informative action release sequence for $\vlsp{}G$ of
length at least $n$.
\end{itemize}
\end{minipage}
\end{theorem}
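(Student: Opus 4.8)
The plan is to build the required informative action release sequence explicitly, in two stages, with every released action lying in $\sigma$, and then to read off its length from Corollary~\ref{dissize}. Adopt the notation of Lemma~\ref{overlap}: set $\eta = \tauc \union \taup$, and for each $c \in \taum$ (written $c = w \rightarrow T$) put $J_c = \calF_\eta(T)$ and $\sigma_c = \setdef{a \in \sigma}{\src(a) \in J_c \;\hbox{and}\; \trg(a) \not\subseteq J_c}$. The first stage releases the actions of $\eta$; the second stage releases, for each $c \in \taum$, a single action drawn from $\sigma_c$. Since the $\sigma_c$ are disjoint from $\eta$ and from one another, the sequence will release $\abs{\eta} + \abs{\taum}$ distinct actions, a number at least $\abs{\tauc \union \taup \union \taum}$, which by Corollary~\ref{dissize} equals $n-1$ when $H^*$ is a leaf and $n$ when $H^*$ is a node.

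\emph{First stage.} By Lemma~\ref{acyclicprop}(iii) the set $\tauc \union \xi$ is cycle-breaking in $H$, so its subset $\eta = \tauc \union \taup$ (note $\taup \subseteq \xi$) is cycle-breaking as well; hence $\eta \in \DTH$ by Lemma~\ref{nocyclestrats}. By Lemma~\ref{acyclicprop}(ii), $\eta \subseteq \tauc \union \xi \subseteq \tau = \sigma \inter \frakB$, so in particular $\eta \subseteq \sigma$. If $\eta \neq \emptyset$, Lemma~\ref{nocycleiars} supplies an ordering of $\eta$ that is an iars for $H$, and the $k=0$ case of Lemma~\ref{combinesubgraphiars} (see its Comment) promotes this to an iars for $G$. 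If $\eta = \emptyset$ the first stage is simply empty.

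\emph{Second stage.} The sets $\{J_c\}_{c\in\taum}$ are pairwise disjoint by Lemma~\ref{disjointforward}(a), and since every action of $\sigma_c$ has its source in $J_c$, the sets $\{\sigma_c\}_{c\in\taum}$ are pairwise disjoint too. Moreover $\eta \inter \sigma_c = \emptyset$: because $J_c = \calF_\eta(T)$ is closed under the transitions of $\eta$, any $a \in \eta$ with $\src(a) \in J_c$ satisfies $\trg(a) \subseteq J_c$, whence $a \not\in \sigma_c$. I would process the actions of $\taum$ in any order $c_1, \ldots, c_p$. Just before $c_j$ is treated, the set $\gamma$ of actions released so far is $\eta$ together with one chosen representative from each of $\sigma_{c_1}, \ldots, \sigma_{c_{j-1}}$; by the disjointness just noted, $\gamma \subseteq \sigma$ and $\gamma \inter \sigma_{c_j} = \emptyset$. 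Corollary~\ref{iarsfwd}(ii) then gives $\calK_{c_j} \neq \emptyset$ and, for any $\kappa \in \calK_{c_j}$, that $\sdiff{(\kappa \inter \sigma_{c_j})}{(\clsAy)(\gamma)} \neq \emptyset$. Choosing $a_j$ from this nonempty set yields $a_j \in \sigma_{c_j} \subseteq \sigma$ with $a_j \not\in (\clsAy)(\gamma)$, so appending $a_j$ keeps the sequence an iars for $G$.

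\emph{Conclusion and main obstacle.} The representatives $a_1, \ldots, a_p$ are mutually distinct and disjoint from $\eta$ (each lies in a distinct $\sigma_{c_j}$, none of which meets $\eta$), so the full sequence releases $\abs{\eta} + \abs{\taum}$ distinct actions of $\sigma$ and is an iars for $G$; its length is at least $\abs{\tauc \union \taup \union \taum}$, which Corollary~\ref{dissize} identifies as $n-1$ or $n$ according to whether $H^*$ is a leaf or a node, establishing cases I and II. I expect the delicate point to be the second stage: the informativeness of each appended action rests on maintaining $\gamma \inter \sigma_{c_j} = \emptyset$ throughout, which in turn depends on the pairwise disjointness of the forward projections (Lemma~\ref{disjointforward}) and on the forward-closure identity $\eta \inter \sigma_c = \emptyset$. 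The other step not to overlook is lifting the first-stage $H$-sequence to a genuine $G$-sequence through the $k=0$ combining lemma.
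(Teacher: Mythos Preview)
Your proposal is correct and follows essentially the same approach as the paper's own proof: release $\eta=\tauc\cup\taup$ informatively via Lemma~\ref{nocycleiars} lifted through Lemma~\ref{combinesubgraphiars}, then for each $c\in\taum$ append one action from $\sigma_c$ using Corollary~\ref{iarsfwd}, with the disjointness of the $\sigma_c$ and of $\eta$ from each $\sigma_c$ justified exactly as you do. The only cosmetic difference is that the paper treats cases I and II separately (noting that $\taum=\emptyset$ when $H^*$ is a leaf and asserting $\eta\neq\emptyset$ when $H^*$ is a node), whereas you run both stages uniformly and allow for an empty first stage; both presentations yield the same count via Corollary~\ref{dissize}.
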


\begin{proof}
Throughout the proof we assume notation as given in
Construction~\ref{acyclicdissection} and Lemma~\ref{overlap}.  Observe
that $\sigma\nvtsp\neq\emptyset$, since $G$ is fully controllable with $n > 1$.

\vspace*{0.075in}

I. Suppose $H^*$ is a leaf.  Then $\taup=\taum=\emptyset$.  \ By
Lemma~\ref{acyclicprop} on page~\pageref{acyclicprop},
$\tauc\subseteq\sigma$ and $\tauc$ is cycle-breaking in $H$; by
Corollary~\ref{dissize} on page~\pageref{dissize}, $\abs{\tauc}=n-1$;
and by Lemma~\ref{nocycleiars} on page~\pageref{nocycleiars}, one may
find an ordering of the actions in $\tauc$ such that they form an
informative action release sequence for $H$.  This sequence is also
informative for $G$ by the comment after
Lemma~\ref{combinesubgraphiars} on page~\pageref{combinesubgraphiars}.

\vspace*{0.075in}

II. Suppose $H^*$ is a node.  As in part I, one may find an ordering
of the actions in $\tauc\union\taup$ such that they form an
informative action release sequence for $G$
($\tauc\union\taup\neq\emptyset$, by maximality of $\sigma$ and full
controllability of $G$).  Write this sequence as $a_1, \ldots,
a_\ell$.  It is contained in $\sigma$.

\vso

By Corollary~\ref{dissize}, $\abs{\tauc\union\taup\union\taum}=n$.
Since $\tau^*$ is disruptive in $H^*$, $\taum\neq\emptyset$.  Of
course, one cannot release the actions in $\taum$, since they are not
in $\sigma$.  Instead, as we will see shortly, for each $c\in\taum$
one may release some action of $\sigma_c$ informatively, thereby
completing the proof.

\vso

First, observe that
$\big(\tauc\union\taup\big)\inter\sigma_c=\emptyset$, for every
$c\in\taum$.  To see this, write $\eta=\tauc\union\taup$ and suppose
$a\in\eta$ and $\src(a)\in J_c$ for some $c\in\taum$.  Write $c = w
\rightarrow T$.  Let $\geq$ be the partial order induced on
$\mskip1mu{V}$ by $\eta$.  Then $t \geq \src(a)$ for some $t\in T$.
Since $a\in\eta$, $\src(a) \geq s$ for every $s\in\trg(a)$.  So $t
\geq s$ for every $s\in\trg(a)$, meaning $\trg(a)\subseteq J_c$.
Consequently, $a\not\in\sigma_c$.

\vso

Inductively, suppose we have released, for some sequence of distinct
actions $c_1, ..., c_k$ in $\taum$, with $k\geq 0$, a corresponding
sequence of distinct actions $b_1, ..., b_k$ in $\sigma$, such that
$b_i\in\sigma_{\!c_{\scriptstyle{i}}}$, for $i=1, \ldots, k$, and such
that the overall sequence $a_1, \ldots, a_\ell, \bonespc, \ldots, b_k$
is an iars for $G$.  If $k=\abs{\taum}$, we are done.  Otherwise, we
need to show how to extend this sequence.

\vso

Let $\gamma=\{a_1, \ldots, a_\ell, \bonespc, \ldots, b_k\}$ and
$\gammacls = (\clsAy)(\gamma)$, with $A$ being $G$'s action relation.
Pick some $c\in\sdiff{\taum}{\{c_1, \ldots, c_k\}}$.  We already
observed that $\big(\tauc\union\taup\big)\inter\sigma_c=\emptyset$.
By construction, $\src(b_i)\in J_{\mskip-0.5muc_{\scriptstyle{i}}}$,
for $i=1, \ldots, k$.  By part (a) of Lemma~\ref{disjointforward} on
page~\pageref{disjointforward},
$J_{\mskip-0.5muc_{\scriptstyle{i}}}\inter J_c = \emptyset$, for $i=1,
\ldots, k$.  Consequently, $\gamma \inter \sigma_c = \emptyset$.  By
Corollary~\ref{iarsfwd}, there exist $\kappa\in\calK_c$ and $b\in
\sdiff{\big(\kappa\inter\sigma_c\big)}{\gammacls}$, so $b$ may be
released informatively.  \ Let \hspq$c_{k+1}=c\,$ and $\,b_{k+1}=b$.
\end{proof}
\clearpage

\subsection{Examples for Pure Nondeterministic Graphs}
\markright{Examples for Pure Nondeterministic Graphs}

This subsection shows how the proof of Theorem~\ref{longiarsnondet}
produces informative action release sequences for various pure
nondeterministic graphs and strategies.

\subsubsection{A Hierarchical Pure Nondeterministic Graph}

The first example considers the pure nondeterministic graph of
\fig{Ex202b_Graph} on page~\pageref{Ex202b_Graph}.  The graph
may be viewed directly as a hierarchical cyclic graph, as indicated by
\fig{Ex202b_HTree}.

\begin{figure}[h]
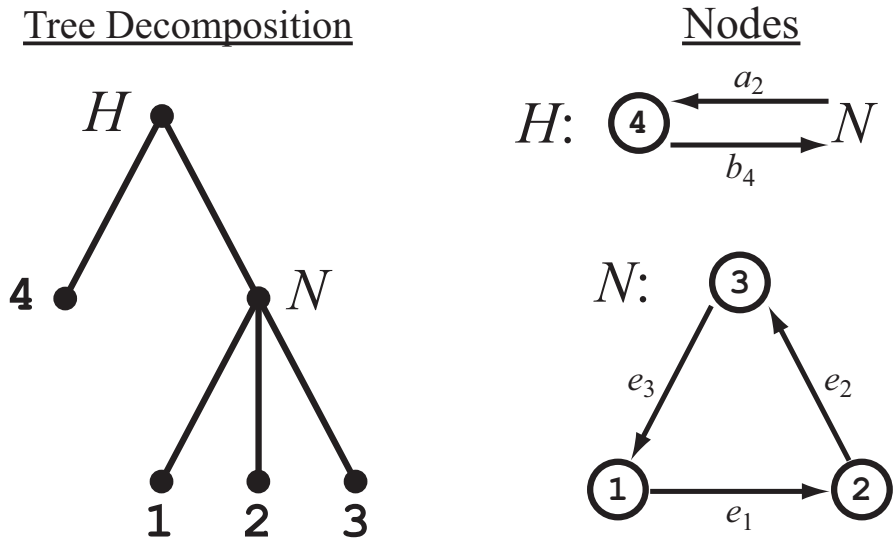

\vspace*{0.1in}
\begin{center}
\ifig{Ex202b_HTreeLeaves}{height=3in}
\end{center}
\vspace*{-0.25in}
\caption[]{A view of graph $G$ from \fig{Ex202b_Graph} directly
  as a hierarchical cyclic graph $H$.  The left panel shows the tree
  decomposition of $H$.  The right panel shows each node's constituent
  parts.  The root $H$ contains two children, a leaf modeling state
  \#4 and a node $N$, along with two cycle actions, $b_4$ and $a_2$.
  Although action $b_4$ is nondeterministic with multiple targets
  inside node $N$, for simplicity the figure merely depicts an arrow
  pointing from state \#4 to node $N$.  \ Node $N$ contains three
  leaves as children, modeling the set of states $\{1, 2, 3\}$, along
  with three cycle actions, $e_1$, $e_2$, and $e_3$.}
\label{Ex202b_HTree}
\end{figure}

Let us consider two maximal strategies and see how our constructions
generate informative action release sequences using the hierarchical
cyclic graph $H$.  Since $H$ and $G$ have the same actions, $\tau$ in
Construction~\ref{acyclicdissection} on
page~\pageref{acyclicdissection} is the maximal strategy under
consideration.

\vspace*{-0.1in}

\paragraph{\underline{$\tau = \{e_2, e_3, b_4\}$}}\quad (This strategy
converges to state \#1.)

\begin{itemize}
\addtolength{\itemsep}{-3pt}

\item Not used by the construction, but just for reference: $\tau$ is
  cycle-breaking in $H$.

\item $\tau$ is not disruptive in $H$, so we run the loop of step 2 in
  Construction~\ref{acyclicdissection}:

\vspace*{-0.1in}

  \begin{enumerate}
  \addtolength{\itemsep}{-3pt}
    \item First we mark node $N$, defining $\kappa^{(1)}=\{e_2, e_3\}$.
    \item Then we mark node $H$, defining $\kappa^{(2)}=\{e_2, e_3, b_4\}$.
  \end{enumerate}

\item At step 3, $H^*$ is a leaf.  So $\tauc=\kappa^{(2)}=\tau$ and
  $\taup=\taum=\emptyset$.

\item The proof of Lemma~\ref{nocycleiars} on
  page~\pageref{nocycleiars} now produces either the sequence $\;b_4,
  e_2, e_3\,$ or the sequence $\,b_4, e_3, e_2\,$ as an informative
  action release sequence.

\end{itemize}

\paragraph{\underline{$\tau = \{e_1, e_2, a_2\}$}}\quad (This strategy
converges to the set of states $\{3, 4\}$.)

\begin{itemize}
\addtolength{\itemsep}{-3pt}

\item $\tau$ is cycle-breaking in $H$.

\item $\tau$ is not disruptive in $H$, so we run the loop of step 2 in
  the construction:

\vspace*{-0.1in}

  \begin{enumerate}
  \addtolength{\itemsep}{-3pt}
    \item First we mark node $N$, defining $\kappa^{(1)}=\{e_1, e_2\}$.
    \item Then we mark node $H$, defining $\kappa^{(2)}=\{e_1, e_2, a_2\}$.
  \end{enumerate}

\item At step 3, $H^*$ is a leaf.  So $\tauc$ is again all of $\tau$
  and $\taup=\taum=\emptyset$.

\item Again, one may release the actions of $\tauc$ informatively, as
  per the proof of Lemma~\ref{nocycleiars}, for instance as the sequence
  $\,a_2, e_1, e_2$.

\end{itemize}

{\bf Comments:}\ (i) $G$'s action relation in \fig{Ex202b} on
page~\pageref{Ex202b} shows that no maximal strategy is disruptive in
$H$, so Construction~\ref{acyclicdissection} will always run the loop
of step 2.
(ii) The construction will always assemble the entire strategy as an
iars.  In fact, as \fig{Ex202b} shows, the strategy complex $\DG$
is a triangulation of $\Stwo$, and in particular has no free faces.
Consequently, any ordering of the actions in a maximal strategy will be
an informative action release sequence for $G$.

\vspace*{0.15in}

\subsubsection{A Pure Nondeterministic Graph with Several Nondeterministic Actions}

Let us add some nondeterministic actions to the previous graph, as shown
in \fig{Ex202_Graph}.

\begin{figure}[h]
\vspace*{0.1in}
\begin{center}
\begin{minipage}{2.9in}
\ifig{Ex202_Graph}{height=2in}
\end{minipage}
\hspace*{0.2in}
\begin{minipage}{2.3in}{$\begin{array}{c|ccccccc}
       A & e_1  & e_2  & e_3  & a_1  & a_2  & a_3  & b_4 \\[2pt]\hline
\sigma_1 &      & \one & \one &      &      &      & \one \\[2pt]
\sigma_2 & \one &      & \one &      &      &      & \one \\[2pt]
\sigma_3 & \one & \one &      &      &      &      & \one \\[2pt]
\sigma_4 & \one &      & \one &      & \one & \one &      \\[2pt]
\sigma_5 & \one &      &      & \one & \one & \one &      \\[2pt]
\sigma_{14} &      & \one & \one &      & \one &      &      \\[2pt]
\sigma_{34} & \one & \one &      & \one & \one &      &      \\[2pt]
\end{array}$}
\end{minipage}
\begin{minipage}{0.5in}{$\begin{array}{c}
\hbox{Goal} \\[2pt]\hline
1 \\[2pt]
2 \\[2pt]
3 \\[2pt]
4 \\[2pt]
4 \\[2pt]
\{1,4\} \\[2pt]
\{3,4\} \\[2pt]
\end{array}$}
\end{minipage}
\end{center}
\vspace*{-0.1in}
\caption[]{{\bf Left Panel:} A pure nondeterministic graph $G$ with
  four states, $1,2,3,4$, four deterministic actions, $e_1$, $e_2$,
  $e_3$, $a_2$, and three nondeterministic actions, $a_1$, $a_3$,
  $b_4$.\quad\\[2pt]
  {\bf Right Panel:} $G$'s action relation and goal sets.\quad\\[2pt]
  (This figure is a copy of Figures 47 and 48 in \cite{paths:privacy}.)}
\label{Ex202_Graph}
\end{figure}

The earlier hierarchical cyclic graph $H$ of \fig{Ex202b_HTree}
is a subgraph of the new $G$, on the same state space (but with fewer
actions), so we can use the same $H$ as before to construct
informative action release sequences for maximal strategies, now in
the new $G$.  Almost every maximal strategy in the new $\DG$ is either
identical to or a proper superset of a maximal strategy in the old
$\DG$.  Intersecting one of these strategies with the actions of $H$,
as Theorem~\ref{longiarsnondet} requires, therefore produces the same
constructions as before.

\vspace*{0.05in}

There is one exception: The new $\DG$ contains a maximal strategy,
namely $\sigma_5$, that does not restrict to a maximal strategy in the
old $\DG$.  \ Let us look at that strategy more carefully:

\paragraph{\underline{$\sigma = \sigma_5 = \{e_1, a_1, a_2, a_3\}$}}\quad (This strategy
converges to state \#4.)

\begin{itemize}
\addtolength{\itemsep}{-3pt}

\item $\tau$ is the intersection of $\sigma$ with the actions of $H$, so
      $\tau=\{e_1, a_2\}$.

\item $\tau$ is cycle-breaking in $H$.

\item Now $\,\tau$ {\,\em is\,} disruptive in $H$, so
      Construction~\ref{acyclicdissection} $\mskip2.5mu${\em does
      not}$\mskip4mu$ run the loop of step 2, but skips directly to
      step 3.

\item At step 3, $H^*$ is all of $H$, so $\tauc=\emptyset$.

\item $\frakC$ consists of all the core cycle actions of $H$, so
      $\frakC = \{e_1, e_2, e_3, a_2\}$.

\item The construction of $\xi$ in step 4 incorporates all of $\tau$,
      starting from $\xi=\emptyset$, as follows:

\vspace*{-0.1in}

  \begin{enumerate}
  \addtolength{\itemsep}{-3pt}
    \item For node $N$, step 4 adds action $e_1$ to $\xi$.
    \item For node $H$, step 4 adds action $a_2$ to $\xi$.
  \end{enumerate}

\item Thus $\taup=\frakC\inter\xi=\{e_1, a_2\}$ and
      $\taum=\sdiff{\frakC}{\xi}=\{e_2, e_3\}$.

\item The actions of $\taup$ may be released informatively in depth
      order, as the sequence $\,a_2, e_1$.

\item For each action in $\taum$, one finds an action in $\sigma$
      as per the proof of Theorem~\ref{longiarsnondet}:

\vspace*{-0.1in}

  \begin{enumerate}
  \addtolength{\itemsep}{-2pt}
    \item For action $e_2\in\taum$, action $a_3\in\sigma$ lies
    ``downstream'' from $e_2$, forms a minimal nonface with $e_2$, and
    is not implied by $\{a_2, e_1\}$.

    \item For action $e_3\in\taum$, action $a_1\in\sigma$ lies
    ``downstream'' from $e_3$, forms a minimal nonface with $e_3$, and
    is not implied by $\{a_2, e_1, a_3\}$.

  \end{enumerate}

  (The term ``downstream'' refers to the partial order determined by
   $\eta=\tauc\union\taup$.  \ Since $\tauc=\emptyset$, that simply
   means $\taup$ here.
   \ Specifically, the phrase ``action $b\mskip1mu$ lies downstream
   from action $a$'' means that $b$'s source lies in the forward
   projection of $a$'s targets under $\eta$, that is,
   ``$\mskip2mut\geq_{\eta} \src(b)$, for some
   $\mskip1mut\in\trg(a)$''.

   Moreover, this and subsequent examples, following the proof of
   Theorem~\ref{longiarsnondet}, further choose $\mskip1mub\mskip1.75mu$
   so that {\em not\,} all of $\vtsp{}b$'s targets lie within the
   forward projection of $a$'s targets.)

\end{itemize}

Consequently, one may arrange all four actions of $\sigma$ ($=\sigma_5$)
into an informative action release sequence for $G$.  This is consistent
with Theorem~\ref{longiarsnondet}, since $H^*$ is a node in the
construction.  For instance, the sequence $\,a_2, e_1, a_3, a_1\,$ is an
iars.  There are other orderings that will also produce iars of length
4, but not all will do so.  For instance, releasing action $a_1$ as the
first action in a sequence would limit the length of that sequence as an
iars to 2.  See \cite{paths:privacy} for further discussion of this
example.

\clearpage
\subsubsection{A Directed Graph with Several Cycles, Represented Hierarchically}
\label{multicyclegraph}

Consider the directed graph $G$ of \fig{Ex241_GraphSigma}.  All the
actions in this graph are deterministic.  The graph has several
directed cycles in it, giving us the opportunity to explore more than
one hierarchical decomposition for $G$.  The figure also shows a
maximal strategy $\sigma$ in $\DG$.  We will focus on this one
strategy, using two different hierarchical cyclic subgraphs of $G$ to
construct informative action release sequences for $G$ in two
different ways, such that each sequence consists of actions contained
in $\sigma$.  \ For reference, $G$'s full action relation appears in
\fig{Ex202_relation}.

\begin{figure}[h]
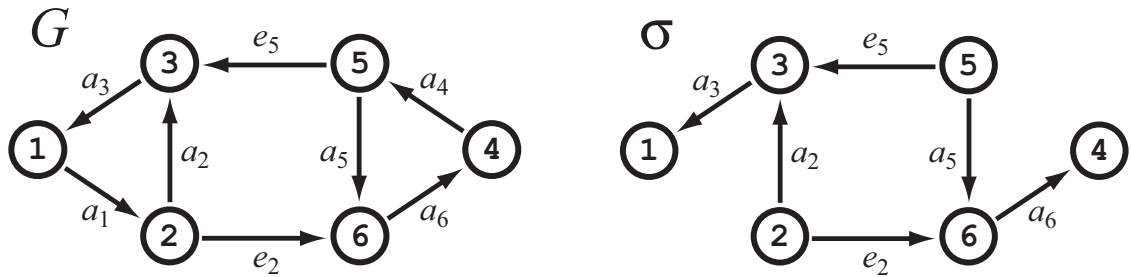

\begin{center}
\hspace*{0.1in}
\begin{minipage}{2.7in}
\ifig{Ex241_Graph}{height=1.5in}
\end{minipage}
\hspace*{0.4in}
\begin{minipage}{2.7in}
\ifig{Ex241_sigma}{height=1.5in}
\end{minipage}
\end{center}
\vspace*{-0.1in}
\caption[]{{\bf Left Panel:} A directed graph $G$, consisting of six
  states and eight directed edges.\quad
  {\bf Right Panel:} A maximal strategy $\sigma\in\DG$, depicted by
  its directed edges.}
\label{Ex241_GraphSigma}
\end{figure}

\begin{figure}[h]
\begin{center}
\vspace*{0.1in}
\begin{minipage}{2.5in}{$\begin{array}{c|cccccccc}
     A & a_1  & a_2  & a_3  & a_4  & a_5  & a_6  & e_2  & e_5  \\[2pt]\hline
\sigma &      & \one & \one &      & \one & \one & \one & \one \\[2pt]
       & \one &      & \one &      & \one & \one & \one & \one \\[2pt]
       & \one & \one &      &      & \one & \one & \one & \one \\[2pt]
       &      & \one & \one & \one &      & \one & \one & \one \\[2pt]
       & \one &      & \one & \one &      & \one & \one &      \\[2pt]
       & \one &      & \one & \one &      & \one &      & \one \\[2pt]
       & \one & \one &      & \one &      & \one & \one & \one \\[2pt]
       &      & \one & \one & \one & \one &      & \one & \one \\[2pt]
       & \one &      & \one & \one & \one &      & \one & \one \\[2pt]
       & \one & \one &      & \one & \one &      & \one & \one \\[2pt]
\end{array}$}
\end{minipage}
\hspace*{0.2in}
\begin{minipage}{0.5in}{$\begin{array}{c}
\hbox{Goal} \\[2pt]\hline
\{1,4\} \\[2pt]
4 \\[2pt]
\{3,4\} \\[2pt]
1 \\[2pt]
5 \\[2pt]
2 \\[2pt]
3 \\[2pt]
\{1,6\} \\[2pt]
6 \\[2pt]
\{3,6\} \\[2pt]
\end{array}$}
\end{minipage}
\end{center}
\vspace*{-0.2in}
\caption[]{Action relation and goal sets for the graph of
   \fig{Ex241_GraphSigma}.  The row corresponding to maximal
   strategy $\sigma$ is labeled.  This strategy has a multi-state goal,
   namely $\{1, 4\}$.}
\label{Ex202_relation}
\end{figure}

\paragraph{A Multi-Node Hierarchical Decomposition:} \ The 
decomposition $H$ shown in \fig{Ex241_H1} models $G$ directly
as a hierarchical cyclic graph, meaning $H$ and $G$ contain the same
states and actions.  In this decomposition, the smaller two cycles of
$G$ define two nodes.  Each of these nodes contains only leaves,
comprising the state spaces $\{1, 2, 3\}$ and $\{4, 5, 6\}$,
respectively.  The root of the tree has these two nodes as children,
connected by a two-cycle.

\clearpage

\begin{figure}[h]
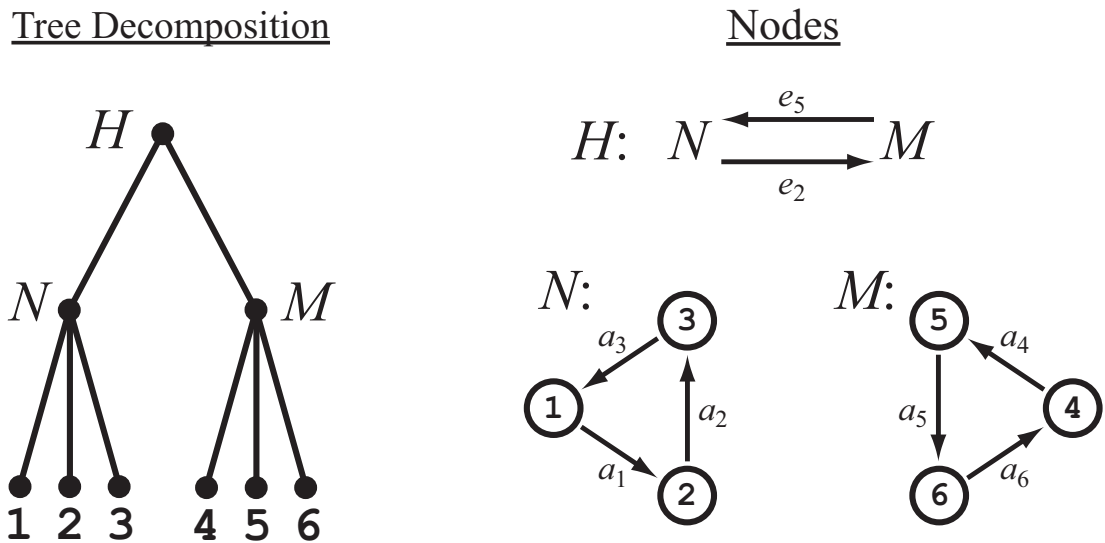

\vspace*{0.1in}
\begin{center}
\ifig{Ex241_H1}{height=3in}
\end{center}
\vspace*{-0.25in}
\caption[]{A view of graph $G$ from \fig{Ex241_GraphSigma}
  directly as a hierarchical cyclic graph $H$.  The left panel shows
  the tree decomposition of $H$.  The right panel shows each node's
  constituent parts.  The root of $H$ contains two nodes as children,
  along with two cycle actions.  \ The child nodes $N$ and $M$ each
  contain three leaves as children along with three cycle actions.}
\label{Ex241_H1}
\end{figure}

\vspace*{-0.2in}

\paragraph{\underline{$\sigma=\{e_2, e_5, a_2, a_3, a_5, a_6\}$}}

\begin{itemize}
\addtolength{\itemsep}{-3pt}

\item Since $H=G$, also $\,\tau=\sigma$.

\item $\tau$ is not cycle-breaking in $H$, since it contains
      both cycle actions of $H$'s root node.

\item $\tau$ is not disruptive in $H$, so
      Construction~\ref{acyclicdissection} runs the loop of step 2.
      The construction may mark nodes $N$ and $M$ in either order.
      Here we start with $N$.
  
\vspace*{-0.1in}

  \begin{enumerate}
  \addtolength{\itemsep}{-1pt}
    \item Mark node $N$, defining $\kappa^{(1)}=\{a_2, a_3\}$.
    \item Mark node $M$, defining $\kappa^{(2)}=\{a_2, a_3, a_5, a_6\}$.
    \item Mark node $H$.  Since $\tau$ contains both of $H$'s cycle
          actions, the construction could add either action to
          $\kappa^{(2)}$ in defining $\kappa^{(3)}$.  Here we add
          $e_2$, \hspd{}so $\kappa^{(3)}=\{a_2, a_3, a_5, a_6, e_2\}$.
  \end{enumerate}

\item At step 3, $H^*$ is a leaf.  So $\tauc=\{a_2, a_3, a_5, a_6,
      e_2\}$ and $\taup=\taum=\emptyset$.

\item One may release the actions of $\tauc$ informatively in depth
      order, as per the proof of Lemma~\ref{nocycleiars} on
      page~\pageref{nocycleiars}, for instance as the sequence $e_2,
      a_2, a_3, a_5, a_6$.

\item Observe that $\tauc$ is almost all of $\sigma$, excluding only
      action $e_5$.  The construction discarded that one action when
      forming $\kappa^{(3)}$.

\end{itemize}

\vspace*{0.2in}

\paragraph{A Flat Decomposition:} \ \fig{Ex241_H2} shows another
hierarchical cyclic subgraph $H$ of $\mskip2muG$, on the same state
space but with fewer actions.  In this subgraph, the Hamiltonian cycle
of $G$ defines a single node, necessarily the root of $H$, with all
six states as leaves.  Two of $G$'s actions do not appear in $H$.

\begin{figure}[h]
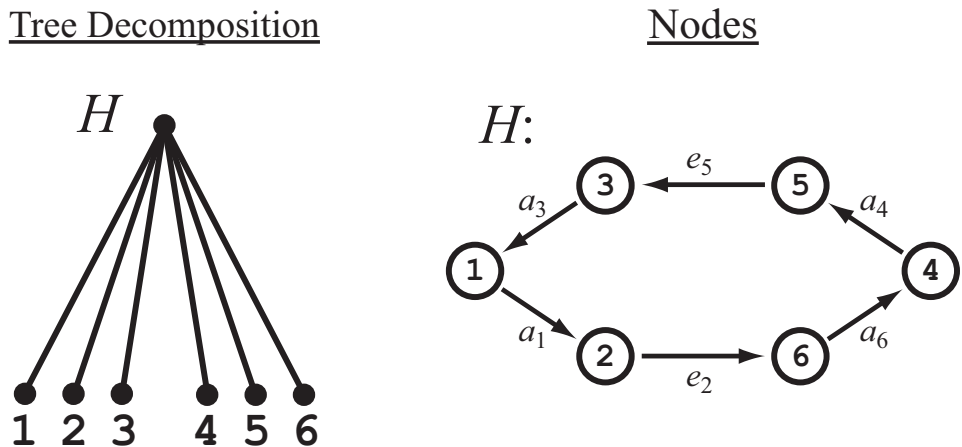

\vspace*{0.075in}
\begin{center}
\ifig{Ex241_H2}{height=2.5in}
\end{center}
\vspace*{-0.3in}
\caption[]{A hierarchical cyclic subgraph $H\mskip-1mu$ of graph $G$
  from \fig{Ex241_GraphSigma}, with the same states.  The left panel
  shows the tree decomposition of $H$.  The right panel shows
  constituent parts.  The root of $H\mskip-1mu$ contains six leaves,
  connected by six actions forming a directed cycle.}
\label{Ex241_H2}
\end{figure}

\vspace*{-3pt}

\paragraph{\underline{$\sigma=\{e_2, e_5, a_2, a_3, a_5, a_6\}$}}

\begin{itemize}
\addtolength{\itemsep}{-3pt}

\vspace*{3pt}

\item $\tau$ is the intersection of $\sigma$ with $H$'s actions, so
      $\tau=\{e_2, e_5, a_3, a_6\}$.

\item $\tau$ is both cycle-breaking and disruptive in $H$.

\item Since $\tau$ is disruptive, $H^*=H$ and $\tauc=\emptyset$ in
      step 3 of Construction~\ref{acyclicdissection}.
  
\item $\frakC$ consists of all the core cycle actions of $H$, which
      means all the actions of $H$ since $H$ defines a Hamiltonian
      cycle.  \ So $\frakC = \{a_1, e_2, a_6, a_4, e_5, a_3\}$.

\item The construction of $\xi$ incorporates all of $\tau$, since $H$
      consists of a single unmarked node.  Thus
      $\taup=\frakC\inter\xi=\tau=\{e_2, e_5, a_3, a_6\}$ and
      $\taum=\sdiff{\frakC}{\xi}=\{a_1, a_4\}$.

\item The actions of $\taup$ may be released informatively in any
      order, for instance as the sequence $e_2, e_5, a_3, a_6$.

\item For each action in $\taum$, one finds an action in $\sigma$
      as per the proof of Theorem~\ref{longiarsnondet}:

      (Again, ``downstream'' refers to the partial order determined by
      $\taup$.)

\vspace*{-0.1in}

  \begin{enumerate}
  \addtolength{\itemsep}{-1pt}
    \item For action $a_1\in\taum$, action $a_2\in\sigma$ lies
         ``downstream'' from $a_1$, participates in the minimal
         nonface $\{a_1, a_2, a_3\}$ with $a_1$, and is not implied by
         $\{e_2, e_5, a_3, a_6\}$.

    \item For action $a_4\in\taum$, action $a_5\in\sigma$ lies
         ``downstream'' from $a_4$, participates in the minimal
         nonface $\{a_4, a_5, a_6\}$ with $a_4$, and is not implied by
         $\{e_2, e_5, a_3, a_6, a_2\}$.

  \end{enumerate}

\vspace*{-0.05in}

\item Consequently, all actions of $\sigma$ may be arranged into the
  informative action release sequence $e_2, e_5, a_3, a_6, a_2, a_5$.

\end{itemize}

\paragraph{Comment:}\  We have seen the following: (i) With $H$
  as in \fig{Ex241_H1}, Construction~\ref{acyclicdissection} produces
  an informative action release sequence for $G$ consisting of
  $\mskip1mu5$ actions in $\sigma$.  (ii) With $H$ as in
  \fig{Ex241_H2}, the construction produces an informative action
  release sequence consisting of all $\hspc{6}$ actions in $\sigma$.
  These sequence lengths match the assertions of
  Theorem~\ref{longiarsnondet} on page~\pageref{longiarsnondet}.

\subsubsection{A Directed Graph with a Disruptive but not Cycle-Breaking Strategy}

This example will illustrate an instance in which $\tau$ contains all
the cycle actions in a node during step 4 of
Construction~\ref{acyclicdissection}.  \fig{Ex249_GraphSigma}
depicts a graph $G$ and a maximal strategy $\sigma\in\DG$.
\fig{Ex249_relation} displays $G$'s action relation.
\fig{Ex249_H} shows a hierarchical cyclic subgraph $H$ of $G$, on
the same state space (but with fewer actions).  \ (Other such subgraphs
exist, of course.)

\begin{figure}[h]
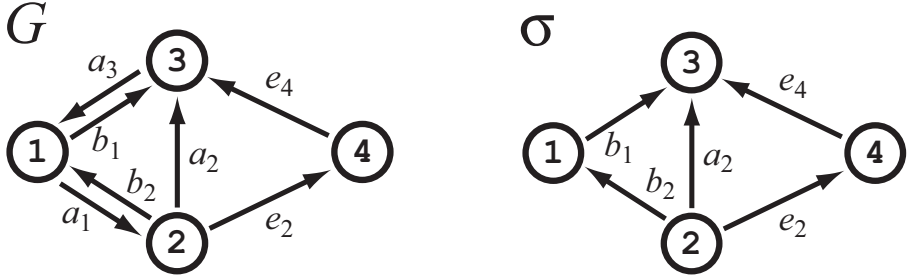

\begin{center}
\hspace*{0.1in}
\begin{minipage}{2.2in}
\ifig{Ex249_Graph}{height=1.5in}
\end{minipage}
\hspace*{0.4in}
\begin{minipage}{2.2in}
\ifig{Ex249_sigma}{height=1.5in}
\end{minipage}
\end{center}
\vspace*{-0.1in}
\caption[]{{\bf Left Panel:} A directed graph $G$, consisting of four
  states and seven directed edges.\quad
  {\bf Right Panel:} A maximal strategy $\sigma\in\DG$, depicted by
  its directed edges.}
\label{Ex249_GraphSigma}
\end{figure}

\begin{figure}[h]
\begin{center}
\vspace*{0.1in}
\begin{minipage}{2in}{$\begin{array}{c|ccccccc}
     A & a_1  & a_2  & a_3  & b_1  & b_2  & e_2  & e_4  \\[2pt]\hline
       &      & \one & \one &      & \one & \one & \one \\[2pt]
       & \one &      & \one &      &      & \one &      \\[2pt]
       & \one &      & \one &      &      &      & \one \\[2pt]
       & \one & \one &      & \one &      & \one & \one \\[2pt]
\sigma &      & \one &      & \one & \one & \one & \one \\[2pt]
\end{array}$}
\end{minipage}
\hspace*{0.2in}
\begin{minipage}{0.5in}{$\begin{array}{c}
\hbox{Goal} \\[2pt]\hline
1 \\[2pt]
4 \\[2pt]
2 \\[2pt]
3 \\[2pt]
3 \\[2pt]
\end{array}$}
\end{minipage}
\end{center}
\vspace*{-0.1in}
\caption[]{Action relation and goals for the graph of
   \fig{Ex249_GraphSigma}.  The row corresponding to maximal
   strategy $\sigma$ is labeled.  This strategy converges to state \#3.}
\label{Ex249_relation}
\end{figure}

\begin{figure}[h]
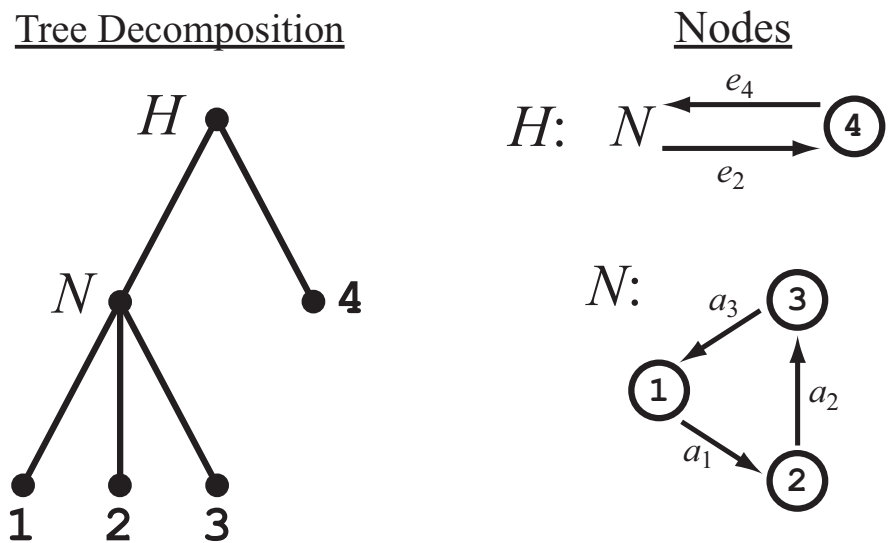

\vspace*{0.1in}
\begin{center}
\ifig{Ex249_H}{height=3in}
\end{center}
\vspace*{-0.25in}
\caption[]{A hierarchical cyclic subgraph $H$ of graph $G$ from
  \fig{Ex249_GraphSigma}, on the same same state space (but with
  fewer actions).  The left panel shows the tree decomposition of $H$.
  The right panel shows each node's constituent parts.  The structure is
  very similar to that of \fig{Ex202b_HTree}.}
\label{Ex249_H}
\end{figure}

\paragraph{\underline{$\sigma=\{e_2, e_4, a_2, b_1, b_2\}$}}

\begin{itemize}
\addtolength{\itemsep}{-3pt}

\item $\tau$ is the intersection of $\sigma$ with $H$'s actions, so
      $\tau=\{e_2, e_4, a_2\}$.

\item $\tau$ is not cycle-breaking in $H$, since it contains both cycle
  actions of $H$'s root node.

\item $\tau$ is disruptive in $H$, since it contains only one of node
      $N$'s three cycle actions.

\item Since $\tau$ is disruptive, $H^*=H$ and $\tauc=\emptyset\,$ in
      step 3 of Construction~\ref{acyclicdissection}.
  
\item $\frakC$ consists of all the core cycle actions of $H$, so
      $\frakC = \{a_1, a_2, a_3, e_2\}$.

\item The construction of $\xi=\{a_2, e_2\}\,$ in step 4 occurs as
      follows, starting from $\xi=\emptyset$:

\vspace*{-0.1in}

  \begin{enumerate}
  \addtolength{\itemsep}{-1pt}
    \item For node $N$, $\CyN=\{a_1, a_2, a_3\}$ and
    $\CyN\inter\tau=\{a_2\}$, so one adds action $a_2$ to $\xi$.

    \item For node $H$, $\CyH=\{e_2, e_4\}$ and $\CyH\inter\tau=\CyH$,
    so one must discard some action of $\CyH$ that is not in $\frakC$.
    That action is $e_4$.  One adds action $e_2$ to $\xi$.
  \end{enumerate}

\item Consequently, $\taup=\frakC\inter\xi=\{a_2, e_2\}$ and
      $\taum=\sdiff{\frakC}{\xi}=\{a_1, a_3\}$.

\item The actions of $\taup$ may be released informatively in depth
      order, so as the sequence $\,e_2, a_2$.

\item For each action in $\taum$, one finds an action in $\sigma$
      as per the proof of Theorem~\ref{longiarsnondet}:

      (Once again, ``downstream'' refers to the partial order
      determined by $\taup$.)

\vspace*{-0.1in}

  \begin{enumerate}
  \addtolength{\itemsep}{-1pt}
    \item For action $a_1\in\taum$, action $b_2\in\sigma$ lies
       ``downstream'' from $a_1$, forms a minimal nonface with $a_1$,
       and is not implied by $\{e_2, a_2\}$.

    \item For action $a_3\in\taum$, action $b_1\in\sigma$ lies
      ``downstream'' from $a_3$, forms a minimal nonface with $a_3$,
      and is not implied by $\{e_2, a_2, b_2\}$.

  \end{enumerate}

\item Therefore $\,e_2, a_2, b_2, b_1\,$ is an informative action
      release sequence for $G$, contained in $\sigma$.

      (The order matters: Revealing either $b_1$ or $b_2$ at the
      beginning of the sequence would narrow the set of maximal
      strategies consistent with the revealed action to two instantly.
      One then could reveal only one more action informatively before
      identifying $\sigma$, that action being the other ``$b_i$''
      action not yet revealed.  Indeed, revealing actions
      $\vtsp{}b_1\mskip-1.75mu$ {\em and} $\,b_2\vtsp$ in either order
      identifies the maximal strategy to be $\sigma$.  Revealing
      action $a_2$ implies action $e_2$, while revealing $e_2$ at the
      beginning does not imply $a_2$.  Although not part of the
      construction, observe that revealing $b_1$ would in and of
      itself declare the goal to be state \#3.)

\end{itemize}

\clearpage
\section{The Stochastic Setting}
\markright{The Stochastic Setting}
\label{stochastic}

The aim of this section is to prove Theorem~\ref{longiars} from
page~\pageref{longiars} for the case in which the graph $G$ is pure
stochastic.  Throughout, this section assumes that all graphs are pure
stochastic, meaning each action is either deterministic or stochastic
(but not nondeterministic).

\paragraph{Caution:}\ Even though all actions in a pure stochastic
graph are deterministic or stochastic, there may still be a component
of nondeterminism in a strategy: When multiple actions have the same
source state, any one of those actions might execute from that state,
with the choice potentially made by an adversary.  (See again the
discussion of generalized control laws on
page~\pageref{actionsemantics}, as well as the definitions of ``moves
off'', ``contains a circuit'', and ``strategy complex''.)

\subsection{Expanding Fully Controllable Subgraphs via Minimal Nonfaces}
\markright{Expanding Fully Controllable Subgraphs via Minimal Nonfaces}

As mentioned on page~\pageref{minnonfacesemantics}, a minimal nonface
of $\DG$ in a pure stochastic graph $G$ defines an irreducible Markov
chain and thus a fully controllable subgraph of $G$.  If $G$ is itself
fully controllable, one may construct such a minimal nonface $\kappa$
for each maximal strategy $\sigma\in\DG$, for instance by considering
some action at a goal state of $\sigma$.  The actions of the minimal
nonface $\kappa$ that lie within $\sigma$ then form an informative
action release sequence $z$ for $G$, contained in $\sigma$.

One may expand the state space $\src(\kappa)$ covered by this minimal
nonface by considering some action outside $\sigma$, in a manner to be
discussed.  This process yields a new minimal nonface and thus
additional actions of $\sigma$ with which to enlarge the informative
action release sequence $z$.  Repeating this process one may
eventually encounter a situation in which there are no further useful
actions outside $\sigma$.  Instead, one forms a quotient graph by
identifying all the states covered thus far.  Recursively, one obtains
an informative action release sequence within this quotient graph.
Patching the two sequences together gives an overall informative
action release sequence contained in $\sigma$ of length one less than
the number of states in $G$'s state space.

\vspace*{0.1in}

The following construction and subsequent results describe this process formally:

\begin{construction}[Minimal Nonface Expansion]\label{expandminnonfaces}
Let $G=(V,\frakA)$ be a fully controllable pure stochastic graph with
$n = \abs{V} > 1$ and suppose $\mskip1.7mu\sigma$ is a maximal
strategy in $\DG$.

Construct a collection $\{b_1, \ldots, b_k\}$ of convergent actions in
$\frakA$, a collection $\{\kappa_1, \ldots, \kappa_k\}$ of minimal
nonfaces \!of $\DG$, a collection $\{\calA_1, \ldots, \calA_k\}$ of
subsets of $\vtsp\frakA$, and a collection $\{W_1, \ldots, W_k\}$ of
subsets of $\vlsp{}V$, with $k \geq 1$, as follows:
\end{construction}

\begin{enumerate}

\addtolength{\itemsep}{1pt}

\item Let $g\in\sdiff{V}{\src(\sigma)}$.  Choose $b_1\in\frakA$ so
      that $\{b_1\}\in\DG$ and $\src(b_1)=g$.  Such an action exists
      since $G$ is fully controllable and $\abs{V} > 1$.

\item Since $\sigma$ is maximal, $\sigma\union\{b_1\}\not\in\DG$, so
      there exists a minimal nonface $\kappa_1$ of $\DG$ such that
      $b_1\in\kappa_1\subseteq\sigma\union\{b_1\}$.
      \ \ (For later reference, observe also that $\abs{\kappa_1} > 1$.)

\item Let $\calA_1=\kappa_1$ and $W_1=\src(\kappa_1)$.

\item Set {\sc Done} to {\tt false}. \ While not {\sc Done}, run the
      following loop, starting from $i=1$:

  \vspace*{-0.1in}

  \begin{enumerate}

  \addtolength{\itemsep}{3pt}

  \item Consider the quotient graph $G/W_i$ and let prime notation
        refer to the correspondence between actions in $G$ and
        $G/W_i$, as per the discussion on page~\pageref{quotient}.

  \item \label{qloopstepb}Define
        $\xi_i=\setdef{a\in\sigma}{\src(a)\in\sdiff{V}{W_i}}$.  \ So
        $\,\xi_i\subseteq\sigma$, $\,\xi_i\in\DG$, and
        $\src(\xi_i)\inter{W_i}=\emptyset$.

  \item By Fact 1 on page~\pageref{quotientfacts},
        $\xi_i^\prime\in\Delta_{G/W_{\scriptstyle{i}}}$, so extend
        $\xi_i^\prime$ to a maximal simplex
        $\tau^\prime_i\in\Delta_{G/W_{\scriptstyle{i}}}$.

  \item If \label{qloopstepd}$\tau_i\subseteq\sigma$, then set $k$ to
        the current value of $\vtsp{}i$ and {\sc Done} to {\tt true}.
        \ The loop ends.

        \vspace*{0.05in}

        Otherwise: \hspace*{-0.15in}\begin{minipage}[t]{4.1in}
                   \begin{itemize}
                   \addtolength{\itemsep}{-1pt}

                   \item[--] Let $b_{i+1}\in\sdiff{\tau_i}{\sigma}$.

                   \item[--] As in step 2, there exists a minimal nonface
                         $\mskip1mu\kappa_{i+1}\mskip1mu$ of $\,\DG$ such that
                         $\mskip1mub_{i+1}\in\kappa_{i+1}\subseteq\sigma\union\{b_{i+1}\}$.
                         \ (Again, $\abs{\kappa_{i+1}} > 1$.)

                   \item[--] Let $\calA_{i+1}=\calA_i\union\kappa_{i+1}$
                         and $W_{i+1}=W_i\union\src(\kappa_{i+1})$.

                   \item[--] The loop continues, with $i+1$ in place of $i$.
                   \end{itemize}
                   \end{minipage}

  \end{enumerate}

\end{enumerate}

\vspace*{0.05in}

\begin{lemma}[Expansive Subspaces]\label{expansive}
Let the hypotheses and notation be as in Construction~\ref{expandminnonfaces}.

Then $\mskip1muW_i \subsetneq W_{i+1}$, for all $\,i$ such that
$\vlsp{}W_i$ and $\vtsp{}W_{i+1}\mskip-1mu$ are well-defined.

(Consequently, the loop of step 4 in the construction ends, that is,
$k$ is well-defined finite.)
\end{lemma}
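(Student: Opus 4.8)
The plan is to establish the strict inclusion only in the branch of step~4(d) where $W_{i+1}$ is actually created, namely the ``Otherwise'' case, since that is the only situation in which $W_{i+1}$ is defined (if $\tau_i\subseteq\sigma$ the loop halts and there is no $W_{i+1}$, so the claim is vacuous). In that branch we have $W_{i+1}=W_i\union\src(\kappa_{i+1})$, so $W_i\subseteq W_{i+1}$ holds automatically, and it suffices to show that the freshly chosen minimal nonface reaches outside the region collapsed so far, i.e.\ that $\src(\kappa_{i+1})\not\subseteq W_i$. I would argue this by contradiction, assuming $\src(\kappa_{i+1})\subseteq W_i$.

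The key input is the interpretation of minimal nonfaces in a \emph{pure stochastic} graph from page~\pageref{minnonfacesemantics}: because $G$ is pure stochastic, every action of the minimal nonface $\kappa_{i+1}$ has \emph{all} of its targets inside $\src(\kappa_{i+1})$. In particular the action $b_{i+1}\in\kappa_{i+1}$ has both its source and all of its targets in $\src(\kappa_{i+1})$. Under the contradiction hypothesis $\src(\kappa_{i+1})\subseteq W_i$, this forces $\src(b_{i+1})\in W_i$ and $\trg(b_{i+1})\subseteq W_i$, so when we pass to the quotient $G/W_i$ the corresponding action $b_{i+1}'$ collapses to a self-loop at the single state $\wrep$ representing $W_i$.

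The contradiction then comes from convergence. By construction $b_{i+1}\in\sdiff{\tau_i}{\sigma}$ where $\tau_i'$ is a simplex of $\Delta_{G/W_{\scriptstyle i}}$, so $b_{i+1}'$ is a vertex of that complex and $\{b_{i+1}'\}$ is convergent in $G/W_i$. But a self-loop at $\wrep$ has source set $\{\wrep\}$ and does not move off $\{\wrep\}$ (it has no target outside $\{\wrep\}$), so $\{b_{i+1}'\}$ contains a circuit and is \emph{not} convergent --- a contradiction. Hence $\src(\kappa_{i+1})\not\subseteq W_i$ and $W_i\subsetneq W_{i+1}$. Finiteness of $V$ then bounds the strictly increasing chain $W_1\subsetneq W_2\subsetneq\cdots\subseteq V$, so the loop must terminate and $k$ is a well-defined finite value.

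I expect the main obstacle to be justifying the self-loop step cleanly rather than the termination bookkeeping. One must genuinely invoke the purity of the stochastic graph to guarantee that \emph{all} targets of $b_{i+1}$, not merely one, lie inside $\src(\kappa_{i+1})$ --- the nondeterministic reading of ``moves off'' would only supply a single target and would leave open the possibility that $b_{i+1}'$ moves off $\wrep$, which would break the argument --- and then confirm against the definitions of ``moves off'' and ``contains a circuit'' on page~\pageref{movesoff} that a bona fide self-loop is nonconvergent.
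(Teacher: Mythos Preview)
Your proposal is correct and follows essentially the same route as the paper: reduce to showing $\src(\kappa_{i+1})\not\subseteq W_i$, assume otherwise, use pure stochasticity (via Lemma~\ref{minnonfacestrat} and its interpretation) to force $\trg(b_{i+1})\subseteq W_i$, then contradict $b_{i+1}'\in\tau_i'\in\Delta_{G/W_i}$ because $b_{i+1}'$ would be a self-loop. Your closing remark about why the argument genuinely needs the stochastic reading of ``moves off'' is a good observation and is exactly the content of the paper's parenthetical justification.
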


\begin{proof}
It is enough to show that $\src(\kappa_{i+1}) \not\subseteq W_i$.
Suppose otherwise.  Since $b_{i+1}\in\kappa_{i+1}$, that would mean
$\src(b_{i+1})\in W_i$ and $\trg(b_{i+1})\subseteq W_i$. (The
inclusion holds because $\kappa_{i+1}$ is a minimal nonface in $\DG$,
so no action of $\kappa_{i+1}$ moves off $\src(\kappa_{i+1})$, and
because $G$ is pure stochastic.)  Thus $b^{{\kern .1em}\prime}_{i+1}$
would become self-looping in $G/W_i$, contradicting $b^{{\kern
.1em}\prime}_{i+1}\in\tau^\prime_i\in\Delta_{G/W_{\scriptstyle{i}}}$.
\end{proof}

\vspace*{0.01in}

\begin{lemma}[Fully Controllable Expansion]\label{expfullcontrol}
Let the hypotheses and notation be as in Construction~\ref{expandminnonfaces}.
\ Then $(W_i, \vtsp\calA_i)$ is a fully controllable pure stochastic
graph, for $i=1,\ldots,k$.
\end{lemma}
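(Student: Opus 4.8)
The plan is to argue by induction on $i$, taking $(\src(\kappa_1),\kappa_1)$ as the base case and passing from $(W_i,\calA_i)$ to $(W_{i+1},\calA_{i+1})$ in the inductive step. For the base case I would invoke the discussion of minimal nonfaces in pure stochastic strategy complexes (page~\pageref{minnonfacesemantics}): a minimal nonface $\kappa$ of $\DG$ has all targets of all its actions inside $\src(\kappa)$, and $(\src(\kappa),\kappa)$ is a fully controllable pure stochastic graph (an irreducible Markov chain). Applied to $\kappa_1$, this shows $(W_1,\calA_1)=(\src(\kappa_1),\kappa_1)$ is fully controllable.

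For the inductive step I would isolate two fully controllable pieces inside $(W_{i+1},\calA_{i+1})$. Writing $U:=\src(\kappa_{i+1})$, the first piece is the subgraph $(U,\kappa_{i+1})$, fully controllable by the same page~\pageref{minnonfacesemantics} fact applied to $\kappa_{i+1}$. The second is the quotient graph $(W_{i+1},\calA_{i+1})/U$. Because every action of $\kappa_{i+1}$ has source and all targets inside $U$, those actions collapse to self-loops and are discarded, so this quotient is exactly $(W_i,\calA_i)/(W_i\inter U)$; by Fact~3 on page~\pageref{quotientfacts} it is fully controllable, using the inductive hypothesis on $(W_i,\calA_i)$, provided $W_i\inter U\neq\emptyset$. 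I would then combine the two pieces through Lemma~\ref{unquotienting}: for a target $v\in W_{i+1}$, choose a maximal quotient strategy $\sigma'$ whose goal is $\{\wrep\}$ when $v\in U$ and $\{v\}$ when $v\notin U$, together with a subgraph strategy $\gamma\in\Delta_{(U,\kappa_{i+1})}$ converging to $v$ (when $v\in U$) or to the source $w\in U$ of one of $\sigma'$'s lifted $\wrep$-actions (when $v\notin U$). Lemma~\ref{unquotienting} makes $\sigma\union\gamma$ a strategy in $(W_{i+1},\calA_{i+1})$, and a short source-set check shows its goal is exactly $\{v\}$. Since a strategy with goal $\{v\}$ extends to a maximal strategy with the same goal (a convergent strategy has nonempty goal), full controllability of $(W_{i+1},\calA_{i+1})$ follows.

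The hinge of the step, and the part I expect to be the main obstacle, is the overlap claim $W_i\inter U\neq\emptyset$: without it the quotient is undefined and, worse, $(W_{i+1},\calA_{i+1})$ could split into two disjoint fully controllable components and thus fail to be fully controllable. I would prove it by contradiction. If $\src(\kappa_{i+1})\subseteq\sdiff{V}{W_i}$, then, since all targets of $\kappa_{i+1}$'s actions also lie in $\src(\kappa_{i+1})$, quotienting by $W_i$ leaves every action of $\kappa_{i+1}$ untouched, so $\kappa'_{i+1}$ still carries the same circuit and remains a nonface of $\Delta_{G/W_i}$. On the other hand $\kappa_{i+1}\setminus\{b_{i+1}\}\subseteq\sigma$ with all sources outside $W_i$, hence $\kappa_{i+1}\setminus\{b_{i+1}\}\subseteq\xi_i$ (step~4(b) of Construction~\ref{expandminnonfaces}), while $b_{i+1}\in\tau_i$; as $\xi_i'\subseteq\tau_i'$, this forces $\kappa'_{i+1}\subseteq\tau'_i\in\Delta_{G/W_i}$, a simplex, contradicting that $\kappa'_{i+1}$ is a nonface. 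This overlap argument, together with the goal-set bookkeeping when patching $\sigma$ and $\gamma$, are the only genuinely delicate points; everything else is a direct citation of the stochastic minimal-nonface structure, Fact~3, and Lemma~\ref{unquotienting}.
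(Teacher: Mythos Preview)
Your proof is correct and follows essentially the same approach as the paper: induction on $i$, with the base case handled by the pure-stochastic minimal-nonface structure, and the inductive step hinging on the overlap claim $W_i\cap\src(\kappa_{i+1})\neq\emptyset$, which you prove exactly as the paper does (if the intersection were empty, $\kappa'_{i+1}$ would remain a nonface of $\Delta_{G/W_i}$ yet sit inside $\tau'_i$). The only difference is that the paper treats ``two overlapping fully controllable pure stochastic subgraphs have a fully controllable union'' as immediate, while you spell it out via the quotient $(W_{i+1},\calA_{i+1})/U\cong (W_i,\calA_i)/(W_i\cap U)$ and Lemma~\ref{unquotienting}; your extra detail is sound and arguably clarifies a step the paper leaves implicit.
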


\begin{proof}
Observe that $k \geq 1$, since $\abs{V} > 1$. 

\vso

Base Case: \ $(W_1, \vtsp\calA_1) = (\src(\kappa_1), \,\kappa_1)$.  \
Since $\kappa_1$ is a minimal nonface in the strategy complex of a
pure stochastic graph, $(\src(\kappa_1), \,\kappa_1)$ is a fully
controllable pure stochastic graph.

\vso

Inductive Step: \ As in the base case, $(\src(\kappa_{i+1}),
\,\kappa_{i+1})$ is a fully controllable pure stochastic graph.
Inductively, $(W_i, \vtsp\calA_i)$ is a fully controllable pure
stochastic graph.  Showing that $W_i \inter \src(\kappa_{i+1}) \neq
\emptyset$ would therefore establish full controllability of the pure
stochastic graph $(W_{i+1}, \calA_{i+1})$.  Suppose this intersection
is empty.  Then $\kappa^\prime_{i+1}$ is a minimal nonface in
$\Delta_{G/W_{\scriptstyle{i}}}$.  On the other hand,
$\sdiff{\kappa_{i+1}\!}{\!\{b_{i+1}\}}\subseteq\vtsp\xi_i$, \vtsp{}so
$\kappa^\prime_{i+1} \subseteq \tau^\prime_i \in
\Delta_{G/W_{\scriptstyle{i}}}$, producing a contradiction.
\end{proof}

\vspace*{0.01in}

\begin{lemma}[Distinct Actions]\label{distinctbi}
Let the hypotheses and notation be as in Construction~\ref{expandminnonfaces}.

Then $\abs{\{b_1, \ldots, b_k\}} = k$, that is, the actions $b_1,
\ldots, b_k$ are distinct.
\end{lemma}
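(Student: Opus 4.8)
The plan is to prove that the actions $b_1, \ldots, b_k$ are distinct by a direct pairwise comparison, carried out inside a suitable quotient graph rather than by an elaborate induction. Fix indices $1 \le j < m \le k$; I want to show $b_j \neq b_m$. The graph to look at is $G/W_{m-1}$, which is well-defined since $m-1 \ge 1$. The guiding observation is that the earlier action $b_j$ becomes a \emph{self-loop} in $G/W_{m-1}$, whereas the later action $b_m$ is chosen precisely so that its image lies in a convergent set of that quotient and therefore cannot be a self-loop. Since the correspondence $a \leftrightarrow a^\prime$ between actions of $G$ and of $G/W_{m-1}$ is a bijection (page~\pageref{quotient}), the differing self-loop status of $b_j^\prime$ and $b_m^\prime$ forces $b_j \neq b_m$.

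The key steps, in order, are as follows. First I establish the containment $\src(\kappa_j) \subseteq W_{m-1}$: from Construction~\ref{expandminnonfaces} one has $W_1 = \src(\kappa_1)$ and $W_{i+1} = W_i \union \src(\kappa_{i+1})$, so $\src(\kappa_j) \subseteq W_j$, and $W_j \subseteq W_{m-1}$ because the $W_i$ are increasing (Lemma~\ref{expansive}). Second, since $\kappa_j$ is a minimal nonface of $\DG$ and $G$ is pure stochastic, the interpretation of Lemma~\ref{minnonfacestrat} discussed on page~\pageref{minnonfacesemantics} gives that every action of $\kappa_j$ has all of its targets inside $\src(\kappa_j)$; in particular $\trg(b_j) \subseteq \src(\kappa_j) \subseteq W_{m-1}$ and $\src(b_j) \in \src(\kappa_j) \subseteq W_{m-1}$. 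Hence in $G/W_{m-1}$ both the source and every target of $b_j$ are relabeled to the single state $\wrep$, so $b_j^\prime$ is a self-loop at $\wrep$. Third, I use that $b_m = b_{(m-1)+1}$ was selected in step 4 of the construction from $\sdiff{\tau_{m-1}}{\sigma}$, with $\tau_{m-1}^\prime \in \Delta_{G/W_{m-1}}$ a strategy; a self-loop is by itself a circuit (its singleton source set is not moved off), so a convergent set contains no self-loop, and therefore $b_m^\prime$ is not a self-loop. Combining the second and third steps, $b_m^\prime \neq b_j^\prime$, so $b_m \neq b_j$, and as $j < m$ were arbitrary the actions are distinct.

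I expect the main obstacle to be the bookkeeping that makes the ``collapse to a self-loop'' claim precise, namely verifying simultaneously that $b_j$'s source lies in $W_{m-1}$ and that \emph{all} of its targets do too. The source inclusion is immediate from the definition of $W_j$, but the target inclusion is exactly where the pure-stochastic hypothesis is essential: it is the pure-stochastic reading of a minimal nonface (all targets within the source set) that forces $\trg(b_j) \subseteq \src(\kappa_j)$, and this step would fail in the pure-nondeterministic setting, where an action of a minimal nonface may keep targets outside $\src(\kappa_j)$. I would also take care to note that the argument is genuinely pairwise and needs no induction, and that it reuses the very self-loop/convergence mechanism already invoked in the proof of Lemma~\ref{expansive}.
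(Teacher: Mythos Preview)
Your proof is correct and follows essentially the same route as the paper's own argument: show that for $j < m$ the earlier action $b_j$ has both its source and all its targets in $W_j \subseteq W_{m-1}$ (using the pure-stochastic reading of minimal nonfaces), hence becomes a self-loop in $G/W_{m-1}$, whereas $b_m^\prime$ lies in the convergent set $\tau_{m-1}^\prime$ and so cannot be a self-loop. The paper's proof is merely a terser version of this, asserting $\src(b_j)\in W_j$ and $\trg(b_j)\subseteq W_j$ without spelling out the minimal-nonface justification that you make explicit.
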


\begin{proof}
Suppose $1 \leq j \leq i < k$.  Then $\src(b_j)\in W_j$ and
$\trg(b_j)\subseteq W_j$.  Since $W_j \subseteq W_i$, action
$b^\prime_j$ is self-looping in $G/W_i$ and thus $b_j$ cannot be a
candidate for $b_{i+1}$.
\end{proof}

\vspace*{0.01in}

\begin{lemma}[Expansive Sets of Actions]\label{fullexp}
Let hypotheses and notation be as in Construction~\ref{expandminnonfaces}.
Suppose $1 < i \leq k$.
\ Let $\ell_i = \abs{\sdiff{W_i}{W_{i-1}}}$.
\quad (By Lemma~\ref{expansive}, $\mskip2mu\ell_i > 0$.)

\vst

Then there exist actions $\hspt\expansive_i \subseteq
\sdiff{\kappa_i\!}{\!\big(\calA_{i-1}\union\{b_i\}\big)}$ such that
$\abs{\expansive_i} = \ell_i$ and at most one action in
$\hspq\expansive_i$ has its source in \hspq$W_{i-1}$.
\quad (We refer to $\mskip3mu\expansive_i$ as an
\,\mydefem{expansive set} of actions.)

\vst

Moreover, suppose for all $\,\expansive \subseteq
\sdiff{\kappa_i\!}{\!\big(\calA_{i-1}\union\{b_i\}\big)}$ with
$\abs{\expansive} = \ell_i$, $\,\src(\expansive)\inter
W_{i-1}\neq\emptyset$.  Then $\src(b_i)\not\in W_{i-1}$ and one may
choose $\expansive_i$ to contain an action $\exa$ such that
$\src(\exa)\in W_{i-1}$ and such that the probability of reaching
$\vtsp\src(b_i)$ from $\vtsp\src(\exa)$ under actions of
$\vmsp\expansive_i$ is nonzero.
\end{lemma}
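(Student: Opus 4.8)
The plan is to analyze $\kappa_i$ through its source set. Since $\kappa_i$ is a minimal nonface of $\DG$ in a pure stochastic graph, Lemma~\ref{minnonfacestrat} gives that its actions have pairwise distinct sources, so for each $v\in\src(\kappa_i)$ there is a unique action $a_v\in\kappa_i$ with $\src(a_v)=v$; moreover (page~\pageref{minnonfacesemantics}) every target of every such action lies in $\src(\kappa_i)$, and $(\src(\kappa_i),\kappa_i)$ is an irreducible Markov chain. I would partition $\src(\kappa_i)=S\,\dot\cup\,O$, where $S=\sdiff{\src(\kappa_i)}{W_{i-1}}$ (so $\abs{S}=\ell_i$) and $O=\src(\kappa_i)\inter W_{i-1}$; the proof of Lemma~\ref{expfullcontrol} shows $W_{i-1}\inter\src(\kappa_i)\neq\emptyset$, i.e.\ $O\neq\emptyset$. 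Because every action of $\calA_{i-1}$ has source in $W_{i-1}$, each $a_v$ with $v\in S$ automatically avoids $\calA_{i-1}$, and the only size-$\ell_i$ subset of $\kappa_i$ whose sources all avoid $W_{i-1}$ is $\{a_v : v\in S\}$.

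First I would dispatch the case $\src(b_i)\in W_{i-1}$: then $b_i\notin\{a_v:v\in S\}$, so $\expansive_i=\{a_v:v\in S\}$ lies in $\sdiff{\kappa_i}{(\calA_{i-1}\union\{b_i\})}$, has size $\ell_i$, and has no source in $W_{i-1}$, proving the first assertion. I would then observe that this is exactly the negation of the ``Moreover'' hypothesis: $\{a_v:v\in S\}$ is a size-$\ell_i$ subset of $\sdiff{\kappa_i}{(\calA_{i-1}\union\{b_i\})}$ with sources disjoint from $W_{i-1}$ precisely when $\src(b_i)\notin S$. Hence the ``Moreover'' hypothesis forces $\src(b_i)\notin W_{i-1}$, yielding that conclusion at once and placing us in the complementary case $\src(b_i)\in S$, where the real work lies.

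In that case $b_i=a_{\src(b_i)}$, so I would keep the $\ell_i-1$ actions $\{a_v:v\in S\setminus\{\src(b_i)\}\}$ and must adjoin one action $\exa=a_u$ with $u\in O$, $\exa\notin\calA_{i-1}$, that reaches $\src(b_i)$. The heart of the argument is a backward-reachability computation inside the irreducible chain. Define $R_S\subseteq S$ to be the set of states from which $\src(b_i)$ can be reached along action-edges using only $\{a_v:v\in S\setminus\{\src(b_i)\}\}$, so that all intermediate states stay in $S\setminus\{\src(b_i)\}$; then $\src(b_i)\in R_S$ and, by this closure definition, no state of $\sdiff{S}{R_S}$ has an action-edge into $R_S$. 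The goal is to find $u\in O$ with $\trg(a_u)\inter R_S\neq\emptyset$: a first step $u\to(\text{target in }R_S)$ followed by the $R_S$-path to $\src(b_i)$ is then a positive-probability route to $\src(b_i)$ under $\expansive_i=\{a_v:v\in S\setminus\{\src(b_i)\}\}\union\{a_u\}$, since stochastic actions reach every target with positive probability.

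To produce such a $u$ I would argue by contradiction: if no action sourced in $Q:=\sdiff{\src(\kappa_i)}{R_S}$ had a target in $R_S$, then $Q$ would be closed under the chain, and since $\emptyset\neq O\subseteq Q\subsetneq\src(\kappa_i)$ this contradicts irreducibility (no state of $Q$ could reach $\src(b_i)\in R_S$). The closure property already excludes any $w\in\sdiff{S}{R_S}$ from having a target in $R_S$, so the offending source must lie in $O$. The crucial payoff, and the step I expect to carry the most weight, is that this $u$ also satisfies $a_u\notin\calA_{i-1}$ for free: any action of $\calA_{i-1}$ belongs to some earlier minimal nonface $\kappa_j$, whose targets lie (pure stochastic case, page~\pageref{minnonfacesemantics}) in $\src(\kappa_j)\subseteq W_{i-1}$ and hence are disjoint from $R_S\subseteq S\subseteq\sdiff{V}{W_{i-1}}$. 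Thus the condition $\trg(a_u)\inter R_S\neq\emptyset$ itself forces $a_u\notin\calA_{i-1}$, simultaneously delivering membership of $\exa$ in $\sdiff{\kappa_i}{(\calA_{i-1}\union\{b_i\})}$, the ``at most one source in $W_{i-1}$'' count (only $\exa$ is $O$-sourced), and the required reachability of $\src(b_i)$.
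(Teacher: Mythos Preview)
Your proposal is correct and follows essentially the same approach as the paper. Both arguments split on whether $\src(b_i)\in W_{i-1}$, both recognize that the ``Moreover'' hypothesis is equivalent to $\src(b_i)\in S$, and both find the required action $e$ by backward reachability inside the irreducible chain $(\src(\kappa_i),\kappa_i)$, using the fact that every action of $\calA_{i-1}$ has all its targets in $W_{i-1}$ to conclude $e\notin\calA_{i-1}$. The only cosmetic difference is that the paper phrases the reachability step as an explicit backchaining loop (building sets $\tau^{(0)}\subsetneq\tau^{(1)}\subsetneq\cdots$ until an action with source in $W_{i-1}$ appears), whereas you define the reachable set $R_S$ directly and invoke irreducibility once; the content is the same.
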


\noindent {\bf Comments:}\label{commentszero}
\ (a) Let $\calA_0 = \emptyset\,$, $W_0 = \{\src(b_1)\}$, and
    $\expansive_1 = \sdiff{\kappa_1\!}{\!\{b_1\}}$.  Then the lemma
    holds for $i=1$, with $\src(\expansive_1)\inter W_0=\emptyset$.
\ (b) For $i=1,\ldots,k$, $\vmsp\expansive_i \subseteq \calA_i\inter\sigma$,
    since $\kappa_i \subseteq \calA_i$ and $\sdiff{\kappa_i\!}{\!\{b_i\}}
    \subseteq \sigma$.

\begin{proof}
Assume $1 < i \leq k$.
\ Readily,
$\;\sdiff{W_i}{W_{i-1}} = \sdiff{\src(\kappa_i)}{W_{i-1}}\;$
and
$\;W_{i-1} = \src(\calA_{i-1}).$
\ Thus
$\;\sdiff{W_i}{W_{i-1}} \;=\; \sdiff{\src(\kappa_i)}{\src(\calA_{i-1})}
\;\subseteq\; \src\big(\sdiff{\kappa_i}{\calA_{i-1}}\big),$
meaning each state in $\sdiff{W_i}{W_{i-1}}$ is the source of some
action in $\kappa_i$ that is not also an action in $\calA_{i-1}$.  If
in fact every state in $\sdiff{W_i}{W_{i-1}}$ is the source of some
action in $\kappa_i$ that is neither an action in $\calA_{i-1}$ nor
the action $b_i$, then we may construct
$\mskip1.5mu\expansive_i\subseteq
\sdiff{\kappa_i\!}{\!\big(\calA_{i-1}\union\{b_i\}\big)}\mskip2mu$
such that $\mskip1mu\abs{\expansive_i} = \ell_i\mskip2mu$ and
$\mskip2mu\src(\expansive_i)\inter W_{i-1}=\emptyset$.

Otherwise, since all actions in a minimal nonface have distinct
sources, it is only possible to find $\ell_i-1$ actions in
$\sdiff{\kappa_i\!}{\!\big(\calA_{i-1}\union\{b_i\}\big)}$ whose
sources lie outside $W_{i-1}$.  Moreover,
$\src(b_i)\in\sdiff{W_i}{W_{i-1}}$.  By the proof of
Lemma~\ref{expfullcontrol}, $\src(\kappa_i)\inter W_{i-1}
\neq\emptyset$, meaning $\kappa_i$ contains at least one action with
source in $W_{i-1}$.  We now show by backchaining from $\src(b_i)$ how
to select one such action $e$ so that $\expansive_i$ may consist of
action $e$ and the $\mskip1.5mu\ell_i-1$ actions just mentioned.

\vspace*{0.07in}

To reduce index clutter, we fix $i$ and make the following
definitions for the rest of the proof:

\vspace*{-0.225in}

\begin{eqnarray*}
       \calA &=& \calA_{i-1} \quad \hbox{and} \quad W \;=\; \src(\calA),\\
       b &=& b_i,\\
  \kappa &=& \kappa_i,\\
   \outside &=&
   \setdef{a\in\sdiff{\kappa\!}{\!\big(\calA\union\{b\}\big)}}{\vtsp\src(a)\in\sdiff{\src(\kappa)\!}{\!W}}.\\
\end{eqnarray*}

\vspace*{-0.25in}

(By assumption for this case, $\vtsp\abs{\outside}=\ell_i-1\vtsp$ and
$\vtsp\src(b)\in\sdiff{\src(\kappa)\!}{\!W}$.)

\vspace*{0.1in}

We now define a backchaining algorithm, with a loop index $j$, for
constructing sets of actions $\emptyset \neq \tau^{(0)} \subsetneq
\cdots \subsetneq \tau^{(j)} \subsetneq \cdots$.  Inductively, each
iteration assumes that (i) $b\in\tau^{(j)} \subsetneq \kappa$, (ii)
$\tau^{(j)} \subseteq \outside\union\{b\}$, and (iii) for each
$s\in\src(\tau^{(j)})$, there exists a sequence of zero or more action
edges leading from $s$ to $\src(b)$, with the edges coming from
actions in $\sdiff{\tau^{(j)}}{\{b\}}$.

\vspace*{0.1in}

We initialize the loop with $\tau^{(0)}=\{b\}$.  The loop will end
by defining an action $\exa\vtsp$ such that we may let
$\expansive_i=\outside\union\{\exa\}$, establishing the lemma.
\ The loop starts from $j=0$:
      
\vspace*{0.1in}

\begin{minipage}{5.6in}

  \begin{enumerate}

  \item[(a)] Since $\kappa$ is a minimal nonface in $\DG$, with $G$
             pure stochastic, $(\src(\kappa), \vmsp\kappa)$ is a fully
             controllable graph in its own right and $\emptyset \neq
             \src(\tau^{(j)}) \subsetneq \src(\kappa)$, by
             Lemma~\ref{minnonfacestrat} on
             page~\pageref{minnonfacestrat}.  Thus some action
             $a^{(j)}\in\kappa$ moves off
             $\vmsp\sdiff{\src(\kappa)}{\src(\tau^{(j)})}$ in this
             graph.

  \item[(b)] If we can pick $a^{(j)}$ so that $\src(a^{(j)})\in W$,
             then we do so and in that case we let $\exa=a^{(j)}$.
             Either way, we define
             $\tau^{(j+1)}=\tau^{(j)}\union\{a^{(j)}\}$.  Condition
             (iii) above is satisfied by $\tau^{(j+1)}$ since it is
             satisfied by $\tau^{(j)}$ and
      $\mskip0.5mu\trg(a^{(j)})\inter\mskip1.1mu\src(\tau^{(j)})\neq\emptyset$.

  \item[(c)] If step (b) defined action $\exa$, then the loop ends.

             Otherwise, necessarily
             $a^{(j)}\!\in\sdiff{\outside}{\tau^{(j)}}$.  Thus, in
             this case, $\tau^{(j+1)}$ also satisfies conditions (ii)
             and (i) above, since in particular some action of
             $\kappa$ has source in $W\mskip-1mu$ but no action of
             $\tau^{(j+1)}$ does.  \ The loop continues, with $j+1$ in
             place of $j$.

  \end{enumerate}

\end{minipage}

\vspace*{0.15in}

By finiteness, the loop must eventually end, for some $j$.  The
probability of reaching $\src(b)$ from $\src(\exa)$ under actions of
$\sdiff{\tau^{(j+1)}\!}{\!\{b\}}$ is nonzero by condition (iii), so
the same will be true under actions of $\expansive_i= \outside \union \{\exa\}
\subseteq \kappa$.  Moreover,
$\exa\in\sdiff{\kappa\!}{\!\big(\calA\union\{b\}\big)}$ with
$\src(\exa)\in W$, since $\exa\in\kappa$ and
$\emptyset\neq\trg(\exa)\inter\mskip1.5mu\src(\tau^{(j)})\subseteq\sdiff{\src(\kappa)}{W}$,
whereas $\trg(a)\subseteq W$, for all $a\in\calA$, and $\src(b)\not\in W$.
\end{proof}

\paragraph{For the remainder of Section~\ref{stochastic}:} \ \label{Hidefs}Assume
the hypotheses and notation of Construction~\ref{expandminnonfaces}
starting on page~\pageref{expandminnonfaces}.  Let $\calA_0 =
\emptyset\,$ and $W_0 = \{\src(b_1)\}$.  Define $H_i = (W_i,
\vtsp\calA_i)$, for $i=0, 1, \ldots, k$, with $k \geq 1$.  Each $H_i$
is a fully controllable pure stochastic graph, by
Lemma~\ref{expfullcontrol}.
Also, for $i=1, \ldots, k$, $H_{i-1}$ is a subgraph of $H_i$, with
$\emptyset\neq{}W_{i-1}\subsetneq{}W_i$ and
$\calA_{i-1}\subsetneq\calA_i$, by Lemmas~\ref{expansive}
and \ref{distinctbi}, and since $\abs{\kappa_1} > 1$.
\ Let $\expansive_1 = \sdiff{\kappa_1\!}{\!\{b_1\}}$.  For $i=2,
\ldots, k$, define $\expansive_i$ via Lemma~\ref{fullexp}, choosing
$\expansive_i$ so that $\vtsp\src(\expansive_i)\inter{}W_{i-1}=\emptyset\vtsp$
whenever possible.

\vspace*{0.05in}

\begin{corollary}[Expansion Independence]\label{indepexp}
\quad
Let hypotheses and notation be as above.

Suppose $1 \leq i \leq k$.
\ Then $\mskip3mu\expansive_i \union \tau \in \DHi$, \,for every $\mskip2mu\tau\in\DHim$.
\end{corollary}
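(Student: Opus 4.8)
The plan is to deduce the corollary from Lemma~\ref{unquotienting} (Combining Quotient and Subgraph Strategies), applied with $H_i=(W_i,\calA_i)$ in the role of $G$ and its subgraph $H_{i-1}=(W_{i-1},\calA_{i-1})$ in the role of $H$. Since $H_{i-1}$ is a genuine subgraph of $H_i$ and the quotient in that lemma is then $H_i/W_{i-1}$, it suffices to establish the single convergence statement $\expansive_i' \in \Delta_{H_i/W_{i-1}}$: once this is known, Lemma~\ref{unquotienting} yields $\expansive_i \union \tau \in \DHi$ for \emph{every} $\tau \in \DHim$ in one stroke (and $\expansive_i \inter \tau=\emptyset$ comes for free, as $\expansive_i \subseteq \sdiff{\kappa_i}{\calA_{i-1}}$ while $\tau \subseteq \calA_{i-1}$). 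First I would record the facts I will lean on: by Comment (b) after Lemma~\ref{fullexp}, $\expansive_i$ is a subset of the minimal nonface $\kappa_i$ avoiding $b_i$, so by Lemma~\ref{minnonfacestrat} its actions have distinct sources; because $G$ is pure stochastic, every target of a $\kappa_i$-action lies in $\src(\kappa_i)\subseteq W_i$; at most one action of $\expansive_i$, call it $\exa$, has its source inside $W_{i-1}$; and $\src(b_i)$ is the unique state of $\sdiff{W_i}{W_{i-1}}$ that is not a source of $\expansive_i$.

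For the convergence claim I would argue by contradiction. A circuit in $\expansive_i'$ is a nonempty $\calB' \subseteq \expansive_i'$ no action of which moves off $S'=\src(\calB')$ in $H_i/W_{i-1}$; since all actions are stochastic, every action of $\calB'$ keeps all targets in $S'$. As the sources in $\expansive_i'$ are distinct, $\calB'$ is precisely the set of $\expansive_i'$-actions with source in $S'$, so the preimage $\hat S$ of $S'$ in $W_i$ is \emph{closed} under $\expansive_i$: any $\expansive_i$-action with source in $\hat S$ sends all targets back into $\hat S$. I would then split on whether $\wrep \in S'$. If $\wrep \notin S'$, then $\hat S = S' \subseteq \sdiff{W_i}{W_{i-1}}$, and the lift $\calB$ of $\calB'$ is a nonempty proper subset of $\kappa_i$ none of whose actions moves off $\src(\calB)=S'$ in $H_i$; but $\kappa_i$ is a minimal nonface, so $\calB$ is convergent in the fully controllable graph $(\src(\kappa_i),\kappa_i)$ (see page~\pageref{minnonfacesemantics} and Lemma~\ref{expfullcontrol}), forcing some action of $\calB$ to move off $\src(\calB)$, and since each such target lies in $\src(\kappa_i)\subseteq W_i$ this is also an escape in $H_i$ --- a contradiction.

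The case $\wrep \in S'$ is where the real work lies, and it is the step I expect to be the main obstacle. Here the unique $\wrep$-sourced action $\exa'$ must lie in $\calB'$, so $\src(\exa)\in W_{i-1}\subseteq \hat S$; and since $\src(b_i)$ is not a source of $\expansive_i'$ it lies outside $S'$, hence $\src(b_i)\notin \hat S$. This is exactly the situation in which the ``Moreover'' clause of Lemma~\ref{fullexp} guarantees that $\exa$ was chosen so that $\src(b_i)$ is reachable from $\src(\exa)$ with positive probability using only actions of $\expansive_i$ (that same clause also ensures $\exa$ has a target in $\sdiff{\src(\kappa_i)}{W_{i-1}}$, so $\exa'$ is genuinely non-self-looping, keeping the quotient actions well-defined). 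A realizing path starts in $\hat S$ at $\src(\exa)$ and ends outside $\hat S$ at $\src(b_i)$, so at some step an $\expansive_i$-action with source in $\hat S$ carries a target out of $\hat S$, contradicting the closedness of $\hat S$. This exhausts the cases, giving $\expansive_i'\in\Delta_{H_i/W_{i-1}}$ and hence the corollary; the base index $i=1$ is immediate, since then $W_0=\{\src(b_1)\}$, $\DHz=\{\emptyset\}$, and $\expansive_1=\sdiff{\kappa_1}{\{b_1\}}$ is already convergent. Throughout, the one point needing care is the bookkeeping of the state correspondence under quotienting.
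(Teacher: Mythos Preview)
Your argument is correct and organized differently from the paper's.  The paper argues directly in $\DHi$: it assumes $\expansive_i\cup\tau$ fails to be convergent, extracts a minimal nonface $\minexp$ of $\DHi$ inside $\expansive_i\cup\tau$, invokes the external Lemma~7.3(b)(i) of \cite{paths:plans} to handle the case $\src(\expansive_i)\cap W_{i-1}=\emptyset$ and to force $\exa\in\minexp$ otherwise, and then uses the path from $\src(\exa)$ to $\src(b_i)$ to exhibit an escape from $\src(\minexp)$.  You instead factor through Lemma~\ref{unquotienting}, reducing the whole statement to the single claim $\expansive_i'\in\Delta_{H_i/W_{i-1}}$; your two cases ($\wrep\notin S'$ versus $\wrep\in S'$) correspond exactly to the paper's two cases, but your first case is handled by the minimal-nonface property of $\kappa_i$ alone, with no appeal to \cite{paths:plans}.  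Both routes hinge on the same reachability guarantee in the ``Moreover'' clause of Lemma~\ref{fullexp}, and you correctly observe that the standing choice on page~\pageref{Hidefs} (take $\src(\expansive_i)\cap W_{i-1}=\emptyset$ whenever possible) is what activates that clause precisely when your second case arises.  One small wording fix: in your first case the contradiction is with $\calB'$ being a circuit in $H_i/W_{i-1}$, not in $H_i$; the escape target you find lies outside $S'$ in $W_i$, and since $\wrep\notin S'$ its image in the quotient also lies outside $S'$, giving the contradiction there.
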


\begin{proof}
If $\src(\expansive_i)\subseteq\sdiff{W_i}{W_{i-1}}$, then the lemma's
assertion follows from Lemma~7.3(b)(i) in \cite{paths:plans}.

\vst

Otherwise, $i>1$.  Let $\exa\in\expansive_i$ be as per Lemma~\ref{fullexp}.
There exists a sequence of action edges

\vspace*{-0.1in}

$$\src(\exa)=v_1\;\xrightarrow{a_1=\mskip2mu\exa}\; v_2
       \;\xrightarrow{\phantom{1}a_2\phantom{1}}\;
       \cdots \;v_m
       \;\xrightarrow{\phantom{1}a_m\phantom{1}}\; v_{m+1}=\src(b_i),$$

\noindent for some $m \geq 1$, with $a_j\in\expansive_i$,
$v_j=\src(a_j)$, and $v_{j+1}\in\trg(a_j)$, for all $j=1, \ldots, m$.
Moreover, $v_1 \in W_{i-1}$, $v_j\not\in W_{i-1}$, for $j = 2, \ldots,
m$, \vtsp{}and $v_{m+1}=\src(b_i)\not\in W_{i-1}\mskip-1mu\union\mskip0.5mu\src(\expansive_i)$.

\vst

Suppose $\expansive_i\union\tau\not\in\DHi$, for some $\tau\in\DHim$.
Let $\minexp$ be a minimal nonface of $\DHi$, with
$\emptyset\neq\minexp\subseteq\expansive_i\union\tau$.  Some or all of
the actions $\{a_1, \ldots, a_m\}$ lie in $\minexp$.
\ Certainly $\exa\in\minexp$, again by Lemma~7.3(b)(i) in \cite{paths:plans}.
Since $\minexp$ is a minimal nonface, $\exa$ is the only action of
$\minexp$ with source $\src(\exa)$.  \ Since $\tau\in\DHim$,
$\vtsp\src(\sdiff{\minexp}{\expansive_i}) \subseteq W_{i-1}$.  The
actions in $\expansive_i$ all have distinct sources.  Thus no action
in $\minexp$ other than $a_j$ (if $a_j$ is even in $\minexp$) can have
source $v_j$, for $j = 2, \ldots, m$.

\vst

Consequently, there is a nonzero probability that the system will
transition to and stop at a state outside $\src(\minexp)$ when started
at $\src(\exa)$, while moving under actions of $\minexp$.  Some action of
$\minexp$ therefore moves off $\src(\minexp)$, which is a contradiction.
\end{proof}

\vspace*{0.05in}

\begin{corollary}[Cardinality of Expansive Actions]\label{expandingcard}
Let hypotheses and notation be as above.

\vst

Then $\,\bigabs{\mskip-2.25mu\bigunion_{i=1}^k\expansive_i\mskip1mu} = \abs{W_k} - 1$.
\end{corollary}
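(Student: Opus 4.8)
The plan is to reduce the claim to two facts: that the expansive sets $\expansive_1, \ldots, \expansive_k$ are pairwise disjoint, and that their individual cardinalities telescope. Granting disjointness, $\bigabs{\bigunion_{i=1}^k\expansive_i} = \sum_{i=1}^k\abs{\expansive_i}$, so it suffices to evaluate that sum. The heavy lifting has already been done in Lemmas~\ref{expansive}, \ref{fullexp}, and \ref{distinctbi}; this corollary is essentially bookkeeping on top of them.

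First I would record the cardinalities. By Comment (a) following Lemma~\ref{fullexp}, $\expansive_1 = \sdiff{\kappa_1}{\{b_1\}}$; since $\kappa_1$ is a minimal nonface of $\DG$ its actions have distinct sources (Lemma~\ref{minnonfacestrat}), whence $\abs{\kappa_1} = \abs{\src(\kappa_1)} = \abs{W_1}$ and so $\abs{\expansive_1} = \abs{W_1} - 1$. Because $W_0 = \{\src(b_1)\}$ with $\src(b_1)\in\src(\kappa_1)=W_1$, this reads $\abs{\expansive_1} = \abs{W_1} - \abs{W_0}$. For $2 \le i \le k$, Lemma~\ref{fullexp} gives $\abs{\expansive_i} = \ell_i = \abs{\sdiff{W_i}{W_{i-1}}}$, and since $W_{i-1}\subsetneq W_i$ by Lemma~\ref{expansive}, this equals $\abs{W_i} - \abs{W_{i-1}}$.

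Next I would establish disjointness, which is where the nested structure of the construction does the work. The construction gives $\calA_i = \calA_{i-1}\union\kappa_i$ for $i\geq 2$ and $\calA_1 = \kappa_1$, so $\calA_i = \bigunion_{j\le i}\kappa_j$ is increasing in $i$. Each expansive set satisfies $\expansive_i\subseteq\kappa_i\subseteq\calA_i$, while Lemma~\ref{fullexp} (and the definition of $\expansive_1$, using $\calA_0=\emptyset$) forces $\expansive_i\subseteq\sdiff{\kappa_i}{(\calA_{i-1}\union\{b_i\})}$, so in particular $\expansive_i\inter\calA_{i-1}=\emptyset$. Hence for $i < j$ we have $\expansive_i\subseteq\calA_i\subseteq\calA_{j-1}$ while $\expansive_j\inter\calA_{j-1}=\emptyset$, giving $\expansive_i\inter\expansive_j=\emptyset$.

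Finally the sum telescopes: $\sum_{i=1}^k\abs{\expansive_i} = (\abs{W_1}-\abs{W_0}) + \sum_{i=2}^k(\abs{W_i}-\abs{W_{i-1}}) = \abs{W_k} - \abs{W_0} = \abs{W_k} - 1$, since $\abs{W_0}=1$. I do not expect a genuinely hard step here; the only points needing care are the cardinality of $\expansive_1$, which rests on the distinct-sources property of minimal nonfaces, and invoking the ``avoids $\calA_{i-1}$'' property uniformly across all indices, including the $i=1$ case via $\calA_0 = \emptyset$ and $W_0 = \{\src(b_1)\}$.
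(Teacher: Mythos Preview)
Your proof is correct and follows essentially the same approach as the paper: establish disjointness of the $\expansive_i$, use $\abs{\expansive_i}=\ell_i=\abs{W_i}-\abs{W_{i-1}}$, and telescope. The paper's proof is a one-line chain of equalities citing Construction~\ref{expandminnonfaces}, Lemma~\ref{fullexp}, and the comments after it; you spell out the disjointness argument explicitly via $\expansive_i\subseteq\sdiff{\kappa_i}{\calA_{i-1}}$ and $\expansive_j\subseteq\calA_{j-1}$ for $j<i$, which is exactly the content the paper leaves implicit.
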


\begin{proof}
By Construction~\ref{expandminnonfaces}, Lemma~\ref{fullexp}, and
subsequent comments,

\vspace*{-0.2in}

$$\matchabs{\bigunion_{i=1}^k\expansive_i}
    \;=\;
           \sum_{i=1}^k\abs{\expansive_i}
    \;=\;
           \sum_{i=1}^k\ell_i
    \;=\;
           \sum_{i=1}^k\abs{\sdiff{W_i}{W_{i-1}}}
    \;=\;
           \sum_{i=1}^k\left(\abs{W_i} - \abs{W_{i-1}}\right)
    \;=\;
           \abs{W_k} - 1.
$$

\vspace*{-0.25in}

\end{proof}

\subsection{Informative Action Release Sequences from Expansive Sets of Actions}
\markright{Informative Action Release Sequences from Expansive Sets of Actions}

This subsection shows how the constructions of the previous
subsection produce informative action release sequences.
Some notational abbreviations will be useful:

\vspace*{-0.1in}

\paragraph{Notation and Terminology:}
\begin{enumerate}
\item Rather than merely write sequences of actions, $b_1, \ldots,
  b_m$, we may write sequences of sets of actions $\calB_1, \ldots,
  \calB_m$, assuming the sets $\calB_1, \ldots, \calB_m$ are nonempty
  and pairwise disjoint.

  The meaning of a set $\calB_i$ of actions is to indicate a multiplicity
  of sequences of actions, one for each possible permutation of the
  actions in the set $\calB_i$.  The sequence of sets $\calB_1, \ldots,
  \calB_m$ represents all possible orderings of the actions
  $\union_{i=1}^m\calB_i$ consistent with the top-level ordering
  $\calB_1, \ldots, \calB_m$.  \ Here, ``consistent'' means actions in
  $\calB_i$ must appear before actions in $\calB_j$ whenever $i < j$,
  but the ordering is otherwise unconstrained.

  For example, the sequence of sets $\,\{a, b\}, \{c\}, \{d, e, f\}\,$
  represents 12 sequences of actions:

  \vspace*{-0.2in}

   $$\begin{array}{cccccc}
   a,b,c,d,e,f \,&\, a,b,c,e,f,d \,&\, a,b,c,f,d,e \,&\, a,b,c,f,e,d \,&\, a,b,c,e,d,f \,&\, a,b,c,d,f,e \\[1pt]
     b,a,c,d,e,f \,&\, b,a,c,e,f,d \,&\, b,a,c,f,d,e \,&\, b,a,c,f,e,d \,&\, b,a,c,e,d,f \,&\, b,a,c,d,f,e \\[1pt]
     \end{array}$$

  \vspace*{-0.05in}

\item We say that a sequence $\calB_1, \ldots, \calB_m$ of sets of
  actions is \vtsp{\em informative for $\vtsp{}G\vlsp$} if each of the
  sequences of actions it represents is an informative action release
  sequence for graph $G$.

\item In place of a singleton set, we may also simply write the
  action it contains.  For instance, we could write the top-level
  sequence in the example above as $\,\{a, b\}, \,c, \{d, e, f\}\,$.
\end{enumerate}

\vspace*{0.1in}

\begin{lemma}[Expanding Informative Actions]\label{expandingiars}
\ Suppose $G=(V,\frakA)$ is a fully controllable pure stochastic graph
with $n = \abs{V} > 1$. 
\ Let $\sigma$ be a maximal strategy in $\DG$.

\vso

From $G$ and $\mskip1mu\sigma\nvtsp$ construct $\vtsp{}H_1, \ldots,
H_k$ and $\vtsp\expansive_1, \ldots, \expansive_k$, with $k \geq 1$,
as per Construction~\ref{expandminnonfaces} on
page~\pageref{expandminnonfaces}, Lemma~\ref{fullexp} on
page~\pageref{fullexp}, and the definitions and notation of
page~\pageref{Hidefs}.

\vso

Then, for each $i$, with $1 \leq i \leq k$, the sequence $\expansive_i,
\expansive_{i-1}, \ldots, \expansive_1$ is informative for $H_i$.
\end{lemma}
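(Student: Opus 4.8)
The plan is to induct on $i$, building the informative sequence for $H_i$ by prepending the block $\expansive_i$ to the inductively informative sequence $\expansive_{i-1}, \ldots, \expansive_1$ for the subgraph $H_{i-1}$, and gluing the two halves with the combination lemma. Everything rests on one self-contained observation that I would isolate first: for every $i$, any ordering of $\expansive_i$ is by itself an iars for $H_i$. Since a set of actions is convergent in a subgraph exactly when it is convergent in the full graph, $\DHi$ is the subcomplex of $\DG$ induced on $\calA_i$; hence $\kappa_i$, a minimal nonface of $\DG$ whose vertices lie in $\calA_i$, is also a minimal nonface of $\DHi$. Now given an ordering $a_1, \ldots, a_{\ell_i}$ of $\expansive_i \subseteq \kappa_i \setminus \{b_i\}$ and any prefix together with the next element $a_t$, the set $\kappa_i \setminus \{a_t\}$ is a proper subset of $\kappa_i$, hence lies in $\DHi$ and extends to a maximal simplex $\mu$ of $\DHi$. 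Because $\kappa_i \notin \DHi$, such a $\mu$ cannot contain $a_t$, yet it contains the released prefix; therefore $a_t \notin (\clsAy)(\{a_1, \ldots, a_{t-1}\})$. This establishes the observation, and with it the base case $i=1$, where the sequence is just $\expansive_1$.

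Second, I would record that the cumulative union $U_m = \expansive_1 \cup \cdots \cup \expansive_m$ is a simplex of $\Delta_{H_m}$. This follows by a short inner induction from Corollary~\ref{indepexp}: we have $U_1 = \expansive_1 \in \Delta_{H_1}$, and $U_m = \expansive_m \cup U_{m-1} \in \Delta_{H_m}$ because $U_{m-1} \in \Delta_{H_{m-1}}$ and Corollary~\ref{indepexp} permits adjoining $\expansive_m$. Combined with the count $\abs{U_m} = \abs{W_m} - 1$ from Corollary~\ref{expandingcard}, this shows that for $i \geq 2$ the set $U_{i-1}$ is a \emph{nonempty} simplex of $\DHim$, since $\abs{W_{i-1}} \geq \abs{W_1} \geq 2$ (as $\abs{\kappa_1} > 1$).

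With these two facts in hand, the inductive step is a direct application of Lemma~\ref{combinesubgraphiars} with ambient graph $H_i$ and subgraph $H_{i-1}$. Take an arbitrary ordering consistent with the block order $\expansive_i, \expansive_{i-1}, \ldots, \expansive_1$ and split it into its $\expansive_i$-part $P$ and the remainder $Q$. Then $P$ is an iars for $H_i$ by the first observation; its actions lie in $\calA_i \setminus \calA_{i-1}$ since $\expansive_i \cap \calA_{i-1} = \emptyset$; and $\expansive_i \cup \tau \in \DHi$ for every $\tau \in \DHim$ by Corollary~\ref{indepexp}. This verifies hypotheses (i) and (ii) of Lemma~\ref{combinesubgraphiars}. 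Meanwhile $Q$ is an iars for $H_{i-1}$ by the inductive hypothesis, with underlying set $U_{i-1} \in \DHim$. The lemma then yields that $P, Q$ is an iars for $H_i$; as the ordering was arbitrary, the block sequence $\expansive_i, \ldots, \expansive_1$ is informative for $H_i$.

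I expect the main obstacle to be the first observation, namely upgrading ``$\expansive_i$ is a subset of a minimal nonface'' to ``$\expansive_i$ is informative in the larger graph $H_i$.'' The subtlety is that implication is measured in $\DHi$ rather than in the boundary complex on $\kappa_i$ alone, so I must rule out the possibility that extending a prefix to a maximal strategy of $H_i$ silently reintroduces the omitted action $a_t$; the fact that $\kappa_i$ remains a \emph{nonface} of $\DHi$ is precisely what forbids this. A secondary bookkeeping point is ensuring the subgraph block $Q$ is nonempty, so that Lemma~\ref{combinesubgraphiars} applies with $\ell \geq 1$; this is why I separate out the estimate $\abs{U_{i-1}} = \abs{W_{i-1}} - 1 \geq 1$ rather than relying only on its cardinality at the top level.
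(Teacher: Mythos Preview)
Your proof is correct and follows essentially the same inductive scheme as the paper: establish that any ordering of $\expansive_i$ is an iars for $H_i$, then apply Lemma~\ref{combinesubgraphiars} with ambient graph $H_i$ and subgraph $H_{i-1}$, invoking Corollary~\ref{indepexp} for hypothesis~(ii). Two minor simplifications the paper makes: it obtains the first observation by citing Lemma~\ref{minnonfaceiars} directly (any ordering of a minimal nonface is an iars, hence so is any prefix, hence any ordering of any nonempty subset), rather than re-deriving it; and it shows $U_m \in \Delta_{H_m}$ in one line via $U_m \subseteq \sigma$ (comment~(b) after Lemma~\ref{fullexp}) rather than by a separate induction on Corollary~\ref{indepexp}.
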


\begin{proof}
By induction on $i$.
\ Let $i$, with $1 \leq i \leq k$, be given.

\vst

The set $\expansive_i$ is a nonempty proper subset of a minimal
nonface of $\DG$ and thus of $\DHi$.   By Lemma~\ref{minnonfaceiars}
on page~\pageref{minnonfaceiars}, every ordering of actions in
$\expansive_i$ is an informative action release sequence for $H_i$.
\ Moreover, $\expansive_i\in\DHi$.

\vsr

If $i=1$, these observations establish the base case.

\vst

If $i > 1$, then inductively $\expansive_{i-1}, \ldots, \expansive_1$
is informative for $H_{i-1}$ and 
$\expansive_{i-1} \union \cdots \union \expansive_1 \in \DHim$.

By Corollary~\ref{indepexp} on page~\pageref{indepexp}, $\expansive_i
\union \tau \in \DHi$, for every $\tau\in\DHim$.  By construction, no
action in $\expansive_i$ is an action in the graph $H_{i-1}$.
Therefore, by Lemma~\ref{combinesubgraphiars} on
page~\pageref{combinesubgraphiars}, $\expansive_i, \expansive_{i-1},
\ldots, \expansive_1$ is informative for $H_i$.  Moreover,
$\expansive_i \union \cdots \union \expansive_1 \in \DHi$, since,
for instance,
$\expansive_i \union \cdots \union \expansive_1\subseteq\sigma$.
\end{proof}

\vspace*{0.1in}

\begin{corollary}[Expanding Informative Actions in $G$]\label{expandingiarsG}
Let the hypotheses and notation be as for Lemma~\ref{expandingiars}.
\ For each $i$ with $1 \leq i \leq k$, the sequence $\expansive_i,
\expansive_{i-1}, \ldots, \expansive_1$ is informative for $G$.
\end{corollary}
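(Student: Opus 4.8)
The plan is to let Lemma~\ref{expandingiars} carry all of the combinatorial weight and then merely \emph{lift} the informativeness it establishes from the subgraph $H_i$ up to the ambient graph $G$. Lemma~\ref{expandingiars} already gives us two things for each $i$ with $1 \leq i \leq k$: that the sequence of sets $\expansive_i, \expansive_{i-1}, \ldots, \expansive_1$ is informative for $H_i$, and that $\expansive_i \union \cdots \union \expansive_1 \in \DHi$. So the only remaining task is to pass from ``informative for $H_i$'' to ``informative for $G$,'' and the natural tool is the degenerate $k=0$ case of Lemma~\ref{combinesubgraphiars}, as noted in that lemma's Comment.

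First I would verify the subgraph hypothesis. By Lemma~\ref{expfullcontrol}, $H_i = (W_i, \calA_i)$ is a fully controllable pure stochastic graph in its own right, and by construction $W_i \subseteq V$ and $\calA_i \subseteq \frakA$, with both $W_i$ and $V$ nonempty; hence $H_i$ is a subgraph of $G$. The key observation is then that every convergent set of actions in $H_i$ is also convergent in $G$, i.e.\ $\tau\in\DHi$ implies $\tau\in\DG$. This holds because each action of $\calA_i$ has all of its sources and targets inside $W_i$, so for any candidate circuit the predicate ``moves off $\src(\calB)$'' has the same truth value whether the ambient state space is taken to be $W_i$ or $V$ (a target lies in $V \setminus \src(\calB)$ exactly when it lies in $W_i \setminus \src(\calB)$). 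This containment $\DHi \subseteq \DG$ is precisely what the hypotheses (i) and (ii) of Lemma~\ref{combinesubgraphiars} reduce to once there are no leading actions $a_j$.

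Then I would apply the $k=0$ case directly, unwinding the set-sequence notation. Recall that $\expansive_i, \ldots, \expansive_1$ abbreviates the collection of all concrete orderings consistent with that top-level ordering. I would fix an arbitrary such concrete ordering $b_1, \ldots, b_\ell$; by Lemma~\ref{expandingiars} it is an iars for $H_i$, and $\{b_1, \ldots, b_\ell\} = \expansive_i \union \cdots \union \expansive_1 \in \DHi$. The $k=0$ instance of Lemma~\ref{combinesubgraphiars}, taken with $H = H_i$, then yields that $b_1, \ldots, b_\ell$ is an iars for $G$. Since the concrete ordering was arbitrary, every ordering represented by $\expansive_i, \ldots, \expansive_1$ is an iars for $G$, which is by definition exactly what it means for the sequence of sets to be informative for $G$.

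I do not expect a genuine obstacle here: all of the real content already lives in Lemma~\ref{expandingiars} (and, upstream of it, in Corollary~\ref{indepexp}). The one point worth stating with care is the subgraph containment $\DHi \subseteq \DG$ that legitimizes invoking the $k=0$ form of Lemma~\ref{combinesubgraphiars}; the rest is bookkeeping about sequences of sets versus their concrete orderings.
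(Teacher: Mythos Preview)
Your proposal is correct and mirrors the paper's own argument: the paper invokes Lemma~\ref{expandingiars} to obtain informativeness for $H_i$ and then cites the comment following Lemma~\ref{combinesubgraphiars} (the $k=0$ case) to lift that informativeness to $G$. Your added justification of $\DHi \subseteq \DG$ is sound and simply makes explicit what the paper leaves to the reader.
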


\begin{proof}
By the previous lemma, $\expansive_i, \expansive_{i-1}, \ldots,
\expansive_1$ is informative for $H_i$.  The comment after the
statement of Lemma~\ref{combinesubgraphiars} on
page~\pageref{combinesubgraphiars}~establishes the corollary.
\end{proof}

\vspace*{0.1in}

\subsection{An Informative Action Release Sequence from a Quotient}
\markright{An Informative Action Release Sequence from a Quotient}
\label{iarsfromquotient}

The loop in Construction~\ref{expandminnonfaces} may end in step 4(d)
(on page~\pageref{qloopstepd}) with $\tau_k=\emptyset$.  This will occur
if and only if $W_k=V$.  In that case, Corollary~\ref{expandingiarsG}
(above) and Corollary~\ref{expandingcard} (on
page~\pageref{expandingcard}) imply that the sequence $\expansive_k,
\ldots, \expansive_1$ provides an informative action release sequence
for $G$ of length $n-1$, with all actions of the sequence contained in
$\sigma$, and with $n=\abs{V}>1$.

Otherwise, the following lemma ensures that one may add a prefix of
informative actions to that sequence whenever one can find an
informative sequence in the quotient graph $G/W_k$.

\begin{lemma}[Informative Actions from Quotient]\label{quotiars}
\ Suppose $G=(V,\frakA)$ is a fully controllable pure stochastic graph
with $n = \abs{V} > 1$. 
\ Let $\sigma$ be a maximal strategy in $\DG$.

\vso

From \vtsp$G$ and $\sigma\nvtsp$ construct \vmsp$k$, $W_k$, $\tau_k$,
$H_k$, and $\vtsp\expansive_1, \ldots, \expansive_k$, as per
Construction~\ref{expandminnonfaces} on
page~\pageref{expandminnonfaces}, Lemma~\ref{fullexp} on
page~\pageref{fullexp}, and the definitions and notation of
page~\pageref{Hidefs}.  (Recall that $k \geq 1$.)

\vso

Suppose further that $a^\prime_1, \ldots, a^\prime_\ell$ is an
informative action release sequence for $G/W_k$, with $\ell \geq 1$
and $\{a^\prime_1, \ldots,
a^\prime_\ell\}\subseteq\tau^\prime_k\in\Delta_{G/W_{\scriptstyle{k}}}$.

\vsr

Then $\vtsp{}a_1, \ldots, a_\ell, \vmsp\expansive_k, \ldots,
\expansive_1$ is informative for $\vtsp{}G$, with all actions
contained in $\sigma$.
\end{lemma}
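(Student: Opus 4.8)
The plan is to recognize this statement as essentially a direct invocation of Corollary~\ref{combinequotientgraphiars} (Lifting and Combining Informative Action Release Sequences), with the fully controllable subgraph there taken to be $H_k = (W_k, \calA_k)$ and the quotient taken to be $G/W_k$. That corollary splices a quotient-graph iars in front of a fully-controllable-subgraph iars; here the quotient-graph iars is the hypothesized $a^\prime_1, \ldots, a^\prime_\ell$, and the subgraph iars will be supplied by the expansive sets $\expansive_k, \ldots, \expansive_1$. So the real work is verifying each hypothesis of the corollary and handling the sequence-of-sets bookkeeping.

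First I would dispatch the subgraph iars. Applying Lemma~\ref{expandingiars} with $i = k$ shows that $\expansive_k, \ldots, \expansive_1$ is informative for $H_k$ and that $\expansive_k \union \cdots \union \expansive_1 \in \Delta_{H_k}$. To match the individual-action format required by Corollary~\ref{combinequotientgraphiars}, I fix an arbitrary linear ordering $b_1, \ldots, b_m$ of $\expansive_k \union \cdots \union \expansive_1$ consistent with the top-level ordering $\expansive_k, \ldots, \expansive_1$. By Lemma~\ref{expandingiars} each such $b_1, \ldots, b_m$ is an iars for $H_k$ whose underlying set lies in $\Delta_{H_k}$, and $m \geq 1$ since every $\expansive_i$ is nonempty (indeed $\abs{\expansive_i} = \ell_i > 0$ by Lemma~\ref{fullexp}).

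Next I would assemble the pieces. Because the hypothesized iars $a^\prime_1, \ldots, a^\prime_\ell$ for $G/W_k$ has length $\ell \geq 1$, the quotient must retain a non-self-looping action, which forces $W_k \subsetneq V$; together with $W_k \neq \emptyset$ and the fact that $H_k$ is a fully controllable pure stochastic subgraph of $G$ (Lemma~\ref{expfullcontrol}), the structural hypotheses $\emptyset \neq W \subsetneq V$ of Corollary~\ref{combinequotientgraphiars} are met. Moreover $\{a^\prime_1, \ldots, a^\prime_\ell\} \subseteq \tau^\prime_k \in \Delta_{G/W_k}$, so $\{a^\prime_1, \ldots, a^\prime_\ell\} \in \Delta_{G/W_k}$. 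Applying Corollary~\ref{combinequotientgraphiars} with the quotient iars $a^\prime_1, \ldots, a^\prime_\ell$ and the subgraph iars $b_1, \ldots, b_m$ then yields that $a_1, \ldots, a_\ell, b_1, \ldots, b_m$ is an iars for $G$. Since this holds for every admissible ordering $b_1, \ldots, b_m$, the sequence of sets $a_1, \ldots, a_\ell, \expansive_k, \ldots, \expansive_1$ is informative for $G$.

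For the containment claim, I would note that the loop of Construction~\ref{expandminnonfaces} terminated with \textsc{Done} set to \texttt{true}, which happens exactly when $\tau_k \subseteq \sigma$; since $\{a_1, \ldots, a_\ell\} \subseteq \tau_k$, all the $a_i$ lie in $\sigma$, while the expansive actions lie in $\sigma$ by the comments following Lemma~\ref{fullexp}. I do not anticipate a deep obstacle: the substantive content (that $H_k$ is fully controllable, that the expansive sequence is informative, and that quotient and subgraph iars combine) was already discharged in Lemmas~\ref{expfullcontrol} and~\ref{expandingiars} and in Corollary~\ref{combinequotientgraphiars}. The only points demanding care are the sequence-of-sets notation, where the combined conclusion must be checked for each consistent ordering rather than a single one, and the easily overlooked verification that $W_k \subsetneq V$, so that the corollary's subgraph hypothesis genuinely applies.
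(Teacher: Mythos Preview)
Your proposal is correct and follows essentially the same route as the paper: both use Lemma~\ref{expandingiars} to obtain the subgraph iars $\expansive_k,\ldots,\expansive_1$ in $H_k$, invoke Lemma~\ref{expfullcontrol} for full controllability of $H_k$, check $\emptyset\neq W_k\subsetneq V$, and then apply Corollary~\ref{combinequotientgraphiars} to splice the quotient iars in front. Your treatment is slightly more explicit about the sequence-of-sets unpacking and the reason $W_k\subsetneq V$, but the underlying argument is the same.
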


\begin{proof}
By construction, $\{a_1, \ldots,
a_\ell\}\subseteq\vtsp\tau_k\subseteq\sigma$ and
$\union_{i=1}^k\expansive_i \subseteq \sigma$.

\vso

By Lemma~\ref{expfullcontrol} on page~\pageref{expfullcontrol}, $H_k$
is a fully controllable subgraph of $G$.
\ Also, $\emptyset\neq W_k\subsetneq V$.

By Lemma~\ref{expandingiars} on page~\pageref{expandingiars},
$\vtsp\expansive_k, \ldots, \expansive_1$ is informative for $H_k$.
Any informative sequence of actions formed from $\vtsp\expansive_k,
\ldots, \expansive_1$ is a subset of $\sigma$, therefore convergent in
both $G$ and $H_k$.

Corollary~\ref{combinequotientgraphiars} on
page~\pageref{combinequotientgraphiars} therefore establishes the desired
result.
\end{proof}

\vspace*{0.1in}

The following theorem instantiates Theorem~\ref{longiars} of
page~\pageref{longiars} for pure stochastic graphs:

\begin{theorem}[Informative Action Release Sequences :
    Pure Stochastic Graphs]$\phantom{0}$\label{longiarsstoch}

Let $G=(V,\frakA)$ be a fully controllable pure stochastic graph
with $n = \abs{V} > 1$.

Suppose $\sigma$ is a maximal strategy in $\DG$.

Then $\vtsp\sigma$ contains an informative action release sequence for
$\vlsp{}G$ of length at least ${\kern .135em}n-1$.
\end{theorem}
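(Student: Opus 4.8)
The plan is to argue by strong induction on $n=\abs{V}$, assembling the machinery of Construction~\ref{expandminnonfaces} with the accounting supplied by Corollaries~\ref{expandingiarsG} and~\ref{expandingcard} and by Lemma~\ref{quotiars}. Given the maximal strategy $\sigma$, I would first run Construction~\ref{expandminnonfaces} to obtain $k$, the nested fully controllable subgraphs $H_1, \ldots, H_k$ with $H_k=(W_k,\calA_k)$, and the expansive sets $\expansive_1, \ldots, \expansive_k$. By Corollary~\ref{expandingcard} the union $\bigcup_{i=1}^k\expansive_i$ contains exactly $\abs{W_k}-1$ actions, and by Corollary~\ref{expandingiarsG} the sequence $\expansive_k, \ldots, \expansive_1$ is informative for $G$ with all its actions lying in $\sigma$. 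Thus a length budget of $\abs{W_k}-1$ informative actions inside $\sigma$ is secured at once, and everything reduces to accounting for the states in $\sdiff{V}{W_k}$.

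Second, I would split into two cases according to whether the loop of Construction~\ref{expandminnonfaces} exhausts $V$. If $W_k=V$, then $\expansive_k, \ldots, \expansive_1$ already has length $\abs{V}-1=n-1$ and is informative for $G$, settling this case; note this also disposes of the base case $n=2$, since $\abs{\kappa_1}>1$ forces $\src(\kappa_1)=W_1=V$ there. If instead $W_k\subsetneq V$, I would pass to the quotient graph $G/W_k$. Because $\abs{\kappa_1}>1$ and actions in a minimal nonface have distinct sources (Lemma~\ref{minnonfacestrat}), we have $\abs{W_k}\geq 2$, so $G/W_k$ has $n'=n-\abs{W_k}+1$ states with $1<n'<n$; moreover $G/W_k$ is pure stochastic (collapsing $W_k$ merely sums transition probabilities and can only render a stochastic action deterministic, never nondeterministic) and fully controllable by Fact~3 on page~\pageref{quotientfacts}.

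Third, I would apply the induction hypothesis to $G/W_k$ and its maximal strategy $\tau^\prime_k$, which is maximal in $\Delta_{G/W_k}$ by step~4(c) of the construction. This yields an informative action release sequence $a^\prime_1, \ldots, a^\prime_\ell$ for $G/W_k$ of length $\ell\geq n'-1=n-\abs{W_k}$, contained in $\tau^\prime_k$. Lemma~\ref{quotiars} then patches this prefix onto the expansive suffix: $a_1, \ldots, a_\ell, \expansive_k, \ldots, \expansive_1$ is informative for $G$ with all actions contained in $\sigma$, of length $\ell+\abs{\bigcup_{i=1}^k\expansive_i}\geq(n-\abs{W_k})+(\abs{W_k}-1)=n-1$, as required.

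The one delicate point in writing this up is confirming that the induction hypothesis may be invoked for the \emph{specific} maximal strategy $\tau^\prime_k$ rather than for some arbitrary maximal strategy of $G/W_k$; this is precisely why the theorem is stated for \emph{every} maximal strategy, so no strengthening of the hypothesis is needed. The genuinely hard work lives upstream and is already established: Construction~\ref{expandminnonfaces} and Lemma~\ref{fullexp} produce expansive sets whose sources avoid $W_{i-1}$ where possible and, where impossible, backchain to an action reaching $\src(b_i)$, while Corollary~\ref{indepexp} secures expansion independence. Given those results, the theorem's proof is the bookkeeping assembly sketched above, and I expect the only effort in the write-up to be stating the induction cleanly and verifying the length arithmetic.
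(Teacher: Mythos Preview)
Your proposal is correct and follows essentially the same approach as the paper: induction on $n$, running Construction~\ref{expandminnonfaces} to obtain the expansive sets, then either finishing directly when $W_k=V$ or recursing on $G/W_k$ with the maximal strategy $\tau^\prime_k$ and patching via Lemma~\ref{quotiars}. Your handling of the base case by folding $n=2$ into the $W_k=V$ branch is a minor variation (the paper treats $n=2$ separately), but your argument that $\abs{\kappa_1}>1$ forces $W_1=V$ there is valid.
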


\begin{proof}
By induction on $n$.

\vspace*{0.05in}

{Base Case:} \ {$n=2$.}

\vso 

In this case, $\DG$ consists of two (nonempty) maximal strategies, one
for each state in $V$.  (The strategy for state $v$ consists of all
actions with source $v$ that are not deterministic self-loops.  The
strategy converges to the other state.)  Any single action in one of
these strategies constitutes an informative action release sequence
for $G$ and is contained in the given strategy.

\vspace*{0.1in}

{Inductive Step:} \ {$n>2$.}

\vso

From $G$ and $\sigma$ construct \vmsp$k$, $W_k$, $\tau_k$, and
$\vtsp\expansive_1, \ldots, \expansive_k$, using
Construction~\ref{expandminnonfaces} on
page~\pageref{expandminnonfaces}, Lemma~\ref{fullexp} on
page~\pageref{fullexp}, and subsequent comments.
Recall that $k \geq 1$.

\vst

As discussed on page~\pageref{iarsfromquotient}, if
$\tau_k=\emptyset$, then the sequence $\expansive_k, \ldots,
\expansive_1$ provides an informative action release sequence for $G$
of length $n-1$, consisting of actions in $\sigma$.

\vst

Otherwise, let $\ell = \abs{\sdiff{V}{W_k}}$.  Then $\ell > 0$.  The
quotient graph $G/W_k$ is pure stochastic and fully controllable, by
Fact 3 on page~\pageref{quotientfacts}.  It has state space
$V^\prime=(\sdiff{V}{W_k})\union \{\wrep\}$, with $\wrep$ representing
all of $W_k$ identified to a single state.

\vst

Since the minimal nonface $\kappa_1$ in
Construction~\ref{expandminnonfaces} contains at least two actions,
$W_k$ contains at least two states.  Therefore $2 \leq \abs{V^\prime}
< n$.  Inductively, the theorem holds for graph $G/W_k$ and maximal
strategy $\tau^\prime_k$, producing an informative action release
sequence $a^\prime_1, \ldots, a^\prime_\ell$ for $G/W_k$ with
$\{a^\prime_1, \ldots,
a^\prime_\ell\}\subseteq\tau^\prime_k\in\Delta_{G/W_{\scriptstyle{k}}}$.
By Lemma~\ref{quotiars}, $a_1, \ldots, a_\ell, \vmsp\expansive_k,
\ldots, \expansive_1$ is informative for $\vtsp{}G$, with all actions
contained in $\sigma$.  Any consequent informative action release
sequence has length $\vtsp\ell + \abs{\union_{i=1}^k\expansive_i} =
\abs{\sdiff{V}{W_k}} + (\abs{W_k} - 1) = n - 1$, by
Corollary~\ref{expandingcard} on page~\pageref{expandingcard}.
\end{proof}

\clearpage
\subsection{Examples for Pure Stochastic Graphs}
\markright{Examples for Pure Stochastic Graphs}

This subsection shows how the proof of Theorem~\ref{longiarsstoch}
produces informative action release sequences for some pure stochastic
graphs and strategies.  \vtsp{}For clarity, figures discard self-loops.

\subsubsection{A Directed Graph with Several Cycles}

\begin{figure}[t]
\begin{center}
\hspace*{0.1in}
\begin{minipage}{2.7in}
\ifig{Ex241_GW1}{scale=0.5}
\end{minipage}
\hspace*{0.4in}
\begin{minipage}{2.7in}
\ifig{Ex241_tau1}{scale=0.5}
\end{minipage}
\end{center}
\vspace*{-0.15in}
\caption[]{{\bf Left Panel:} The quotient graph $G/W_1$, with
  $G$ as in \fig{Ex241_GraphSigma} and $W_1=\{1, 2, 3\}$.\\[2pt]
  {\bf Right Panel:} The maximal strategy $\tau^\prime_1$, obtained in
  step 4(c) on page~\pageref{qloopstepb} during the first iteration of
  the loop of Construction~\ref{expandminnonfaces}, as applied to the
  graph $\mskip1muG$ and strategy $\mskip0.6mu\sigma\mskip0.8mu$ of
  \hsph\fig{Ex241_GraphSigma}.}
\label{Ex241_GWtau1}
\end{figure}

Consider again the strongly connected directed graph $G$ and maximal
strategy $\sigma$ of \fig{Ex241_GraphSigma} on
page~\pageref{Ex241_GraphSigma}.  Earlier, we viewed $G$ as a pure
nondeterministic graph with different hierarchical cyclic subgraphs.
Now, we view $G$ as a pure stochastic graph and apply
Construction~\ref{expandminnonfaces} to obtain an informative action
release sequence of length 5 for $G$, contained in $\sigma$.

\vspace*{0.1in}

For this example, it turns out that the loop of step 4 in the
construction runs once, ending with $k=1$, but without having covered
the entire state space of the graph.  Consequently, as indicated by
Theorem~\ref{longiarsstoch}'s inductive proof, one needs to invoke the
construction again, on a quotient graph.  Again, the loop runs only
once.  In total, there are three invocations of the construction.  The
synopses below show how local variables in the construction are instantiated.

\begin{enumerate}

\item In the first invocation of Construction~\ref{expandminnonfaces},
  the graph is $G$ as in \fig{Ex241_GraphSigma} and the maximal
  strategy is $\sigma=\{e_2, e_5, a_2, a_3, a_5, a_6\}$.  For $g$, one
  may use either state in $\sigma$'s goal set $\{1, 4\}$.  Using
  $g=1$, one finds $b_1=a_1$, yielding the minimal nonface
  $\kappa_1=\{a_1, a_2, a_3\}$.  Thus $W_1=\{1,2,3\}$.  The comments
  at the top of page~\pageref{commentszero} produce
  $\expansive_1=\{a_2, a_3\}$.

  Running the loop of step 4 in the construction, with $i=1$, one
  obtains $\xi_1=\{e_5, a_5, a_6\}$.  In $G/W_1$, $\vtsp\xi^\prime_1$
  has a single maximal extension, namely $\tau^\prime_1=\{e^\prime_2,
  e^\prime_5, a^\prime_5, a^\prime_6\}$.  \ \fig{Ex241_GWtau1}
  shows both $G/W_1$ and $\tau^\prime_1$, with $\wrepone$ representing
  all of $W_1$ identified to a singleton.

  Since $\tau_1\subseteq\sigma$, the loop ends with $k=1$.

\item In the second invocation of
  Construction~\ref{expandminnonfaces}, the graph is $G/W_1$ and the
  maximal strategy is $\tau^\prime_1=\{e^\prime_2, e^\prime_5,
  a^\prime_5, a^\prime_6\}$.  The strategy has goal state $4$, so let
  $g=4$.  Therefore, in this invocation of the construction,
  $b^{\mskip1mu\prime}_1=a^\prime_4$, yielding the minimal nonface
  $\kappa^\prime_1=\{a^\prime_4, a^\prime_5, a^\prime_6\}$.  (We use
  single prime notation to indicate actions in $G/W_1$, including
  references to local variables within this invocation of
  Construction~\ref{expandminnonfaces}.)

  We now write $U_1$ for $\src(\kappa^\prime_1)$, in order to avoid
  confusion with the earlier $W_1$.  Thus $U_1 = \{4, 5, 6\}$.  The
  comments at the top of page~\pageref{commentszero} produce
  $\expansive^\prime_1=\{a^\prime_5, a^\prime_6\}$.

  (Below, we will now also use double prime notation, specifically to
  indicate actions in $(G/W_1)/U_1$, including references to local
  variables within Construction~\ref{expandminnonfaces}.  We therefore
  write $\xi^\prime_1$ in place of $\xi_1$ in step 4(b) and
  $\xi^{\prime\prime}_1$ in place of $\xi^\prime_1$ in step 4(c).)

  Running the loop of step 4, with $i=1$, one obtains
  $\xi^\prime_1=\{e^{\prime}_2\}$.  In $(G/W_1)/U_1$,
  $\vtsp\xi^{\prime\prime}_1$ has a single maximal extension, namely
  itself.  We refer to that extension as
  $\rho^{\mskip0.5mu\prime\prime}_1$, in order to avoid confusion with
  the earlier $\tau_1$.  \fig{Ex241_GWUrho1} shows both $(G/W_1)/U_1$
  and $\rho^{\prime\prime}_1$, with $\wrepone$ as before and
  $\mskip1mu\wreptwo$ representing all of $U_1$ identified to a
  singleton.

  Since $\rho^\prime_1\subseteq\tau^\prime_1$, the loop ends with $k=1$.

\item Since the graph $(G/W_1)/U_1$ has only two states, one could now
  simply refer to the base case in the proof of
  Theorem~\ref{longiarsstoch}.  However, we will invoke
  Construction~\ref{expandminnonfaces} yet a third time, with graph
  $(G/W_1)/U_1$ and maximal strategy
  $\rho^{\prime\prime}_1=\{e^{\prime\prime}_2\}$.  This strategy has
  goal state $\wreptwo$, thus yielding minimal nonface
  $\kappa_1^{\prime\prime}=\{e^{\prime\prime}_2, e^{\prime\prime}_5\}$
  with source set $\{\wrepone, \wreptwo\}$.  The comments at the top
  of page~\pageref{commentszero} produce the expansive set of actions
  $\expansive^{\prime\prime}_1=\{e^{\prime\prime}_2\}$.  The loop ends
  because the source set is the entire state space, as discussed at
  the beginning of Section~\ref{iarsfromquotient} on
  page~\pageref{iarsfromquotient}.
\end{enumerate}

\vspace*{0.05in}

Finally, one assembles the various expansive sets in reverse order of
the recursive invocations of Construction~\ref{expandminnonfaces}.
This process produces the following sequence of sets of actions in
$G$:

\vspace*{-0.1in}

$$ \{e_2\}, \;\{a_5, a_6\}, \;\{a_2, a_3\}.$$

That sequence of sets represents four informative action release
sequences for $G$, each consisting of actions in $\sigma$:

\vspace*{-0.2in}

$$\begin{array}{cc}
      e_2, a_5, a_6, a_2, a_3 \;&\; e_2, a_5, a_6, a_3, a_2 \\[2pt]
      e_2, a_6, a_5, a_2, a_3 \;&\; e_2, a_6, a_5, a_3, a_2 \\
\end{array}$$

\vspace*{0.05in}

\noindent (We know from Section~\ref{multicyclegraph} \hspt that $\sigma$
also contains other informative action release sequences.)

\vspace*{0.2in}

\begin{figure}[h]
\vspace*{0.1in}
\begin{center}
\hspace*{-0.2in}
\ifig{Ex241_GW1U1_rho1}{scale=0.5}
\end{center}
\vspace*{-0.2in}
\caption[]{{\bf Left Panel:} The quotient graph $\left(G/W_1\right)\!/U_1$,
  with $G/W_1$ as in \fig{Ex241_GWtau1} and $U_1=\{4, 5, 6\}$.\quad
  {\bf Right Panel:} The maximal strategy $\rho^{\mskip0.5mu\prime\prime}_1$,
  obtained in step 4(c) during the first iteration of the loop of
  Construction~\ref{expandminnonfaces} as applied to the graph $G/W_1$
  and the strategy $\tau^\prime_1$ of \fig{Ex241_GWtau1}.\quad
  (In order to avoid overloaded letters, while retaining indices as in
  Construction~\ref{expandminnonfaces}, this figure refers to $U_1$,
  $\rho^{\prime\prime}_1$, and uses double prime notation to indicate
  actions in $\left(G/W_1\right)\!/U_1$.)}
\label{Ex241_GWUrho1}
\end{figure}

\clearpage

\begin{figure}[t]
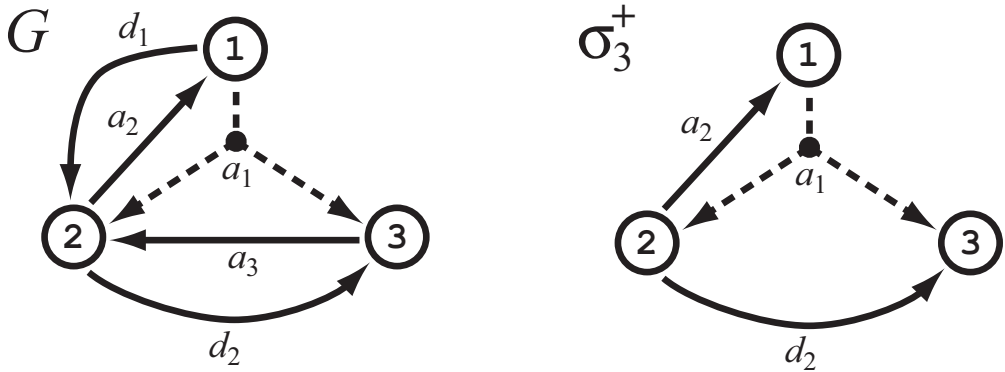

\vspace*{-0.05in}
\begin{center}
\hspace*{0.1in}
\begin{minipage}{2.5in}
\ifig{Ex251_Graph}{scale=0.5}
\end{minipage}
\hspace*{0.4in}
\begin{minipage}{2.5in}
\ifig{Ex251_sigma}{scale=0.5}
\end{minipage}
\end{center}
\vspace*{-0.2in}
\caption[]{{\bf Left Panel:} A pure stochastic graph $G$, consisting
  of three states, four deterministic actions, and one stochastic
  action.  The stochastic action is $a_1 = 1 \rightarrow
  p\mskip1mu\{2,3\}$; its action edges appear as dashed lines.  The
  precise probability distribution $p$ is not significant here, except
  to indicate that each of the transitions $1 \rightarrow 2$ and $1
  \rightarrow 3$ has nonzero probability.\\[1pt]
  {\bf Right Panel:} \hspt{The} maximal strategy $\sigma^{+}_3\in\DG$,
  depicted by its actions.
  \quad See also \fig{Ex251_DGA}.}
\label{Ex251_GraphSigma}
\end{figure}

\begin{figure}[h]
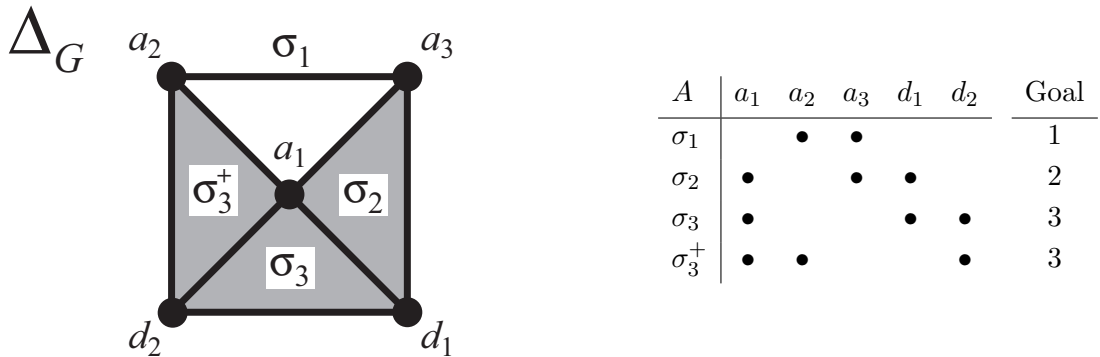

\vspace*{-0.05in}
\begin{center}
\begin{minipage}{2.7in}
\ifig{Ex251_DG}{height=1.975in}
\end{minipage}
\hspace*{0.6in}
\begin{minipage}{1.8in}{$\begin{array}{l|ccccc}
       A     & a_1  & a_2  & a_3  & d_1  & d_2 \\[2pt]\hline
\sigma_1     &      & \one & \one &      &      \\[2pt]
\sigma_2     & \one &      & \one & \one &      \\[2pt]
\sigma_3     & \one &      &      & \one & \one \\[2pt]
\sigma^{+}_3 & \one & \one &      &      & \one \\[2pt]
\end{array}$}
\end{minipage}
\begin{minipage}{0.5in}{$\begin{array}{c}
\hbox{Goal} \\[2pt]\hline
1 \\[2pt]
2 \\[2pt]
3 \\[2pt]
3 \\[2pt]
\end{array}$}
\end{minipage}
\end{center}
\vspace*{-0.2in}
\caption[]{The strategy complex $\DG$ of the graph $G$ from
  \fig{Ex251_GraphSigma} appears on the left.  \,Each maximal simplex
  is labeled with its strategy name, as specified by the relation on
  the right.}
\label{Ex251_DGA}
\end{figure}

\subsubsection{A Pure Stochastic Graph}

\fig{Ex251_GraphSigma} depicts a fully controllable pure
stochastic graph $G$, along with a maximal strategy that contains an
action with stochastic transitions.  The strategy complex $\DG$
appears in \fig{Ex251_DGA}, along with $G$'s action relation.
The maximal strategy under consideration is $\sigma^{+}_3$.

Unlike in a pure nondeterministic graph, cycling is permitted in a
pure stochastic graph, so long as the cycling is transient.  The
definition of ``moves off'' from page~\pageref{movesoff} captures this
distinction.  For instance, in the current example, $\{a_1, a_2\}$ is
a convergent (nonmaximal) strategy, with goal state \#3.  The set of
actions $\{a_1, a_2\}$ would {\em not{\kern .10em}} be convergent if
action $a_1$ were nondeterministic, since then an adversary could
force infinite cycling between states \#1 and \#2.  However, $a_1$ is
stochastic, so there is a nonzero probability that the system will
exit such a cycle, transitioning to state \#3 instead.  The precise
transition probabilities of action $a_1$ affect expected convergence
times, as discussed in \cite{paths:plans, paths:strategies}, but not
overall convergence.

\begin{figure}[t]
\begin{center}
\hspace*{0.1in}
\begin{minipage}{2in}
\ifig{Ex251_GW1}{scale=0.5}
\end{minipage}
\hspace*{0.4in}
\begin{minipage}{2in}
\ifig{Ex251_tau1}{scale=0.5}
\end{minipage}
\end{center}
\vspace*{-0.15in}
\caption[]{{\bf Left Panel:} The quotient graph $G/W_1$, with
  $G$ as in \fig{Ex251_GraphSigma} and $W_1=\{2, 3\}$.\\[2pt]
  {\bf Right Panel:} The maximal strategy $\tau^\prime_1$, obtained in
  step 4(c) during the first iteration of the loop of
  Construction~\ref{expandminnonfaces}, as applied to the graph $G$
  and strategy $\sigma^{+}_3$ of \fig{Ex251_GraphSigma}.}
\label{Ex251_GWtau1}
\end{figure}

\noindent Given $G$ and $\sigma=\sigma^{+}_3=\{a_1, a_2, d_2\}$ as in
\fig{Ex251_GraphSigma}, Construction~\ref{expandminnonfaces} computes
as follows:\label{examplestoch}

\vspace*{-0.025in}

\begin{enumerate}

\item There is one state outside $\src(\sigma)=\{1,2\}$, so $g=3$.  \
      Action $b_1$ in the construction should be an action with source
      $g$, meaning it is action $a_3$ of \fig{Ex251_GraphSigma}.

\item One may then use minimal nonface $\kappa_1=\{d_2, a_3\}$.
  (Another possibility is $\{a_1, a_2, a_3\}$.)

\item So $\calA_1 = \{d_2, a_3\}$ and $W_1 = \{2, 3\}$.

\item Now the loop of the construction runs:

\vspace*{0.05in}

\hspace*{0.05in}\begin{minipage}{5.64in}

\begin{itemize}

\item[$i=1$:]

\begin{enumerate}

\item \!\fig{Ex251_GWtau1} depicts graph $G/W_1$, omitting
      the self-looping actions at state $\wrepone$.

\item $\xi_1=\setdef{a\in\sigma}{\src(a)\not\in W_1}=\{a_1\}$.

\item $\xi^\prime_1$ is not maximal in
      $\Delta_{G/W_{\scriptstyle{1}}}$.  It has unique maximal extension
      $\tau^\prime_1 = \{a^\prime_1, d^{\mskip1mu\prime}_1\}$.

\item $\tau_1 \not\subseteq \sigma$.  Since
      $\sdiff{\tau_1}{\sigma}=\{d_1\}$, $\,b_2=d_1$.  Thus $\kappa_2 =
      \{d_1, a_2\}$, $\calA_2 = \calA_1 \union \kappa_2 = \{d_2, a_3,
      d_1, a_2\}$, and $W_2 = W_1 \union \src(\kappa_2) = \{1, 2,
      3\}$.

\end{enumerate}

\vspace*{0.05in}

\item[$i=2$:]

\begin{enumerate}

\item Graph $G/W_2 = (\{\Box\}, \emptyset)$, with $\Box$ representing
      all states of $G$ identified to a singleton.  \ So
      $\Delta_{G/W_{\scriptstyle{2}}}=\{\emptyset\}$, the empty
      simplicial complex.

\item $\xi_2=\emptyset$.

\item $\xi^\prime_2$ is maximal in $\Delta_{G/W_{\scriptstyle{2}}}$,
      so $\tau^\prime_2=\emptyset$.

\item $\tau_2\subseteq\sigma$, so the loop ends, with $k=2$.

\end{enumerate}

\end{itemize}

\end{minipage}

\end{enumerate}

\noindent Lemma~\ref{fullexp} and subsequent comments construct
expansive sets $\expansive_1$ and $\expansive_2$ as follows (variable
bindings for $g$, $W_1$, $W_2$, $b_1$, $b_2$, $\kappa_1$, and
$\kappa_2$ are as above, actions $d_2$ and $a_2$ are as in
\fig{Ex251_GraphSigma}):

\vspace*{0.035in}

\hspace*{-0.175in}\begin{minipage}{6.1in}

   \begin{enumerate}

   \setcounter{enumi}{-1}
   \addtolength{\itemsep}{-4pt}

   \item Let $W_0=\{g\}=\{3\}$.

   \item Since $\sdiff{W_1}{W_0}= \{2\} = \src(\sdiff{\kappa_1\!}{\!\{b_1\}})$,
         $\;\expansive_1 = \sdiff{\kappa_1\!}{\!\{b_1\}} = \{d_2\}$.
         \ (See also page~\pageref{commentszero}.)

   \item While $\sdiff{W_2}{W_1}=\{1\}\neq\{2\}=\src(\{a_2\})=
         \src(\sdiff{\kappa_2\!}{\!\{b_2\}})$,
         $\vmsp\src(b_2)\in\trg(a_2)$, so $\expansive_2 = \{a_2\}$.

   \end{enumerate}

\end{minipage}

\vspace*{0.1in}

By Corollary~\ref{expandingiarsG}, the sequence $\expansive_2,
\expansive_1$ is informative for $G$, with all actions contained in
$\sigma^{+}_3$.  We thus obtain the informative action release
sequence $\,a_2, d_2$.  \ Side note: This is not the only iars
contained in $\sigma^{+}_3$.  The longest such iars consists of all
actions in $\sigma^{+}_3$, in the order $\,a_1, d_2, a_2$.

\vspace*{-0.1in}

\paragraph{Interpretation:}\   \fig{Ex251_A2} depicts $\calA_2$
as a graph.  The graph consists of two independent two-cycles.
Relationally, we may think of these two two-cycles as two independent
bits of information, forming a basis for informative action release
sequences.  In more general examples (see
Section~\ref{expansivedependence}), there may be less independence.
Consequently, Lemma~\ref{fullexp} (page~\pageref{fullexp}) constructs
expansive sets of actions, which Lemma~\ref{expandingiars}
(page~\pageref{expandingiars}) then arranges informatively.

\begin{figure}[h]
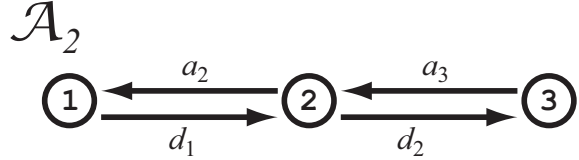

\vspace*{0.1in}
\begin{center}
\ifig{Ex251_A2}{scale=0.425}
\end{center}
\vspace*{-0.25in}
\caption[]{The set of actions $\calA_2$ viewed as a graph.
  Construction~\ref{expandminnonfaces} produces this set when applied
  to graph $G$ and maximal strategy $\sigma^{+}_3$ of
  \fig{Ex251_GraphSigma} in the manner discussed on
  page~\pageref{examplestoch}.  The actions of $\calA_2$ contained in
  $\sigma^{+}_3$ form an informative action release sequence for $G$.
  \ (In fact, any ordering of any convergent set of actions in
  $\calA_2$ is an iars contained in some strategy, by independence of
  the two two-cycles in $\calA_2$.)}
\label{Ex251_A2}
\end{figure}

\subsubsection{A Pure Stochastic Graph Highlighting Expansive Set Order}
\label{expansivedependence}

\begin{figure}[h]
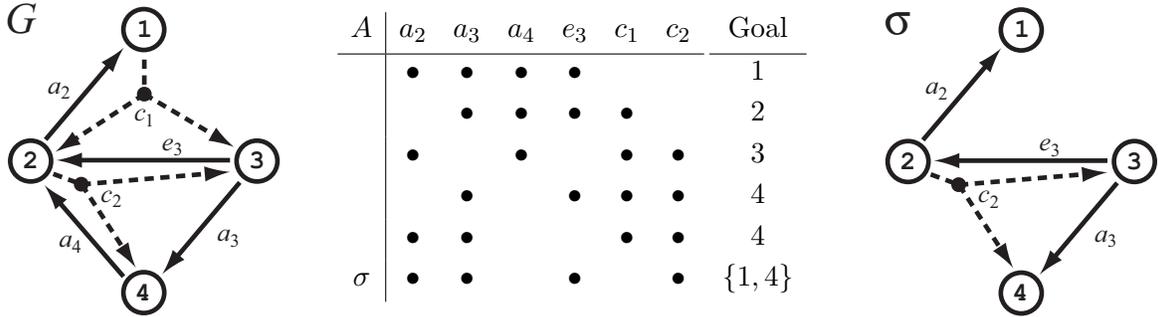

\vspace*{-0.05in}
\begin{center}
\hspace*{0.05in}
\begin{minipage}{1.6in}
\ifig{ExH5_Graph}{scale=0.35}
\end{minipage}
\hspace*{0.05in}
\begin{minipage}{1.9in}{$\begin{array}{c|cccccc}
A      & a_2  & a_3  & a_4  & e_3  & c_1  & c_2  \\[2pt]\hline
       & \one & \one & \one & \one &      &      \\[2pt]
       &      & \one & \one & \one & \one &      \\[2pt]
       & \one &      & \one &      & \one & \one \\[2pt]
       &      & \one &      & \one & \one & \one \\[2pt]
       & \one & \one &      &      & \one & \one \\[2pt]
\sigma & \one & \one &      & \one &      & \one \\[2pt]
\end{array}$}
\end{minipage}
\begin{minipage}{0.45in}{$\begin{array}{c}
\hbox{Goal} \\[2pt]\hline
1 \\[2pt]
2 \\[2pt]
3 \\[2pt]
4 \\[2pt]
4 \\[2pt]
\{1,4\} \\[2pt]
\end{array}$}
\end{minipage}
\hspace*{0.375in}
\begin{minipage}{1.5in}
\ifig{ExH5_sigma}{scale=0.35}
\end{minipage}
\end{center}
\vspace*{-0.2in}
\caption[]{The left panel displays a pure stochastic graph $G$,
  consisting of four states, $\,1, 2, 3, 4$, \ four deterministic
  actions, $a_2$, $a_3$, $a_4$, $e_3$, \ and two stochastic actions,
  $c_1$, $c_2$.  \ The center panel shows $G$'s action relation.  \
  The right panel depicts maximal strategy $\sigma\in\DG$, via its
  actions.}
\label{ExH5}
\end{figure}

We now apply Construction~\ref{expandminnonfaces} to the pure
stochastic graph $G$ and maximal strategy $\sigma\in\DG$ of
\fig{ExH5}.  We may let $g=1$.  $\mskip1mu$Then $\mskip1mub_1=c_1$,
$\mskip3mu\kappa_1 = \{c_1, a_2, e_3\}$, and $W_1 = \{1,2,3\}$.  Thus
$\expansive_1=\{a_2,e_3\}$.  Now $\xi_1=\emptyset$, so there is a
choice in constructing $\tau^\prime_1$.  If we choose
$\tau^\prime_1=\{a^\prime_4\}$, then $\tau_1\not\subseteq \sigma$ and
so $b_2=a_4$.  There are two minimal nonfaces within $\sigma \union
\{a_4\}$.  Let us use $\kappa_2=\{a_4, c_2, a_3\}$.  Thus $W_2 =
\{1,2,3,4\}$ and the loop ends with k=2.  Constructing $\expansive_2$
involves a choice since actions $c_2$ and $a_3$ each have action
$a_4\mskip-1.5mu$'s source as a target.
Let us pick $\expansive_2=\{a_3\}$.

Corollary~\ref{expandingiarsG} on page~\pageref{expandingiarsG}
arranges the expansive sets of actions in the order $\expansive_2,
\expansive_1$.  Indeed, both $\,a_3, a_2, e_3\,$ and $\,a_3, e_3,
a_2\,$ are informative action release sequences for $G$.  Notice that
action $e_3$ implies action $a_3$ in relation $A$.  Consequently, the
order $\expansive_1, \expansive_2$ would \vtsp{\em not\,} be
acceptable.

\vst

Comment: \hsph{We} can lengthen $\,a_3, a_2, e_3\,$ to the iars
$\,c_2, a_3, a_2, e_3$.  \hsph{In} fact, 12 of the 24 possible
permutations of all the actions in $\mskip0.5mu\sigma$ constitute
informative action release sequences for $G$.

\clearpage
\section{Counterexamples for Mixed Graphs}
\markright{Counterexamples for Mixed Graphs}
\label{mixed}

The assertion of Theorem~\ref{longiars} on page~\pageref{longiars}
need not hold for graphs containing a mix of deterministic,
nondeterministic, and stochastic actions.  Of course, there are many
settings in which the assertion does hold.  For instance, if one can
find a hierarchical cyclic subgraph with the same state space as the
given graph, then one can again prove the theorem, even if the graph
contains stochastic actions.  All the proof needs is for the
hierarchical cyclic subgraph to be composed only of deterministic and
nondeterministic actions.  Absent such structure, it is very easy to
construct a counterexample involving a mix of deterministic,
nondeterministic, and stochastic actions.  With some added effort, one
may also construct counterexamples involving maximal strategies that
attain singleton goals.  This section presents such counterexamples.

\subsection{A Counterexample with a Large Goal Set}
\markright{A Counterexample with a Large Goal Set}

\begin{figure}[h]
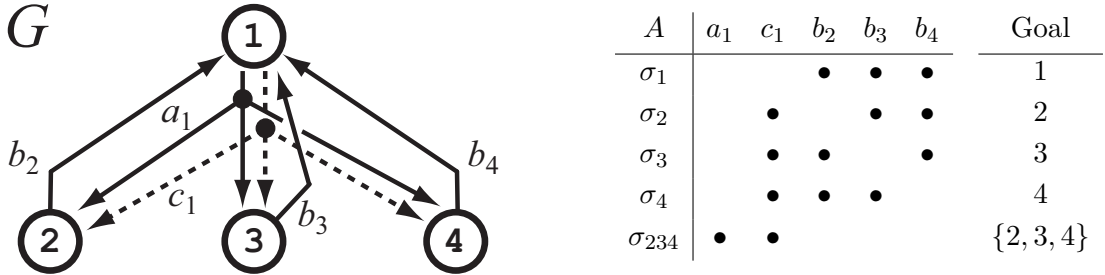

\begin{center}
\begin{minipage}{2.7in}
\ifig{Ex238_top}{scale=0.5}
\end{minipage}
\hspace*{0.4in}
\begin{minipage}{1.7in}{$\begin{array}{c|ccccc}
       A     & a_1  & c_1  & b_2  & b_3  & b_4  \\[2pt]\hline
\sigma_1     &      &      & \one & \one & \one \\[2pt]
\sigma_2     &      & \one &      & \one & \one \\[2pt]
\sigma_3     &      & \one & \one &      & \one \\[2pt]
\sigma_4     &      & \one & \one & \one &      \\[2pt]
\sigma_{234} & \one & \one &      &      &      \\[2pt]
\end{array}$}
\end{minipage}
\quad
\begin{minipage}{0.5in}{$\begin{array}{c}
\hbox{Goal} \\[2pt]\hline
1 \\[2pt]
2 \\[2pt]
3 \\[2pt]
4 \\[2pt]
\{2, 3, 4\} \\[2pt]
\end{array}$}
\end{minipage}
\end{center}
\vspace*{-0.1in}
\caption[]{\ {\bf Left Panel:} \ A graph $G$ with four states, $1, 2,
  3, 4$, \ three deterministic actions, $b_2$, $b_3$, $b_4$, \ one
  nondeterministic action, $a_1$, \ and one stochastic action,
  $c_1$.\\[2pt]
  {\bf Right Panel:} \,$G$'s action relation and goal sets.}
\label{Ex238_top}
\end{figure}

Consider the graph and action relation of \fig{Ex238_top}.  The
graph contains two actions that have identical action edges, but
differ in that one action is stochastic and the other action is
nondeterministic.  The stochastic action is $c_1 = 1 \rightarrow
p\mskip1.25mu\{2,3,4\}$, for some probability distribution $p$
ascribing nonzero probabilities to each target, and the
nondeterministic action is $a_1 = 1 \rightarrow \{2,3,4\}$.
Additionally, the graph contains three deterministic action, $b_i = i
\rightarrow 1$, for $i=2,3,4$.

\vst

The graph is fully controllable based just on the set of actions
$\{c_1, b_2, b_3, b_4\}$, as one can see from the action relation or
as follows: The system can attain state \#1 from any other state by
using strategy $\{b_2, b_3, b_4\}$.  The system can reach a desired
state in the set $\{2, 3, 4\}$ by repeatedly trying to do so using the
stochastic action $c_1$, cycling back to state \#1 if that action
transitions to the wrong state.  For instance, strategy $\{c_1, b_2,
b_3\}$ will converge to state \#4.

\vst

None of the actions $\{b_i\}$ can be in a strategy together with
action $a_1$, but action $c_1$ can be.  In fact, $\sigma_{234}=\{c_1,
a_1\}$ is a maximal strategy.  The longest informative action release
sequence contained in $\sigma_{234}$ is the strategy itself, revealed
in the order $\,c_1, a_1$.  That iars has length 2, which is less than
the number 3 demanded by Theorem~\ref{longiars}.

\vst

One may easily generalize this example to graphs with $n$ states, for
$n>4$, such that some maximal strategy consists of only two actions
and therefore has an iars of length at most 2.

Key to this example is the oddity of having two nearly identical
actions, with the only difference being that one action is stochastic
and the other is nondeterministic.  The graph would {\em not{\kern
.12em}} be fully controllable with just the nondeterministic action.
The stochastic action is needed to ``sample with replacement'', i.e.,
``try and try again, until success''.

From a worst-case perspective, the strategy consisting of the nearly
identical stochastic and nondeterministic actions amounts to no more
than the nondeterministic action itself.  So, why even include the
nondeterministic action in the graph?

The answer is that it is a choice a system may make.  Executing the
stochastic action may entail greater cost than executing the
nondeterministic action, because the nondeterministic action relieves
the system of guaranteeing stochastic behavior.  That may be desirable
in some settings.  At first it seems hardly so, because the only goal
set one can attain using any strategy containing the nondeterministic
action is a very large set (consisting of $n-1$ states in the
generalized version).  However, not caring about precise transitions
is sensible when the graph is part of a larger graph and it does not
matter what state the system passes through as a subgoal while
attaining some overall goal.  The next subsection explores such graphs
further.

\subsection{A Counterexample with a Small Goal Set}
\markright{A Counterexample with a Small Goal Set}

Previously, we saw the basis for a family of counterexamples in which
the graph has $n$ states but contains a maximal strategy consisting of
two actions with a goal set of size $n-1$.  One might therefore
hypothesize that Theorem~\ref{longiars} should merely assert the
existence of an informative action release sequence of length $n-k$,
with $k$ being the size of the goal set.  In fact, such a theorem
would also be false, since one can construct counterexamples to
Theorem~\ref{longiars} using strategies that have goal sets of size 1,
as this subsection demonstrates.

\begin{figure}[h]
\begin{center}
\begin{minipage}{2.6in}
\ifig{Ex238_Graph}{scale=0.5}
\end{minipage}
\begin{minipage}{2.85in}{$\begin{array}{c|ccccccccc}
       A     & a_1  & d_2  & d_3  & d_4  & c_1  & b_2  & b_3  & b_4  & b_5  \\[2pt]\hline
\sigma_1     &      &      &      &      &      & \one & \one & \one & \one \\[2pt]
\sigma_2     &      &      &      &      & \one &      & \one & \one & \one \\[2pt]
\sigma_3     &      &      &      &      & \one & \one &      & \one & \one \\[2pt]
\sigma_4     &      &      &      &      & \one & \one & \one &      & \one \\[2pt]
\sigma_{234} & \one &      &      &      & \one &      &      &      & \one \\[2pt]
\sigma_5     & \one & \one & \one & \one & \one &      &      &      &      \\[2pt]
\sigma_{54}  &      & \one & \one & \one & \one & \one & \one &      &      \\[2pt]
\sigma_{53}  &      & \one & \one & \one & \one & \one &      & \one &      \\[2pt]
\sigma_{52}  &      & \one & \one & \one & \one &      & \one & \one &      \\[2pt]
\sigma_{15}  &      & \one & \one & \one &      & \one & \one & \one &      \\[2pt]
\end{array}$}
\end{minipage}
\begin{minipage}{0.6in}{$\begin{array}{c}
\hbox{Goal} \\[2pt]\hline
1 \\[2pt]
2 \\[2pt]
3 \\[2pt]
4 \\[2pt]
\{2, 3, 4\} \\[2pt]
5 \\[2pt]
5 \\[2pt]
5 \\[2pt]
5 \\[2pt]
\{1,5\} \\[2pt]
\end{array}$}
\end{minipage}
\end{center}
\vspace*{-0.2in}
\caption[]{{\bf Left Panel:} A graph with five states, $1,2,3,4,5$,
\,six deterministic actions, \hbox{$d_2$, $d_3$, $d_4$, $b_2$, $b_3$, $b_4$},
\ two nondeterministic actions, $a_1$, $b_5$, \ and one stochastic
action, $c_1$.\\[3pt]
  {\bf Right Panel:} \,The graph's action relation and goal sets.\\[2pt]
  (This figure is a copy, with minor notational changes, of Figures 67
  and 68 in \cite{paths:privacy}.)}
\label{Ex238_GraphArel}
\end{figure}

In constructing such counterexamples, one may take the fully
controllable graph on $n$ states of the previous subsection and glue
the set $\{2, \ldots, n\}$ to another graph, permitting direct motion
from any state in $\{2, \ldots, n\}$ to a new state, \#$(n+1)$.  In
order to retain full controllability, the system also needs to be able
to move back from state \#$(n+1)$.  In this subsection, we use a
single nondeterministic action.  In the next subsection, will will use
$n-1$ different nondeterministic actions.  Numerous variations exist.

\vst

\fig{Ex238_GraphArel} replicates a counterexample taken from
\cite{paths:privacy}, showing a graph and its action relation.
Maximal strategy $\sigma_5=\{a_1, d_2, d_3, d_4, c_1\}$ converges to
singleton goal state \#5.  The strategy contains 5 actions, but the
longest informative action release sequences contained in $\sigma_5$
have length 3, which is less than the 4 demanded by
Theorem~\ref{longiars}.  The reason no longer iars exists is because
any one of the ``downward'' actions in the set $\{d_2, d_3, d_4\}$
implies the other two.  That fact is clear from the action relation,
but can also be understood as follows: First, knowing that a strategy
contains one of the actions $d_2$, $d_3$, or $d_4$ means the strategy
cannot contain the nondeterministic action $b_5$.  Second, for a
maximal strategy, not containing $b_5$ means the strategy must contain
the entire set $\{d_2, d_3, d_4\}$.

\subsection{A Counterexample with a Small Goal Set and Nonequivalent Inferences}
\markright{A Counterexample with a Small Goal Set and Nonequivalent Inferences}

Finally, we construct a counterexample similar to that of
\fig{Ex238_GraphArel}, but without requiring equivalence between the
deterministic downward actions flowing into state \#$(n+1)$.  Instead,
any $\mskip-0.5mu${\em two\hspt} of these downward actions will imply
all the downward actions.  One may achieve this inference by replacing
the single nondeterministic action at state \#$(n+1)$ with $n-1$
different nondeterministic actions.  Each of these actions now has a
target set of size $n-2$ contained within the set $\{2, \ldots, n\}$.
With this counterexample in mind, one may see yet another infinite
family of counterexamples, parameterized now by the number of downward
actions $\{d_i\}_{i=2}^{n}$ that may be released before all are
implied (with $n$ sufficiently large).

\vspace*{0.1in}

\fig{Ex239_Graph} shows a graph $G$ in three panels.  \fig{Ex239_Arel}
shows $G$'s action relation.  There are six states.  Four of the
graph's deterministic actions, namely $d_2$, $d_3$, $d_4$, $d_5$,
transition {\em to\vmsp} state \#6, while four nondeterministic
actions, $e_2$, $e_3$, $e_4$, $e_5$, transition {\em away\vmsp} from
state \#6.  Each of those nondeterministic actions has a target set of
size three that is a subset of $\{2, 3, 4, 5\}$.  The following table
shows which pairings of $d_i$ and $e_j$ actions create minimal
nonfaces in $\DG$.  One sees that any single action drawn from $\{d_2,
d_3, d_4, d_5\}$ is potentially consistent with strategies not
involving any other $d_i$ action, but that any two of the $\{d_i\}$
actions imply them all.  (Any two of the $\{d_i\}$ eliminate all
$\{e_j\}$, so the given maximal strategy must contain all $\{d_i\}$.)

\vspace*{-0.05in}

$$\begin{array}{c|cccc}
    & e_5  & e_4  & e_3  & e_2  \\[2pt]\hline
d_2 & \one & \one & \one &      \\[2pt]
d_3 & \one & \one &      & \one \\[2pt]
d_4 & \one &      & \one & \one \\[2pt]
d_5 &      & \one & \one & \one \\[2pt]
\end{array}$$

\vspace*{0.05in}

Maximal strategy $\sigma\mskip-4mu=\mskip-4mu\{a_1, d_2, d_3, d_4,
d_5, c_1\}$ converges to goal state
\hspace*{-0.5pt}\#6\hspace*{-0.3pt} and contains
$\mskip-0.05mu6\mskip-0.75mu$ actions.  However, since at most two of
the actions $\{d_i\}$ are informative, the longest informative action
release sequences contained in $\sigma$ have length 4.  That is less
than the 5 demanded by Theorem~\ref{longiars}.

\begin{figure}[h]
\vspace*{-0.3in}
\setlength{\fboxrule}{0.75pt}
\begin{center}
\fbox{\begin{minipage}{3.25in}
\begin{center}
\ifig{Ex239_Gdet}{scale=0.5}
\end{center}
\end{minipage}}\\
\vspace*{0.1in}
\fbox{\begin{minipage}{3.25in}
\begin{center}
\ifig{Ex239_Gstoch}{scale=0.5}
\end{center}
\end{minipage}}\\
\vspace*{0.1in}
\fbox{\begin{minipage}{3.25in}
\begin{center}
\ifig{Ex239_Gnondet}{scale=0.5}
\end{center}
\end{minipage}}
\end{center}
\vspace*{-0.2in}
\caption[]{A graph $G$ with eight deterministic actions, one
  stochastic action, and five nondeterministic actions, depicted in
  three panels.  \ \fig{Ex239_Arel} displays $G$'s action
  relation.  \\[3pt]
\small
  {\bf Top:}\ The top panel shows $G$'s deterministic actions.  Action
  $b_i$ moves ``\underline{b}ack'' from state \#$i$ to state \#1,
  while action $d_i$ moves ``\underline{d}own'' from state \#$i$ to
  state \#6, for $i=2,3,4,5$.  \\[3pt]
  {\bf Middle:}\ The middle panel shows $G$'s stochastic action $c_1$,
  with source state \#1 and target set $\{2,3,4,5\}$.\\[3pt]
  {\bf Bottom:}\ The bottom panel shows $G$'s nondeterministic actions.
  Action $a_1$ has the same source and targets as the stochastic
  action, but is nondeterministic.  The remaining four actions each
  have source state \#6, and some target set of size three in the set of
  states $\{2,3,4,5\}$.  Specifically, the target set of action $e_i$
  is $\{2,3,4,5\}$ but with state \#$i$ ``\underline{e}xcised'', for
  $i=2,3,4,5$.  The figure displays these four actions in abbreviated
  form, with written target sets rather than all the arrows drawn.}
\label{Ex239_Graph}
\end{figure}

\begin{figure}[h]
\begin{center}
\begin{minipage}{4.5in}{$\begin{array}{c|cccccccccccccc}
  A     & a_1  & d_2  & d_3  & d_4  & d_5  & c_1  & b_2  & b_3  & b_4  & b_5  & e_5  & e_4  & e_3  & e_2   \\[2pt]\hline
        &      &      &      &      &      &      & \one & \one & \one & \one & \one & \one & \one & \one \\[2pt]
        &      &      &      &      &      & \one &      & \one & \one & \one & \one & \one & \one & \one \\[2pt]
        &      &      &      &      &      & \one & \one &      & \one & \one & \one & \one & \one & \one \\[2pt]
        &      &      &      &      &      & \one & \one & \one &      & \one & \one & \one & \one & \one \\[2pt]
        &      &      &      &      &      & \one & \one & \one & \one &      & \one & \one & \one & \one \\[2pt]
        & \one &      &      &      &      & \one &      &      &      &      & \one & \one & \one & \one \\[2pt]
        &      &      &      &      & \one &      & \one & \one & \one & \one & \one &      &      &      \\[2pt]
        &      &      &      &      & \one & \one &      & \one & \one & \one & \one &      &      &      \\[2pt]
        &      &      &      &      & \one & \one & \one &      & \one & \one & \one &      &      &      \\[2pt]
        &      &      &      &      & \one & \one & \one & \one &      & \one & \one &      &      &      \\[2pt]
        & \one &      &      &      & \one & \one &      &      &      &      & \one &      &      &      \\[2pt]
        &      &      &      & \one &      &      & \one & \one & \one & \one &      & \one &      &      \\[2pt]
        &      &      &      & \one &      & \one &      & \one & \one & \one &      & \one &      &      \\[2pt]
        &      &      &      & \one &      & \one & \one &      & \one & \one &      & \one &      &      \\[2pt]
        &      &      &      & \one &      & \one & \one & \one & \one &      &      & \one &      &      \\[2pt]
        & \one &      &      & \one &      & \one &      &      &      &      &      & \one &      &      \\[2pt]
        &      &      & \one &      &      &      & \one & \one & \one & \one &      &      & \one &      \\[2pt]
        &      &      & \one &      &      & \one &      & \one & \one & \one &      &      & \one &      \\[2pt]
        &      &      & \one &      &      & \one & \one & \one &      & \one &      &      & \one &      \\[2pt]
        &      &      & \one &      &      & \one & \one & \one & \one &      &      &      & \one &      \\[2pt]
        & \one &      & \one &      &      & \one &      &      &      &      &      &      & \one &      \\[2pt]
        &      & \one &      &      &      &      & \one & \one & \one & \one &      &      &      & \one \\[2pt]
        &      & \one &      &      &      & \one & \one &      & \one & \one &      &      &      & \one \\[2pt]
        &      & \one &      &      &      & \one & \one & \one &      & \one &      &      &      & \one \\[2pt]
        &      & \one &      &      &      & \one & \one & \one & \one &      &      &      &      & \one \\[2pt]
        & \one & \one &      &      &      & \one &      &      &      &      &      &      &      & \one \\[2pt]
        &      & \one & \one & \one & \one &      & \one & \one & \one & \one &      &      &      &      \\[2pt]
        &      & \one & \one & \one & \one & \one &      & \one & \one & \one &      &      &      &      \\[2pt]
        &      & \one & \one & \one & \one & \one & \one &      & \one & \one &      &      &      &      \\[2pt]
        &      & \one & \one & \one & \one & \one & \one & \one &      & \one &      &      &      &      \\[2pt]
        &      & \one & \one & \one & \one & \one & \one & \one & \one &      &      &      &      &      \\[2pt]
\sigma  & \one & \one & \one & \one & \one & \one &      &      &      &      &      &      &      &      \\[2pt]
\end{array}$}
\end{minipage}
\begin{minipage}{0.5in}{$\begin{array}{c}
\hbox{Goal} \\[2pt]\hline
1 \\[2pt]
2 \\[2pt]
3 \\[2pt]
4 \\[2pt]
5 \\[2pt]
\{2,3,4,5\} \\[2pt]
1 \\[2pt]
2 \\[2pt]
3 \\[2pt]
4 \\[2pt]
\{2,3,4\} \\[2pt]
1 \\[2pt]
2 \\[2pt]
3 \\[2pt]
5 \\[2pt]
\{2,3,5\} \\[2pt]
1 \\[2pt]
2 \\[2pt]
4 \\[2pt]
5 \\[2pt]
\{2,4,5\} \\[2pt]
1 \\[2pt]
3 \\[2pt]
4 \\[2pt]
5 \\[2pt]
\{3,4,5\} \\[2pt]
\{1,6\} \\[2pt]
6 \\[2pt]
6 \\[2pt]
6 \\[2pt]
6 \\[2pt]
6 \\[2pt]
\end{array}$}
\end{minipage}
\end{center}
\caption[]{Action relation and goal sets for the graph $G$ of
           \fig{Ex239_Graph}.  The maximal strategy in the
           bottommost row is labeled $\sigma$ for reference in the
           text.}
\label{Ex239_Arel}
\end{figure}

\clearpage
\section*{Acknowledgments}
\addcontentsline{toc}{section}{Acknowledgments}

Many thanks to Rob Ghrist, Steve LaValle, Ben Mann, and Matt Mason,
for advice and support related to this research and its earlier
foundations within SToMP.

\vspace*{0.2in}

\markright{References}
\addcontentsline{toc}{section}{References}



\begin{thebibliography}{10}

\bibitem{paths:bjorner-welker}
A.~Bj{\"{o}}rner and V.~Welker.
\newblock Complexes of directed graphs.
\newblock {\em SIAM J.~Discrete Math}, 12(4):413--424, 1999.

\bibitem{paths:DinurNissim}
I.~Dinur and K.~Nissim.
\newblock Revealing information while preserving privacy.
\newblock In {\em Proceedings of the 22nd ACM Symposium on Principles of
  Database Systems}, pages 202--210, 2003.

\bibitem{paths:dworkcacm}
C.~Dwork.
\newblock A firm foundation for private data analysis.
\newblock {\em Communications of the ACM}, 54(1):86--95, 2011.

\bibitem{paths:strategies}
M.~A. Erdmann.
\newblock On the topology of discrete strategies.
\newblock {\em International Journal of Robotics Research}, 29(7):855--896,
  2010.

\bibitem{paths:plans}
M.~A. Erdmann.
\newblock On the topology of discrete planning with uncertainty.
\newblock In A.~Zomorodian, editor, {\em Advances in Applied and Computational
  Topology}, pages 147--194. AMS, 2012.

\bibitem{paths:privacy}
M.~A. Erdmann.
\newblock Topology of privacy: Lattice structures and information bubbles for
  inference and obfuscation.
\newblock {\tt http://arxiv.org/abs/1712.04130}, 2017.

\bibitem{paths:feller}
W.~Feller.
\newblock {\em {An Introduction to Probability Theory and Its Applications}},
  volume~1.
\newblock John Wiley \& Sons, New York, third edition, 1968.

\bibitem{paths:hultman}
A.~Hultman.
\newblock Directed subgraph complexes.
\newblock {\em Elec.~J.~Combinatorics}, 11(1):R75, 2004.

\bibitem{paths:jonsson}
J.~Jonsson.
\newblock {\em Simplicial Complexes of Graphs}.
\newblock PhD thesis, Department of Mathematics, KTH, Stockholm, Sweden, 2005.

\bibitem{paths:ktII}
S.~Karlin and H.~M. Taylor.
\newblock {\em {A Second Course in Stochastic Processes}}.
\newblock Academic Press, New York, 1981.

\bibitem{paths:munkres}
J.~R. Munkres.
\newblock {\em {Elements of Algebraic Topology}}.
\newblock Addison-Wesley, Menlo Park, CA, 1984.

\bibitem{paths:netflix}
A.~Narayanan and V.~Shmatikov.
\newblock Robust de-anonymization of large sparse datasets.
\newblock In {\em Proceedings of the IEEE Symposium on Security and Privacy},
  pages 111--125, 2008.

\bibitem{paths:rotman}
J.~J. Rotman.
\newblock {\em {An Introduction to Algebraic Topology}}.
\newblock Springer-Verlag, \,New York, 1988.

\bibitem{paths:sweeneykanon}
L.~Sweeney.
\newblock k-anonymity: A model for protecting privacy.
\newblock {\em International Journal of Uncertainty, Fuzziness and
  Knowledge-Based Systems}, 10(5):557--570, 2002.

\bibitem{paths:wachs}
M.~L. Wachs.
\newblock {\em {Poset Topology: Tools and Applications}}.
\newblock IAS/Park City Mathematics Institute, Summer 2004.
\newblock \ Also available here: {\tt http://arxiv.org/abs/math/0602226}.

\end{thebibliography}
\end{document}